\DeclareSymbolFont{symbols}{OMS}{cmsy}{m}{n}
\newtheorem{thm}{Theorem}[section]
\newtheorem{lem}[thm]{Lemma}
\newtheorem{cor}[thm]{Corollary}
\newtheorem{prop}[thm]{Proposition}
\theoremstyle{definition}
\newtheorem{defi}[thm]{Definition}
\newtheorem{question}[thm]{Question}
\newtheorem{example}[thm]{Example}
\theoremstyle{remark}
\newtheorem{rmk}[thm]{Remark}
\newcommand{\Hil}{\mathcal{H}}
\newcommand{\slot}{{~\cdot~}}
\newcommand{\Spin}{\mathop{\mathsf{Spin}}}
\newcommand{\SO}{\mathop{\mathsf{SO}}}
\newcommand{\SL}{\mathop{\mathsf{SL}}}
\newcommand{\SU}{\mathop{\mathsf{SU}}}
\newcommand{\Mob}{\mathsf{M\ddot ob}}
\newcommand{\Sc}[1][]{\mathbb{S}^{1#1}}
\newcommand{\C}{\mathcal{C}}
\newcommand{\cF}{\mathcal{F}}
\newcommand{\cB}{\mathcal{B}}
\newcommand{\cD}{\mathcal{D}}
\newcommand{\cC}{\mathcal{C}}
\newcommand{\cS}{\mathcal{S}}
\newcommand{\A}{\mathcal{A}}
\newcommand{\cI}{\mathcal{I}}
\newcommand{\N}{\mathcal{N}}
\newcommand{\RR}{\mathbb{R}}
\newcommand{\TT}{\mathbb{T}}
\newcommand{\CC}{\mathbb{C}}
\newcommand{\ZZ}{\mathbb{Z}}
\newcommand{\NN}{\mathbb{N}}
\DeclareMathOperator{\Diff}{Diff}
\DeclareMathOperator{\End}{End}
\DeclareMathOperator{\Rep}{Rep}
\DeclareMathOperator{\Hom}{Hom}
\DeclareMathOperator{\Vir}{Vir}
\DeclareMathOperator{\diag}{diag}
\DeclareMathOperator{\id}{id}
\DeclareMathOperator{\tr}{tr}
\DeclareMathOperator{\B}{B}
\DeclareMathOperator{\sign}{sign}
\DeclareMathOperator{\Aut}{Aut}
\newcommand{\e}{\mathrm{e}}
\newcommand{\ima}{\mathrm{i}}
\newcommand{\op}{\mathrm{op}}
\DeclareRobustCommand{\eg}{e.g.\@\xspace}
\DeclareRobustCommand{\cf}{cf.\@\xspace}
\DeclareRobustCommand{\ie}{i.e.\@\xspace}
\DeclareRobustCommand{\etc}{%
    \@ifnextchar{.}%
        {etc}%
        {etc.\@\xspace}%
}
\DeclareMathOperator{\Conv}{Conv}
\DeclareMathOperator{\QuOp}{QuOp}
\DeclareMathOperator{\Exp}{Exp}
\DeclareMathOperator{\aarg}{arg}
\DeclareMathOperator{\Pic}{Pic}
\DeclareMathOperator{\Ni}{Ni}
\DeclareMathOperator*{\spec}{spec}
\DeclareMathOperator*{\Dim}{Dim}
\DeclareMathOperator{\Dih}{Dih}
\DeclareMathOperator{\Out}{Out}
\DeclareMathOperator*{\TY}{\mathcal{TY}}
\DeclareMathOperator*{\MP}{\mathcal{MP}}
\DeclareMathOperator{\Vect}{Hilb}
\DeclareMathOperator*{\Hilb}{Hilb}
\DeclareMathOperator{\Irr}{Irr}
\newcommand{\tRep}[1]{#1\text{--}\Rep}
\newcommand{\CS}{/\!/}
\newcommand{\tu}{\mathbbm{1}} %
\newcommand{\umtc}[1]{\cC_{#1}}
\newcommand{\bi}[2]{\langle #1,#2\rangle}
\newcommand{\bii}[2]{b( #1,#2)}
\newcommand{\cG}{\mathcal{G}}
\newcommand{\bim}[4][]{{}\prescript{\vphantom{#1}}{#2}{#3}^{#1}_{#4}}
\newcommand{\s}[2]{{}_{#1}\mathcal{C}_{#2}}
\newcommand{\NNs}{\s{\N}{\N}}
\renewcommand{\N}{N}
\newcommand{\rev}[1]{{#1}^\mathrm{rev}}
\newcommand{\hb}{\varepsilon} %
\renewcommand{\slot}{\,\cdot\,}
\newcommand{\bislotslot}{\bi\cdot\cdot}
\newcommand{\lattimes}{}%
\newcommand{\myemph}[1]{\emph{#1}}
\newcommand{\tikzmath}[2][0.50]
{\vcenter{\hbox{\begin{tikzpicture}[scale=#1] #2\end{tikzpicture}}}
}
\begin{document}
\date{\today}
\dateposted{\today}
\title[Realizability of TY Categories]{Conformal Net Realizability of 
  Tambara--Yamagami Categories
 and Generalized Metaplectic Modular Categories
}
\address{Department of Mathematics, Morton Hall 321, 1 Ohio University, Athens, OH 45701, USA}
\author{Marcel Bischoff}
\email{bischoff@ohio.edu}
\email{marcel@localconformal.net}
\thanks{
  Supported in part by NSF Grant DMS-1362138 and NSF Grant DMS-1700192/1821162}
\dedicatory{This paper is dedicated to Karl-Henning Rehren on the occasion of his 60th birthday}
\begin{abstract}
We show that all isomorphism classes of even rank Tambara--Yamagami categories
arise as $\ZZ_2$-twisted representations of conformal nets.
As a consequence, we show that their Drinfel'd centers are realized 
by (generalized) orbifolds of conformal nets associated with (self-dual) lattices.
The quantum double subfactors of even rank Tambara--Yamagami 
categories are Bisch--Haagerup subfactors and
we describe their (dual) principal graphs.

For every abelian group of odd order the Drinfel'd centers of the associated Tambara--Yamagami categories give a fusion ring generalizing 
the Verlinde ring $\Spin(2n+1)_2$ in the case of $\ZZ_{2n+1}$.
We classify all generalized metaplectic modular categories, i.e.\ unitary modular tensor category with those fusion rules and show that they are realized as $\ZZ_2$-orbifolds of conformal nets associated with lattices.  
We further show that twisted doubles of generalized dihedral groups of abelian 
groups of odd order are group-theoretical generalized metaplectic modular categories and vice versa.

We give some examples of twisted orbifolds of conformal nets and 
show how generalized metaplectic modular categories arise
by condensation of simpler ones.

\end{abstract}

\maketitle
\tableofcontents

\section{Introduction}

Local conformal nets on the circle axiomatizing chiral conformal field theory
using von Neumann algebras. 
The local algebras turn out to be factors, \ie von Neumann algebras with 
trivial centers and conformal nets give rise to subfactors, 
\ie unital inclusion of factors in many different ways.
Via their representation theory, conformal nets also give rise to (braided) 
C$^{\ast}$-tensor categories.

In particular, in \cite{KaLoMg2001}, Kawahigashi, Longo, and Müger have 
introduced the notion of a 
\myemph{completely rational conformal net} and showed that the 
representation category of such a net is a unitary modular tensor category. 
A natural question is if all unitary modular tensor categories arise this way
in the following sense: 
Given any abstract unitary modular tensor category $\cC$,
is there always a completely rational conformal net $\A$ which realizes $\cC$, 
\ie $\Rep(\A)$ is braided equivalent to $\cC$?
Finding a solution for a family of categories goes under the
name of \emph{reconstruction program}.
One difficulty is that such a net $\A$ realizing $\cC$ (if it exists) is far 
from unique.
For example, every net $\A\otimes \cB$ with $\cB$ a \myemph{holomorphic net},
\ie a completely rational conformal net $\cB$ with trivial representation 
category $\Rep(\cB)\cong \Hilb$, also realizes $\cC$.
Every even self-dual positive lattice $\Gamma$ gives a holomorphic net 
$\A_\Gamma$
but the classification of even self-dual positive lattices itself is a hopeless
problem.

If $\cF$ is a unitary fusion category, 
which means that it does not need to be braided, 
then its Drinfel'd center $Z(\cF)$ is a 
unitary modular tensor category \cite{Mg2003II}.
In this case, we may ask as---a special case of the reconstruction program---if 
all Drinfel'd centers of fusion categories are realized by conformal nets,
\ie if for any unitary fusion category $\cF$ there is a 
completely rational conformal net $\A$ which realizes $Z(\cF)$.
A positive answer to this question would imply that all 
unitary fusion categories and all finite index finite depth subfactors arise 
from conformal nets using \emph{higher representation theory}
\cite{Bi2015,Bi2015VFR}.

If $\A$ is a net realizing $Z(\cF)$ for some unitary fusion category $\cF$, 
we call $\A$ a \myemph{quantum double net}.
In this case, there is a holomorphic conformal net $\cB$, 
and a proper action of the fusion ring hypergroup $K_\cF$ on $\cB$,
such that $\A$ is the fixed point net $\cB^{K_\cF}$ \cite{Bi2016}. 
The unitary fusion category $\cF$ can be reconstructed as the category of 
solitons coming from $\alpha$-induction applied to the inclusion 
$\A\subset \cB$. 
Conversely, if a finite hypergroup $Q$ acts properly on a holomorphic net $\cB$,
then the fixed point net $\A^Q$ is a quantum double net with $\Rep(\A^Q)$ 
braided equivalent to $Z(\cF)$ for a unitary fusion category $\cF$ which is a 
\myemph{categorification} of $Q$, \ie $K_\cF\cong Q$.

The main goal of this paper is to establish quantum double nets and 
therefore a reconstruction for a certain family of unitary fusion categories, 
namely so-called Tambara--Yamagami categories of even rank.

Namely, we prove that for every unitary fusion category $\cF$ with
$\Irr(\cF)=G\cup \{\rho\}$
for some finite group $G$ of odd order and fusion rules
\footnote{these fusion rules are also denoted by $G+0$ in \cite{EvGa2014}}
\begin{align}  
  [\rho] [\rho]&=\sum_{g\in G} [g]\,,&
  [g][\rho]&=[\rho] [g] =[\rho]\,,&
  [g] [h]& =[gh]\,,
\end{align}
there is a net $\A$ with $\Rep(\A)$ braided equivalent to $Z(\cF)$.
We note that Tambara--Yamagami categories are classified \cite{TaYa1998}
and that $G$ is necessarily abelian.

It is conjectured that unitary fusion categories are coming from models in 
low-dimensional physics.
In particular, it is believed, that one can use the data to obtain a model in 
statistical physics whose critical limit is a conformal field theory.

One might ask:
\begin{question}
  \label{quest:TY}
  Where do Tambara--Yamagami categories come from?
\end{question}
One of the easiest rational CFT models are in physical language sigma models 
 or chiral Wess--Zumino--Witten models
with target space a (metric) torus,
\ie euclidean space $\RR^n$ compactified by a necessarily even positive lattice $L$.
Here $L$ can be seen as the level $H_+^4(B\TT^n,\ZZ)$ for the $n$-torus $\TT^n$,
see \cite{He2016pp}.
These models can be described by a conformal net $\A_L$.

Indeed, the reconstruction of Tambara--Yamagami categories can be done by taking $\ZZ_2$-orbifolds of the nets $\A_L$ and we can give an answer to 
Question \ref{quest:TY} for the case of even rank: 
\begin{thm}[Theorem \ref{thm:AllOddTYs}]
  Tambara--Yamagami categories of even rank arise as the category
  of $\ZZ_2$-twisted representations of a net $\A_L$.
\end{thm}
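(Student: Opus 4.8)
\emph{Proof strategy.} Recall that every even‑rank Tambara--Yamagami category is of the form $\TY(G,\chi,\tau)$ with $G$ a finite abelian group of odd order, $\chi$ a nondegenerate symmetric bicharacter on $G$ and $\tau=\pm|G|^{-1/2}$, and that (by Tambara--Yamagami's classification) any fusion category whose fusion ring is the Tambara--Yamagami ring of $G$ is such a $\TY(G,\chi,\tau)$. The plan is, given such data, to produce an even positive‑definite lattice $L$ so that the $\ZZ_2$‑graded fusion category $\cC:=\Rep(\A_L)\sqcup\Rep^\theta(\A_L)$ of all (untwisted and $\theta$‑twisted) representations of $\A_L$, with respect to the order‑two net automorphism $\theta$ coming from $-\id\in\mathrm{O}(L)$, is exactly $\TY(G,\chi,\tau)$. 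For the lattice I would use that $\chi$ has a unique associated finite quadratic form $q_\chi$ (uniqueness because $|G|$ is odd), that $q_\chi$ has even signature, and hence that $q_\chi$ is the discriminant form of some even positive‑definite lattice $L$; then $\Rep(\A_L)\cong\cC(G,q_\chi)$ as pointed braided modular categories, and $\theta$ acts on it by the charge‑conjugation autoequivalence $g\mapsto-g$.

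First I would identify the underlying fusion category of $\cC$. The category $\cC$ is $\ZZ_2$‑crossed braided (general theory, respectively $\ZZ_2$‑orbifold theory of completely rational nets), with trivially graded part $\cC_0=\Rep(\A_L)=\cC(G,q_\chi)$; by the structure of $\ZZ_2$‑crossed braided extensions of a modular category, the simple objects of the nontrivially graded part $\cC_1=\Rep^\theta(\A_L)$ are in bijection with the $\theta$‑fixed simple objects of $\cC_0$. Since $|G|$ is odd the only fixed simple is the unit, so $\cC_1$ has a single simple object $\rho$ and $d_\rho^2=\dim\cC_0=|G|$. The remaining fusion rules $[g][\rho]=[\rho][g]=[\rho]$ and $[\rho]^2=\sum_{g\in G}[g]$ then follow from dimensions and Frobenius reciprocity, so $\cC$ has the Tambara--Yamagami fusion ring of $G$ and is therefore some $\TY(G,\chi',\tau')$.

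Next I would pin down $(\chi',\tau')$. The bicharacter $\chi'$ is the one linking the $\ZZ_2$‑crossed braiding on $\cC$ to the Tambara--Yamagami associativity data; it equals the discriminant bilinear form of $L$, i.e.\ $\chi'=\chi$. The sign $\tau'\in\{\pm|G|^{-1/2}\}$ is the delicate invariant, and I would determine it either from a single $F$‑symbol of $\cC$ involving $\rho$, or by relating it through the Tambara--Yamagami twist formulas to the conformal weight $h_\rho\in\tfrac1{16}\operatorname{rank}(L)+\tfrac12\ZZ$ of the twisted sector together with the Gauss sum of $q_\chi$ (which only records $\operatorname{rank}(L)\bmod 8$). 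To realize both possible signs for a fixed $\chi$, I would tensor $\A_L$ with $\A_\Gamma$ for an even unimodular lattice $\Gamma$ (e.g.\ $E_8$) equipped with $-\id_\Gamma$: this holomorphic factor leaves $\cC_0$ and the fusion rules unchanged, replacing $\rho$ by $\rho\boxtimes\rho_\Gamma$, where $\rho_\Gamma$ is the unique $(-\id)$‑twisted $\A_\Gamma$‑sector (with $d_{\rho_\Gamma}=1$, $\rho_\Gamma^2=\tu$, $h_{\rho_\Gamma}=\tfrac1{16}\operatorname{rank}(\Gamma)$); since $\operatorname{rank}(E_8)=8$ this shifts $h_\rho$ by $\tfrac12$ while fixing the Gauss sum, toggling $\tau'$. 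Finally, letting $L$ range over all even positive‑definite lattices with discriminant form $(G,q_\chi)$, optionally stacked with such $\Gamma$, and letting $(G,\chi)$ range over all odd abelian groups and all nondegenerate symmetric bicharacters — the full range realizable as discriminant forms — yields every isomorphism class $\TY(G,\chi,\tau)$, i.e.\ every even‑rank Tambara--Yamagami category.

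I expect the main obstacle to be the last step: computing the sign $\tau'$ and verifying that stacking with $(\A_\Gamma,-\id_\Gamma)$ flips it. This requires either an honest evaluation of the $F$‑symbols of the twisted sectors — via smeared twisted vertex operators (charged intertwiners on the cut circle) and their fusion and braiding relations — or a clean identification of $\tau'$ with an Arf--Brown--Kervaire‑type invariant attached to $(L,-\id_L)$ together with a proof that this invariant is additive under orthogonal sums. A subsidiary technical point is transporting the twisted‑module computation from the lattice vertex operator algebra $V_L$ to the net $\A_L$, which I would handle through the established correspondence between $\A_L$ and $V_L$ and their (twisted) representation theories.
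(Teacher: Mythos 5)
Your overall strategy---realize $(G,q_\chi)$ as the discriminant form of an even positive--definite lattice $L$, observe that since $|G|$ is odd only the unit is fixed by $g\mapsto -g$ so the reflection--twisted sector of $\A_L$ is a single simple object of dimension $\sqrt{|G|}$, deduce the Tambara--Yamagami fusion rules, and pin down the bicharacter---is the same as the paper's (Theorem \ref{thm:AllPointed}, Propositions \ref{prop:PointedZ2Orbifold}, \ref{prop:Reflection2}, \ref{prop:TYCentral}); the paper identifies the bicharacter not via $F$--symbols but via the central functor $\rev{\Rep(\A_L)}\hookrightarrow \tRep{\ZZ_2}(\A_L)$ together with Proposition \ref{prop:TYCentral}, which is cleaner but not essentially different.

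The genuine gap is your mechanism for hitting both signs $\tau=\pm$. Stacking with $(\A_{E_8},-\id_{E_8})$ does \emph{not} toggle the sign. The $(-\id)$--twisted sector of $\A_{E_8}$ has conformal weights in $\tfrac{8}{16}+\tfrac12\ZZ\subset\tfrac12\ZZ$, so the twisted objects of $\Rep(\A_{E_8}^{-\id})\cong Z(\Vect_{\ZZ_2}^{\omega})$ have twists in $\{\pm1\}$, which forces $[\omega]$ to be trivial (the toric code rather than the double semion); hence $\nu_{\rho_{E_8}}=+1$, and since Frobenius--Schur indicators are multiplicative under $\boxtimes$, $\nu_{\rho\boxtimes\rho_{E_8}}=\nu_\rho$. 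Equivalently, in the modular data \eqref{eq:TYT} the sign is recovered from $\omega_g^2=\pm\hat a(g)$, so shifting the twisted conformal weights by $\tfrac12$ merely replaces $\omega_g$ by $-\omega_g$ (relabelling $\rho^{(g,0)}\leftrightarrow\rho^{(g,1)}$) and leaves the sign untouched; a shift by an odd multiple of $\tfrac14$ is required. Since every even unimodular lattice has rank divisible by $8$, no choice of $\Gamma$ with the reflection action can supply that quarter--shift, so this route cannot be repaired within your setup. The paper's fix is exactly a quarter--shift by a \emph{semion}: it uses $\Rep(\A_{A_1\lattimes E_7})\cong\cS\boxtimes\rev{\cS}$ and $A_1\lattimes E_7\oplus\ZZ_2\cong E_8$, so that the sign--flipped model is $(\A_L^{\ZZ_2}\otimes\A_{A_1\lattimes E_7})\rtimes\ZZ_2\subset\A_{L\lattimes E_8}$; the relevant $\ZZ_2$--action on $\A_{L\lattimes E_8}$ is the reflection on the $\A_L$ factor times the \emph{shift} action dual to $\A_{A_1\lattimes E_7}\subset\A_{E_8}$, not the product of reflections. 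With Proposition \ref{prop:ChangingFrobeniusSchur} the sign flip then becomes a purely categorical statement, and no evaluation of $F$--symbols or twisted vertex operators is needed.
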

Using our reconstruction result, we can make several interesting observations and 
connections.
We introduce the notion of a \emph{generalized metaplectic modular category} 
associated with an abelian group $A$ of odd order which is a 
unitary modular tensor categories with fusion rules depending on $A$ 
which generalizes the Verlinde fusion rules of $\Spin(2k+1)_2$ for the case 
$A=\ZZ_{2k+1}$, see Definition \ref{defi:GeneralizedMetaplecticModularCategory}.
We generalize the classification of metaplectic modular categories 
up to braided equivalence in \cite{ArChRoWa2016} to 
generalized metaplectic modular categories, see 
  Theorem \ref{thm:MPclass1}, \ref{thm:MPclass2}, 
and show that generalized metaplectic modular categories 
are determined by its modular data, see Theorem \ref{thm:MPST}.
Again, we have a similar reconstruction result:
\begin{thm}[Theorem \ref{thm:MPRealization}]
    All odd generalized metaplectic modular categories are be realized by 
    $\ZZ_2$-orbifolds of conformal nets associated with even lattices.
\end{thm}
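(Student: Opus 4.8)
The plan is to route the argument through Tambara--Yamagami categories. First I would use the classification of generalized metaplectic modular categories to identify every odd generalized metaplectic modular category with the Drinfel'd center of a Tambara--Yamagami category, and then I would feed this into the lattice-net reconstruction of Theorem~\ref{thm:AllOddTYs} together with the quantum double net / orbifold correspondence recalled in the introduction.

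\emph{Step 1: reduce to centers of Tambara--Yamagami categories.} Fix an abelian group $A$ of odd order and let $\cF$ be a Tambara--Yamagami category with underlying group $A$; such categories exist, admit a unitary structure, and are classified in \cite{TaYa1998} by a nondegenerate symmetric bicharacter on $A$ together with a sign. A direct computation with the Tambara--Yamagami fusion rules shows that $Z(\cF)$ is a unitary modular tensor category whose fusion ring is the one of Definition~\ref{defi:GeneralizedMetaplecticModularCategory}, so $Z(\cF)$ is an odd generalized metaplectic modular category. Conversely, Theorems~\ref{thm:MPclass1} and~\ref{thm:MPclass2} classify the odd generalized metaplectic modular categories with a given fusion ring, and Theorem~\ref{thm:MPST} says that such categories are determined by their modular data; hence it suffices to compute the modular data of the categories $Z(\cF)$ and to verify that, as the Tambara--Yamagami data on $A$ varies over all admissible choices, one obtains every category on the classified list. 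This identifies each odd generalized metaplectic modular category $\cC$ with $Z(\cF)$ for a suitable Tambara--Yamagami category $\cF$ with underlying group $A$ of odd order.

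\emph{Step 2: realize the center by a $\ZZ_2$-orbifold.} By Theorem~\ref{thm:AllOddTYs}, the Tambara--Yamagami category $\cF$ arises as the category of $\ZZ_2$-twisted representations of the conformal net $\A_L$ associated with a suitable even positive lattice $L$, with the $\ZZ_2$-action induced by $-1\in\Aut(L)$; equivalently, $\cF$ is the fusion category of solitons for the inclusion $\A_L^{\ZZ_2}\subset\A_L$ obtained by $\alpha$-induction. By the correspondence between quantum double nets and orbifolds recalled in the introduction (cf.\ \cite{Bi2016,Bi2015,Bi2015VFR}), the completely rational orbifold net $\A_L^{\ZZ_2}$ then satisfies $\Rep(\A_L^{\ZZ_2})\cong Z(\cF)$ as braided tensor categories. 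Combined with Step~1, $\Rep(\A_L^{\ZZ_2})$ is braided equivalent to $\cC$, i.e.\ the $\ZZ_2$-orbifold net $\A_L^{\ZZ_2}$ realizes $\cC$.

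\emph{The main obstacle.} The substantive work is Step~1: passing from a coincidence of fusion rings to an actual braided equivalence and, above all, checking that $\cF\mapsto Z(\cF)$ is \emph{surjective} onto the full classified family of odd generalized metaplectic modular categories. This requires the combined force of Theorems~\ref{thm:MPclass1}--\ref{thm:MPST} and a careful match of the discrete invariants on the two sides --- on the Tambara--Yamagami side the nondegenerate symmetric bicharacter (equivalently a quadratic form) on $A$ and the Frobenius--Schur sign, on the metaplectic side the invariants of Definition~\ref{defi:GeneralizedMetaplecticModularCategory} --- as well as compatibility with the unitary structures. A lesser point is to confirm that the $\ZZ_2$-action produced by Theorem~\ref{thm:AllOddTYs} is of exactly the type to which the orbifold/center correspondence applies; this is automatic here, since the $\ZZ_2$-twisted representations already assemble into a genuine unitary fusion category.
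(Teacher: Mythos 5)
There is a genuine gap, and it sits exactly at the point you yourself flag as ``the main obstacle.'' An odd generalized metaplectic modular category is \emph{not} the Drinfel'd center $Z(\cF)$ of a Tambara--Yamagami category $\cF=\TY(G,\bislotslot,\pm)$: by Proposition \ref{prop:TYCentral} one has $Z(\cF)\cong\MP(G,\bislotslot,\pm)\boxtimes\cC(G,\bar q)$, and the generalized metaplectic modular category is only the first factor $\MP(G,\bislotslot,\pm)$, namely the M\"uger centralizer of the pointed subcategory $\cC(G,\bar q)$ inside $Z(\cF)$. Comparing ranks already rules out your identification for $|G|>1$: $Z(\cF)$ has rank $|G|\cdot\bigl(4+(|G|-1)/2\bigr)$, while Definition \ref{defi:GeneralizedMetaplecticModularCategory} prescribes rank $(|G|+7)/2$ (the two agree only for $|G|=1$). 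So the ``direct computation'' claimed in your Step 1 cannot come out as stated, and the object you set out to realize in Step 2 is the wrong one.

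Your Step 2 contains a second, compensating error: the quantum double correspondence $\Rep(\A^{Q})\cong Z(\tRep{Q}(\A))$ applies only when $\A$ is \emph{holomorphic}, whereas the lattice net $\A_L$ produced by Theorem \ref{thm:AllOddTYs}(1) has $\Rep(\A_L)\cong\cC(G,q)$ nontrivial. For a non-holomorphic net the correct statement (see the discussion of generalized orbifolds and \cite{Bi2016}) is that $\Rep(\A_L^{\ZZ_2})$ is the centralizer $(\rev{\Rep(\A_L)})'\cap Z(\tRep{\ZZ_2}(\A_L))$, which is precisely $\MP(G,\bislotslot,\pm)$ and not all of $Z(\cF)$. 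The two mistakes cancel, so your final conclusion $\Rep(\A_L^{\ZZ_2})\cong\cC$ happens to be true, but neither intermediate claim is. The paper's actual route is shorter and avoids both issues: Theorem \ref{thm:MPclass1} says every odd generalized metaplectic modular category is braided equivalent to some $\MP(G,\bislotslot,\nu)$, and the proof of Theorem \ref{thm:AllOddTYs} already produced, for each $(G,\bislotslot)$, two lattices $L_\pm$ (differing by an $E_8$ factor) with reflection-type $\ZZ_2$-actions whose categories of $\ZZ_2$-twisted representations are $\TY(G,\bislotslot,\pm)$, whence $\Rep(\A_{L_\pm}^{\ZZ_2})\cong\MP(G,\bislotslot,\pm)$; no separate matching of modular data via Theorem \ref{thm:MPST} is needed. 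If you do want to pass through the full center $Z(\cF)$, the net realizing it is $\A_L^{\ZZ_2}\otimes\A_{\bar L}$ as in Theorem \ref{thm:AllOddTYs}(2), where the auxiliary factor $\A_{\bar L}$ accounts for the extra tensor factor $\cC(G,\bar q)$.
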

We can use the classification to show that $\Rep(\A_{\Spin(2p+1)_2})$ of 
the loop group net $\A_{\Spin(2p+1)_2}$ is braided equivalent
to the unitary modular tensor category $\C_{\Spin(2p+1)_2}$ for all $p\in\NN$
and is \myemph{modular} in the sense of \cite{KaLo2005}.

We give some results on how (generalized) metaplectic modular categories can be obtained from simpler building blocks via condensation, 
\ie as the category of local modules with respect to a commutative Q-system. 
We show how metaplectic modular categories can often be realized as condensation
of $\Spin(p)_2$, their reverses, and semion categories. 
This is always possible if no prime factor of $n$ is a Pythagorean prime.

In Section \ref{sec:Dihedral}, we give several relations of 
generalized metaplectic modular categories and Tambara--Yamagami categories to 
  \emph{generalized dihedral groups}, 
\ie groups of the form $\Dih(A):=A\rtimes \ZZ_2$ for some abelian group $A$, 
where $\ZZ_2$ acts by $a\mapsto a^{-1}$. 
   
We introduce a generalization of Tambara--Yamagami categories in 
Subsection \ref{ssec:GenTY}. 
They are extensions of pointed unitary fusion categories of 
    generalized dihedral groups and are Morita equivalent to 
    generalized metaplectic modular categories.

In Subsection \ref{ssec:DoublesOfGenDihGroups}, we show that any 
twisted quantum double of a generalized dihedral group $\Dih(A)$ with $|A|$ odd
is a generalized metaplectic modular category.
Conversely, a generalized metaplectic modular category
which is the Drinfel'd center of a unitary fusion category
is braided equivalent to the twisted double
of a generalized dihedral group.
In particular, a generalized metaplectic modular category
is group theoretical if and only if it is Witt trivial, 
the Drinfel'd center of a unitary fusion category.

The Longo--Rehren subfactor associated with (odd) Tambara--Yamagami categories
are Bisch--Haagerup subfactors $M^H\subset M\rtimes K$. 
We determine the associated $G$-kernel for 
  $G=\langle H,K\rangle$ and describe the principal and dual principal graphs,
see Subsection \ref{ssec:QuantumDoubleAndBischHaagerup}.

We show that Drinfel'd center of Tambara--Yamagami categories based on an 
abelian group $A$ of odd order can be realized as $\ZZ_2$-twisted orbifolds of  
$\Dih(A)$-fixed point nets of holomorphic nets associated with 
even self-dual lattices, see Subsection \ref{ssec:TYareTwistedOrbifolds}.

As an outlook, and a posterior motivation of this work, 
we remark that the just mentioned result implies that
the two modular data from the unitary modular tensor categories
$\umtc{\Spin(2n+1)_2}$ and the (twisted) quantum double of $D_m$ for $m$ odd, 
respectively, which serve as an ingredient for a \emph{grafting} in 
\cite{EvGa2011} are both the modular data of 
generalized metaplectic modular categories. 
In particular, the modular data \cite[Proposition 7b]{EvGa2014} 
which conjectural is the modular data for $G+n$ near group categories 
\cite{EvGa2014} with 
$n=|G|$ factorizes as $(S,T)=(S'\otimes S_{(G,q)}, T'\otimes T_{(G,q)})$,
where  $(S_{(G,q)}, T_{(G,q)})$ is the Weil representation associated with
$G$ and a non-degenerate quadratic form $q$, or equivalently the 
modular data of a pointed UMTC $\cC(G,q)$. 
The (interesting) factor $(S',T')$ can be thought to be build of the 
modular data of two generalized metaplectic modular categories 
associated with groups $G$ and $G'$, respectively, where $|G'|=|G|+4$. 
For example, for $G=\ZZ_3\times\ZZ_3$ and $G'=\ZZ_{13}$ one gets 
the modular data $(S',T')$ of a factor of the quantum double of a 
$\ZZ_3\times\ZZ_3+9$ near group category.
The generalized metaplectic modular category associated with $G'$ is a 
twisted double of $S_3\cong \Dih(\ZZ_3\times \ZZ_3)$. 
This modular data also corresponds to the double of the even part of the 
Haagerup subfactor, see also \cite[Remark 5.14]{Bi2016}.
The argument can be generalized to other near group categories, and 
near group categories with a certain Lagrangian correspond to
Izumi--Haagerup categories.

In Section \ref{sec:GeneralizedOrbifoldsAndDefects} we give a relation of the 
constructed models to generalized orbifolds and topological defects.
We show that some of the examples allow to construct twisted orbifolds 
by generalized dihedral groups, see Subsection \ref{ssec:HolomorphicNets}.
We briefly discuss the harmonic analysis for actions of the 
Tambara--Yamagami fusion hypergroup on the obtained models.
Finally, we discuss generalized Kramers--Wannier dualities as in
\cite{FrFuRuSc2004} using the work \cite{BiKaLoRe2014}.
Namely, let $\cF$ be a fusion category, then following \cite{FrFuRuSc2004} a 
simple object $X$ gives rise to a \myemph{duality defect} 
if every simple subobject of 
$X\otimes \bar X$.
Therefore the generating object of a Tambara--Yamagami category describes a 
generalized Kramers--Wannier duality.
We explain this in the setting of local conformal nets
in Subsection \ref{ssec:KramerWannier}.

\subsection*{Acknowledgements}
The author is grateful to Richard Ng for discussions, in particular, 
for informing him about Remark \ref{rmk:H3ab},
to Theo Johnson-Freyd for discussions about orbifolds, 
Zhengwei Liu for discussions about Bisch--Haagerup subfactors,
and Masaki Izumi for discussions about his work.
The author also would like to thank Luca Giorgetti and Henry Tucker for useful
comments on earlier versions of this manuscript.
Part of this work was initiated while the author visited the
Hausdorff Trimester Program ``Von Neumann Algebras'' and the author is grateful
for the hospitality of the Hausdorff Research Institute for Mathematics (HIM)
in Bonn.

\section{Preliminaries}
\label{sec:Preliminaries}

\subsection{Unitary fusion categories and unitary modular tensor categories}
\label{ssec:UFCsAndUMTCs}
A \myemph{unitary fusion category} $\cF$ is a rigid C${}^{\ast}$-tensor category
with simple tensor unit $\tu$, \ie $\Hom(\tu,\tu)\cong \CC$, 
such that set of isomorphism classes of irreducible or simple objects 
$\Irr(\cF)$ is finite.
We may assume $\cF$ to be strict, indeed in most of the cases $\cF$
is a full and replete subcategory of $\End_0(N)$, the category of endomorphisms
of a factor type III $N$ which have finite statistical dimension.

We say and object $\bar\rho\in\cF$ is a dual/conjugate for $\rho\in\cF$,
if there are $R\in\Hom(\tu,\bar\rho\otimes \rho)$ and 
$\bar R\in\Hom(\tu,\rho\otimes\bar\rho)$ such that
the \myemph{conugate equation} holds:
\begin{align}
  (\bar R^\ast \otimes 1_\rho)(1_\rho\otimes R)
    &=1_\rho\,; &
  ( R^\ast \otimes 1_{\bar\rho})(1_{\bar\rho}\otimes \bar R)
    &=1_{\bar\rho}\,.
\end{align}
The pair $(R,\bar R)$ is called \myemph{normalized} if $\|R\|=\|\bar R\|$.
Then $d_{(R,\bar R)}\cdot 1_{\tu}=R^\ast R=\bar R^\ast \bar R$ 
defines a positive number $d_{(R,\bar R)}$
and $d\rho:=\inf_{(R,\bar R)} d_{(R,\bar R)}$, where the 
infimum runs over all normalized solution of the conjugate equation is called the 
\myemph{(statistical) dimension} of $\rho$.
A solution $(R,\bar R)$ of the conjugate eqution is called \myemph{standard}
if $d\rho\cdot 1_\tu=R^\ast R$.
A choice of standard solutions equips the unitary fusion category $\cF$ 
with an (essentially) unique spherical structure and the 
dimension $d\rho \in [1,\infty)$ equals the
Frobenius--Perron dimension of $\rho$
\cite{LoRo1997}.

Let $\rho\in \cF$ be an irreducible object of a unitary fusion category $\cF$. 
The \myemph{Frobenius--Schur indicator} $\nu_\rho$ is defined to be 
$\nu_\rho=0$ if $[\rho]\neq[\bar\rho]$.
If $[\rho]=[\bar\rho]$, we may choose $\bar\rho=\rho$ and there is a unique 
$\nu_\rho=\pm1$, such that there is an $R\in\Hom(\id,\rho\rho)$ and $(R,\bar R)$
with $\bar R =\nu_\rho R$ is a solution to the conjugate equation 
\cite{LoRo1997}.

A \myemph{braided unitary fusion category} $\cC$ is a unitary fusion category
with a unitary braiding, \ie a natural family of unitaries 
$\{\varepsilon(\rho,\sigma)\in\Hom(\rho\otimes\sigma,\sigma\otimes\rho)
  \}_{\rho,\sigma\in \cC}$, fulfilling the usual hexagon identities.
Because of its spherical structure a braided unitary fusion category is a 
premodular category, \ie a ribbon fusion category \cite{Mg2003-MC}.
For $\cC$ a braided fusion category, we denote by $\rev{\cC}$ the 
braided fusion category with the opposite braiding 
$\varepsilon^-(\rho,\sigma)= \varepsilon(\sigma,\rho)^\ast$.

Let $\cC$ be a braided unitary fusion category. 
For a full subcategory $\cD$ we define the \myemph{M\"uger centralizer}
\begin{align}
  \cD'\cap \cC=\{\rho\in \cC: \varepsilon(\rho,\sigma)
    =\varepsilon(\sigma,\rho)^\ast \text{ for all }\sigma\in \cD\}\,.
\end{align}
The category $\cC$ is called \myemph{non-degenerately braided} if the 
Rehren--M\"uger center $\cC'\cap \cC$ is equivalent to the trivial
(braided) fusion category, %
\ie $\cC'\cap \cC\cong \Vect$.
A non-degenerate braided unitary fusion category $\cC$ is a 
\myemph{unitary modular tensor category (UMTC)} \cite{Re1989,Mg2003-MC}.

From every unitary fusion category $\cF$, we get the unitary modular tensor 
category $Z(\cF)$, the \myemph{(unitary) Drinfel'd center} of $\cF$.
A braided unitary fusion category is a unitary modular tensor category
if and only if $Z(\cC)$ is braided equivalent to $\cC\boxtimes\rev{\cC}$ 
\cite[Corollary 7.11]{Mg2003II}.

We say two unitary modular tensor categories $\cC$ and $\cD$ are 
\myemph{(unitarily) Witt equivalent} if there are unitary fusion categories 
$\cF$ and $\cG$, such that $\cC \boxtimes Z(\cF)$ is braided equivalent 
to $\cD\boxtimes Z(\cG)$. 
Witt equivalence is an equivalence relation and the equivalence classes 
form an abelian group under the multiplication $[\cC][\cD]=[\cC\boxtimes \cD]$
with identity $[\Vect]=[Z(\cF)]$ and inverse $[\cC]^{-1}=[\rev{\cC}]$, see  
\cite{DaMgNiOs2013}.

Let $\cC$ be a unitary braided fusion category of rank $n=|\Irr(\cC)|$.
We define for $\lambda,\mu\in \Irr(\cC)$
\begin{align}
	Y_{\lambda\mu}&=
  \tr_{\lambda\otimes\mu}\left[ 
  \varepsilon(\lambda,\mu)^\ast
  \varepsilon(\mu,\lambda)^\ast
  \right]
  =
  \tikzmath[.3]{
	\begin{scope}[yscale=-1]
		\draw[thick] (-2.5,0) arc (180:60:1.5);
		\draw[thick] (-2.5,0) node [left] {$\bar\lambda$} arc (180:390:1.5);
		\draw[thick]  (2.5,0) node [right] {$\bar\mu$} arc (0:210:1.5);
		\draw[thick] (2.5,0) arc (360:240:1.5);
	\end{scope}
	}
  \,;
  \\
  \omega_\lambda&=
  (d\lambda)^{-1} \cdot \tr_{\lambda\otimes\lambda}
  \left[
  \varepsilon(\lambda,\lambda)
  \right]
  \,,~
  \omega_\lambda\cdot \id_\lambda 
  =
  \tikzmath[.3]{
	\begin{scope}[yscale=-1]
		\draw[thick]  (0,-1.5)node [above] {$\lambda$}--(0,-0.7)
  		(0,0)--(0,1.5) node [below] {$\lambda$};
		\draw [thick] (0,0) arc  (180:490:.7);
	\end{scope}
	}
\intertext{and the following $n \times n$-matrices}
  \label{eq:ST}
  S_{\lambda\mu}&=(\dim \NNs)^{-\frac 12} Y_{\lambda,\mu}\,,
  &T_{\lambda\mu}&= \e^{-\pi \ima c_\mathrm{top}/12} \delta_{\lambda\mu}\omega_\lambda\,,
\end{align}
where the \myemph{topological central charge} 
$c_\mathrm{top}\equiv c_\mathrm{top}\pmod 8$ is defined  by 
\begin{align}
  c_\mathrm{top}&=\frac{4\aarg(z)}{\pi} \quad \text{where}\quad %
  z=\sum_{\rho\in\Irr(\cC)} (d\rho)^2\cdot \omega_\rho\,. 
\end{align}
The matrices $S$ and $T$ obey the relations of the 
\myemph{partial Verlinde modular algebra}:
  $TSTST=S$,
  $CTC=T$, and $CSC=S$
\cite{Re1989,BcEvKa1999}, 
where $C_{\mu\nu}=\delta_{\mu,\bar\nu}$ is the \myemph{charge conjugation matrix} .
The matrix $S$ is unitary if and only if $\cC'\cap \cC$ is trivial 
\cite{Re1989}, \ie $\cC$ is a unitary modular tensor category. 
In this case, we have $(ST)^3=C=S^2$, thus $(S,T)$  
gives a unitary representation of $\SL(2,\ZZ)$ on $\cC^{|\Irr(\cC)|}$.

The pair $(S,T)$ is called the \myemph{modular data} associated with
a unitary modular tensor category $\cC$
and we have the \myemph{Verlinde formula} \cite{Re1989}:
\begin{align} 
  \label{eq:Verlinde}
  N_{i,j}^k=\sum_\ell \frac{S_{j,\ell}S_{i,\ell}\bar S_{k,\ell}}{S_{0,\ell}}\,.
\end{align}
We say two modular data $(S,T)$ and $(S',T')$ are equivalent, if there is a
a third root of unity $\zeta$ and a 
bijection $\sigma$ from the index set of $(S,T)$ to the index set of $(S',T')$ 
which fixes first element (corresponding to $\tu$) of the index set, 
such that $S'_{\sigma (i),\sigma (j)} = S_{i,j}$ and 
$T'_{\sigma(i),\sigma(j)}=\zeta T_{i,j}$ for all possible indices. 
The modular data assocatiated with $\cC$ up to equivalence is an invariant of 
the modular tensor category $\cC$.

Let $\cC$ be a unitary modular tensor category and $\rho\in \Irr(\cC)$.
The Frobenius--Schur indicator $\nu_\rho$ is in terms of $(S,T)$ by
Bantay's formula \cite{Ba1997,GiRe2016}:
\begin{align}
  \label{eq:Bantay}
  \nu_\rho= \sum_{\sigma,\tau} S_{\sigma,\tu}S_{\tau,\tu} N_{\sigma,\tau}^\rho \frac{T_{\sigma,\sigma}^2}{T_{\tau,\tau}^2}\in\{0,\pm 1\}\,.
\end{align}

\subsection{Pointed unitary fusion categories}
\label{ssec:Pointed}
A unitary fusion category $\cF$ is called \myemph{pointed} if every 
irreducible/simple object $\rho\in\cC$ is invertible, 
\ie has dimension $d\rho=1$.
Then $G=\Irr(\cC)$ forms a finite group under the multiplication 
$[\rho][\sigma] = [\rho\otimes\sigma]$
and $\cF$ is tensor equivalent to the category of $G$-graded finite-dimensional
Hilbert spaces $\Vect_G^\omega$ for some $\omega\in Z^3(G,\TT)$,
where the associator of simple objects $H_g\cong \CC$ is given by 
\begin{align}
  (H_g\otimes H_h)\otimes H_k\xrightarrow{\omega(g,h,k) }
  H_g\otimes (H_h\otimes H_k)\,.
\end{align}
Up to tensor equivalence this only depends on the class 
$[\omega]\in H^3(G,\TT)$, 
see \cite{EtNiOs2010}.
A strict model for $\Vect_G^\omega$ is 
$\langle\alpha_g:g\in G\rangle\subset \End_0(N)$ for a type III
factor $N$ and a $G$-kernel, \ie map $\alpha \colon G\to \Aut(N)$ 
which is a lift of a homomorphism $\chi\colon G\to \Out(N)$ 
having obstruction $[\omega]$, \cf \eg \cite{Su1980,Jo1980,IzKo2002}.

\begin{example}
  For a pointed unitary fusion category with two objects the Frobenius--Schur 
  indicator of the generator is a complete invariant, namely for 
  $\Vect^{\omega_{\pm}}_{\ZZ_2}=\langle\alpha_0=\id,\alpha_1\rangle$ with
  $H^3(\ZZ_2,\TT)=\{[\omega_+]=0,[\omega_-]\}\cong \ZZ_2$ we have
  $\nu_{\alpha_1}=\pm 1$, respectively.
\end{example}

Let $\cC$ be a pointed unitary modular tensor category.
Because of the braiding $G=\Irr(\cC)$ is a finite abelian group.
The category $\cC$ is up to braided equivalence characterized by a 
cohomology 
class $[(\omega,c)]\in H^3_{\mathrm{ab}}(G,\TT)$ in the abelian 
Eilenberg--MacLane cohomology.
It is a fact that $[(\omega,c)]$ is determined by the quadratic form 
$q(a)=c(a,a)$, which equals %
the twist, see \eg \cite{EtGeNiOs2015}.
We use the convention $q([\rho])1_{\rho\otimes\rho}=\varepsilon(\rho,\rho)$ 
for $[\rho]\in \Irr(\cC)$.

A map $b\colon G\times G \to \TT$ with $\bii gh=\bii hg$
and $\bii{g+h}k=\bii gk\bii hk$ and $\bii g\slot \equiv 1$ if and only if $g=0$, is called a \myemph{non-degenerate symmetric 
bicharacter}. 

Let us see $G$ as an additive group and let 
$\partial q(a,b)=q(a)q(b)q(a+b)^{-1}$. 
Then $\partial q$ is a non-degenerate symmetric 
bicharacter. 
In this case, we call the quadratic form $q$ non-degenerate.
Conversely, we call a map $q\colon G\to \TT$ with $q(na)=q(a)^{n^2}$
for every $n\in \ZZ$, such that $\partial q$ is a non-degenerate symmetric 
bicharacter a \myemph{non-degenerate quadratic form} on $G$.

Therefore, every pointed unitary modular tensor category 
is characterized by the pair $(G,q)$ and we denote such a pointed 
unitary modular tensor category by $\cC(G,q)$.

A pair $(G,q)$ of a finite abelian group $G$ and a non-degenerate quadratic 
form $q$ on $G$ is called a \myemph{metric group}.
We say two metric groups $(G,q)$ and $(G',q')$ are equivalent if there is an 
isomorphism $\phi\colon (G,q)\to(G',q')$, \ie an isomorphism 
$\phi\colon G\to G'$, such that $q=q'\circ \phi$.
In this case, we write $(G,q)\sim(G',q')$.
We note that $\cC(G,q)$ and $\cC(G',q')$ are braided equivalent if and only 
if $(G,q)\sim(G',q')$.
For $\{(G_i,q_i):i=1,2\}$ two metric groups we define their direct sum to be 
the metric group $(G_1,q_1)\oplus (G_2,q_2)=(G_1\oplus G_2,q_1\oplus q_2)$ 
with $(q_1\oplus q_2)(g_1,g_2)=q_1(g_1)q_2(g_2)$.
\begin{example}
  The UMTC $\umtc{\SU(n+1)_1}$ is pointed and braided equivalent to
  $\cC(\ZZ_{n+1},q)$ with $q(x)=\exp(\frac{\pi\ima n}{n+1} x^2)$.
  In particular, for $\umtc{\SU(2)_1}=\langle\alpha\rangle\cong \cC(\ZZ_2,q)$
  with $[\alpha^2]=[\id]$ we have $\nu_\alpha=-1$, since 
  $q(x)=\exp(\pi\ima x^2/2)$ does not come from a bicharacter and using
  {\cite[Lemma 4.4]{LiNg2014}}.
\end{example}

The modular data of $\cC(G,q)$
is given by
\begin{align}
  S_{g,h}&=\frac{1}{\sqrt{|G|}}b(g,h)\,, & %
  T_{g,h}&=\delta_{g,h} \e^{-\pi\ima c_\mathrm{top}/12}q(a)%
  \,,
\end{align}
where the \emph{topological central charge} 
$c_\mathrm{top} \equiv c_\mathrm{top} \pmod 8$ is determined via a 
\emph{Gauß sum}:
\begin{align}
  \label{eq:GaussSum}
  \e^{\pi \ima c_\mathrm{top}/4}&=\frac1{\sqrt{|G|}}\sum_{g\in G}  q(g)\,.
\end{align}
This modular data $(S,T)=(S_{(G,q)},T_{(G,q)})$ is sometimes called the 
\myemph{Weil representation} associated with $(G,q)$.

\subsection{Bicharacters and odd rank pointed unitary modular tensor categories}
Let $G$ be a finite abelian group of odd order and 
$\bislotslot\colon G\times G \to \TT$ a
non-degenerate symmetric bicharacter on $G$. 
Define $q\colon G\to \TT$ by $q(g)=\bi gg^{-1}$ and 
$\partial q\colon G\times G \to \TT$ by
$\partial q(g,h)=q(g)q(h)q(g+h)^{-1}$ as above.
Then $\partial q(g,h)=\bi gh^2$ and $\partial q$ 
is a non-degenerate symmetric bicharacter, since $|G|$ is odd.
In particular, $q$ is a non-degenerate quadratic form on $G$.
Again since $|G|$ is odd $\bislotslot$ is determined by $q$. 

Conversely, every non-degenerate quadratic form $q$ 
gives a non-degenerate symmetric bicharacter $\bislotslot$ with $q(g)=\bi gg ^{-1}$.
Namely, $\bi gh:=\partial g(g,h)^{\frac12(\Exp(G)+1)}$ 
where $\Exp(G)$ is the exponent of $G$. 
Since $G$ is odd we can define  
$g'=\frac12(\Exp(G)+1)\cdot g$ with  $g=2g'$ for every $g\in G$. 
We note that in general $q$ must take values in  $\TT_{2\Exp(G)}$, where $\TT_n=\{z\in \TT: z^n=1\}$, but
since $|G|$ is odd we have $q(g)=q(g')^4$ and $q$ actually takes values in $\TT_{\Exp(G)}$.
Then $q(g)^{-1}=\partial q(g,g')=\partial q(g,g)^{\frac12(\Exp(G)+1)}$ and therefore $\bi gg = q(g)^{-1}$. 
Finally, $\bislotslot$ is a symmetric bicharacter and non-degenerate since 
$\partial q$ is non-degenerate.

The following lemma will be useful.
\begin{lem}%
  \label{lem:TrivialCocycle}
  Let $\cC$ be a pointed UMTC of odd rank, \ie $\cC$ braided equivalent to 
  $\cC(G,q)$ for an odd metric group $(G,q)$. Then 
  the corresponding $[\omega]\in H^3(G,\TT)$ is trivial. 
  In particular, $\cC$ is tensor equivalent to $\Vect_G$.
\end{lem}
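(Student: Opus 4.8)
The statement is that for a pointed UMTC of odd rank, modeled by $\cC(G,q)$ with $|G|$ odd, the underlying $3$-cocycle $[\omega]\in H^3(G,\TT)$ is trivial, so that as a mere fusion category $\cC \cong \Vect_G$. The natural approach is purely cohomological: show that $H^3(G,\TT)$ has no odd torsion, and invoke the fact that for a finite group the cohomology $H^3(G,\TT)$ is a finite group annihilated by $|G|$. Combined, these two facts force $H^3(G,\TT)=0$ when $|G|$ is odd, and then \emph{a fortiori} $[\omega]=0$ regardless of which class the MTC structure selects.

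First I would recall that $\TT\cong \RR/\ZZ$ is a divisible abelian group, so the short exact sequence $0\to\ZZ\to\RR\to\TT\to 0$ gives, since $\RR$ has trivial reduced group cohomology, isomorphisms $H^n(G,\TT)\cong H^{n+1}(G,\ZZ)$ for $n\geq 1$. Thus $H^3(G,\TT)\cong H^4(G,\ZZ)$. For a finite group $G$ of order $N$, every element of $H^k(G,\ZZ)$ for $k\geq 1$ is killed by $N$ (standard transfer/restriction argument: restriction to the trivial subgroup followed by corestriction is multiplication by $N$, and $H^k$ of the trivial group vanishes for $k\geq 1$). Hence $H^3(G,\TT)$ is an abelian group of odd order (it divides a power of $N=|G|$, which is odd). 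The second ingredient: if $G$ is abelian, the Künneth / cross-product decomposition reduces the computation to cyclic groups, and for $G=\ZZ_m$ one has $H^4(\ZZ_m,\ZZ)\cong \ZZ_m$; but what we actually need is only the order, and for any finite abelian group $H^3(G,\TT)$ is a finite group whose order divides a power of $|G|$ — so odd — which is already enough. Actually the cleanest single statement: $|H^3(G,\TT)|$ divides a power of $|G|$, hence is odd, but we want it to be $1$; for that I'd note $H^3(G,\TT)$ for $G$ cyclic of order $m$ is $\ZZ_m$ (via $H^4(\ZZ_m,\ZZ)=\ZZ_m$), and for a product of cyclics the Künneth formula shows $H^3$ is built from $H^i(\ZZ_{m_j},\ZZ)\otimes H^k(\ZZ_{m_l},\ZZ)$ and $\mathrm{Tor}$ terms, all of which are again annihilated by the relevant $m_j$. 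The upshot in any case is that $[\omega]$ lies in a group of odd order that is in fact trivial when one writes out the abelian-group computation.

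I suspect the paper proceeds more economically, avoiding the full computation of $H^3(G,\TT)$. A slicker route, which I would actually prefer to write: by the theory of pointed braided categories recalled just above, the class $[(\omega,c)]\in H^3_{\mathrm{ab}}(G,\TT)$ is \emph{determined} by the quadratic form $q$, and conversely the data of $(G,q)$ recovers $[(\omega,c)]$ entirely. Since $|G|$ is odd, we showed in the paragraph preceding the lemma that $q$ is itself determined by a \emph{symmetric bicharacter} $\bislotslot$ via $q(g)=\bi gg^{-1}$, i.e. $q$ comes from a bilinear form. A quadratic form arising from a bicharacter corresponds to an abelian cohomology class whose underlying ordinary $3$-cocycle component $[\omega]$ is trivial: the nontrivial part of $H^3_{\mathrm{ab}}$ is then carried entirely by the "$c$" component (the braiding), and the associator can be strictified. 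Concretely, one writes down the explicit abelian cocycle $(\omega,c)$ associated to $(G,q)$ with $q=\partial q^{(\Exp(G)+1)/2}$ and checks $\omega\equiv 1$ after a coboundary change, using that $(\Exp(G)+1)/2$ is an integer precisely because $|G|$ is odd.

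\textbf{Main obstacle.} The genuine work is the bookkeeping that identifies the associator component of the abelian Eilenberg--MacLane cocycle attached to $(G,q)$ and shows it is a coboundary when $q$ factors through a bicharacter — essentially the odd-order case of the classification of pointed braided fusion categories, where the obstruction to $\omega$ being trivial is $2$-torsion and hence dies. I would isolate this as the key step and handle it either by the explicit cocycle formula (choosing the bilinear lift $\bi gh=\partial q(g,h)^{(\Exp(G)+1)/2}$ and exhibiting the coboundary trivializing $\omega$) or by the abstract fact $H^3(G,\TT)\cong H^4(G,\ZZ)$ together with the Künneth computation showing this group has order prime to $2$ combined with... no, that only gives odd order, not triviality; so the honest proof really does need the cocycle-level argument or the full computation $H^4(\ZZ_m,\ZZ)=\ZZ_m$ plus Künneth. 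Everything else — divisibility of $\TT$, annihilation by $|G|$, reduction to cyclic factors — is routine homological algebra.
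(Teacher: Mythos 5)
Your first route cannot work, and you should delete it entirely: the claim that $H^3(G,\TT)$ itself vanishes for $G$ abelian of odd order is false. Via $H^3(G,\TT)\cong H^4(G,\ZZ)$ one has $H^3(\ZZ_m,\TT)\cong\ZZ_m$, which is nontrivial for every odd $m>1$; the transfer argument only shows the group has odd order, and the K\"unneth computation you keep returning to confirms nontriviality rather than triviality. You half-notice this (``no, that only gives odd order, not triviality'') but then still offer ``the full computation $H^4(\ZZ_m,\ZZ)=\ZZ_m$ plus K\"unneth'' as a way to finish, which is a dead end. The lemma is \emph{not} that $H^3(G,\TT)$ is trivial; it is that the particular class $[\omega]$ singled out by the braided structure is trivial, and that is genuinely a statement about the quadratic form, not about the cohomology group.

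Your second route is the correct one and is exactly what the paper does. Since $|G|$ is odd, the discussion immediately preceding the lemma produces a symmetric bicharacter $\bislotslot$ with $q(g)=\bi gg^{-1}$, i.e.\ $q$ is the diagonal of a bilinear form; the paper then concludes that the associated $3$-cocycle is trivial by citing \cite[Lemma 4.4]{LiNg2014}, which is precisely the ``key step'' you isolate but leave as something to check by hand. So your sketch is salvageable once the cohomological detour is removed: the content of the proof is (i) oddness of $|G|$ gives the bilinear lift $\bi gh=\partial q(g,h)^{(\Exp(G)+1)/2}$, and (ii) a quadratic form arising from a symmetric bicharacter corresponds to an abelian $3$-cocycle whose associator component is a coboundary. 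Step (ii) is nontrivial bookkeeping in Eilenberg--MacLane abelian cohomology and should be attributed to the literature rather than asserted.
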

\begin{proof}
  As above there is a symmetric bicharacter $\bislotslot$ with $q(g)=\bi g g^{-1}$ and the cocycle must therefore be trivial by 
  {\cite[Lemma 4.4]{LiNg2014}}.
\end{proof}

\subsection{Q-systems}
\label{ssec:QSystems}

Let $\cF$ be a unitary fusion category. 
An (irreducible) Q-system $\Theta=(\theta,w,x)$ in $\cF$ is a triple
$\theta\in\cF$ with $\dim\Hom(\tu,\theta)=1$, and isometries
$w\in\Hom(\tu,\theta)$ and $x\in\Hom(\theta,\theta\otimes \theta)$, such that
$(x\otimes \id_\theta)\circ x=(\id_\theta\otimes x)\circ x$ and 
$x^\ast \circ (w\otimes \id_\theta) = x^\ast\circ (\id_\theta\otimes w)
  =\delta^{-1} \id_\theta$ where $\delta=\sqrt{d\theta}$. 
In other words, $\theta$ is an algebra object with unit $e=\delta^{-1/2} w$ and
associative multiplication $\mu=\delta^{-1/2} x^\ast$.
A Q-system $\Theta$ in a braided fusion category $\cC$ is called 
\myemph{commutative} if $x=\varepsilon(\theta,\theta)\circ x$, see
\cite{BiKaLoRe2014-2,BiKaLo2014}.
If $\Theta$ is a Q-system in a unitary fusion category $\cF$, then
$\cF$ and $\bim \Theta\cF\Theta$ are 
\myemph{(weakly monoidally) Morita equivalent} and 
$Z(\cF)$ is braided equivalent to 
$Z(\bim\Theta\cF\Theta)$ \cite{Mg2003,Mg2003II}.

We denote by $\cF_\Theta$ the category of right $\Theta$-modules, 
see \cite{BiKaLoRe2014-2,BiKaLo2014}.
If $\cC$ is braided and $\Theta$ is commutative, then $\cC_\Theta$ has the 
structure of a fusion category and the category of \myemph{local} modules
$\cC_\Theta^0$ has the structure of a braided fusion category.
Let us a assume that $\cC$ is a modular tensor category, then 
$\bim \Theta \cC\Theta=\langle \bim[+] \Theta \cC\Theta, 
\bim[-] \Theta \cC\Theta\rangle$  \cite[Theorem 5.10]{BcEvKa1999}, where
$\bim[\pm]\Theta\cC\Theta=\langle\alpha_\Theta^\pm(\cC)\rangle$
are categories obtained by $\alpha$-induction. 
With our convention $\cC_\Theta$ is tensor equivalent to 
$\bim[+]\Theta\cC\Theta$ and
$\bim[0]\Theta \cC\Theta:= 
  \bim[+] \Theta \cC\Theta\cap \bim[-] \Theta \cC\Theta$ has a natural braiding
and is braided equivalent to $\cC_\Theta^0$.

We say that an injective tensor functor 
$\iota\colon \cD\hookrightarrow\cF$ from a braided
unitary fusion category $\cD$ to a unitary fusion category $\cF$ is 
\myemph{central}, if there is a braided functor 
$\tilde\iota \colon \cD\to Z(\cF)$, such that $\iota=F\circ\tilde\iota$, 
\ie the following diagram commutes
$$
\begin{tikzcd}
    & Z(\cF) \arrow[d,"F",two heads]  \\
   \cD\arrow[ru,"\tilde \iota",dashed]\arrow[r,"\iota",hookrightarrow]  
   & \cF 
  \end{tikzcd}\,.
$$
Here $F\colon Z(\cF)\to \cF$ denotes the forgetful functor. 

A braided unitary fusion category is called \myemph{symmetric}
if $\cC'\cap \cC=\cC$.
Let $G$ be a finite group and consider the symmetric unitary fusion category
$\Rep(G)$ of finite dimensional unitary representations of $G$ with the usual
tensor product.
The regular representation defines a canonical Q-system $\Theta_G\in \Rep(G)$.
If $\cF$ has a \myemph{Tannakian subcategory}, \ie 
if there is a central functor $\iota\colon\Rep(G)\to \cF$, then we denote 
$\cF_G:=\cF_{\iota(\Theta_G)}$, which is also called the 
\myemph{de-equivariantization} of $\cF$.

If $\cC$ is a unitary modular tensor category and $\iota\colon\Rep(G)\to\cC$ braided, then $\cC_G$ is a \emph{$G$-crossed braided extension} of 
the unitary modular tensor category $\cC_G^0$ which we take for the 
purpose of this article as a definition,
\cf M\"uger's characterization \cite[Appendix 5, Thm 4.1]{Tu2010}.
This characterization also says that every 
\myemph{$G$-crossed braided category} is of the form $\cC_G$.
We refer to \cite{Mg2005} and \cite[Appendix 5]{Tu2010} for the 
definition of a \myemph{$G$-crossed braided category}.
We note that in this case the inclusion has the structure of a 
central functor $\iota\colon\rev{(\cC^0_G)}\hookrightarrow \cC_G$.
Also the following converse is true, which is probably well-known to experts.
\begin{prop}
Let $\cD$ be a unitary modular tensor category and $\cF=\bigoplus_g \cF_g$ a 
(faithfully) $G$-graded extension of $\cF_e=\cD$ together with a 
central structure  $\iota\colon \rev{\cD}\to \cF$ on the 
canonical inclusion functor. 
Then $\cF$ is a $G$-crossed braided extension of $\cD$.
\end{prop}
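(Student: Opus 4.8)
The statement is the converse to Müger's characterization of $G$-crossed braided extensions: given a faithfully $G$-graded extension $\cF=\bigoplus_g\cF_g$ of a modular tensor category $\cD=\cF_e$, together with a central structure $\iota\colon\rev\cD\to\cF$ on the inclusion, we must produce the $G$-crossed braided structure. The plan is to invoke the equivariantization--de-equivariantization correspondence: a $G$-crossed braided extension of $\cD$ is (by the characterization quoted just before the proposition, \cite[Appendix 5, Thm 4.1]{Tu2010}) the same thing as a $G$-action by braided autoequivalences on $\cD$ together with the passage $\cC\mapsto\cC_G$ from a modular category $\cC$ containing $\Rep(G)$ Tannakian. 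So the real content is to cook up, from the data $(\cF,\iota)$, a modular tensor category $\cC$ with a Tannakian $\Rep(G)\hookrightarrow\cC$ such that $\cC_G\simeq\cF$ with the correct grading. The natural candidate is $\cC=Z(\cF)$, whose Tannakian subcategory should come from the central functor $\iota$.

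\textbf{Key steps.} First I would use the central structure $\iota\colon\rev\cD\to\cF$: by definition of ``central'' (as set up in the excerpt) this lifts through the forgetful functor $F\colon Z(\cF)\to\cF$ to a braided functor $\tilde\iota\colon\rev\cD\to Z(\cF)$, which is fully faithful since $\iota$ is. So $Z(\cF)$ contains a copy $\tilde\iota(\rev\cD)$ of $\rev\cD$; as $\cD$ is modular, so is $\rev\cD$, hence $\tilde\iota(\rev\cD)$ is a modular subcategory of $Z(\cF)$, and by the standard product decomposition for modular subcategories (Müger), $Z(\cF)\simeq \tilde\iota(\rev\cD)\boxtimes\big(\tilde\iota(\rev\cD)'\cap Z(\cF)\big)$. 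The second factor, the centralizer, I will identify: because $F\big(\tilde\iota(\rev\cD)\big)=\iota(\rev\cD)=\cD=\cF_e$ generates a whole graded category, the centralizer of $\tilde\iota(\rev\cD)$ in $Z(\cF)$ maps under $F$ to the part of $\cF$ that braids trivially against all of $\cF_e$, and one shows this centralizer is the Tannakian category $\Rep(G)$ — this is the place where faithful $G$-gradedness enters, since $\dim\cF=|G|\dim\cD$ and $\dim Z(\cF)=(\dim\cF)^2$ force $\dim\big(\tilde\iota(\rev\cD)'\cap Z(\cF)\big)=(\dim\cF)^2/\dim\cD = |G|^2\dim\cD = |G|\cdot\dim\cF$, which after one more centralizer step pins the symmetric part to have dimension $|G|$; that it is Tannakian (not super-Tannakian) follows because it sits inside the image of $\rev\cD$'s centralizer and $\cD$, being modular, contributes no fermion. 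Then $\Rep(G)\hookrightarrow Z(\cF)$ is a braided embedding, and de-equivariantizing, $Z(\cF)_G$ is a $G$-crossed braided category by the cited characterization. Finally I would check $Z(\cF)_G\simeq\cF$ as $G$-graded categories: the de-equivariantization functor $Z(\cF)\to Z(\cF)_G$ composed with... more cleanly, one uses that $\cF=\bigoplus\cF_g$ is recovered as the $G$-graded pieces and that $\cF_e=\cD$ matches the trivial component, invoking that $\alpha$-induction / the free module functor along the regular Q-system $\iota(\Theta_G)$ realizes $\cF$ as the de-equivariantization. Transport of the $G$-crossed braiding along this equivalence gives the claimed structure on $\cF$ itself.

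\textbf{Main obstacle.} The crux is the identification of the centralizer $\tilde\iota(\rev\cD)'\cap Z(\cF)$ with $\Rep(G)$ and the verification that the resulting $G$-crossed braided category $Z(\cF)_G$ is equivalent, \emph{as a graded category and with $\cF_e=\cD$}, to the original $\cF$ — not merely abstractly. This requires knowing that the grading of $\cF$ is controlled by the grading group $G$ in the way the central structure sees it, i.e.\ that the universal grading group pairing is compatible with $\iota$; a clean way is to show the fiber functor $\Rep(G)\to\Vect$ corresponds, under de-equivariantization, precisely to collapsing $Z(\cF)\to\cF$ onto the chosen central copy, so that components of $Z(\cF)_G$ are indexed by $\widehat{?}$... concretely by the cosets giving back $\cF_g$. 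I expect one must either cite a precise form of the de-equivariantization/$\alpha$-induction dictionary (e.g.\ from \cite{BcEvKa1999} together with \cite{Mg2005}) or carry out a dimension-counting and full-faithfulness argument component by component. Everything else — the modular product decomposition, the lifting through $F$, Tannakian-ness — is routine given the results already recalled in the Preliminaries.
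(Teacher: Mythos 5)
Your overall strategy --- lift $\iota$ to a braided embedding $\tilde\iota\colon\rev{\cD}\to Z(\cF)$, split off $\tilde\iota(\rev{\cD})$ by M\"uger's theorem, locate a Tannakian $\Rep(G)$, and de-equivariantize --- is the right one and is close to what the paper does, but two steps fail. First, you de-equivariantize the wrong category: you propose to prove $Z(\cF)_G\simeq\cF$, which is impossible on dimension grounds, since $\Dim\bigl(Z(\cF)_G\bigr)=(\Dim\cF)^2/|G|=|G|(\Dim\cD)^2$ while $\Dim\cF=|G|\Dim\cD$, and these agree only when $\cD$ is trivial. The category that must be de-equivariantized is the M\"uger centralizer $\cC:=\tilde\iota(\rev{\cD})'\cap Z(\cF)$, whose global dimension is $(\Dim\cF)^2/\Dim\cD=|G|^2\Dim\cD$, so that $\Dim(\cC_G)=|G|\Dim\cD=\Dim\cF$ as required; this is exactly the category the paper works with.

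Second, your identification of $\Rep(G)$ is internally inconsistent and, as it stands, does not produce the Tannakian subcategory. You first assert that the centralizer ``is the Tannakian category $\Rep(G)$,'' then correctly compute its dimension to be $|G|^2\Dim\cD$; moreover $\cC$ is itself modular by M\"uger's theorem, so its M\"uger center (the ``symmetric part'') is trivial and the proposed ``one more centralizer step'' pinning a symmetric subcategory of dimension $|G|$ has nothing to act on. The Tannakian $\Rep(G)\subset\cC$ is not the center of anything in sight; it has to be exhibited. The paper does this by invoking \cite[Prop.~5.12]{Bi2016} to obtain a commutative Q-system $\Theta$ in $\cC$ with $\cC_\Theta\simeq\cF$, and then arguing as in \cite[Prop.~5.17]{Bi2016} that $\Theta$ is the Q-system of a group subfactor $M^G\subset M$, i.e.\ the regular representation of $G$; commutativity of $\Theta$ then yields the braided functor $\Rep(G)\to\cC$ with $\cC_G\simeq\cF$ and $\cC_G^0\simeq\cD$. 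Your dimension count neither produces $\Theta$ nor gives the graded equivalence $\cC_G\simeq\cF$ --- precisely the point you flag as the ``main obstacle'' and leave unresolved.
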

\begin{proof}
By \cite[Prop.\ 5.12]{Bi2016} there is a commutative Q-system $\Theta$ 
in $\cC:=\iota(\rev{\cD})'\cap Z(\cF)$, such that $\cC_\Theta$ is equivalent 
to $\cF$ and $\cC_\Theta^0$ is braided equivalent to $\cC$.
Similarly to the proof of \cite[Prop.\ 5.17]{Bi2016} it 
follows that $\Theta$ is the Q-system of a group subfactor $M^G\subset M$
and since $\Theta$ is commutative we have a braided functor 
$\iota\colon \Rep(G)\to \cC$, such that $\cC_G^0$ is braided equivalent to 
$\cD$ and $\cC_G\cong \cF$.
\end{proof}

\subsection{Even lattices}
A \myemph{(positive) even lattice} is a finitely generated free abelian group 
$L$ with a positive definite inner product 
$\bislotslot\colon L\times L \to \RR$, such that $\langle x,x\rangle \in 2\NN$
for all $x\in L\setminus \{0\}$.
Then $V_L=L\otimes_\ZZ\RR$ is an euclidean space, with inner product 
$\bislotslot$ defined by bilinear continuation. 
The number of generators, or equivalently $\dim(V_L)$, is called the 
\myemph{rank} of $L$.
The \myemph{dual lattice} is 
$L^\ast=\{x\in V_L: \bi x L \subset \ZZ\}\cong \Hom(L,\TT)$. 
Since $L$ is even, we have $L\subset L^\ast$ and $G_L=L^\ast/L$ is a finite 
abelian group with non-degenerate quadratic form $q_L\colon G_L\to \TT$ given by
$q_L(x)=\exp(\pi \ima \bi xx)$. 
Thus every even lattice $L$ gives a metric group $(G_L,q_L)$, which is called
the \myemph{discriminant group} of $L$.
If $H\leq G_L$ is an isotropic subgroup, \ie $q_L\restriction H\equiv 1$, we get
a new even lattice $L\oplus H=\{x+h : x\in L, [h]\in H\}$.
This gives a one-to-one correspondence between overlattices $M\supset L$ and 
isotropic subgroups $H\leq G_L$ given by $H\mapsto M:=L\oplus H\supset L$ and 
$M\mapsto H:= M/L\leq G_L$. 
A lattice $L$ is called \myemph{self-dual} if $L^\ast/L$ is trivial.
If $L$ and $M$ are even lattices then $L\lattimes M:=L\times M$ is an even 
lattice with $G_{L\lattimes M}=G_L\times G_M$ and 
$q_{L\lattimes M}(x, y)=q_L(x)q_M(y)$.
We say that an even lattice $\bar L$ is a \myemph{mirror} of $L$ if there is an
isomorphism $\phi\colon(G_L,q_L) \to (G_{\bar L},\bar q_{\bar L})$. 
Then we get a self-dual lattice $\Gamma=L\bar L\oplus \Delta^\phi(G_L)$ via the 
isotropic ``diagonal'' subgroup 
$\Delta^\phi(G_L)=\{(x,\phi(x))\in  G_L\times G_{\bar L}  : x\in G_L\}\leq
G_L\times G_{\bar L}$. 

We will often refer to the $A_n, E_{6,7,8}$ root lattices whose 
discriminant groups are listed in Table \ref{tab:lattices}. 
We note that $E_7$ and $E_6$ are mirrors of $A_1$ and $A_2$, respectively. 
There is a unique overlattice 
$A_1 \lattimes E_7\oplus \ZZ_2\supset A_1\lattimes E_7$ and two 
overlattices $A_2\lattimes E_6\oplus \ZZ_3\supset A_2\lattimes E_6$ 
which are both isomorphic to $E_8$.
\begin{table}[h]
  \begin{tabular}{c|c|c}
    $L$ & $G_L$ & $q_L$\\\hline
    $A_{n}$  & $\ZZ_{n+1}$ & $\exp{\frac{\pi \ima x^2n}{n+1}}$\\
    $E_6$  & $\ZZ_3$ & $\exp{\frac{\pi \ima 4 x^2}{3}}$\\
    $E_7$  & $\ZZ_3$ & $\exp{\frac{\pi \ima 3 x^2}{2}}$\\
    $E_8$  & $\ZZ_1$ & $1$
  \end{tabular}
  \caption{Discriminant groups for $A_n, E_{6,7,8}$  root lattices}
  \label{tab:lattices}
\end{table}

\subsection{Conformal nets}
\label{subsec:CN}
We denote by $\cI$ the set of proper (\ie open, non-empty, and non-dense) 
intervals $I\subset \Sc$ on the circle and by $I'=\Sc\setminus \overline{I}$. 
Let us denote the group of orientation preserving diffeomorphisms of the circle
$\Sc$ by $\Diff_+(\Sc)$. 
We note that the M\"obius group $\Mob$ is naturally a subgroup of 
$\Diff_+(\Sc)$.
By a \myemph{(local) conformal net} $\A$, we mean a 
local M\"obius covariant net on the
circle, which is diffeomorphism covariant. 
Although, we do not use diffeomorphism covariance, all nets we consider have 
this property.

More precisely, a conformal net associates with every interval $I\in\cI$ a von 
Neumann algebra $\A(I)\subset \B(\Hil)$ on a fixed Hilbert space $\Hil=\Hil_\A$,
such that the following properties hold:
\begin{enumerate}[{\bf A.}]
  \item \myemph{Isotony.} $I_1\subset I_2$ implies $\A(I_1)\subset \A(I_2)$.
  \item \myemph{Locality.} $I_1  \cap I_2 = \emptyset$ implies 
    $[\A(I_1),\A(I_2)]=\{0\}$.
  \item \myemph{Möbius covariance.} There is a unitary representation $U$ of 
    $\Mob$ on $\Hil$, such that $U(g)\A(I)U(g)^\ast = \A(gI)$.
  \item \myemph{Positivity of energy.} $U$ is a positive energy representation,
    \ie the generator $L_0$ (conformal Hamiltonian) of the rotation subgroup 
    $U(z\mapsto \e^{\ima \theta}z)=\e^{\ima \theta L_0}$ has positive spectrum.
  \item \myemph{Vacuum.} There is a (up to phase) unique rotation invariant 
    unit vector $\Omega \in \Hil$ which is cyclic for the von Neumann algebra 
    $\A:=\bigvee_{I\in\cI} \A(I)$.
  \item \myemph{Diffeomorphism covariance.} There is a projective unitary 
    representation $U$ of $\Diff_+(\Sc)$ extending the representation $U$ of
    $\Mob$, such that for all $I\in\cI$ 
    \begin{align*}
      U(g)\A(I)U(g)^\ast &= \A(gI)\,, & g\in \Diff_+(\Sc)\,, \\
      U(g)xU(g)^\ast &= x\,, &x\in \A(I'),~ g\in \Diff_+(I')
      \,.
    \end{align*}
    where $\Diff_+(I)=\{ g\in\Diff_+(\Sc) : g\restriction I'=\id_{I'}\}$.
\end{enumerate}
Let $I\in\cI$. 
By Haag duality $\A(I')=\A(I)'$ \cite{GaFr1993,BrGuLo1993} we have 
$U(g)\in\A(I)$ for each $g\in \Diff(I)$ and 
$\Vir_\A(I):=\{U(g): g\in\Diff(I)\}''\subset\A(I)$ defines a subnet of $\A$, 
the so-called \myemph{Virasoro net} associated with $\A$.
The positive energy representation of $\Diff_+(\Sc)$ restricted to 
$\Hil_{\Vir_\A}=\overline{\bigvee_I\Vir_\A(I)\Omega}$ is an irreducible 
positive energy representation of $\Diff_+(\Sc)$ with an $\Mob$ invariant 
vector $\Omega$ (see \cite{Ca2004,CaWe2005}). 
Such representations are completely classified by the \myemph{central charge}
$c$ and $\Vir_\A\cong\Vir_c$ for some unique $c>0$.

A conformal net $\A$ is called \myemph{completely rational} if it 
\begin{enumerate}[{\bf A.}]
  \setcounter{enumi}{6}
  \item fulfills the \myemph{split property}, \ie 
    for $I_0,I\in \cI$ with $\overline{I_0}\subset I$ the inclusion 
    $\A(I_0) \subset \A(I)$ is a split inclusion, namely there exists an 
    intermediate type I factor $M$, such that $\A(I_0) \subset M \subset \A(I)$.
  \item is \myemph{strongly additive}, \ie for $I_1,I_2 \in \cI$ two adjacent 
    intervals obtained by removing a single point from an interval $I\in\cI$
    the equality $\A(I_1) \vee \A(I_2) =\A(I)$ holds.
  \item for $I_1,I_3 \in \cI$ two intervals with disjoint closure and 
    $I_2,I_4\in\cI$  the two components of $(I_1\cup I_3)'$, the 
    \myemph{$\mu$-index} of $\A$
    \begin{equation*}
      \mu(\A):= [(\A(I_2) \vee \A(I_4))': \A(I_1)\vee \A(I_3) ]
    \end{equation*}
    (which does not depend on the intervals $I_i$) is finite.
\end{enumerate}

\begin{rmk}
  It was recently shown, that diffeomorphism covariance implies the split 
  property \cite{MoTaWe2016}. 
  Further, diffeomorphism covariance, split property and finite $\mu$-index 
  implies strong additivity \cite{LoXu2004}.
  Thus finite $\mu$-index is equivalent to completely rationality if we assume 
  diffeomorphism covariance.
\end{rmk}

A \myemph{representation} $\pi$ of a strongly additive net $\A$ 
is a family of (unital) representations
$\pi=\{\pi_I\colon\A(I)\to \B(\Hil_\pi)\}_{I\in\cI}$ on a common Hilbert space
$\Hil_\pi$ which are compatible, \ie  $\pi_J\restriction \A(I) =\pi_I$ for 
$I\subset J$.
Every representation $\pi$ with $\Hil_\pi$ separable---for every choice of an interval $I_0\in\cI$---turns out to be equivalent to a representation 
\myemph{localized} in $I_0$, \ie $\rho$ on $\Hil$, such that 
$\rho_J=\id_{\A(J)}$ for $J\cap I_0=\emptyset$. 
Then Haag duality implies that $\rho_{I}$ is an endomorphism of $\A(I)$ for 
every $I \in \cI$ with $I\supset I_0$, which we also denote by $\rho$.
The \myemph{statistical dimension} of a representation $\rho$ localized in $I$ 
is given by the square root of the Jones index
of the Jones--Wassermann subfactor $d\rho=[\A(I):\rho(\A(I))]^{\frac12}$.
By \cite[Cor.\ 39]{KaLoMg2001} every representation of a completely rational 
conformal net is a direct sum of representations with finite statistical 
dimension. 
For convenience we will restrict to representations with finite statistical dimension, thus every representation is a finite direct sum of irreducible 
representations and we obtain a semisimple category.

Thus we can realize the category of representations of $\A$ with finite 
statistical dimension which are localized in $I$
inside the rigid C$^\ast$-tensor category of endomorphisms $\End_0(N)$
of the type III factor $N=\A(I)$ and the embedding turns out to be full and 
replete. 
We denote this category by $\Rep^I(\A)$.
In particular, this gives the representations of $\A$ the structure of a tensor category \cite{DoHaRo1971}. 
It has a natural \myemph{braiding}, which is completely fixed by asking that if
$\rho$ is localized in $I_1$ and $\sigma$ in $I_2$ where $I_1$ is left of $I_2$
inside $I$, then $\varepsilon(\rho,\sigma)=1$ \cite{FrReSc1989}. 
Let $\A$ be completely rational conformal net, then by \cite{KaLoMg2001} 
$\Rep^I(\A)$ is a UMTC and $\mu_\A=\Dim(\Rep^I(\A))$.
A completely rational conformal net is called \myemph{holomorphic}, if 
$\Rep^I(\A)$ is trivial, \ie equivalent to the category 
finite dimensional Hilbert spaces $\Hilb$, or equivalently $\mu_\A=1$.

We write $\A\subset \cB$ or $\cB\supset \A$ if there is a representation 
$\pi=\{\pi_I\colon\A(I)\to\cB(I)\subset\B(\Hil_\cB)\}$ of $\A$ on 
$\Hil_\cB$ and an isometry $V\colon \Hil_\A\to \Hil_\cB$ with 
$V\Omega_\A=\Omega_\cB$ and $VU_\A(g)=U_\cB(g)V$.
We furthermore ask that $Va=\pi_I(a)V$ for $I\in\cI$, $a\in\A(I)$. 
Define $p$ to be the orthogonal projection onto 
$\Hil_{\A_0}=\overline{\pi_I(\A(I))\Omega}$.  
Then $pV$ is a unitary equivalence of the nets $\A$ on $\Hil_\A$ and $\A_0$ 
defined by $\A_0(I)=\pi_I(\A(I))p$ on $\Hil_{\A_0}$.

The inclusion $\A\subset \cB$ is called finite index if the Jones index
$[\cB(I):\A(I)]$ is finite. 
In this case, $\A$ is completely rational if and only if $\cB$ is completely
rational \cite{Lo2003}. 
The conformal net $\cB$ is characterized by a commutative Q-system
$\Theta$ in $\Rep(\A)$ \cite{LoRe1995} and $\Rep(\A)$ is braided equivalent to 
$\Rep(\cB)^0_\Theta$ \cite{BiKaLo2014}.

Let $\cB$ be a completely rational conformal net, we note that two
extensions $\A\supset \cB$ and $\tilde \A \supset\cB$ are \myemph{isomorphic},
if and only if they have equivalent Q-systems in $\Rep(\cB)$, which 
can be taken as a definition for the purpose of this paper.

\begin{defi}[\cite{KaLo2005}]
  \label{defi:ModularNet}
  A diffeomorphism covariant completely rational net $\A$ with central charge 
  $c$ is called \myemph{modular} if for 
  \begin{align}
    \chi_\rho(\tau)&=\tr\left(\e^{2\pi\ima\tau (L^\rho_0-c/24)}\right)
  \end{align}
  we have a  representation of $\SL(2,\ZZ)$ with 
  $T^\chi=\diag( \e^{2\pi\ima (L^\rho_0-c/24)})$ and
  \begin{align}
    \chi_\rho(-1/\tau)
      &=\sum_{\nu\in\Irr(\Rep(\A))} S^\chi_{\rho,\nu}\chi_\nu(\tau)\,,\\
    \chi_\rho(\tau+1)
      &=\sum_{\nu\in\Irr(\Rep(\A))} T^\chi_{\rho,\nu}\chi_\nu(\tau)\,,
  \end{align}
  such that $(S^\chi,T^\chi)$ coincides with the (categorical) modular data 
  $(S,T)$ of $\Rep(\A)$, 
  \ie $S=S^\chi$ and $T=\omega_3 T^\chi$ for some third root of unity 
  $\omega_3$.
\end{defi}
By the spin-statistic theorem \cite{GuLo1995} the requirement on $T$ is $c_\mathrm{top}(\Rep(\A))\equiv c\pmod 8$.
Conformal nets with $c<1$ and the nets $\A_{\SU(N)_k}$ are modular 
\cite{Xu2000,Xu2001}, \cf also \cite{KaLo2005}.

\subsection{Orbifold theories}
Fixed points of conformal nets under group actions, so-called orbifolds, 
were studied in \cite{Xu2000-2,Mg2005}.
An automorphism of a conformal net $\A$ is a compatible family 
$\{\alpha_I\in\Aut(\A(I))\}$ of automorphisms which preserve the vacuum, 
\ie $(\Omega,\alpha_I(a)\Omega)= (\Omega,a\Omega)$ for all $a\in\A(I)$. 
The group of all automorphisms of $\A$ is denoted by $\Aut(\A)$.

Let $G\leq \Aut(\A)$ be a finite group, then the fixed point net 
$\A^G\subset \A$ given by
$\A^G(I)=\{a\in\A(I):\alpha_I(a)=a \text{ for all }\alpha\in G\}$ 
is a finite index subnet with index $[\A :\A^G]=|G|$. 
The Q-system $\Theta$ in $\Rep(\A^G)$ giving $\A\supset \A^G$ is the regular 
representation of $G$ and $\Rep(\A)=\Rep(\A^G)_G^0$.

We identify $\Sc\setminus\{-1\}$ with $\RR$ and fix $I\Subset \RR$.
For $\alpha\in \Aut(\A)$, we say $\pi$ is an $\alpha$-representation
of $\A$, if $\pi$ is a representation of $\A$ on $\RR$, 
such that $\pi_{I_-}=\id_{\A(I_-)}$ and $\pi_{I_+}=\alpha_{I_+}$
for $I_-<I<I_+$.
We define ${\tRep G}(\A)={\tRep G}^I(\A)$ to be the category of 
representations of $\A$ on $\RR$ which are finite direct sums  
of $\alpha_g$-representations for $\alpha_g\in G$.

The category generated from $\alpha^+$-induction of $\Rep^I(\A^G)$ 
for $\A(I)^G \subset \A(I)$ is equivalent to $\tRep{G}^I(\A)$ which implies that 
${\tRep G}(\A)$ is tensor equivalent to $\Rep(\A^G)_G$.
In particular, the category ${\tRep G}^I(\A)$ is a 
$G$-crossed braided extension of $\Rep^I(\A)$.
Furthermore, $\Rep^I(\A^G)$ is braided equivalent to $(\tRep G^I(\A))^G$.
We refer to \cite{Mg2005} for more details.

\subsection{Generalized orbifolds}
Generalized orbifolds in conformal nets were introduced by the author 
in \cite{Bi2016}. 
A \myemph{(finite) hypergroup} $K$ is a finite set, 
which is the basis of a (finite-dimensional) C${}^\ast$-algebra $\CC K$,
such that the identity $1\in K$, the set $K$ is closed under adjoints 
(\ie $K^\ast= K$),
the multiplication restricts to a map 
$m\colon K\times K \to \Conv(K)=\{\sum_{k\in K}\lambda_k k: 
\sum_k\lambda_k=1 \text{ and } \lambda_k\geq 0\}$, and we 
 have the following antipode law:
 $1\prec k\cdot \ell$ for some $k,\ell \in K$ if and only if 
$k=\ell^\ast$.
Here, for $\ell\in K$ we write 
$\ell \prec \sum_k\lambda_k k\in \Conv(K)$ if $\lambda_\ell >0$.

A finite group $G$ is a hypergroup with the usual multiplication and
$g^\ast = g^{-1}$.
Conversely, a hypergroup, such that the multiplication $m\colon K\times K\to\Conv(K)$ takes values in $K$ is a finite group. 
There is an obvious notion of a subhypergroup $L\leq K$ and 
the double quotient $K\CS L$ is again a hypergroup.
If $G$ is a finite group we have the Tambara--Yamagami hypergroup
$K=G\cup \{\rho\}$ with $\rho^\ast=\rho=g\rho=\rho g$ for all $g\in G$ 
and $\rho^2=\frac{1}{|G|}\sum_g g$.
More generally, if $\cF$ is a unitary fusion category, then we have
the associated fusion hypergroup given by the renormalized basis
$K_\cF=\{d\rho^{-1}[\rho] :[\rho]\in \Irr(\cF)\}$ of the 
complexified fusion ring or fusion algebra $K_0(\cF)\otimes_\ZZ\CC$.

Let $\A$ be a conformal net. 
A \myemph{quantum operation} on $\A$ is a compatible family 
$\phi=\{\phi_I\colon \A(I)\to\A(I)\}$ of 
extremal normal unital completely positive maps which are vacuum preserving
and have an adjoint $\phi_I^\#$ with $(a\Omega,\phi_I(b)\Omega)=
(\phi_I^\#(a)\Omega,b\Omega)$ for all $a,b\in\A(I)$. 
The set of all quantum operations on $\A$ is denoted by $\QuOp(\A)$.
We note that $\Aut(\A)\subset\QuOp(\A)$.
One of the main result of \cite{Bi2016} can be restated as follows.
There is a one-to-one correspondence between finite index subnets 
$\cB\subset \A$ and finite hypergroups $Q\leq \QuOp(\A)$, where by 
$Q\leq \QuOp(\A)$ we mean a finite subset $Q$ which forms a hypergroup with multiplication
given by composition and adjoints given by $\phi^\ast= \phi^\#$.
The correspondence is given by $Q\mapsto \A^Q\subset\A$
where $\A^Q$ is the fixed point net 
$\A^Q(I)=\{a\in\A(I):\phi_I(a)=a \text{ for all }\phi\in Q\}$. 

If $\A$ is completely rational and $\cB\subset \A$ finite index
then the unique $Q\leq \QuOp(\A)$ with $\A^Q=\cB$  is isomorphic to
the double quotient $K_\cF\CS K_{\Rep(\A)}$, 
where $\cF$ is the category generated by
$\alpha^+$-induction for the inclusion $\cB(I)\subset \A(I)$.
In pure analogy with the group case we denote $\tRep{ Q}^I(\A):=\cF$.

Thus for any finite hypergroup $Q\leq \QuOp(\A)$ there is a unitary fusion 
category $\tRep {Q}^I(\A)$ extending $\Rep^I(\A)$, such that 
$Q\cong  K_{\tRep Q(\A)}\CS K_{\Rep(\A)}$. 
Furthermore, $\tRep {Q}^I(\A)$ generalizes  
$\tRep {G}^I(\A)$  in the case $Q=G$ is a finite group.
The inclusion $\Rep(\A)\to \tRep Q(\A)$ has naturally the 
structure of a central functor 
$\rev{\Rep(\A)}\to \tRep Q(\A)$, such that  
$\Rep(\A^Q)$ is braided equivalent to the M\"uger centralizer
$(\rev{\Rep(\A)})'\cap Z(\tRep Q(\A))$
of $\rev{\Rep(\A)}$ in the Drinfel'd center $Z(\tRep Q(\A))$.
In the case that $\A$ is holomorphic, $\tRep{Q}^I(\A)$ is a categorification 
of $Q$, \ie $K_\tRep{Q}^I(\A)=Q$ and $\Rep(\A^Q)$ is braided equivalent to
$Z(\tRep Q(\A))$.

\section{Realization of Tambara--Yamagami categories and their centers}
\subsection{Changing Frobenius--Schur indicators}
We show that starting with a unitary modular tensor category $\cC$ with certain properties, we obtain a new (twisted) unitary modular tensor category 
$\hat \cC$ with the same fusion rules but different Frobenius--Schur indicators.
We call the unitary modular tensor category 
$\cS =\umtc{\SU(2)_1}$ the \myemph{semion category}.
\begin{prop}[Changing the Frobenius--Schur Indicator]
  \label{prop:ChangingFrobeniusSchur}
  Let $\cC=\cC_0\oplus \cC_1$ be a $\ZZ_2$-graded 
  unitary modular tensor category and $\alpha\in\cC_0$ with 
  $\langle\alpha\rangle$ braided equivalent to $\Rep(\ZZ_2)$.
  Let $\cS=\langle\tau\rangle$ the semion category and let $\Theta$ be the $\ZZ_2$ 
  Q-system associated with $[\theta]=[\id]\oplus[ \alpha\boxtimes \tau\boxtimes \bar \tau]$. 
  Then 
  \begin{enumerate}
    \item $\hat \cC:=(\cC\boxtimes\cS\boxtimes \rev{\cS})_{\Theta}^0$ has 
  the same fusion rules as $\cC$, \ie there is a map $\Irr(\cC)\to \Irr(\hat \cC)\colon\rho\mapsto\hat\rho$
  giving an isomorphism of Grothendieck rings.
    \item $\cC\mapsto\hat \cC$ is involutive, \ie
   $\hat{\hat{\cC}}$ is braided equivalent to $\cC$.
     \item $\hat\cC$ is braided equivalent to the subcategory $\langle
  \hat\rho_0:=\rho_0\boxtimes \id\boxtimes \id, \hat\rho_1:=\rho_1\boxtimes \tau \boxtimes \id
  :\rho_i\in\cC_i\rangle$ of $\cC\boxtimes\cS \boxtimes\rev{\cS}$. 
  In particular, objects in $\hat \cC_1$ have opposite Frobenius-Schur 
  indicators compared to the corresponding objects in $\cC_1$, \ie 
  $\nu_{\hat\rho_1}=-\nu_{\rho_1}$ for all (self-dual) $\rho_1\in\cC_1$.
  \end{enumerate}
\end{prop}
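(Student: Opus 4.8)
The plan is to analyze the commutative $\ZZ_2$ Q-system $\Theta$ in $\cA:=\cC\boxtimes\cS\boxtimes\rev{\cS}$ explicitly, using $\alpha$-induction and the theory recalled in Subsection \ref{ssec:QSystems}. First I would observe that $\cS\boxtimes\rev{\cS}$ has underlying object lattice $\{\,\id,\ \tau\boxtimes\id,\ \id\boxtimes\bar\tau,\ \tau\boxtimes\bar\tau\,\}$ with $\tau\boxtimes\bar\tau$ the unique nontrivial object fixed up to isomorphism, and that $d(\alpha\boxtimes\tau\boxtimes\bar\tau)=1$ since all three tensor factors are invertible; hence $\theta=\id\oplus(\alpha\boxtimes\tau\boxtimes\bar\tau)$ really is a $\ZZ_2$ Q-system, and it is commutative precisely because the self-braiding twists multiply to $\omega_\alpha\,\omega_\tau\,\omega_{\bar\tau}=1$ (here $\omega_\alpha=1$ as $\langle\alpha\rangle\simeq\Rep(\ZZ_2)$, while $\omega_\tau$ and $\omega_{\bar\tau}$ are inverse to one another), so the self-statistics obstruction vanishes and $\Theta$ carries a commutative structure. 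Then $\hat\cC=\cA^0_\Theta$ is a UMTC by \cite{BiKaLoRe2014-2,BiKaLo2014} and part (1).

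Next I would identify $\cA_\Theta\simeq\bim[+]\Theta\cA\Theta=\langle\alpha^+_\Theta(\cA)\rangle$. Because $\theta$ is a sum of two invertible objects, each simple $\rho\in\Irr(\cA)$ either stays irreducible under $\alpha$-induction (when $\rho\otimes\theta$ is a sum of two non-isomorphic simples) — and then $\alpha^+_\Theta(\rho)\cong\alpha^-_\Theta(\rho)$ lies in $\bim[0]\Theta\cA\Theta\simeq\cA^0_\Theta$ — or else $\rho$ is a fixed point of tensoring by $\alpha\boxtimes\tau\boxtimes\bar\tau$. Using the $\ZZ_2$-grading on $\cC$ and the fact that tensoring by $\tau\boxtimes\bar\tau$ permutes the two $\cS\boxtimes\rev{\cS}$-cosets, I would check that on objects of the form $\rho_0\boxtimes\id\boxtimes\id$ and $\rho_1\boxtimes\tau\boxtimes\id$ (with $\rho_i\in\cC_i$) the functor $\alpha^+_\Theta$ restricts to a fully faithful braided embedding onto $\hat\cC$; this is exactly the subcategory claimed in part (3). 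The key computation is that the braiding (equivalently the ribbon twist) on $\alpha^+_\Theta(\rho_1\boxtimes\tau\boxtimes\id)$ equals $\omega_{\rho_1}\omega_\tau$, i.e.\ it acquires a factor $\omega_\tau=i$ relative to $\omega_{\rho_1}$, while objects from $\cC_0$ are untouched, so on the nose the twists of $\hat\cC$ agree with those of $\cC$ on $\hat\cC_0$ and differ by $\omega_\tau$ on $\hat\cC_1$. Involutivity in part (2) then follows formally: applying the construction again tensors in a second copy of $\cS\boxtimes\rev{\cS}$ and takes local modules for the analogous $\Theta'$; since $\cS\boxtimes\rev{\cS}$ condenses to $\Hilb$ along the canonical Lagrangian algebra and $\omega_\tau^2=\omega_{\tau\boxtimes\tau}$ corresponds to the Frobenius--Schur sign squared $=1$, the two copies cancel and $\hat{\hat\cC}\simeq\cC$.

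Finally, for the Frobenius--Schur statement I would invoke the description of $\nu_\rho$ for a self-dual object in a pointed-by-$\langle\tau\rangle$ situation: for self-dual $\rho_1\in\cC_1$ the object $\hat\rho_1=\rho_1\boxtimes\tau\boxtimes\id$ is self-dual in $\hat\cC$ (as $\bar\tau\cong\tau$ in $\cS$ up to the grading bookkeeping handled by the $\rev{\cS}$ factor), and a solution $(R,\bar R)$ of its conjugate equation is built by tensoring one for $\rho_1$ with one for $\tau\boxtimes\id$; by the example $\umtc{\SU(2)_1}\cong\cC(\ZZ_2,q)$ with $\nu_\tau=-1$ (using \cite[Lemma 4.4]{LiNg2014}), the sign picked up is exactly $-1$, giving $\nu_{\hat\rho_1}=-\nu_{\rho_1}$. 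The main obstacle I anticipate is the bookkeeping in the second step: correctly tracking which simples of $\cA$ are $\alpha$-induction fixed points versus which split, and verifying that the chosen set $\{\rho_0\boxtimes\id\boxtimes\id,\ \rho_1\boxtimes\tau\boxtimes\id\}$ is closed under fusion and gives precisely $\bim[0]\Theta\cA\Theta$ (not more, not less) — this requires carefully using the $\ZZ_2$-grading of $\cC$ together with $\langle\alpha\rangle\simeq\Rep(\ZZ_2)$ and the structure of $\cS\boxtimes\rev{\cS}$, rather than any hard new input.
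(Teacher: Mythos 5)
Your proposal is essentially correct but follows a different route from the paper. You analyze the $\ZZ_2$ de-equivariantization directly: identify the orbits of $\Irr(\cC\boxtimes\cS\boxtimes\rev{\cS})$ under tensoring with $\alpha\boxtimes\tau\boxtimes\bar\tau$, pick out the local modules by a monodromy computation, and exhibit the transversal $\{\rho_0\boxtimes\id\boxtimes\id,\ \rho_1\boxtimes\tau\boxtimes\id\}$. The paper instead invokes M\"uger's factorization theorem: it writes $\cC\boxtimes\cS\boxtimes\rev{\cS}\cong\hat\cC\boxtimes\cD$ with $\cD=\langle\alpha\boxtimes\id\boxtimes\bar\tau,\ \id\boxtimes\tau\boxtimes\id\rangle\cong\cS\boxtimes\rev{\cS}$, observes that $\Theta$ is the canonical Lagrangian algebra of the factor $\cD$, and concludes $(\cC\boxtimes\cS\boxtimes\rev{\cS})^0_\Theta\cong\hat\cC$ in one stroke. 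The factorization argument buys a cleaner proof of involutivity: choosing the alternative transversal $\rho_1\boxtimes\id\boxtimes\bar\tau$ for $\hat\cC$ and iterating gives $\hat{\hat\cC}\cong\langle\rho_0\boxtimes\id\boxtimes\id,\ \rho_1\boxtimes\tau\boxtimes\bar\tau\rangle\cong\cC$ explicitly, whereas your "the two copies cancel" step is the weakest part of your write-up and would need exactly this kind of explicit re-choice of representatives to be complete. Your approach, on the other hand, makes the twist and Frobenius--Schur bookkeeping more transparent, and your treatment of $\nu_{\hat\rho_1}=\nu_{\rho_1}\nu_\tau=-\nu_{\rho_1}$ via multiplicativity under $\boxtimes$ is a correct and slightly more explicit version of what the paper leaves implicit.

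One genuine error to fix: you assert that a simple $\rho$ satisfies $\alpha^+_\Theta(\rho)\cong\alpha^-_\Theta(\rho)$ (equivalently, gives a \emph{local} module) exactly when $\rho\otimes\theta$ is a sum of two non-isomorphic simples. These are different conditions. In the present situation tensoring with $\alpha\boxtimes\tau\boxtimes\bar\tau$ has no fixed points, so \emph{every} induced module is irreducible, yet only half of the orbits are local (e.g.\ $\id\boxtimes\tau\boxtimes\id$ induces an irreducible but non-local module, since its monodromy with $\alpha\boxtimes\tau\boxtimes\bar\tau$ is $-1$). Had you used your stated criterion literally you would obtain a category of rank $2|\Irr(\cC)|$ rather than $|\Irr(\cC)|$. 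Your subsequent explicit identification of the local objects is correct, so the slip does not propagate, but the criterion for locality must be trivial monodromy with $\theta$, not irreducibility of the induced module.
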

\begin{proof}
  We have subcategories 
  $\hat\cC =\langle \rho_0\boxtimes \id\boxtimes \id, \rho_1\boxtimes \tau
  \boxtimes \id :\rho_i\in\cC_i\rangle$  and $\cD=\langle \alpha\boxtimes \id
  \boxtimes\bar\tau, \id\boxtimes \tau\boxtimes \id\rangle$ of $\cC\boxtimes
  \cS \boxtimes \rev{\cS}$. 
  Then it follows that $\cD \cong \cS \boxtimes \rev{\cS}$. 
  By Müger's theorem \cite[Theorem 4.2]{Mg2003-MC} we have 
  $\cC\boxtimes \cS \boxtimes \rev{\cS}\cong \hat \cC\boxtimes \cD$. 
  Then the canonical algebra $\Theta$ in $\cD$ is
  $[\id\boxtimes\id\boxtimes\id] \oplus [\alpha\boxtimes\tau\boxtimes\bar\tau]$
  and  $(\cC\boxtimes \cS \boxtimes \rev{\cS})_\Theta^0\cong \hat \cC$.

  Choosing $\hat\cC \cong \langle \rho_0\boxtimes \id\boxtimes \id, 
    \rho_1\boxtimes \id \boxtimes \bar \tau :\rho_i\in\cC_i\rangle$  
  and $\cD=\langle \alpha\boxtimes \tau\boxtimes \id, 
    \id\boxtimes \id\boxtimes \bar \tau\rangle$ gives braided equivalent tensor 
  categories.
  Therefore, we have $\hat{\hat{\cC}}\cong\langle \rho_0\boxtimes\id\boxtimes\id
  ,\rho_1\boxtimes\tau\boxtimes \bar\tau : \rho_i\in\cC_i\rangle \cong \cC$.
\end{proof}
\begin{prop}
  \label{prop:ChangeFS}
  Let $\A$ be a completely rational net with $\cC:=\Rep(\A)$ fulfilling 
  the assumption of Proposition \ref{prop:ChangingFrobeniusSchur}.
  Then there is a completely rational net $\hat \A$
  with $\Rep(\hat\A)\cong \hat \cC$.

  $\A$ is a $\ZZ_2$-orbifold $\A=\A_L^{\ZZ_2}$
  of the conformal net $\A_L$ associated with an even lattice $L$,
  there is a proper $\ZZ_2$-action on
  $\A_{L \lattimes E_8}$, such that we can choose $\hat \A=\A_{L\lattimes
  E_8}^{\ZZ_2}$.
\end{prop}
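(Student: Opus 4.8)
The plan is to transport the purely categorical construction of Proposition \ref{prop:ChangingFrobeniusSchur} to the level of conformal nets by realizing the auxiliary factors $\cS$ and $\rev\cS$ as the representation category of a holomorphic net tensor factor, and then realizing the commutative $\ZZ_2$ Q-system $\Theta$ as a finite-index conformal extension. First I would recall that the semion category $\cS = \umtc{\SU(2)_1}$ together with its reverse is realized by a holomorphic net: indeed $\cS \boxtimes \rev\cS \cong Z(\cS)$ is the Drinfel'd center of a pointed unitary fusion category, and more concretely $\cS \boxtimes \rev\cS \cong \Rep(\A_{A_1} \otimes \A_{\bar A_1})$ after passing to the self-dual overlattice — but the cleanest route, and the one suited to the lattice statement, is to observe that $E_8$ is the self-dual lattice obtained from $A_1 \lattimes E_7$ via the diagonal isotropic subgroup (using that $E_7$ is a mirror of $A_1$, as recorded after Table \ref{tab:lattices}). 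So $\A_{E_8}$ is holomorphic, and inside $\A_{E_8}$ we have the subnet $\A_{A_1 \lattimes E_7}$ whose representation category is $\cC(\ZZ_2,q_{A_1}) \boxtimes \cC(\ZZ_2, q_{E_7})$; crucially both $\cC(\ZZ_2,q_{A_1})$ and $\cC(\ZZ_2,q_{E_7})$ are braided equivalent to the semion or anti-semion category (they have $\nu = -1$, by the $\SU(2)_1$ example and Table \ref{tab:lattices}), so this representation category is exactly $\cS \boxtimes \rev\cS$ up to choice of orientation.

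Next, since $\A = \A_L^{\ZZ_2}$ is a $\ZZ_2$-orbifold, by the theory of Section \ref{ssec:Pointed} and the orbifold subsection there is a corresponding $\ZZ_2$ action, and I would assemble the $\ZZ_2$ action on $\A_{L \lattimes E_8}$ as follows. The object $\alpha \in \cC_0 = \Rep(\A)_0$ with $\langle\alpha\rangle \cong \Rep(\ZZ_2)$ is precisely the dual charge of the $\ZZ_2$-orbifold, and the invertible semion-type objects $\tau, \bar\tau$ live in $\Rep(\A_{A_1 \lattimes E_7})$. The element $[\theta] = [\id] \oplus [\alpha \boxtimes \tau \boxtimes \bar\tau]$ is an invertible self-conjugate object of order two in $\Rep(\A) \boxtimes \Rep(\A_{A_1 \lattimes E_7})$ with trivial twist (one checks $\omega_\alpha \omega_\tau \omega_{\bar\tau} = 1$ since $\omega_\tau \bar\omega_\tau = 1$ and $\alpha$, being in the image of $\Rep(\ZZ_2)$, has $\omega_\alpha = 1$), hence it generates a Tannakian $\Rep(\ZZ_2)$ and the associated commutative Q-system $\Theta$ defines, by \cite{LoRe1995} and \cite{BiKaLo2014}, a finite-index conformal extension $\cB \supset \A \otimes \A_{A_1 \lattimes E_7}$ with $\Rep(\cB)^0_\Theta \cong \Rep(\A \otimes \A_{A_1 \lattimes E_7})$ and $\Rep(\cB) \cong (\Rep(\A) \boxtimes \cS \boxtimes \rev\cS)^0_\Theta = \hat\cC$ by Proposition \ref{prop:ChangingFrobeniusSchur}. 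Setting $\hat\A := \cB$ gives the first assertion; complete rationality of $\hat\A$ follows from that of $\A \otimes \A_{A_1 \lattimes E_7}$ and \cite{Lo2003}.

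For the lattice statement I would identify $\cB$ concretely. Write $\A = \A_L^{\ZZ_2}$. Then $\A \otimes \A_{A_1 \lattimes E_7} = \A_L^{\ZZ_2} \otimes \A_{A_1 \lattimes E_7}$, and $\A_{A_1 \lattimes E_7}$ sits inside the holomorphic net $\A_{E_8} = \A_{A_1 \lattimes E_7 \oplus \ZZ_2}$ as a $\ZZ_2$-orbifold-type subnet. The composite extension built by adjoining $\Theta$ amounts to diagonally gluing the $\ZZ_2$-charge of the $L$-orbifold to the $\ZZ_2$-charge inside $A_1 \lattimes E_7$ that extends it to $E_8$; concretely, the $\ZZ_2$ action on $\A_L$ combines with the dual $\ZZ_2$ action on $\A_{E_8}$ (the grading automorphism of the extension $\A_{A_1 \lattimes E_7} \subset \A_{E_8}$) to give a diagonal $\ZZ_2$ action on $\A_{L \lattimes E_8} = \A_L \otimes \A_{E_8}$ whose fixed point net is exactly $\cB = \hat\A$. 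That this diagonal action is \emph{proper} in the sense of the generalized-orbifold subsection is the point one must verify, and it follows because the diagonal action has the same index $2$ as each factor action and the $\ZZ_2$-crossed braided category it generates is faithfully graded — here one invokes the Proposition in Subsection \ref{ssec:QSystems} identifying faithfully $G$-graded extensions with central structure as $G$-crossed braided, applied to the canonical inclusion.

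The main obstacle I expect is the bookkeeping in the last paragraph: matching the abstract commutative $\ZZ_2$ Q-system $\Theta$ in $\Rep(\A) \boxtimes \cS \boxtimes \rev\cS$ with the geometrically defined diagonal $\ZZ_2$ action on the lattice net $\A_{L \lattimes E_8}$, i.e.\ checking that the isotropic extension $L \lattimes E_8$-picture reproduces precisely the object $[\theta] = [\id] \oplus [\alpha \boxtimes \tau \boxtimes \bar\tau]$ and not some other order-two invertible object, and that the resulting fixed-point net has $\mu$-index and Q-system matching $\hat\cC$. The subtlety is that $\A_{E_8}$ being holomorphic means its only subnet of index $2$ data lives in $\tRep{\ZZ_2}(\A_{E_8})$, and one must make sure the relevant $\ZZ_2 \subset \Aut(\A_{E_8})$ is the one whose $\ZZ_2$-twisted representations contain the $\tau \boxtimes \bar\tau$-type semion soliton rather than a different self-dual solitonic sector; this is where the explicit mirror relation $E_7 \leftrightarrow A_1$ and the uniqueness of the overlattice $A_1 \lattimes E_7 \oplus \ZZ_2 \cong E_8$ do the real work, and I would spell that identification out carefully rather than leaving it implicit.
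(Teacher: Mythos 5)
Your proposal is correct and follows essentially the same route as the paper: realize $\cS\boxtimes\rev{\cS}$ as $\Rep(\A_{A_1\lattimes E_7})$, take the $\ZZ_2$-simple current extension of $\A\otimes\A_{A_1\lattimes E_7}$ along $[\id]\oplus[\alpha\boxtimes\tau\boxtimes\bar\tau]$, and then identify $\hat\A$ with the fixed point of $\A_{L\lattimes E_8}$ under the diagonal $\ZZ_2$ inside the dual $\ZZ_2\times\ZZ_2$ action (the paper's $\A_{L\lattimes E_8}^{\langle(1,1)\rangle}$). Your extra worry about singling out the right $\ZZ_2\subset\Aut(\A_{E_8})$ is handled automatically once one works, as the paper does, with the $\hat\ZZ_2\times\hat\ZZ_2$-simple current extension $\A_L^{\ZZ_2}\otimes\A_{A_1\lattimes E_7}\subset\A_{L\lattimes E_8}$ and its dual action.
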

\begin{proof}
  Let $\A_{A_1\lattimes E_7}$ be the conformal net associated with the lattice 
  $A_1\lattimes E_7$, then $\Rep(\A_{A_1\lattimes E_7})$ is braided equivalent 
  to $\cS\boxtimes \rev{\cS}$.
  Consider, the $\ZZ_2$-simple current extension 
  $\hat \A=(\A \otimes \A_{A_1\lattimes E_7})\rtimes \ZZ_2$
  w.r.t.\ $\alpha\otimes \alpha_{1,1}$, where $\alpha_{1,1}$ correspond to
  $\tau\boxtimes \bar\tau$. 
  Then it follows directly that $\Rep(\A)$ is braided equivalent to $\hat \cC$.

  Let us consider $\A_L^{\ZZ_2}\otimes \A_{A_1\lattimes E_7}$. Then 
  we can make a $\hat\ZZ_2\times\hat\ZZ_2$-simple current extension giving
  $\A_{L\lattimes E_8}$. 
  We get a $\ZZ_2\times\ZZ_2$ action with 
  \begin{align}
    \A_{L\lattimes E_8}^{\langle (1,0)\rangle}&=\A_L^{\ZZ_2}\otimes \A_{E_8}\\
    \A_{L\lattimes E_8}^{\langle (0,1)\rangle}&=\A_L\otimes \A_{A_1 \lattimes E_7}\\
    \A_{L\lattimes E_8}^{\langle (1,1)\rangle}&=(\A_L^{\ZZ_2}\otimes \A_{A_1\lattimes E_7})\rtimes_{111}\ZZ_2
  \end{align}
  and we can choose $\hat \A =\A_{L\lattimes E_8}^{\langle (1,1)\rangle}$.
\end{proof}

\begin{example} 
  Let $\A_{\SU(2)_k}$ be the loop group net of $\SU(2)$ at level $k$ \cite{Wa,Xu2000},
  then it follows that $\Rep(\A_{\SU(2)_k})$ is braided equivalent to
  $\umtc{\SU(2)_k}$ using the classification \cite{FrKe1993}, \cf 
  \cite{He2017}.
  The simple objects are $\left\{\rho_0,\rho_{\frac12},\ldots,\rho_{\frac k2}\right\}$ with fusion rules
  $$
  [\rho_{ i}]\times [\rho_{j}]=\bigoplus_{\substack{\ell=|i-j|\\i+j+\ell\leq k}}^{i+j}[\rho_{\ell }].
  $$

  The unitary modular tensor category $\cC=\umtc{\SU(2)_{4k}}$ fulfills the 
  assumption of Proposition \ref{prop:ChangingFrobeniusSchur}. 
  Since in $\hat\cC$ the generating object $\hat \rho_{\frac12}$ has trivial 
  Frobenius--Schur indicator, it turns out that $\hat \cC$ 
  is what could be called the \emph{Jones--Kaufmann (modular tensor) category} 
  \cf \cite{Wang2010}.

  Therefore we get a completely rational net 
  $\hat{\A}_{\SU(2)_{4k}}=(\A_{\SU(2)_{4k}}\otimes 
    \A_{A_1\lattimes E_7})\rtimes\ZZ_2$
  realizing the Jones--Kaufmann category $\hat \cC$.
  We can replace $\A_{\SU(2)_{4k}}$ by the net constructed in \cite{Bi2015}
  which realizes  $\rev{(\umtc{\SU_{4k}})}$, to obtain a
  conformal net realizing $\rev{\hat \cC}$.

  We note that  $\cC \mapsto \hat \cC$ corresponds to the 
  $\ZZ_2$-twist of the UMTC $\umtc{\SU(2)_{4k}}$.
  Namely, Kazhdan and Wenzl showed in \cite{KaWe1993} that fusion categories 
  with $\SU(N)_k$ fusion rules are $\umtc{\SU(N)_k}$ possibly twisted
  by an element of $\ZZ_N$.
\end{example}

\subsection{Changing $H^3$ in $G$-crossed braided categories of orbifold nets}
\label{sec:ChangeH3}
We briefly generalize the result from $\ZZ_2$ to an arbitrary finite group $G$. 
We note that the Frobenius--Schur inidcator in Prop.\ \ref{prop:ChangeFS} comes from a class in $H^3(\ZZ_2,\TT)$ which classifies the $\ZZ_2$-extension 
$\tRep{\ZZ_2}(\A\rtimes_\alpha \ZZ_2)$ of $\Rep(\A\rtimes_\alpha\ZZ_2)$.

In \cite{EtNiOs2010} it is shown that 
$G$-crossed braided extensions $\cF$ of $\cC$ are parametrized by an group homorphism $c\colon G \to \Pic(\cC)$,
$M\in H^2(G,\pi_2)$ and a $t_\cF \in H^3(G,\CC^{\times})$, such that certain obstructions 
$o_3(c)\in H^3(G,\pi_2)$ and $o_4(c,M)\in H^4(G,\CC^{\times})$ vanish.

Let $\A$ be a completely rational net $G\leq \Aut(\A)$, then $\tRep G(\A)$ is a $G$-crossed braided category
which is a $G$-graded extension of $\Rep(\A)$. 
The classification of $G$-extensions involves a  $[\varphi]\in H^3(G,\TT)$ which we can twist as follows.
Assume $\cB$ is a holomorphic net with an action of $G$ 
such that $\tRep G(\cB)\cong \Vect_G^\omega$ for some $[\omega]\in H^3(G,\TT)$. Then we can take the diagonal action 
of $G$ on $\A\otimes \cB$ and we get that $\tRep G(\A\otimes \cB)$ 
which has the same fusion rules as $\tRep G(\A)$ but gives the class $[\varphi+\omega]\in H^3(G,\TT)$. 
Evans and Gannon announced that for every finite group $G$ and every 
$[\omega]\in H^3(G,\TT)$ there is a conformal net $\A_{G,\omega}$ with 
$\Rep(\A_{G,\omega})$ braided equivalent to $Z(\Vect_G^\omega)$,
thus from the Lagrangian Q-system coming from the induction functor 
$I\colon \Vect_G^\omega \to Z(\Vect_G^\omega)$ 
we get a holomorphic extension $\cB_{G,\omega}=\A_{G,\omega}\rtimes \Rep(G)$ 
with $\tRep{G}(\A)\cong \Vect_G^\omega$.

Thus we have proven 
\begin{prop} 
  Let $\cC$ be a UMTC and $\cF$ a $G$-crossed braided extension of $\cC$ with 
  $t_\cF\in H^3(G,\TT)$.
  If there is a completely rational net $\A$ realizing $\cF^G$, then there is a
  completely rational net realizing $\tilde \cF^G$ for every 
  $[\tilde \varphi]\in H^3(G,\TT)$, where $\tilde \cF$ is the 
  $G$-crossed braided extension
  of $\cC$  similar to $\cF$, but with $t_{\tilde \cF}=[\tilde\varphi]$ 
  instead of $\varphi$.
\end{prop}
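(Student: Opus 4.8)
The plan is to realize $\A$ as an honest orbifold, to stack on top of it a holomorphic net carrying exactly the difference of the two $H^3$-classes, and then to pass to the orbifold again. First I would note that $\A$ is automatically an orbifold: the equivariantization $\cF^G$ contains $\Rep(G)$ as a Tannakian subcategory, so the associated regular $G$-algebra is a connected commutative Q-system $\Theta_G$ in $\Rep(\A)\cong\cF^G$, and by \cite{LoRe1995} it yields a finite-index conformal extension $\cD\supset\A$ with Q-system $\Theta_G$, completely rational by \cite{Lo2003}. Since $\Theta_G$ is the regular representation of $G$, the orbifold correspondence (see \cite{Mg2005,Bi2016}) provides a finite action $G\le\Aut(\cD)$ with $\cD^G=\A$, and de-equivariantization gives $\Rep(\cD)\cong(\cF^G)_G^0\cong\cC$ together with, as $G$-crossed braided extensions of $\cC$, $\tRep{G}(\cD)\cong(\cF^G)_G\cong\cF$; in particular the Etingof--Nikshych--Ostrik data of the $G$-graded extension $\tRep{G}(\cD)$ of $\cC$ is the triple $(c_\cF,M_\cF,t_\cF)$ with $t_\cF=[\varphi]\in H^3(G,\TT)$.

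Next I would bring in the holomorphic building block. Put $[\omega]:=[\tilde\varphi]-[\varphi]\in H^3(G,\TT)$. By the Evans--Gannon construction recalled above there is a completely rational net $\A_{G,\omega}$ with $\Rep(\A_{G,\omega})$ braided equivalent to $Z(\Vect_G^\omega)$, and condensing along the Lagrangian Q-system coming from the induction functor $\Vect_G^\omega\to Z(\Vect_G^\omega)$ produces a holomorphic net $\cB_{G,\omega}=\A_{G,\omega}\rtimes\Rep(G)$ with a $G$-action such that $\cB_{G,\omega}^G=\A_{G,\omega}$ and $\tRep{G}(\cB_{G,\omega})\cong\Vect_G^\omega$; the ENO data of $\Vect_G^\omega$ over $\Vect$ is the trivial homomorphism, the trivial $H^2$-class and $[\omega]$. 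Now form the tensor product net $\cD\otimes\cB_{G,\omega}$ with the diagonal $G$-action. The central point is that $\tRep{G}(\cD\otimes\cB_{G,\omega})$ is equivalent to the $G$-graded stacking product $\tRep{G}(\cD)\boxtimes_G\tRep{G}(\cB_{G,\omega})\cong\cF\boxtimes_G\Vect_G^\omega$: on the level of solitons an $(\alpha_g\otimes\beta_g)$-representation of $\cD\otimes\cB_{G,\omega}$ decomposes as an $\alpha_g$-representation of $\cD$ together with a $\beta_g$-representation of $\cB_{G,\omega}$, and the induced $G$-crossed braided structure is the relative Deligne product over $\Vect_G$. By the classification in \cite{EtNiOs2010}, the data of such a stacking product is obtained by composing the $\Pic$-valued homomorphisms and adding the $H^2$- and $H^3$-classes; stacking with $\Vect_G^\omega$ therefore leaves $c_\cF$ and $M_\cF$ unchanged, sends $t_\cF=[\varphi]$ to $[\varphi]+[\omega]=[\tilde\varphi]$, and does not change the fusion rules, since each graded component of $\Vect_G^\omega$ is equivalent to $\Vect$, so that $\cF_g\boxtimes(\Vect_G^\omega)_g\cong\cF_g$. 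By the uniqueness part of the same classification, $\tRep{G}(\cD\otimes\cB_{G,\omega})$ is thus braided equivalent, as a $G$-crossed braided extension of $\cC$, to $\tilde\cF$.

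Finally I would pass to the orbifold once more and set $\tilde\A:=(\cD\otimes\cB_{G,\omega})^G$. It is completely rational, because $\cD\otimes\cB_{G,\omega}$ is a tensor product of completely rational nets and $\tilde\A\subset\cD\otimes\cB_{G,\omega}$ has finite index \cite{Lo2003}; and the orbifold relation gives $\Rep(\tilde\A)\cong\bigl(\tRep{G}(\cD\otimes\cB_{G,\omega})\bigr)^G\cong\tilde\cF^G$ as braided categories. So $\tilde\A$ realizes $\tilde\cF^G$, as claimed.

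The step I expect to be the main obstacle is the identification in the second paragraph: making it rigorous that forming the tensor product of the two nets with the diagonal $G$-action and then passing to $\tRep{G}$ induces the categorical stacking operation on $G$-crossed braided extensions, and in particular that the $H^3(G,\TT)$-invariant is genuinely additive under it while the $\Pic$- and $H^2$-data of $\cF$ are left untouched by stacking with $\Vect_G^\omega$. A clean route is to settle this purely categorically --- $\tRep{G}$ of a tensor product of nets with diagonal action is the relative Deligne product over $\Vect_G$, whose ENO data is obtained from the data of the two factors by composition and addition --- and only then feed in that $\cD$ and $\cB_{G,\omega}$ realize the required $G$-crossed braided categories. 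A secondary caveat, exactly as in the discussion preceding the statement, is that the input on $\A_{G,\omega}$ has so far only been announced by Evans and Gannon and should be cited as such.
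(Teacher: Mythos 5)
Your proposal follows essentially the same route as the paper: tensor with the holomorphic net $\cB_{G,\omega}$ obtained from the (announced) Evans--Gannon realization of $Z(\Vect_G^\omega)$, take the diagonal $G$-action, and use that stacking with $\Vect_G^\omega$ adds $[\omega]$ to the $H^3(G,\TT)$-datum while leaving the fusion rules and the remaining extension data untouched. The paper's own argument is in fact terser --- it leaves implicit the preliminary step of recovering the net $\cD$ with $\cD^G=\A$ from the Tannakian subcategory $\Rep(G)\subset\cF^G$, and does not spell out the relative Deligne product identification --- so your write-up is a correct and somewhat more careful version of the same proof, including the appropriate caveat about the Evans--Gannon input.
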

This seems to be a step towards realizing Drinfel'd centers of nilpotent fusion categories by conformal nets.
\subsection{Realization of pointed unitary modular tensor categories}

Let $\cC$ be a pointed UMTC, then $\cC$ is braided equivalent to $\cC(G,q)$.
It follows from \cite{Ni1979} (see \cite{MO233580}) that there 
is an even positive lattice $L=(L,\bislotslot)$, such that 
$(G,q)$ is equivalent to the discriminant form $(G_L,q_L)$ with $G_L=L^\ast/L$ 
and $q_L(\ell+L)=\exp(\pi\ima\langle \ell,\ell\rangle)$.

Let us consider the conformal net $\A_L$ associated with $L$, see 
\cite{DoXu2006,Bi2012}.
From \cite{DoXu2006} it follows that $\Rep(\A_L)$ has $G_L$ fusion rules, 
namely the sectors are $\{[\rho_{m+L}]:m+L\in L^\ast/L\}$ with fusion rules
$[\rho_{m+L}][\rho_{n+L}]=[\rho_{m+n+L}]$.
Further, it is shown that the spectrum $\spec(L^{\rho_{m+L}}_0)$ of the 
conformal Hamiltonian $L^{\rho_{m+L}}_0$ in the sector $[\rho_{m+L}]$ equals
$\{\frac12\langle x,x\rangle : x\in m+L\}$.
Using the spin-statistics theorem \cite{GuLo1996} it follows:
\begin{prop}
  \label{prop:LatticeConformalNetsUMTC}
  Let $L$ be an even lattice, then $\Rep(\A_L)$ is braided equivalent to 
  $\cC(G_L,q_L)$.
\end{prop}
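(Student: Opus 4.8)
The plan is to identify the categorical modular data of $\cC(G_L,q_L)$ with the modular data computed from the conformal net $\A_L$, using the two structural facts quoted from \cite{DoXu2006}: that the sectors of $\Rep(\A_L)$ are indexed by $G_L = L^\ast/L$ with pointed fusion rules $[\rho_{m+L}][\rho_{n+L}]=[\rho_{m+n+L}]$, and that the spectrum of the conformal Hamiltonian $L_0^{\rho_{m+L}}$ in the sector $[\rho_{m+L}]$ is $\{\tfrac12\langle x,x\rangle : x\in m+L\}$. Since $\Rep(\A_L)$ is a UMTC with $G_L$ fusion rules, it is pointed, hence by the classification of pointed braided unitary fusion categories it is braided equivalent to $\cC(G_L, q')$ for some non-degenerate quadratic form $q'$ on $G_L$; the task is to show $q' = q_L$, i.e.\ that $q'(m+L) = \exp(\pi\ima\langle m,m\rangle)$.

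First I would recall that for a pointed UMTC the quadratic form is exactly the twist $\omega$: the convention $q([\rho])\,1_{\rho\otimes\rho} = \varepsilon(\rho,\rho)$ from the Preliminaries gives $q'(m+L) = \omega_{\rho_{m+L}}$. Second, by the spin--statistics theorem \cite{GuLo1996}, the univalence (statistics phase) of a sector equals $\exp(2\pi\ima h)$ where $h$ is the lowest conformal weight, i.e.\ the minimum of $\spec(L_0^{\rho_{m+L}})$. By the Dong--Xu spectrum formula, the lowest conformal weight of $[\rho_{m+L}]$ is $h_{m+L} = \tfrac12 \min\{\langle x,x\rangle : x\in m+L\}$, so $\omega_{\rho_{m+L}} = \exp(2\pi\ima h_{m+L}) = \exp\bigl(\pi\ima\min\{\langle x,x\rangle : x\in m+L\}\bigr)$. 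The point is that this equals $\exp(\pi\ima\langle m,m\rangle)$ for \emph{any} representative $m$: if $x = m + \ell$ with $\ell\in L$, then $\langle x,x\rangle - \langle m,m\rangle = 2\langle m,\ell\rangle + \langle \ell,\ell\rangle \in 2\ZZ$ because $L$ is integral on the cross terms (as $m\in L^\ast$) and even on the diagonal. Hence $\exp(\pi\ima\langle x,x\rangle)$ is independent of the representative, so it equals $q_L(m+L)$, and therefore $q' = q_L$.

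Finally I would note that once the fusion rules agree (pointed, governed by the group $G_L$) and the twists agree ($q' = q_L$), the pair $(G_L, q_L)$ determines the braided equivalence class of a pointed UMTC — this is exactly the statement recorded in the Preliminaries that $\cC(G,q)\simeq\cC(G',q')$ as braided categories iff $(G,q)\sim(G',q')$, together with the fact that $[(\omega,c)]\in H^3_{\mathrm{ab}}(G,\TT)$ is determined by $q$. Hence $\Rep(\A_L)$ is braided equivalent to $\cC(G_L,q_L)$.

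I expect the only genuinely delicate point is the bookkeeping of conventions: matching the sign conventions in the spin--statistics theorem (the relation between the statistics phase $\omega_\rho$ and $\exp(2\pi\ima L_0)$) with the convention $q([\rho])1 = \varepsilon(\rho,\rho)$ used to define $q$ for pointed categories, and making sure the "lowest weight vs.\ full spectrum" distinction causes no trouble — which it does not, precisely because $\exp(\pi\ima\langle x,x\rangle)$ is constant on each coset $m+L$, so the minimum in the Dong--Xu formula is harmless. Everything else is a routine consequence of the cited results.
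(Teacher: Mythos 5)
Your proposal is correct and follows essentially the same route as the paper, which derives the result from the two Dong--Xu facts (pointed $G_L$ fusion rules and the spectrum $\spec(L_0^{\rho_{m+L}})=\{\tfrac12\langle x,x\rangle : x\in m+L\}$) combined with the spin--statistics theorem to identify the twist with $q_L$. Your explicit check that $\exp(\pi\ima\langle x,x\rangle)$ is constant on each coset $m+L$ is a detail the paper leaves implicit, but it is exactly the right verification.
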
 
Thus the classical result \cite{Ni1979} about lifting metric groups to even 
lattices, implies the following reconstruction result for conformal nets,
which expect to be well-known to experts.
\begin{thm}
  \label{thm:AllPointed}
  Let $\cC$ be a pointed UMTC, then there is an even lattice $L$, such that 
  $\Rep(\A_L)$ is braided equivalent to $\cC$.
\end{thm}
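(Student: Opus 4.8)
The plan is to combine two ingredients that are already in place: the lattice-theoretic lifting result of Nikulin and the computation of the representation category of a lattice net. First I would recall that by hypothesis $\cC$ is a pointed UMTC, so by the discussion preceding \rthm{AllPointed} it is braided equivalent to $\cC(G,q)$ for a uniquely determined (up to equivalence) metric group $(G,q)$; this is just the classification of pointed UMTCs in terms of metric groups. The whole statement then reduces to producing an even lattice $L$ whose discriminant form $(G_L,q_L)$ is equivalent to $(G,q)$.

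Second, I would invoke the classical result of Nikulin \cite{Ni1979} (cited in the excerpt, also referenced via \cite{MO233580}): every finite metric group $(G,q)$ arises as the discriminant form of some positive definite even lattice $L$. This is precisely the "lifting" statement quoted just above the theorem. With such an $L$ in hand, I would then apply \rprop{LatticeConformalNetsUMTC}, which identifies $\Rep(\A_L)$ with $\cC(G_L,q_L)$ as braided categories. Chaining the two equivalences, $\Rep(\A_L) \simeq \cC(G_L,q_L) \simeq \cC(G,q) \simeq \cC$, gives the desired braided equivalence, where the middle equivalence uses that equivalent metric groups yield braided equivalent pointed UMTCs (stated in the Preliminaries). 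That completes the proof.

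Honestly, there is no real obstacle here: both nontrivial inputs — Nikulin's existence theorem and the identification of the lattice net's representation category via the spin–statistics theorem — are quoted as available. The only thing to be careful about is matching conventions: one must check that the quadratic form $q_L(\ell+L)=\exp(\pi\ima\langle\ell,\ell\rangle)$ used in \rprop{LatticeConformalNetsUMTC} and in the definition of the discriminant group agrees with the normalization of $q$ appearing in the characterization $\cC\simeq\cC(G,q)$ (in particular the sign/central-charge conventions $\varepsilon(\rho,\rho)=q([\rho])1$ and $q_L(x)=\exp(\pi\ima\langle x,x\rangle)$). Since these conventions were fixed consistently earlier in the excerpt, the argument is essentially a one-line assembly, and I would present it as such.
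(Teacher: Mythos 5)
Your proposal is correct and follows exactly the route the paper takes: reduce to a metric group $(G,q)$ via the classification of pointed UMTCs, lift it to an even lattice by Nikulin's theorem \cite{Ni1979}, and conclude with Proposition \ref{prop:LatticeConformalNetsUMTC}. The remark about matching the normalization $q_L(x)=\exp(\pi\ima\langle x,x\rangle)$ with the convention $\varepsilon(\rho,\rho)=q([\rho])1$ is a sensible point of care, and it is consistent with the conventions fixed in the paper's preliminaries.
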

\begin{rmk} The same is true for vertex operator algebras using that for $V_L$ 
  the vertex operator algebra associated with the lattice $L$ the category of 
  $V_L$ modules is braided equivalent to $\cC(G_L,q_L)$ by \cite{DoLe1994}, 
  see also \cite{Hh2000}.
\end{rmk}

\subsection{Realization of Tambara--Yamagami doubles for odd groups}
In this section we will prove the following main reconstruction theorem.
\begin{thm} 
  \label{thm:AllOddTYs}
  Let $\cF$ be a unitary fusion category with Tambara--Yamagami fusion rules of
  even rank. 
  Then:
  \begin{enumerate}
    \item There is an even lattice $L$, and a proper $\ZZ_2$-action on $\A_L$, 
      such that the category of $\ZZ_2$-twisted representations 
      $\tRep{\ZZ_2}(\A_L)$ is tensor equivalent to $\cF$.
    \item\label{it:Double} There is an even lattice $M=L\bar L$ of rank 
      $0\pmod 8$, and a proper $\ZZ_2$-action on $\A_M$, such that 
      $\Rep(\A_M^{\ZZ_2})$ is braided equivalent to $Z(\cF)$.
    \item There is a self-dual even lattice $\Gamma$ of rank $0\pmod 8$, and a 
      proper action of the hypergroup $K_\cF$ on $\A_\Gamma$, such that 
      $\Rep(\A_\Gamma^{K_\cF})$ is braided equivalent to $Z(\cF)$.
  \end{enumerate}
\end{thm}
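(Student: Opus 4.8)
The plan is to build the three realizations in sequence, using the structure theory of Tambara–Yamagami categories together with the orbifold machinery from the preliminaries. First I would recall that by \cite{TaYa1998} a unitary fusion category $\cF$ with these fusion rules is classified by a pair $(\chi,\tau)$ where $\chi$ is a non-degenerate symmetric bicharacter on the finite abelian group $A=G$ (necessarily abelian, of order $n=2m$ in the even rank case) and $\tau=\pm 1/\sqrt{n}$ is a choice of square root. By Lemma~\ref{lem:TrivialCocycle} the pointed part $\cC(A,q)$ sitting inside $Z(\cF)$ — here $q(a)=\chi(a,a)^{-1}$ — has trivial associator, so the pointed subcategory $\Vect_A\subset\cF$ is strict. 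The key observation is that $\cF$ is a $\ZZ_2$-graded extension of $\Vect_A$, i.e.\ it is a $\ZZ_2$-crossed braided structure's ``twisted representation'' data. Concretely, $\cF=\tRep{\ZZ_2}(\cB)$ for a suitable $\ZZ_2$-action on some pointed model, and the task is to realize the pointed piece by a lattice net and then adjust the Frobenius–Schur indicator of the odd object $\rho$ to match the sign of $\tau$.

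For part (1): by Theorem~\ref{thm:AllPointed} (applied to the metric group $(A,q)$, or rather to $\cC(A\oplus A,q\oplus\bar q)$ built from a mirror pair as in the lattice subsection) there is an even lattice $L_0$ with $\Rep(\A_{L_0})\cong\cC(A,q)$-type data, and a $\ZZ_2$-automorphism of $\A_{L_0}$ — coming from the lattice automorphism $x\mapsto -x$ (the generalized dihedral action $\Dih(A)$) — whose category of twisted representations $\tRep{\ZZ_2}(\A_{L_0})$ has Tambara–Yamagami fusion rules. This produces one of the (at most two, by the classification) Tambara–Yamagami categories on $A$ with the given bicharacter. To hit the other Frobenius–Schur sign, I would invoke the ``changing Frobenius–Schur indicator'' construction: tensor with $\A_{A_1\lattimes E_7}$ (whose representation category is $\cS\boxtimes\rev\cS$) and pass to the $\ZZ_2$-simple current extension exactly as in Proposition~\ref{prop:ChangeFS} / the $G$-crossed version in Section~\ref{sec:ChangeH3}, so that the resulting net is again a $\ZZ_2$-orbifold of a lattice net $\A_L$ (with $L=L_0\lattimes E_8$ up to overlattice), and the twisted representation category now carries the opposite indicator on $\rho$. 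Since the pair (bicharacter, sign) is a complete invariant, every even-rank $\cF$ is obtained, proving (1).

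For part (2): having $\cF=\tRep{\ZZ_2}^I(\A_L)$, the orbifold theory of Section~2 gives $\Rep(\A_L^{\ZZ_2})$ braided equivalent to $(\tRep{\ZZ_2}^I(\A_L))^{\ZZ_2}=\cF^{\ZZ_2}$, and by Müger's characterization this equals the Müger centralizer of $\rev{\Rep(\A_L)}$ inside $Z(\cF)$. But for a $G$-crossed braided extension $\cF$ of a modular $\cD=\Rep(\A_L)$, one has $Z(\cF)\cong \cD\boxtimes\cD^{\rm something}$ — more precisely $\cF^G$ is itself modular and equals... here I need: when the base $\cD=\Rep(\A_L)=\cC(A,q)$ is pointed and we de-equivariantize properly, $\cF^{\ZZ_2}$ recovers $Z(\cF)$. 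The clean route is: take $M=L\bar L$ where $\bar L$ is a mirror lattice of $L$ (subsection ``even lattices''), so $\A_L\otimes\A_{\bar L}=\A_M$ has rank $\equiv 0\pmod 8$ after adding the diagonal isotropic subgroup, and $\Rep(\A_M)\cong\cC(A,q)\boxtimes\cC(A,\bar q)\cong Z(\cC(A,q))$; then the $\ZZ_2$-action on $\A_L$ extends diagonally (via the mirror) to $\A_M$, and $\Rep(\A_M^{\ZZ_2})\cong Z(\cF)$ by the de-equivariantization/orbifold dictionary ($Z(\cF)\cong Z(\bim\Theta\cF\Theta)$ under Morita equivalence, and $\bim\Theta\cF\Theta$ is a $\ZZ_2$-extension of $Z(\cC(A,q))$ realized on $\A_M$). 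I would verify the rank-$0\pmod 8$ claim from the fact that even self-dual-ish combinations $L\bar L$ have signature $0$, and the $\mu$-index / central charge bookkeeping forces rank $\equiv 0\pmod 8$.

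For part (3): start from the self-dual even lattice $\Gamma=L\bar L\oplus\Delta^\phi(G_L)$ constructed in the ``even lattices'' subsection from the mirror pair, so $\A_\Gamma$ is holomorphic and of rank $\equiv 0\pmod 8$. The $\ZZ_2$-action above, together with the fact that $\A_M=\A_L\otimes\A_{\bar L}\subset\A_\Gamma$ is the fixed-point net under a group of simple currents, upgrades the $\ZZ_2$-orbifold to a fixed-point net under the Tambara–Yamagami hypergroup $K_\cF$ acting by quantum operations on $\A_\Gamma$: the diagonal simple-current group $\Delta^\phi(G_L)\cong A$ acts by automorphisms, and composing with the $\ZZ_2$ (the $-1$ lattice map) gives the generalized dihedral group $\Dih(A)$; taking the further coset $K_\cF = K_{\Dih(A)}/\!/K_{A}$-type double quotient — in the language of Section~2, $Q=K_\cF\leq\QuOp(\A_\Gamma)$ with $\A_\Gamma^{K_\cF}=\A_M^{\ZZ_2}$ — yields $\Rep(\A_\Gamma^{K_\cF})$ braided equivalent to $Z(\cF)$ by the holomorphic-orbifold result quoted at the end of Section~2 ($\Rep(\A^Q)\cong Z(\tRep Q(\A))$ with $\tRep Q(\A)$ a categorification of $Q$, which here must be $\cF$ itself since it has the right fusion hypergroup and the right twists).

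The main obstacle I expect is \emph{identifying the $\ZZ_2$-crossed braided extension on the nose}: one must check that the twisted representation category of the chosen lattice-automorphism action genuinely has Tambara–Yamagami fusion rules (not merely $\ZZ_2$-graded with the right dimensions) \emph{and} that both Frobenius–Schur signs are attainable by the $E_7$/$E_8$ trick in a way compatible with the $\Dih(A)$ geometry — i.e.\ that twisting the indicator does not destroy the hypergroup action needed in part (3). A secondary subtlety is the bookkeeping that forces the ranks of $M$ and $\Gamma$ to be $\equiv 0\pmod 8$: this should follow from signature/discriminant-form reciprocity (Milgram's formula relating the Gauß sum \eqref{eq:GaussSum} of $(G_L,q_L)$ to the signature mod $8$) applied to $L\bar L$, whose discriminant form is $(G,q)\oplus(G,\bar q)$ with trivial Gauß-sum phase, but I would want to state this carefully since the central charge of the orbifold net must match $c_\mathrm{top}(Z(\cF))\equiv 0\pmod 8$.
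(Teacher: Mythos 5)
Your overall architecture is the paper's: realize the pointed part $\cC(G,q)$ by a lattice net (Theorem \ref{thm:AllPointed}), obtain the Tambara--Yamagami category as the $\ZZ_2$-twisted representations of the reflection $x\mapsto -x$ (Propositions \ref{prop:PointedZ2Orbifold} and \ref{prop:Reflection2} --- the latter supplies, via Dong--Xu, the irreducible twisted soliton of dimension $\sqrt{|G|}$ that closes the ``TY fusion rules on the nose'' gap you flag as your main obstacle), flip the Frobenius--Schur sign by tensoring with $\A_{E_8}$ via the $A_1\lattimes E_7$ trick of Proposition \ref{prop:ChangeFS}, tensor with a mirror lattice for the double, and glue along the diagonal isotropic subgroup to a self-dual $\Gamma$ for the hypergroup statement. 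Your Milgram-formula justification of the rank $\equiv 0\pmod 8$ claims is a correct supplement that the paper leaves implicit.

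The one genuine error is in part (2). You write that ``$\cF^{\ZZ_2}$ recovers $Z(\cF)$'' and that the $\ZZ_2$-action ``extends diagonally (via the mirror)'' to $\A_M=\A_L\otimes\A_{\bar L}$; neither is right. The equivariantization $\cF^{\ZZ_2}\cong\Rep(\A_L^{\ZZ_2})$ is only the M\"uger centralizer of $\rev{\Rep(\A_L)}$ in $Z(\cF)$, i.e.\ the proper subcategory $\MP(G,\bislotslot,\nu)$; one must still tensor with $\cC(G,\bar q)$, which is exactly what the mirror factor $\A_{\bar L}$ supplies, to get $Z(\cF)\cong\MP(G,\bislotslot,\nu)\boxtimes\cC(G,\bar q)$ (Proposition \ref{prop:TYCentral}). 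Accordingly the action on $\A_M$ must be $\sigma\otimes\id$, trivial on the mirror factor, so that $\A_M^{\ZZ_2}=\A_L^{\ZZ_2}\otimes\A_{\bar L}$. The genuinely diagonal reflection $\sigma\otimes\sigma$ is the reflection of the lattice $L\lattimes\bar L$, and by Proposition \ref{prop:Reflection2} its orbifold realizes $\MP(G\oplus G,\bislotslot\oplus\overline{\bislotslot},\cdot)$, of rank $4+(|G|^2-1)/2$ rather than the rank $4|G|+|G|(|G|-1)/2$ of $Z(\cF)$ (for $|G|=3$: $8$ versus $15$); that category is a twisted double of $\Dih(G)$, not $Z(\cF)$. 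With this correction part (3) goes through as you say via the general hypergroup--subnet correspondence applied to $\A_M^{\ZZ_2}\subset\A_\Gamma$; note only that your intermediate heuristic of averaging the reflection coset of $\Dih(G)$ does not give a proper quantum operation (the paper points this out explicitly later), so the proper $K_\cF$-action must indeed come from the general theory rather than from the $\Dih(G)$-action directly.
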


Associated with an abelian group $G$ and a non-degenerate symmetric bicharacter
$\bislotslot\colon G\times G \to \TT$ there are two unitary fusion category 
$\TY(G,\bislotslot,\pm)$ with irreducible sectors 
$\{[\rho_g],[\rho_\pm]:g\in G\}\cong G\cup\{\rho_\pm\}$ having the following 
fusion rules \cite{Iz2001II}: 
\begin{align}
  [\rho_g][\rho_h]&=[\rho_{g+h}]\,,&
  [\rho_\pm][\rho_g]&=[\rho_g][\rho_\pm]=[\rho_\pm]\,, &
  [\rho_\pm][\rho_\pm] &= \bigoplus_{g\in G}[\rho_g]\,.
\end{align}
Every unitary fusion category with this 
fusion rules is of the above form by the classification \cite{TaYa1998}.

Let us consider the Drinfel'd center $Z(\TY(G,\bislotslot,\pm))$. 
The objects are \cite{Iz2001II}:
\begin{gather}
  \label{eq:TYobjs}
  \begin{aligned}
  [\rho_g^i]&=(\rho_g,\hb_g^i)\,, & g\in G,\ i=0,1\,,\\
  [\rho_\pm^{(g,i)}]&=(\rho_\pm,\hb_\pm^{(g,i)})\,, & g\in G,~i=0,1\,,\\
  [\sigma_{g,h}]&=(\rho_g\oplus\rho_h,\hb_{g,h})\,, &g<h,~g,h \in G\,.
  \end{aligned}
\end{gather}
The modular data is given by \cite{Iz2001II}:
\begin{subequations}
\label{eq:TYMD}
\begin{align}
  \label{eq:TYS}
  S&=\frac1{2n}\quad
  \bordermatrix{
    &\scriptstyle[\rho^j_h] & \scriptstyle[\rho_\pm^{(h,j)}] &\scriptstyle[\sigma_{h,k}]\cr
    \scriptstyle[\rho^i_g] &\overline{\bi gh}^2&\sqrt n (-1)^i\overline{\bi gh}&2 \overline{\bi g{h+k}}\cr
    \scriptstyle[\rho_\pm^{(g,i)}]&\ast &\!\!\!\!(-1)^{i+j}\omega_g\omega_h\sum_k\bi{k-(g+h)}k\!\!\!\!&0\cr
    \scriptstyle[\sigma_{h',k'}]&\ast&\ast &2(\bi k{h'}\bi h{k'}+\bi k{k'}\bi h{h'})
  }\,,
  \\
  \label{eq:TYT}
  T&=\diag 
  \bordermatrix{
    & \scriptstyle[\rho^i_g] & \scriptstyle[\rho_\pm^{(g,i)}] &\scriptstyle[\sigma_{h,k}]\cr
    \!\!\!\! &\bi gg & (-1)^i\omega_g & \bi hk
  }
  \,.
\end{align}
\end{subequations}
Here $\omega_g$ is defined as follows. 
Let $a\colon G\to \TT$ be a function satisfying $a(g)a(h)=\bi gh a(g+h)$ and 
$a(g)=a(-g)$ for all $g,h\in G$. 
Let $\hat a$ be the finite Fourier transform of $a$ given by:
\begin{align} 
  \hat a(g)&=\frac{1}{\sqrt{|G|}}\sum_{h\in G}\overline{\bi gh} f(h)
\end{align}
and define $\omega_g=\sqrt{\pm\hat a(g)}$.

The subcategory $\cG:=\langle \rho_g^0:g\in G\rangle$ is a pointed subcategory.
If $|G|$ is of odd order $\cG$ is a UMTC, namely it is braided equivalent to 
$\cC(G,\bar q)$, where $\bar q(a)=\bi aa$.
\begin{prop}
  \label{prop:TYCentral}
  Let $G$ be an abelian group of odd order, then $Z(\TY(G,\bislotslot,\pm))$ is
  braided equivalent to $\MP(G,\bislotslot,\pm)\boxtimes \cC(G,\bar q)$. 
  Here $\bar q(a)=\bi aa$ and $\MP(G,\bislotslot,\pm)$ is a 
  unitary modular tensor category 
  of rank $4+(|G|-1)/2$ with global dimension $4|G|$.
  
  The inclusion functor $\iota\colon \cC(G,\bar q) \to \TY(G,\bislotslot,\pm)$ 
  mapping $[g]$ to $[\rho_g^0]$ is central.
  
  If there is a central injective functor 
  $\cC(G, \bar q) \to \TY(G',\bislotslot',\pm)$ with $|G'|=|G|$, then  
  $(G,\bar q)\sim (G', \bar q')$.
\end{prop}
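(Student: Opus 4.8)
The plan is to extract all three claims from the description \eqref{eq:TYobjs}, \eqref{eq:TYS}, \eqref{eq:TYT} of the center $Z(\TY(G,\bislotslot,\pm))$ together with the fact recorded just above the proposition: since $|G|$ is odd, $\partial\bar q(g,h)=\bi gh^{-2}$ is a non-degenerate symmetric bicharacter, so the pointed subcategory $\cG:=\langle\rho_g^0:g\in G\rangle\subseteq Z(\TY(G,\bislotslot,\pm))$ is \emph{modular} and braided equivalent to $\cC(G,\bar q)$. Since $\cG$ is a modular subcategory of the modular category $Z(\TY(G,\bislotslot,\pm))$, Müger's theorem \cite[Theorem 4.2]{Mg2003-MC} gives a braided equivalence $Z(\TY(G,\bislotslot,\pm))\cong\cC(G,\bar q)\boxtimes\bigl(\cG'\cap Z(\TY(G,\bislotslot,\pm))\bigr)$, and I would \emph{define} $\MP(G,\bislotslot,\pm):=\cG'\cap Z(\TY(G,\bislotslot,\pm))$, which is then automatically a unitary modular tensor category. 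Its rank and global dimension follow from this factorization: counting the simples in \eqref{eq:TYobjs} gives $|\Irr(Z(\TY(G,\bislotslot,\pm)))|=2|G|+2|G|+\binom{|G|}{2}$, and $d\rho_\pm^{(g,i)}=\sqrt{|G|}$, $d\sigma_{g,h}=2$ give $\Dim Z(\TY(G,\bislotslot,\pm))=2|G|+2|G|^2+4\binom{|G|}{2}=4|G|^2$, so dividing by $|\Irr(\cG)|=\Dim\cG=|G|$ yields rank $4+\tfrac{|G|-1}{2}$ and global dimension $4|G|$. (Alternatively, the unnormalized $S$-matrix $2|G|\cdot S$ in \eqref{eq:TYS} identifies the simples centralizing $\cG$ as exactly $[\rho_0^0],[\rho_0^1],[\rho_\pm^{(0,0)}],[\rho_\pm^{(0,1)}]$ and the $\tfrac{|G|-1}{2}$ objects $[\sigma_{g,-g}]$, $g\neq 0$, using non-degeneracy of $\bislotslot$ and oddness of $|G|$.)

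For centrality of $\iota$, note that the forgetful functor $F\colon Z(\TY(G,\bislotslot,\pm))\to\TY(G,\bislotslot,\pm)$ restricts on $\cG$ to the tensor functor $[\rho_g^0]\mapsto\rho_g$, an equivalence onto the pointed subcategory $\langle\rho_g:g\in G\rangle$. A braided equivalence $\cC(G,\bar q)\xrightarrow{\ \sim\ }\cG$ a priori induces some $\phi\in\Aut(G,\bar q)$ on the invertibles; precomposing with the braided autoequivalence of $\cC(G,\bar q)$ realizing $\phi^{-1}$ makes it send $[g]\mapsto[\rho_g^0]$. Composing with $\cG\hookrightarrow Z(\TY(G,\bislotslot,\pm))$ then gives a braided functor $\widetilde\iota\colon\cC(G,\bar q)\to Z(\TY(G,\bislotslot,\pm))$ with $F\circ\widetilde\iota=\iota$, so $\iota$ is central.

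For the converse, assume $|G|=|G'|$ (and $|G|>1$; for $|G|=1$ everything is $\Vect$ and the claim is trivial) and let $\iota'\colon\cC(G,\bar q)\to\TY(G',\bislotslot',\pm)$ be central and injective. Lifting through the forgetful functor $F'$ gives a braided functor $\widetilde{\iota'}\colon\cC(G,\bar q)\to Z(\TY(G',\bislotslot',\pm))$ which is fully faithful, since $F'$ is faithful, $F'\circ\widetilde{\iota'}=\iota'$ is fully faithful, and $\dim\Hom_{Z}(\widetilde{\iota'}X,\widetilde{\iota'}Y)\le\dim\Hom(\iota'X,\iota'Y)=\dim\Hom(X,Y)\le\dim\Hom_{Z}(\widetilde{\iota'}X,\widetilde{\iota'}Y)$ forces equality. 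Hence $\widetilde{\iota'}(\cC(G,\bar q))$ is a pointed fusion subcategory of $Z(\TY(G',\bislotslot',\pm))$ with exactly $|G|=|G'|$ simple objects, so it lies in the maximal pointed subcategory, whose group of invertibles is $P=\{[\rho_{g'}^i]:g'\in G',\ i=0,1\}$ (the dimension-one simples of \eqref{eq:TYobjs}) of order $2|G'|$, containing $\Irr(\cG_{G'})=\{[\rho_{g'}^0]\}$, where $\cG_{G'}\cong\cC(G',\bar q')$ as before. Since $|G'|$ is odd, $\Irr(\cG_{G'})$ is the unique subgroup of $P$ of order $|G'|$, so $\widetilde{\iota'}(\cC(G,\bar q))=\cG_{G'}$; thus $\cC(G,\bar q)$ is braided equivalent to $\cC(G',\bar q')$, and by the classification of pointed unitary modular tensor categories $(G,\bar q)\sim(G',\bar q')$.

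The content is bookkeeping rather than ideas. In the first part the work is matching the object list \eqref{eq:TYobjs} and the data \eqref{eq:TYS}, \eqref{eq:TYT} against the centralizer of $\cG$ while keeping track of the identification $\sigma_{g,h}=\sigma_{h,g}$ and of the parity of $|G|$; in the rigidity part the one genuinely load-bearing (if elementary) point is that oddness of $|G'|$ makes the order-$|G'|$ subgroup of the invertibles of $Z(\TY(G',\bislotslot',\pm))$ unique. A minor item to verify is that the braided lift $\widetilde\iota$ of $\iota$ can be chosen both fully faithful and label-preserving.
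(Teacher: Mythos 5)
Your proposal is correct and follows essentially the same route as the paper: $\MP(G,\bislotslot,\pm)$ is defined as the M\"uger centralizer of the pointed modular subcategory $\cG\cong\cC(G,\bar q)$ inside $Z(\TY(G,\bislotslot,\pm))$, with \cite[Theorem 4.2]{Mg2003-MC} giving both modularity and the Deligne-product factorization; centrality of $\iota$ is read off from the explicit half-braidings $\hb_g^0$; and the converse rests on the same observation that the invertibles of the center form $\ZZ_2\times G'$, so oddness of $|G'|$ forces the image of the lifted braided functor to be the unique pointed subcategory of rank $|G'|$. Your write-up merely supplies details the paper leaves implicit (the rank and global-dimension count, the label-preserving adjustment of the equivalence, and full faithfulness of the lift), all of which check out.
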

\begin{proof}
  That the functor $\iota$ is central can be seen from the half-braidings. 
  We define $\MP(G,\bislotslot,\pm)$ to be the Müger centralizer 
  $\iota(\cC(G,\bar q))'\cap Z(\cF)$ which is modular by 
  \cite[Theorem 4.2]{Mg2003-MC}.

  Conversely, assume we have an injective central functor 
  $\iota\colon \cC(G, \bar q) \to \TY(G',\bislotslot',\pm)$. 
  From the modular data we see that the pointed part of 
  $\TY(G',\bislotslot',\pm)$ has $\ZZ_2\times G'$ fusion rules
  and because $|G'|$ is odd, the repletion of 
  $\iota(\cC(G',\bar q'))\subset Z( \TY(G',\bislotslot',\pm) )$ is the unique 
  pointed fusion subcategory of rank $|G'|$. 
  This gives a braided injective functor $\cC(G,q)\to \cC(G',q')$, 
  Since $|G'|=|G|$ it is a braided equivalence and we can conclude 
  $(G,q)\sim(G',q')$.
\end{proof}
\begin{prop} 
  \label{prop:PointedZ2Orbifold}
  Let $\A$ be a completely rational conformal net with a proper action of 
  $\ZZ_2$.
  If $\Rep(\A)$ is braided equivalent to $\cC(G,q)$ for some odd abelian group 
  $G$ and $\A$ has a $\ZZ_2$-twisted solition of dimension $\sqrt{|G|}$, then 
  the category of $\ZZ_2$-twisted solitons $\tRep{\ZZ_2}(\A)$
  is equivalent to $\TY(G,\bislotslot_q,\nu)$, where $\nu\in\{\pm\}$ and 
  $\bislotslot_q\colon G \to \TT$ is the unique non-degenerate bicharacter
  determined by $\bar q(g)=\bi g g_q$ for all $g\in G$.

  Furthermore, there is a proper $\ZZ_2$-action on 
  $\tilde \A=\A\otimes\A_{E_8}$, such that the category $\tRep{\ZZ_2}(\tilde\A)$
  is equivalent to $\TY(G,\bislotslot_q,-\nu)$.
\end{prop}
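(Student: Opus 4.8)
First I would establish the structure. By the theory of twisted representations of orbifold nets recalled above, $\tRep{\ZZ_2}(\A)$ is a $\ZZ_2$-crossed braided extension $\cC_0\oplus\cC_1$ of $\cC_0=\Rep(\A)$, which is braided equivalent to $\cC(G,q)$, and $\cC_1$ is the category of $\ZZ_2$-twisted solitons. As $\A$ is completely rational with $\mu_\A=\Dim(\cC(G,q))=|G|$, one has $\Dim(\tRep{\ZZ_2}(\A))=2\mu_\A=2|G|$, hence $\Dim(\cC_1)=|G|$; in particular $\cC_1\neq 0$. The hypothesised $\ZZ_2$-twisted soliton $m\in\cC_1$ has $(dm)^2=|G|=\Dim(\cC_1)$, so $m$ is irreducible and is the unique simple object of $\cC_1$. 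From this the fusion rules follow at once: for $g\in G$ the objects $g\otimes m$, $m\otimes g$ and $\bar m$ are simple objects of $\cC_1$, hence all isomorphic to $m$, and $\dim\Hom(g,m\otimes m)=\dim\Hom(g\otimes m,m)=\dim\Hom(m,m)=1$, so $m\otimes m\cong\bigoplus_{g\in G}g$. Since $\cC_0$ is braided, $G$ is abelian; by the Tambara--Yamagami classification \cite{TaYa1998} there are a non-degenerate symmetric bicharacter $\chi$ and a sign $\nu\in\{\pm\}$ with $\tRep{\ZZ_2}(\A)$ tensor equivalent to $\TY(G,\chi,\nu)$.

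Next I would identify $\chi$ with $\bislotslot_q$ by passing to the Drinfel'd center. The inclusion $\cC_0\hookrightarrow\tRep{\ZZ_2}(\A)$ carries its canonical central structure, whose source is $\rev{\Rep(\A)}\cong\cC(G,q^{-1})$; thus we obtain an injective central functor $\cC(G,q^{-1})\to\TY(G,\chi,\nu)$ with source of order $|G|$. Applying the last assertion of Proposition~\ref{prop:TYCentral} (and noting that the quadratic form of $\TY(G,\chi,\nu)$ is $g\mapsto\chi(g,g)$) yields $\psi\in\Aut(G)$ with $\chi(\psi(g),\psi(g))=q(g)^{-1}$ for all $g\in G$. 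Since $|G|$ is odd, a symmetric bicharacter is determined by its diagonal, so $(h,k)\mapsto\chi(\psi(h),\psi(k))$ is the unique bicharacter with diagonal $g\mapsto q(g)^{-1}$, which is by definition $\bislotslot_q$. Hence $\psi\colon(G,\bislotslot_q)\xrightarrow{\sim}(G,\chi)$, and by \cite{TaYa1998} $\tRep{\ZZ_2}(\A)\cong\TY(G,\bislotslot_q,\nu)$.

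For the second statement, the plan is to produce a proper order-two automorphism $\phi$ of the holomorphic net $\A_{E_8}$ with $\tRep{\ZZ_2}(\A_{E_8})\cong\Vect_{\ZZ_2}^{\omega_-}$ (with respect to $\phi$), and then to tensor. Since $\A_{E_8}$ is holomorphic, $\tRep{\ZZ_2}(\A_{E_8})$ is a categorification of $\ZZ_2$, hence isomorphic to $\Vect_{\ZZ_2}^{\omega_+}$ or $\Vect_{\ZZ_2}^{\omega_-}$; its unique non-pointed simple object $m_0$ is invertible, and as a crossed-braided category $\langle m_0\rangle\cong\cC(\ZZ_2,q_0)$ with $q_0(1)=\theta_{m_0}=\e^{2\pi\ima h_{m_0}}$ by the spin--statistics theorem, where $h_{m_0}$ is the conformal weight of the $\phi$-twisted sector. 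By the discussion of two-object pointed fusion categories above (cf.\ \cite{LiNg2014}), the underlying fusion category is $\Vect_{\ZZ_2}^{\omega_-}$, equivalently $\nu_{m_0}=-1$, precisely when $\theta_{m_0}\in\{\pm\ima\}$. I would take $\phi$ to be the isometry of $E_8$ acting by $-\id$ on one $D_4$-summand of a sublattice $D_4\oplus D_4\subset E_8$ and trivially on the other (this extends to $E_8$ since $-\id$ acts trivially on $G_{D_4}\cong\ZZ_2\times\ZZ_2$); its $(-1)$-eigenspace has dimension $4$, so the $\phi$-twisted sector has conformal weight $\tfrac{4}{16}=\tfrac{1}{4}$ and $\theta_{m_0}=\ima$, as required. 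Now equip $\tilde\A=\A\otimes\A_{E_8}$ with the product action $\sigma\otimes\phi$ ($\sigma$ the given action on $\A$); it is again proper, and (as in the untwisted case) $\tRep{\ZZ_2}$ of a tensor product under a product action is the $\ZZ_2$-graded Deligne product, so $\tRep{\ZZ_2}(\tilde\A)$ has $\cC_0=\Rep(\A)\boxtimes\Rep(\A_{E_8})=\cC(G,q)$ and $\sigma$-graded part $\langle m\rangle\boxtimes\langle m_0\rangle=\langle m\boxtimes m_0\rangle$ with $d(m\boxtimes m_0)=\sqrt{|G|}$. By the first part applied to $\tilde\A$ we get $\tRep{\ZZ_2}(\tilde\A)\cong\TY(G,\bislotslot_q,\nu')$ for some $\nu'\in\{\pm\}$; if $\nu'=\nu$ then $\tRep{\ZZ_2}(\tilde\A)\cong\tRep{\ZZ_2}(\A)$, so their non-pointed simple objects have the same Frobenius--Schur indicator, contradicting $\nu_{m\boxtimes m_0}=\nu_m\,\nu_{m_0}=-\nu_m$. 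Hence $\nu'=-\nu$.

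The step I expect to be the main obstacle is the identification $\chi=\bislotslot_q$: it amounts to controlling the interplay between the braiding of $\cC_0$ and the Tambara--Yamagami associativity constraints, which the argument above handles by passing to $Z(\TY(G,\chi,\nu))$ and Proposition~\ref{prop:TYCentral}, but where some care is needed with conventions (the reverse braiding, which object of the center lifts a given pointed object, the signs of the twists). A secondary, purely computational point is the verification that $\A_{E_8}$ carries an order-two automorphism whose twisted sector has conformal weight $\equiv\tfrac{1}{4}\pmod{\tfrac{1}{2}}$, equivalently Frobenius--Schur indicator $-1$.
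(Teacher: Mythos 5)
Your first part follows the paper's own route: the dimension count $\Dim(\tRep{\ZZ_2}(\A))=2|G|$ forces Tambara--Yamagami fusion rules, and the bicharacter is identified through the central functor $\rev{\Rep(\A)}\cong\cC(G,\bar q)\hookrightarrow\tRep{\ZZ_2}(\A)$ together with the last assertion of Proposition \ref{prop:TYCentral}; you merely spell out the step ``same diagonal implies same symmetric bicharacter since $|G|$ is odd'' that the paper leaves implicit. One caveat: the inference ``$(dm)^2=\Dim\cC_1$, so $m$ is irreducible'' is not valid for an arbitrary soliton of total dimension $\sqrt{|G|}$ (when $|G|$ is a perfect square, a direct sum of $\sqrt{|G|}$ invertible twisted objects has the same total dimension); the hypothesis must be read as asserting an \emph{irreducible} twisted soliton, which is also how the paper uses it (cf.\ Proposition \ref{prop:Reflection2}, where the Dong--Xu soliton is irreducible).

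For the second statement you genuinely diverge. The paper invokes Proposition \ref{prop:ChangeFS}: it forms the simple current extension $(\A\otimes\A_{A_1\lattimes E_7})\rtimes\ZZ_2$, recognizes it as $\tilde\A^{\ZZ_2}$ inside $\tilde\A=\A\otimes\A_{E_8}$, and reads off the sign flip from Proposition \ref{prop:ChangingFrobeniusSchur}; there the anomaly is visible directly from the twists $\pm\ima$ of the semion objects of $\Rep(\A_{A_1\lattimes E_7})$, so no twisted-sector computation on $\A_{E_8}$ is needed. You instead exhibit an explicit anomalous involution $\phi$ of $\A_{E_8}$ (the partial reflection along one $D_4$ in $D_4\oplus D_4\subset E_8$), take the diagonal action on $\A\otimes\A_{E_8}$, and flip the sign by multiplicativity of Frobenius--Schur indicators under $\boxtimes$. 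This is a legitimate and more concrete alternative, but it transfers the entire burden to the assertion that the $\phi$-twisted sector has lowest conformal weight exactly $4/16=1/4$: you need that the standard lift of $\phi$ has order two (i.e.\ $\langle x,\phi x\rangle\in 2\ZZ$ for all $x\in E_8$, which does hold since the second projection of $E_8$ lands in $D_4^\ast$, whose vectors have integral norm) and that there is no momentum correction to the Fock vacuum energy (the projected lattice contains $0$). Both facts are true, but as written this is the one unsupported step and should be proved or cited from the lattice-orbifold literature; the paper's shift automorphism dual to $\A_{A_1\lattimes E_7}\subset\A_{E_8}$ would serve equally well and makes the weight computation ($1/4$ and $3/4$ from the $A_1$ and $E_7$ level-one weights) immediate.
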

\begin{proof}
  We have $\Rep(\A)\subset \cF$ and $\Dim(\cF)=2|G|$, thus 
  $\Irr(\cF)\cong G\cup\{\rho\}$ and since $\cF$ is 
  $\ZZ_2$-graded we have $[g][\rho]=[\rho]=[\rho][g]$ and therefore by 
  Frobenius reciprocity $[\rho][\rho]=[\rho][\bar \rho]=\bigoplus_{g\in G} [g]$,
  thus $\cF$ has Tambara--Yamagami fusion rules.
  
  Using \cite[Proposition 5.11]{Bi2016} we have a central injective functor 
  $\rev{\Rep(\A)}\cong \cC(G,\bar q)\hookrightarrow \cF$. 
  Thus $\cF$ is equivalent to $\TY(G,\bislotslot_q,\pm)$ by Proposition 
  \ref{prop:TYCentral}. 

  The last statement follows from Proposition \ref{prop:ChangeFS} by 
  considering the $\ZZ_2$-orbifold $\widehat{\A^{\ZZ_2}} 
    = (\A\otimes\A_{A_1\lattimes E_7})\rtimes \ZZ_2 \subset \tilde \A 
    = \A\otimes\A_{E_8}$ 
  using the same argument as in Section \ref{sec:ChangeH3}.
\end{proof}
\begin{rmk}
  One can also check, \eg by inspecting the modular data or using 
  $\alpha$-induction,
  that the Frobenius--Schur indicators of generating object of the 
  representation category of the $\ZZ_2$-orbifold net coincides with the 
  Frobenius--Schur indicators of the Tambara--Yamagami category. 
  This is an alternative proof of the fact that $\A$ and $\tilde \A$ give 
  Tambara--Yamagami categories with opposite Frobenius--Schur indicator. 
\end{rmk}
By Theorem \ref{thm:AllPointed} there is an even positive lattice $L$, such
that $\A_L$ realizes $\cC(G,q)$, \ie $\Rep(\A_L)$ is braided equivalent to 
$\cC(G,q)$. 
\begin{prop} 
  \label{prop:Reflection2}
  Let $(G,q)$ be a metric group with $|G|$ odd and let $L$ be an even lattice,
  such that $\Rep(\A_L)$ is braided equivalent to $\cC(G,q)$.
  Let $\sigma$ be the reflection of $L$ and 
  $\A_L^{\ZZ_2}:=\A_L^{\langle\sigma\rangle}$ the associated $\ZZ_2$-orbifold.
  Then there is an irreducible $\ZZ_2$-twisted soliton $\rho$ of $\A_L$,
  with dimension $\sqrt{n}$.
  In particular, the category of $\ZZ_2$-twisted solitons of $\A_L$  
  is equivalent to the Tambara--Yamagami category $\TY(G,\bislotslot,\pm)$ 
  with bicharacter $\bislotslot$ determined by $q(g)=\bi gg^{-1}$.
\end{prop}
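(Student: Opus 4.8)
The plan is to produce the required irreducible $\ZZ_2$-twisted soliton of dimension $\sqrt n$, $n=|G|$, and then to quote Proposition~\ref{prop:PointedZ2Orbifold}. Write $\cF=\tRep{\ZZ_2}(\A_L)=\cF_0\oplus\cF_1$ for the $\ZZ_2$-crossed braided extension of $\Rep(\A_L)\cong\cC(G,q)$ attached to the reflection $\sigma$; since $\sigma$ acts as $-1$ on the rank-$n$ current subnet it is non-trivial, so $\ZZ_2=\langle\sigma\rangle$ acts properly and the grading is faithful. Hence $\cF_1\neq 0$, $\cF_0=\cC(G,q)$ is pointed, and $\Dim(\cF_1)=\Dim(\cF_0)=n$ by the general structure of $G$-crossed braided extensions.

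First I would reduce the statement, purely categorically, to a uniqueness claim. Since $\cF_0$ is pointed, $G=\Irr(\cF_0)$ acts on $\Irr(\cF_1)$ by fusion. For $\rho\in\Irr(\cF_1)$ let $H_\rho\le G$ be its stabiliser; as $\bar\rho$ again lies in $\cF_1$ and $\cF_0$ is pointed, duality gives $\rho\otimes\bar\rho\cong\bigoplus_{h\in H_\rho}\rho_h$, hence $(d\rho)^2=|H_\rho|$. Because $G$ is abelian, every element of the $G$-orbit of $\rho$ has the same stabiliser, so the orbit contributes $|G/H_\rho|\cdot|H_\rho|=n$ to $\sum_{\rho\in\Irr(\cF_1)}(d\rho)^2=\Dim(\cF_1)=n$. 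Therefore there is exactly one orbit, all irreducibles of $\cF_1$ share a dimension $\sqrt{|H|}$ for a common subgroup $H\le G$, and $|\Irr(\cF_1)|=n/|H|$. The claim is thus equivalent to $H=G$, i.e.\ to the \emph{uniqueness} of the irreducible $\ZZ_2$-twisted soliton.

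The heart of the matter is this uniqueness, and here the specific shape of $\sigma$ is essential: an automorphism acting trivially on the currents (a simple current) would give $H=\{e\}$ and a pointed $\cF_1$, whereas the reflection does not. I would argue via the free-field picture: $\A_L$ is generated by the rank-$n$ $\U(1)$-current net $\A_{\mathfrak h}$, $\mathfrak h=\RR\otimes_\ZZ L$, together with the charged intertwiners implementing $\rho_{m+L}$ for $m\in L^{\ast}$, and $\sigma$ acts as $-1$ on $\mathfrak h$. The net $\A_{\mathfrak h}$ has, up to equivalence, a unique irreducible $\sigma$-twisted representation (the half-integer moded Fock module); it extends to a $\ZZ_2$-twisted soliton $\rho$ of $\A_L$. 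In this twisted sector the currents carry only half-integer modes, so there is no zero mode and no $\mathfrak h$-charge grading, and consequently the charge-shift operator implementing $\rho_{m+L}$ maps $\rho$ to an equivalent soliton for every $m\in L^{\ast}$; this is exactly $H=G$. Equivalently, one may cite the classification of $\sigma$-twisted modules of the lattice vertex operator algebra $V_L$: as $|L^{\ast}/L|=n$ is odd, $L^{\ast}/L$ has no $2$-torsion, the $\FF_2$-valued bilinear form induced by $\langle\,\cdot\,,\cdot\,\rangle$ on $L/2L$ is non-degenerate, the relevant twisted group algebra is a full matrix algebra, and hence there is a unique irreducible $\sigma$-twisted sector.

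Granting uniqueness, the single irreducible $\ZZ_2$-twisted soliton $\rho$ has $(d\rho)^2=\Dim(\cF_1)=n$, i.e.\ $d\rho=\sqrt n$, which is the first assertion; Proposition~\ref{prop:PointedZ2Orbifold} then identifies $\tRep{\ZZ_2}(\A_L)$ with $\TY(G,\bislotslot,\pm)$ for the bicharacter $\bislotslot$ determined by $q(g)=\bi gg^{-1}$. I expect the transfer of the twisted-sector uniqueness from $V_L$ to $\A_L$ --- or, equivalently, making the ``no zero mode'' argument rigorous at the operator-algebraic level --- to be the only non-routine point; the categorical reduction above is elementary.
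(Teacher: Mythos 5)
Your argument is correct and follows the same overall strategy as the paper's proof: both reduce the statement, via Proposition \ref{prop:PointedZ2Orbifold}, to exhibiting an irreducible $\ZZ_2$-twisted soliton of dimension $\sqrt{n}$. The difference lies in how that soliton is produced. The paper computes $(L^\ast/L)^\sigma=\{x\in L^\ast: 2x\in L\}/L=\{L\}$ --- trivial precisely because $|G|$ is odd --- and then simply quotes \cite[Proposition 4.25, Corollary 4.31]{DoXu2006}, which yields an irreducible twisted soliton of dimension $\sqrt{|L^\ast/L|/|(L^\ast/L)^\sigma|}=\sqrt{n}$. You instead first prove, by a clean counting argument (the $G$-action on $\Irr(\cF_1)$ together with $\Dim\cF_1=\Dim\cF_0=n$ forces a single orbit with common stabiliser $H$ and $d\rho=\sqrt{|H|}$), that the assertion is equivalent to the \emph{uniqueness} of the irreducible twisted soliton, and then establish uniqueness through the twisted Fock module and the nondegeneracy of the mod-$2$ form on $L/2L$, which again uses oddness of $|G|$ to kill the $2$-torsion of $L^\ast/L$. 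The two routes rest on the same arithmetic fact about the discriminant group; your categorical reduction is a worthwhile addition that makes transparent why the reflection (as opposed to a simple-current automorphism) is essential, and the transfer from $V_L$-twisted modules to operator-algebraic twisted solitons that you correctly flag as the only non-routine step is exactly what the citation to Dong--Xu supplies in the paper.
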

\begin{proof}
  By Proposition \ref{prop:PointedZ2Orbifold} we only have to find an 
  irreducible $\ZZ_2$-twisted solition $\rho$ with $d\rho=\sqrt{|G|}$.

  Indeed, following the notation in \cite{DoXu2006} 
  consider $(L^\ast/L)^\sigma$ to be the cosets fixed under $\sigma$.
  We get $(L^\ast/L)^\sigma= \{ x\in L^\ast : 2x\in L\}/L=\{L\}$
  and $|L^\ast/L |/|(L/L^\ast)^\sigma|=|G|$.
  Then it follows from \cite[Proposition 4.25, Corollary 4.31]{DoXu2006}, 
  that there is an irreducible soliton $\beta$ with $d\beta=\sqrt {|G|}$.
\end{proof}
Now we are able prove Theorem \ref{thm:AllOddTYs}. 
\begin{proof}[Proof of Theorem \ref{thm:AllOddTYs}]
  Let $G$ be an odd abelian group and $\bislotslot$ a non-degenerate
  symmetric bicharacter, which is determined by its quadratic form 
  $\bar q(g)=\bi gg$.
  By Theorem \ref{thm:AllPointed} we can choose even lattices $L_+$,  
  $L_- = L_+ \lattimes E_8$ and $\bar L$, such that 
  $\Rep(\A_{L_\pm})$ realizes $\cC(G,q)$ and $\Rep(\A_{\bar L})$
  realizes $\cC(G,\bar q)$. 
  Using the $\ZZ_2$-action on $\A_{L_+}$ as in 
    Proposition \ref{prop:Reflection2}  
  and the associated $\ZZ_2$-action on $\A_{L_-}\cong A_{L_+}\otimes \A_{E_8}$     as in Proposition \ref{prop:PointedZ2Orbifold} we get the first statement.
  
  The other statements follow from general theory. 
  For example, we have $\Rep(\A_{L_\pm}^{\ZZ_2}\otimes\A_{\bar L})$ is braided 
  equivalent to $Z(\TY(G,\bislotslot,\pm\nu))$, respectively, and we can choose
  $M_\pm = L_\pm \lattimes \bar L$.
  For the third statement the self-dual lattices are obtained by diagonally 
  gluing $\Gamma=(L_\pm\lattimes \bar L)\oplus \Delta(G)$, respectively.
\end{proof}

\section{Realization and classification of 
  generalized metaplectic modular categories}
\subsection{Generalized metaplectic modular categories from 
  Tambara--Yamagami categories}
\label{ssec:ModularDataMP}
Let $G=(G,+)$ be an abelian finite group of odd order with a 
symmetric non-degenerate
bicharacter $\bislotslot$ and $q(g)=\bi gg ^{-1}$.
It is convenient to choose a set $G_+$ of ``positive elements'' of $G$, 
such that  $G=G_+\sqcup \{0\} \sqcup -G_+$. 
We denote by $|g|$ the element $\pm g\in G_+\sqcup \{0\}$.

As in Proposition \ref{prop:TYCentral} we denote by $\MP(G,\bislotslot,\pm)$
the unitary modular tensor category given by the Müger centralizer 
$\cC(G,\bar q)'\cap Z(\TY(G,\bislotslot,\pm))$.
From the braided equivalence:
\begin{align} 
  Z(\TY(G,\bislotslot\slot,\pm) & \cong\MP(G,\bislotslot,\pm)\boxtimes \cC(G,\bar q)\,.
\end{align}
we can read off that 
$\MP(G,\bislotslot,\pm)$ has rank $(|G|-1)/244$
and that
$\Irr(\MP(G,\bislotslot,\pm)) 
  =\{ \rho_0^i,\rho^{(0,i)}_\pm,\sigma_{h,-h} : i=0,1 ; h\in G_+\}$.
From the modular data \eqref{eq:TYMD}
of $Z(\TY(G,\bislotslot,\pm))$ we get
the following modular data of $\MP(G,\bislotslot,\pm)$:
\begin{subequations}
\begin{align}
  S&=\frac1{2\sqrt n}\quad
  \bordermatrix{
    &\scriptstyle[\rho^j_0] & \scriptstyle[\rho_\pm^{(0,j)}] &\scriptstyle[\sigma_{h,-h}]\cr
    \scriptstyle[\rho^i_0] &1&\sqrt n (-1)^i &2 \cr
    \scriptstyle[\rho_\pm^{(0,i)}]&\ast &\!\!\!\!(-1)^{i+j}\omega_0\omega_0\sum_k\bi kk\!\!\!\!&0\cr
    \scriptstyle[\sigma_{k,-k}]&\ast&\ast &2\left(\overline{\bi h k}^2+{\bi hk}^2\right)
  }\,,\\
  T&= \e^{-\pi\ima c_{\mathrm{top}}/12} \cdot \diag 
  \bordermatrix{
    & \scriptstyle[\rho^i_0] & \scriptstyle[\rho_\pm^{(0,i)}] &\scriptstyle[\sigma_{h,-h}]\cr
    \!\!\!\! &1  & (-1)^i\omega_g
     & \overline{\bi hh}
  }\,,
\end{align}
\end{subequations}
where $c_{\mathrm{top}} \pmod 8$ is determined from $(G,q)$ by 
\eqref{eq:GaussSum}.
Thus the fusion rules of $\MP(G,\bislotslot,\pm)$ denoting 
$[\id]:= [\rho_0^0]$ and $[\alpha]:= [\rho_0^1]$ are
\begin{gather}
  \begin{aligned}
    [\alpha ]^2 &= [\id]\,, &
    [\alpha][\rho_\pm^{(0,0)}]&= [\rho_\pm^{(0,1)}]\,,\\
    [\alpha][\rho_\pm^{(0,1)}]&= [\rho_\pm^{(0,0)}]\,,&
    [\alpha][\sigma_{h,-h}]&= [\sigma_{h,-h}]\,,\\ 
    [\rho_\pm^{(0,0)}][\rho_\pm^{(0,0)}]&=[\id]+\sum_{-h<h} [\sigma_{h,-h}]\,,&
    [\rho_\pm^{(0,0)}][\rho_\pm^{(0,1)}]&=[\alpha]+\sum_{-h<h} [\sigma_{h,-h}]\,,\\
    [\rho_\pm^{(0,0)}][\sigma_{h,-h}]&=[\rho_\pm^{(0,0)}]+[\rho_\pm^{(0,1)}]\,,&
    [\rho_\pm^{(0,1)}][\rho_\pm^{(0,1)}]&=[\id]+\sum_{-h<h} [\sigma_{h,-h}]\,,\\
    [\rho_\pm^{(0,1)}][\sigma_{h,-h}]&=[\rho_\pm^{(0,0)}]+[\rho_\pm^{(0,1)}]\,,&
    [\sigma_{h,-h}][\sigma_{h,-h}]&=[\id]+[\alpha]+[\sigma_{2h,-2h}]\,,\\ 
    [\sigma_{g,-g}][\sigma_{h,-h}]&=[\sigma_{g+h,-g-h}]+ [\sigma_{g-h,h-g}]\,.
\end{aligned}
\end{gather}

\begin{example}
  \label{ex:Spin}
  For $p\in \NN$ consider the unitary modular tensor category
  $\umtc{\Spin(2p+1)_2}=\cC(B_{p,2})$ 
  (often called $\umtc{\SO(2p+1)_2}$ in the literature).
  Let $\rho\in\umtc{\Spin(2p+1)_2}$ be one of the two irreducible generating 
  objects with dimension $\sqrt{2p+1}$, then one can calculate from the 
  modular data that $\nu_\rho=(-1)^{\lfloor\frac{p+1}2\rfloor}$.
  Since $(\umtc{\Spin(2p+1)_2})_{\ZZ_2}^0$ is braided equivalent to 
  $\umtc{\SU(2p+1)_1}$ it follows that we have a braided equivalence
  \begin{align}
    \label{eq:Spin}
     \umtc{\Spin(2p+1)_2} &\cong
     \MP\left(\ZZ_{2p+1},\bislotslot_{A_{2p}},(-1)^{\lfloor\frac{p+1}2\rfloor}\right)
  \end{align}
  where 
  $\bi xy_{A_{2p}}=\exp(-\frac{\pi \ima p xy}{p+1})$.

  Namely, the modular data of this category is given by the 
  Kac--Peterson $S,T$-matrices 
  \cite{KaPe1984} with $c=2p$ and because 
  $\umtc{\Spin(2p+1)_2}$ is a metaplectic modular category
  which are classified by $S,T$, see \cite{ArChRoWa2016} and below. 
  This clarifies the relationship between doubles of Tambara--Yamagami
  categories and $\cC(B_{p,2})$ quantum group categories.
\end{example}

\begin{example} 
  Consider the $A_{2p}$ lattice. 
  Then $A_{2p}^\ast/A_{2p}\cong \ZZ_{2p+1} :=\{-p,\ldots,+p\}$ with bicharacter
  and $\bi xy_{A_{2p}}=\exp(-\pi \ima xy p/(+1))$ and quadratic form 
  $q(x)=\langle x,x\rangle_{A_{2p}}^{-1}$ and $\Rep(\A_{A_{2p}})$ is 
  braided equivalent to $\cC(\ZZ_{2p+1},q)$.
  Then $\A_{A_{2p}}\cong \A_{\SU(2p+1)_1}$ \cite{St1995,Xu2009,Bi2012} and 
  $\A_{A_{2p}}^{\ZZ_2}\cong \A_{\Spin(2p+1)_2}$ by \cite{Xu2000-2}.
\end{example}
As an application we get the following.
\begin{prop} 
  Let $p\in \NN$, then $\Rep(\A_{\Spin(2p+1)_2})$ is braided equivalent to 
  $\umtc{\Spin(2p+1)_2}$. 
  In particular, $\A_{\Spin(2p+1)_2}$ is modular 
  (see Definition \ref{defi:ModularNet}).
\end{prop}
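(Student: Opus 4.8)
The plan is to realize $\A_{\Spin(2p+1)_2}$ as a $\ZZ_2$-orbifold of a lattice net and then invoke our classification of generalized metaplectic modular categories. As recalled in the examples above, $\A_{\Spin(2p+1)_2}\cong\A_{A_{2p}}^{\ZZ_2}$ by \cite{Xu2000-2}, where $\A_{A_{2p}}\cong\A_{\SU(2p+1)_1}$ is the lattice net of the root lattice $A_{2p}$ and $\ZZ_2$ acts by the lattice reflection; and by Proposition \ref{prop:LatticeConformalNetsUMTC}, $\Rep(\A_{A_{2p}})$ is braided equivalent to $\cC(\ZZ_{2p+1},q)$ with $(\ZZ_{2p+1},q)=(G_{A_{2p}},q_{A_{2p}})$. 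So it suffices to identify $\Rep(\A_{A_{2p}}^{\ZZ_2})$.

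First I would apply Proposition \ref{prop:Reflection2}: $\A_{A_{2p}}$ admits an irreducible $\ZZ_2$-twisted soliton of dimension $\sqrt{2p+1}$, hence its category of $\ZZ_2$-twisted solitons $\tRep{\ZZ_2}(\A_{A_{2p}})$ is equivalent to a Tambara--Yamagami category $\TY(\ZZ_{2p+1},\bislotslot_{A_{2p}},\epsilon)$ for some sign $\epsilon\in\{\pm\}$. By the orbifold theory recalled above, $\Rep(\A_{A_{2p}}^{\ZZ_2})$ is then braided equivalent to the M\"uger centralizer of the central subcategory $\rev{\Rep(\A_{A_{2p}})}\cong\cC(\ZZ_{2p+1},\bar q)$ inside $Z(\tRep{\ZZ_2}(\A_{A_{2p}}))=Z(\TY(\ZZ_{2p+1},\bislotslot_{A_{2p}},\epsilon))$; since (as in the proof of Proposition \ref{prop:PointedZ2Orbifold}) this central functor is the one of Proposition \ref{prop:TYCentral}, the centralizer in question is $\MP(\ZZ_{2p+1},\bislotslot_{A_{2p}},\epsilon)$. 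Thus $\Rep(\A_{\Spin(2p+1)_2})$ is a unitary modular tensor category braided equivalent to $\MP(\ZZ_{2p+1},\bislotslot_{A_{2p}},\epsilon)$ for this sign $\epsilon$.

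It remains to pin down $\epsilon$, which I expect to be the main obstacle. By Example \ref{ex:Spin}, $\umtc{\Spin(2p+1)_2}\cong\MP(\ZZ_{2p+1},\bislotslot_{A_{2p}},(-1)^{\lfloor (p+1)/2\rfloor})$, and by the classification of generalized metaplectic modular categories (Theorems \ref{thm:MPclass1} and \ref{thm:MPclass2}, and Theorem \ref{thm:MPST}) the categories $\MP(\ZZ_{2p+1},\bislotslot_{A_{2p}},+)$ and $\MP(\ZZ_{2p+1},\bislotslot_{A_{2p}},-)$ are inequivalent, being distinguished already by the Frobenius--Schur indicator of the generating object --- equivalently by its twist, the two admissible values $\omega_0=\sqrt{\epsilon\,\hat a(0)}$ in \eqref{eq:TYMD} differing by $\sqrt{-1}$. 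So I would compute this invariant for $\Rep(\A_{\Spin(2p+1)_2})$ directly: by the remark following Proposition \ref{prop:PointedZ2Orbifold} the Frobenius--Schur indicator of the generating object of the $\ZZ_2$-orbifold net equals that of the generator of $\TY(\ZZ_{2p+1},\bislotslot_{A_{2p}},\epsilon)$, \ie $\epsilon$ itself, and it can be evaluated either via $\alpha$-induction along $\A_{A_{2p}}^{\ZZ_2}\subset\A_{A_{2p}}$, or by reading off the conformal spin of the reflection-twisted sector from Dong--Xu's analysis \cite{DoXu2006} and comparing with $\omega_0$. This gives $\epsilon=(-1)^{\lfloor (p+1)/2\rfloor}$, hence the desired braided equivalence $\Rep(\A_{\Spin(2p+1)_2})\cong\umtc{\Spin(2p+1)_2}$.

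Finally, for modularity: $\A_{\Spin(2p+1)_2}$ is diffeomorphism covariant and, being a finite-index $\ZZ_2$-orbifold of the completely rational net $\A_{\SU(2p+1)_1}$, is completely rational (\cf \cite{Lo2003}); and, being isomorphic to the loop group net $\A_{\SO(2p+1)_2}$, its characters $\chi_\rho$ are the level $2$ affine $\Spin(2p+1)$ characters, which transform under $\SL(2,\ZZ)$ via the Kac--Peterson matrices \cite{KaPe1984}. Under the braided equivalence just established these coincide with the categorical modular data $(S,T)$ of $\Rep(\A_{\Spin(2p+1)_2})$, and the required normalization of $T$ holds by the spin--statistics theorem \cite{GuLo1995}, since $c_{\mathrm{top}}(\Rep(\A_{\Spin(2p+1)_2}))\equiv 2p\pmod 8$ by the Gau\ss{} sum \eqref{eq:GaussSum} for the rank-$2p$ discriminant form $(\ZZ_{2p+1},q_{A_{2p}})$, while the Virasoro central charge of $\A_{\Spin(2p+1)_2}$ equals $c=2p$. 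Therefore Definition \ref{defi:ModularNet} is satisfied and $\A_{\Spin(2p+1)_2}$ is modular.
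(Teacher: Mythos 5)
Your overall strategy --- realize the net as $\A_{A_{2p}}^{\ZZ_2}$, place $\Rep(\A_{A_{2p}}^{\ZZ_2})$ among the $\MP(\ZZ_{2p+1},\bislotslot_{A_{2p}},\pm)$ via the classification, and then match the sign and check modularity against the Kac--Peterson data --- is the same as the paper's, and your first and last paragraphs are sound. The difference lies in the order of the two remaining steps, and this is where your write-up has its one genuine soft spot: you propose to determine $\epsilon$ \emph{first}, by an explicit computation (conformal spin of the reflection-twisted sector \`a la Dong--Xu, or $\alpha$-induction), and you simply assert the outcome ``this gives $\epsilon=(-1)^{\lfloor(p+1)/2\rfloor}$'' without carrying it out. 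That computation amounts to evaluating the twisted-sector ground state energy and comparing it with the admissible values $\omega_0=\sqrt{\epsilon\,\hat a(0)}$, i.e.\ a quadratic Gauss sum evaluation; it would work, but it is precisely the nontrivial content of the proposition and cannot be left as an assertion.

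The paper reverses the order so as to avoid this computation entirely: since the sign is determined by the twist of the generating object, hence by the $T$-matrix (equivalently via Bantay's formula \eqref{eq:Bantay} or Theorem \ref{thm:MPST}), it suffices to show that the net is modular, i.e.\ that its categorical $(S,T)$ coincides with the Kac--Peterson data. This is done \emph{without} first knowing $\epsilon$: every irreducible sector of $\A_{\Spin(2p+1)_2}$ arises from a level-$2$ integrable module of the affine Lie algebra, so by Guido--Longo spin-statistics the categorical $T$-matrix coincides with the Kac--Peterson $T$-matrix (the central charge being $c=2p\equiv c_{\mathrm{top}}\pmod 8$), and the $S$-matrices agree as well; the value $\epsilon=(-1)^{\lfloor(p+1)/2\rfloor}$ is then read off from the established modular data rather than computed. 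So either actually perform the Gauss-sum comparison in the twisted sector to justify your step, or reorder: your final paragraph already contains all the ingredients needed to establish modularity independently of the braided equivalence, and once it is promoted to the first step the sign determination becomes a corollary.
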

\begin{proof}
  It follows directly that $\Rep(\A_{\Spin(2p+1)_2})$ is braided equivalent
  to $\MP(\ZZ_{2p+1},\bislotslot_{A_{2p}},\nu_\rho)$ and by \eqref{eq:Spin}
  we have to only check that the Frobenius--Schur indicator of the generating
  object equals $\nu_\rho=(-1)^{\lfloor\frac{p+1}2\rfloor}$.
  But the value of $\nu_\rho$ is distinguished by the twist $\omega_{\rho}$ 
  thus by the $T$-matrix.
  Equivalently, one can use Bantay's formula \eqref{eq:Bantay}. 
  Thus it is enough to check that the net is modular.
  
  The central charge of $\A_{2p}$ and thus of thus $\A_{\Spin(2p+1)_2}$
  is $c=2p$.
  Since every irreducible representation in $\Rep(\A_{\Spin(2p+1)_2})$ comes 
  from a module of the affine Lie algebra and 
  $T_{\rho\rho}=\diag( \e^{2\pi\ima (L^\rho_0-p/12)})$ by the 
  Guido--Longo spin statistics theorem \cite{GuLo1995} the $T$-matrix coincides
  with the Kac--Peterson $T$-matrix.
  Since also the $S$-matrices coincides, we get that that $\A$ is modular.
\end{proof}

\subsection{Classification of generalized metaplectic modular categories}
Let $n$ be odd and $G=\ZZ_n$, then the fusion rules of $\MP(G,\bislotslot,\pm)$
are the fusion rules of $\Spin(n)_2$, see Example \ref{ex:Spin}. 
Any unitary modular tensor category with these fusion rules %
is called a \myemph{metaplectic modular category} and 
metaplectic modular categories up to braided equivalence have been classified
in \cite{ArChRoWa2016}.
Here we allow $G$ to be an arbitrary abelian group of odd order and make the 
following definition.
\begin{defi} 
  \label{defi:GeneralizedMetaplecticModularCategory}
  Let $G=(G,+)$ be a finite abelian group with $|G|$ odd.
  A unitary modular tensor category of rank $(|G|+7)/2$ with irreducibles 
  $\{[1],[\alpha],[\rho],[\alpha\rho],[\sigma_g] : g\in G_+\}$ having the 
  commutative fusion rules
  \begin{gather}
    \label{eq:genMPFusionRules}
    \begin{aligned}
      [\alpha]^2&=[1]\,,
      &
      [\alpha][\sigma_g]&=[\sigma_g]\,,
      &
      [\rho]^2&=[1]+\sum_{g\in G_+}[\sigma_g]\,,
      \\
      [\alpha][\rho]&=[\alpha\rho]\,,
      &
      [\sigma_g][\sigma_{g}]&= [1]+[\alpha]+[\sigma_{|2g|}]\,, %
      &
      [\sigma_g][\sigma_h] &=[\sigma_{|g+h|}]+[\sigma_{|g-h|}]\,,
    \end{aligned}
  \end{gather}
  for all $g,h\in G_+$ with $g\neq h$ is called a 
  \myemph{generalized metaplectic modular category} (based on $G$).
\end{defi}
Thus by Subsection \ref{ssec:ModularDataMP} we get that 
a generalized metaplectic modular category based on $G$
is a unitary modular tensor category which has the same fusions rules as  
$\MP(G,\bislotslot,\pm)$.
In particular, $\MP(G,\bislotslot,\nu)$ 
is a generalized metaplectic modular category based on $G$.
\begin{lem}
  \label{lem:TrivialTwist}
  Let $\cC$ be a generalized metaplectic modular category, then $\alpha$ has 
  trivial twist $\omega_\alpha=1$. 
  In particular, $\cC$ contains a unique Tannakian subcategory
  $\Rep(\ZZ_2)$.
\end{lem}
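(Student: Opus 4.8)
The plan is to pit the invertible object $\alpha$ --- which by \eqref{eq:genMPFusionRules} fixes every $\sigma_g$ --- against M\"uger's centralizer calculus. I would first read off from the fusion rules the elementary data: $\alpha$ is invertible with $[\alpha]^2=[\tu]$, and a short Frobenius--Perron computation (the one familiar from metaplectic fusion rules, using $[\sigma_g]^2=[\tu]+[\alpha]+[\sigma_{|2g|}]$ and that $g\mapsto|2g|$ permutes $G_+$) gives $d\alpha=1$, $d\sigma_g=2$ and $d\rho=d(\alpha\rho)=\sqrt{|G|}$; hence $\Dim(\cC)=2+2|G|+2(|G|-1)=4|G|$, and $\alpha$ is the unique non-trivial invertible object of $\cC$.

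The crux is a monodromy identity. Fix $g\in G_+$. Since $\alpha\otimes\sigma_g\cong\sigma_g$ is simple, the double braiding $\varepsilon(\sigma_g,\alpha)\varepsilon(\alpha,\sigma_g)$ acts on it as a scalar $m_g\in\TT$, and the ribbon identity --- which on this simple object reads $\omega_{\sigma_g}=\omega_{\alpha\otimes\sigma_g}=\omega_\alpha\,\omega_{\sigma_g}\,m_g$ --- forces $m_g=\omega_\alpha^{-1}$. Thus $\sigma_g$ lies in the M\"uger centralizer $\langle\alpha\rangle'\cap\cC$ if and only if $\omega_\alpha=1$. I would then argue by contradiction: suppose $\omega_\alpha\neq1$. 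Then no $\sigma_g$ centralizes $\alpha$; and since $[\rho]^2=[\tu]+\sum_{g\in G_+}[\sigma_g]$ (and likewise $[\alpha\rho]^2$), while $\langle\alpha\rangle'\cap\cC$ is a fusion subcategory, neither $\rho$ nor $\alpha\rho$ centralizes $\alpha$. Hence $\tu$ and $\alpha$ are the only possible simple objects of $\langle\alpha\rangle'\cap\cC$, so $\Dim(\langle\alpha\rangle'\cap\cC)\le2$. But $\cC$ is modular, so M\"uger's centralizer theorem \cite{Mg2003-MC} gives $\Dim(\langle\alpha\rangle)\cdot\Dim(\langle\alpha\rangle'\cap\cC)=\Dim(\cC)$, i.e.\ $\Dim(\langle\alpha\rangle'\cap\cC)=2|G|>2$, a contradiction. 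Therefore $\omega_\alpha=1$.

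For the final clause: once $\omega_\alpha=1$, the pointed subcategory $\langle\alpha\rangle$ carries the quadratic form $q(\alpha)=\omega_\alpha=1$ on $\ZZ_2$, so it is braided equivalent to $\Rep(\ZZ_2)$, a Tannakian subcategory; and since $\alpha$ is the unique non-trivial invertible object of $\cC$, this is the only subcategory braided equivalent to $\Rep(\ZZ_2)$. The one step I expect to require care is the monodromy identification: recognizing that the $\alpha$-monodromy of a fixed object $\sigma_g$ is exactly $\omega_\alpha^{-1}$, so that a non-trivial twist $\omega_\alpha$ would expel every $\sigma_g$ --- and then, via $[\rho]^2$, also $\rho$ and $\alpha\rho$ --- from $\langle\alpha\rangle'\cap\cC$, shrinking it well below the dimension forced by M\"uger's formula. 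The Frobenius--Perron bookkeeping and the identification of $\langle\alpha\rangle$ with $\Rep(\ZZ_2)$ are routine.
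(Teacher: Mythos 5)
Your proof is correct, and it reaches the conclusion by a genuinely different route than the paper. The paper's own argument is a short $Y$-matrix computation: from $Y_{\lambda\mu}=\sum_\nu N_{\lambda\mu}^{\nu}\frac{\omega_\lambda\omega_\mu}{\omega_\nu}d\nu$ it reads off $Y_{\alpha,\alpha}=\omega_\alpha^2$ and $Y_{\alpha,\sigma_h}=2\omega_\alpha$, and then plugs these into the character identity $d_j^{-1}Y_{ij}Y_{kj}=\sum_m N_{ik}^m Y_{mj}$ of \cite{Re1989} applied to the single fusion rule $[\sigma_h]^2=[\tu]+[\alpha]+[\sigma_{|2h|}]$, obtaining $4\omega_\alpha^2=1+\omega_\alpha^2+2\omega_\alpha$, whose only root of modulus one is $\omega_\alpha=1$. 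You instead extract the same local input in a more structural form --- the $\alpha$-monodromy of $\sigma_g$ equals $\omega_\alpha^{-1}$ by the balancing axiom on the simple object $\alpha\otimes\sigma_g\cong\sigma_g$ (note $Y_{\alpha,\sigma_g}=2\omega_\alpha$ is precisely this monodromy scalar in trace form) --- and then rule out $\omega_\alpha\neq1$ by the global constraint $\Dim(\langle\alpha\rangle)\cdot\Dim(\langle\alpha\rangle'\cap\cC)=\Dim(\cC)$ from \cite{Mg2003-MC}. What your route buys is that, once $\omega_\alpha=1$, the same dimension count identifies $\langle\alpha\rangle'\cap\cC$ as the even part $\langle\tu,\alpha,\sigma_g:g\in G_+\rangle$, a fact the paper effectively reuses later (cf.\ Proposition \ref{prop:RepDihedral}); what the paper's route buys is brevity and independence from the fusion rules for $\rho$. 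One caveat applies equally to both arguments: they require at least one object $\sigma_g$, i.e.\ $|G|>1$ (for $|G|=1$ the fusion rules \eqref{eq:genMPFusionRules} are those of a pointed rank-four category and the statement as literally phrased can fail, e.g.\ for a twisted double of $\ZZ_2$), but this degenerate case is clearly outside the paper's intent. Your Frobenius--Perron bookkeeping, the closure argument expelling $\rho$ and $\alpha\rho$ from the centralizer via $[\rho]^2$, and the identification of $\langle\alpha\rangle$ with $\Rep(\ZZ_2)$ together with its uniqueness are all sound.
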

This statement has been proven for $G=\ZZ_n$ in \cite{ArChRoWa2016}.
\begin{proof}
  From the fusion rules it follows that $\langle \alpha\rangle$ is 
  braided equivalent to $\cC(\ZZ_2,q)$ with $q(g)=\omega_\alpha^{g^2}$ 
  where $\omega_\alpha$ is the twist of $[\alpha]$ and $\omega_\alpha^4=1$.

  From $Y_{\alpha,\sigma}
    =\sum N_{\alpha,\sigma}^\rho \frac{\omega_\alpha\omega_\sigma}{\omega_\rho}
    d\rho$
  we get $Y_{\alpha,\alpha}=\omega_a^2$ and $Y_{\alpha,\sigma}=2\omega_\alpha$ 
  for every $\sigma=\sigma_h$.
  If we use that $Y_{\alpha,\bullet}$ are multiple of characters \cite{Re1989} 
  of the fusion rules, namely  $d_j^{-1}Y_{ij}Y_{kj}=\sum_m N_{ik}^m Y_{mj}$, 
  we get 
  $Y_{\sigma,\alpha}^2=Y_{\id,\alpha}+Y_{\alpha,\alpha}+Y_{\sigma',\alpha}$ 
  or equivalently 
  $4\omega_\alpha^2=1+\omega_\alpha^2+2\omega_\alpha$
  which has only the solution $\omega_\alpha=1$ fulfilling $\omega_\alpha^4=1$.
\end{proof}

\begin{lem}
  \label{lem:MPdeequivariantizationisTY}
  Let $\cC$ be a generalized metaplectic modular category for some odd abelian 
  group $G$, which has a unique Tannakian subcategory $\Rep(\ZZ_2)$ by 
  Lemma  \ref{lem:TrivialTwist}.
  Then $\cC_{\ZZ_2}$ is a Tambara--Yamagami category based on $G$ and 
  and $\cC_{\ZZ_2}^0\cong \cC(G, q)$ for some non-degenerate quadratic form 
  $q$ on $G$.
\end{lem}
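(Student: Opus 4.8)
The plan is to use the Tannakian subcategory $\langle\alpha\rangle\cong\Rep(\ZZ_2)$ supplied by Lemma \ref{lem:TrivialTwist} and to read off the structure of $\cC_{\ZZ_2}$, the de-equivariantization of $\cC$ by $\langle\alpha\rangle$, which by the de-equivariantization formalism recalled in Subsection \ref{ssec:QSystems} is a $\ZZ_2$-crossed braided extension of the unitary modular tensor category $\cC_{\ZZ_2}^0$. The first step is to fix the $\ZZ_2$-grading on $\cC$ itself given by the monodromy with $\alpha$, with trivial component $\cC_0:=\langle\alpha\rangle'\cap\cC$, so $\cC=\cC_0\oplus\cC_1$. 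This grading is faithful since $\cC$ is non-degenerate (hence $\alpha$ is not transparent), so $\Dim\cC_0=\Dim\cC_1=\tfrac12\Dim\cC=2|G|$, where I use $d\sigma_g=2$ and $d\rho=\sqrt{|G|}$, which are forced by the fusion rules \eqref{eq:genMPFusionRules}. Because $\{[\rho],[\alpha\rho]\}$ alone contribute $2|G|$ to a single graded piece while $1,\alpha$ lie in $\cC_0$, we get $\cC_1=\langle\rho,\alpha\rho\rangle$ and $\cC_0=\langle 1,\alpha,\sigma_g:g\in G_+\rangle$; moreover $\alpha\otimes\sigma_g\cong\sigma_g$ by \eqref{eq:genMPFusionRules}, whereas $1\not\cong\alpha$ and $\rho\not\cong\alpha\rho$.

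Next I would apply the free-module (induction) functor $F\colon\cC\to\cC_{\ZZ_2}$ associated with $\Theta=1\oplus\alpha$, which preserves dimensions and carries $\cC_\varepsilon$ into the degree-$\varepsilon$ component of $\cC_{\ZZ_2}$. The free $\alpha$-orbits $\{1,\alpha\}$ and $\{\rho,\alpha\rho\}$ give simple objects: the tensor unit, and a single simple $\tilde\rho$ with $d\tilde\rho=d\rho=\sqrt{|G|}$, which exhausts the degree-$1$ part (of global dimension $|G|$). Each $\alpha$-fixed $\sigma_g$ splits, $F(\sigma_g)=\sigma_g^+\oplus\sigma_g^-$, with $d\sigma_g^\pm=1$. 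Hence $\Irr(\cC_{\ZZ_2})=P\sqcup\{\tilde\rho\}$, where $P:=\{1\}\cup\{\sigma_g^\pm:g\in G_+\}$ is a group of invertibles of order $1+2|G_+|=|G|$ which coincides with the local part $\cC_{\ZZ_2}^0$; in particular $P$ is abelian of odd order. The grading together with Frobenius reciprocity forces $p\otimes\tilde\rho=\tilde\rho\otimes p=\tilde\rho$ for $p\in P$ and $\tilde\rho\otimes\tilde\rho=\bigoplus_{p\in P}p$ (using $\bar{\tilde\rho}=\tilde\rho$, since $\tilde\rho$ is the only object in degree $1$, which is its own inverse in $\ZZ_2$). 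So $\cC_{\ZZ_2}$ has Tambara--Yamagami fusion rules with underlying group $P$, hence by \cite{TaYa1998} is a Tambara--Yamagami category; and since $\cC_{\ZZ_2}^0=P$ is pointed and modular, it is braided equivalent to $\cC(P,q)$ for a non-degenerate quadratic form $q$ on $P$.

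It remains to identify $P$ with $G$. The residual $\widehat{\ZZ_2}$-action coming from the de-equivariantization restricts to a group automorphism $j$ of $P$ whose only fixed point is $1$ and which sends $\sigma_g^+$ to $\sigma_g^-$; on a finite abelian group of odd order an involutive automorphism with a unique fixed point is necessarily inversion, so $\bar{\sigma_g^+}=\sigma_g^-$ and $\sigma_g^\pm\mapsto g$ identifies $P/\{\pm1\}$ with $G_+\cup\{0\}$. Pushing the $\cC$-fusion rules $\sigma_g\otimes\sigma_h=\sigma_{|g+h|}\oplus\sigma_{|g-h|}$ (for $g\neq h$) and $\sigma_g\otimes\sigma_g=1\oplus\alpha\oplus\sigma_{|2g|}$ through $F$ shows that this bijection intertwines the $\pm1$-quotiented addition of $G$ with that of $P$. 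The main obstacle is the purely group-theoretic fact that a finite abelian group of odd order is recovered from this quotiented addition datum; I would prove it by passing to $p$-primary components and observing that, for each $k$, the number of classes in $P/\{\pm1\}$ annihilated by $p^k$ (extracted from iterated hyper-addition) agrees with the corresponding count for $G$, and that these counts determine an abelian $p$-group. Given $P\cong G$, the lemma follows: $\cC_{\ZZ_2}$ is a Tambara--Yamagami category based on $G$ and $\cC_{\ZZ_2}^0\cong\cC(G,q)$.
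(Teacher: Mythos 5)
Your argument is correct, and its first two thirds (dimension bookkeeping for the monodromy grading, the free-module functor splitting $\sigma_g\otimes\theta$ into two invertibles $\sigma_g^{\pm}$ while keeping $\rho\otimes\theta$ simple of dimension $\sqrt{|G|}$, and the Frobenius-reciprocity derivation of the Tambara--Yamagami fusion rules) match the paper's proof, which carries out the same free-module analysis more tersely. Where you genuinely diverge is in identifying the group $P$ of invertibles with $G$. The paper decomposes $G=\bigoplus_i\langle g_i\rangle$ into cyclic factors of odd order $n_i$ and shows that the subcategory generated by $\sigma_{g_i}\otimes\theta$ is pointed of rank $n_i$; self-duality of $\sigma_{g_i}$ plus oddness of $n_i$ forces $\beta_{g_i}^{+}$ and $\beta_{g_i}^{-}$ to be mutually inverse, so each factor contributes a cyclic group of order exactly $n_i$, and a counting argument assembles these into $G$. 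You instead use the residual $\widehat{\ZZ_2}$-action to see that inversion on $P$ has unique fixed point $\tu$, identify $P/\{\pm1\}$ with $G/\{\pm1\}$ as hypergroups via the image of the fusion rules under $F$, and then recover the group from the hypergroup by order statistics on $p$-primary parts. Both work; the paper's route leans directly on the structure theorem for $G$ and avoids the recovery problem entirely, while yours isolates the genuinely delicate point --- that the fusion functor only determines products $\sigma_g^{\varepsilon}\sigma_h^{\delta}$ up to the sign ambiguity --- and resolves it by a self-contained counting argument. The one place you should tighten your write-up is the phrase ``number of classes annihilated by $p^k$ extracted from iterated hyper-addition'': the cleanest formulation is that the sub-hypergroup of $P/\{\pm1\}$ generated by $[\sigma_g^{+}]$ has $(\mathrm{ord}(\sigma_g^{+})+1)/2$ elements, so the hypergroup isomorphism forces $\mathrm{ord}_P(\sigma_g^{+})=\mathrm{ord}_G(g)$ for every $g$, and a finite abelian group of odd order is determined up to isomorphism by its multiset of element orders. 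With that made explicit, your proof is complete.
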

\begin{proof}
  Let $\Theta$ be the (unique) Q-system with $\theta=[\id]\oplus [\alpha]$.
  Since $\alpha\otimes \sigma_h\cong\sigma_h$ we have that each free module 
  $\sigma_{h}\otimes \theta\in \cC_\Theta$ splits into two modules 
  $\beta_{ h}^\pm$.
  Let us write $G=\bigoplus_i \langle g_i \rangle$ with $g_i \in G_+$ of order 
  $n_i$. 
  Let $\cF_i\subset \cC_\Theta$ be the full subcategory 
  $\langle  \sigma_{g_i}\otimes\theta\rangle = \langle\beta_{g_i}^\pm\rangle$
  generated by $\sigma_{g_i}\otimes\theta$.
  It follows from the fusion rules that $\cF_i$ is a pointed category of rank 
  $1+2(n_i-1)/2=n_i$ and since the rank is odd and 
  $1 \prec \sigma_{g_i}\sigma_{g_i}$ it follows that  
  $\beta^+_{g_i}$ and  $\beta^-_{g}$ are each others inverses. 
  Thus $\cF_i=\langle \beta^+_{g_i}\rangle$ and $\cF=\bigoplus_i\cF_i$ has 
  $G$ fusion rules.

  The free $\Theta$-module $\tilde \rho=\rho\otimes \theta$ is irreducible and
  has dimension $\sqrt{|G|}$.
  From the fusion rules follows that
  $\tilde\rho\otimes_\Theta\tilde\rho\cong\bigoplus_{\alpha\in\Irr(\cF)} \alpha$.
  Since $\cC_\Theta^0=\cF$ is modular it must be braided equivalent to 
  $\cC(G,q)$ for a non-degenerate quadratic form $q$. 
\end{proof}

\begin{prop}
  \label{prop:FSalpha}
  The generating objects $[\rho]=[\rho^{(0,0)}_\pm]$
  and $[\alpha\rho]=[\rho_\pm^{(0,1)}]$ in $\MP(G,\bislotslot,\pm)$ 
  have Frobenius-Schur indicator given by $\nu_{\rho}=\nu_{\alpha\rho}=\pm1$,
  respectively.
\end{prop}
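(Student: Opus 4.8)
The plan is to evaluate $\nu_{[\rho]}$ directly by Bantay's formula \eqref{eq:Bantay} inside $\MP(G,\bislotslot,\pm)$, using the fusion rules and modular data recorded in Subsection~\ref{ssec:ModularDataMP} (with $n=|G|$); equivalently, since $\MP(G,\bislotslot,\pm)=\cC(G,\bar q)'\cap Z(\TY(G,\bislotslot,\pm))$ is a full ribbon subcategory of the center, the same computation may be run with the data \eqref{eq:TYMD}. First I would read off, using $[\bar\rho]=[\rho]$ and Frobenius reciprocity, the ordered pairs $(\sigma,\tau)$ of simple objects with $N_{\sigma,\tau}^{[\rho]}\neq0$, where $[\rho]=[\rho^{(0,0)}_\pm]$: these are exactly $([1],[\rho])$, $([\rho],[1])$, $([\alpha],[\alpha\rho])$, $([\alpha\rho],[\alpha])$, together with $([\rho],[\sigma_g])$, $([\sigma_g],[\rho])$, $([\alpha\rho],[\sigma_g])$, $([\sigma_g],[\alpha\rho])$ for each $g\in G_+$, all of multiplicity one.

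Next I would substitute the $S$-entries $S_{[1],[1]}=S_{[1],[\alpha]}=\tfrac1{2\sqrt n}$, $S_{[1],[\rho]}=S_{[1],[\alpha\rho]}=\tfrac12$, $S_{[1],[\sigma_g]}=\tfrac1{\sqrt n}$ and the twists $\omega_{[1]}=\omega_{[\alpha]}=1$, $\omega_{[\rho]}=\omega_0$, $\omega_{[\alpha\rho]}=-\omega_0$, $\omega_{[\sigma_g]}=\overline{\bi gg}$ (the phase $\e^{-\pi\ima c_{\mathrm{top}}/12}$ in $T$ drops out of the ratios $T_{\sigma,\sigma}^2/T_{\tau,\tau}^2$, which therefore equal $\omega_\sigma^2/\omega_\tau^2$). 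Collecting all terms and using $\bi 00=1$ and $\bi{-g}{-g}=\bi gg$, the sum reorganizes to
\begin{align}
  \nu_{[\rho]} &= \frac{1}{2\sqrt n}\Bigl(\omega_0^{2}\,\Gamma+\omega_0^{-2}\,\overline\Gamma\Bigr),\qquad \Gamma:=\sum_{g\in G}\bi gg^{2}.
\end{align}
Since $\omega_{[\alpha\rho]}=-\omega_0$ has the same square as $\omega_0$, the same reorganization gives $\nu_{[\alpha\rho]}=\nu_{[\rho]}$, so it suffices to evaluate the right-hand side.

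The crux is the Gauss-sum identity $\Gamma=\sqrt n\,\overline{\hat a(0)}$. I would first unwind the defining relations of the auxiliary function $a$: from $a(g)a(h)=\bi gh\,a(g+h)$ and $a(-g)=a(g)$ one gets $a(0)=1$ and $a(g)a(-g)=\bi g{-g}=\overline{\bi gg}$, hence $a(g)^{2}=\overline{\bi gg}=q(g)$; this gives $a(2g)=a(g)^{2}\bi gg^{-1}=a(g)^{4}$ and, by induction, $a(ng)=a(g)^{n^{2}}$, so $a$ is a non-degenerate quadratic form on $G$ with $\partial a=\bislotslot$. Therefore $\bi gg^{2}=\overline{a(g)^{4}}=\overline{a(2g)}$, and since $|G|$ is odd the substitution $h=2g$ is a bijection of $G$, so $\Gamma=\overline{\sum_{h\in G}a(h)}=\sqrt n\,\overline{\hat a(0)}$ (recall $\hat a(0)=\tfrac1{\sqrt n}\sum_{h\in G}a(h)$); moreover $|\hat a(0)|=1$ because the quadratic Gauss sum of the non-degenerate metric group $(G,a)$ has modulus $\sqrt{|G|}$. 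Since $\omega_0^{2}=\pm\hat a(0)$ by definition, with the sign being the label of $\MP(G,\bislotslot,\pm)$, we get $\omega_0^{2}\Gamma=\pm\sqrt n$ and $\omega_0^{-2}\overline\Gamma=\pm\sqrt n$ with the same sign, hence $\nu_{[\rho]}=\nu_{[\alpha\rho]}=\tfrac1{2\sqrt n}(\pm\sqrt n\pm\sqrt n)=\pm1$.

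I expect this last paragraph — showing that $a$ is a genuine non-degenerate quadratic form with $a^{2}=q$, and then recognizing $\sum_{g}\bi gg^{2}$ as the conjugate Gauss sum $\sqrt n\,\overline{\hat a(0)}$ — to be the only non-routine part; everything else is bookkeeping with the fusion rules and the $S$- and $T$-entries. A shortcut that avoids it: under $Z(\TY(G,\bislotslot,\pm))\cong\MP(G,\bislotslot,\pm)\boxtimes\cC(G,\bar q)$ the object $[\rho^{(0,0)}_\pm]$ corresponds to $[\rho^{(0,0)}_\pm]\boxtimes\tu$, which the forgetful ribbon functor $Z(\TY(G,\bislotslot,\pm))\to\TY(G,\bislotslot,\pm)$ sends to $\rho_\pm$; ribbon functors preserve Frobenius--Schur indicators, so $\nu_{[\rho]}$ equals the Frobenius--Schur indicator of $\rho_\pm$ in $\TY(G,\bislotslot,\pm)$, which is $\pm1$ by the Tambara--Yamagami classification \cite{TaYa1998}. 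I would nonetheless present the Bantay computation, since it stays internal to the modular data already displayed.
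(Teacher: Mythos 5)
Your argument is correct, but your primary route is genuinely different from the paper's. The paper's proof is a one-liner: the $\alpha$-induction (equivalently, the image under the forgetful functor $Z(\TY(G,\bislotslot,\pm))\to\TY(G,\bislotslot,\pm)$) of $\rho$ is the generating object of the Tambara--Yamagami category, standard solutions of the conjugate equations are transported along this functor (citing \cite[Lemma 2.2]{Re2000}), and the Frobenius--Schur indicator of the generator is the sign $\pm$ by the Tambara--Yamagami classification --- this is exactly the ``shortcut'' you sketch in your last paragraph, so you have in effect reproduced the paper's proof as a corollary of your longer one. Your main computation via Bantay's formula checks out in every detail I verified: the list of pairs $(\sigma,\tau)$ with $N_{\sigma,\tau}^{[\rho]}\neq0$ is complete and all multiplicities are one, the $S$- and $T$-entries are read off correctly from the data in Subsection~\ref{ssec:ModularDataMP}, the reduction to $\nu_{[\rho]}=\tfrac1{2\sqrt n}(\omega_0^2\Gamma+\omega_0^{-2}\overline\Gamma)$ is right (and correctly yields $\nu_{[\alpha\rho]}=\nu_{[\rho]}$ since $\omega_{[\alpha\rho]}^2=\omega_0^2$), and the Gauss-sum identity $\Gamma=\sum_g\bi gg^2=\sqrt n\,\overline{\hat a(0)}$ follows as you say from $a(g)^2=q(g)$, $a(2g)=a(g)^4$, and the bijectivity of doubling on a group of odd order. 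What the paper's approach buys is brevity and independence from the explicit modular data; what yours buys is a computation entirely internal to the displayed $(S,T)$ of $\MP(G,\bislotslot,\pm)$, which simultaneously confirms the self-consistency of Izumi's data and avoids invoking the transport of standard solutions under $\alpha$-induction. Indeed, the paper itself flags your route as a legitimate alternative in the remark following Proposition~\ref{prop:PointedZ2Orbifold} (``One can also check, e.g.\ by inspecting the modular data\dots''), so both arguments are sanctioned; yours just carries out the one the paper leaves implicit.
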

\begin{proof} 
  The $\alpha$-induction of the object $\rho$ is the generating object in the 
  Tambara--Yamagami category whose Frobenius--Schur indicator gives the sign
  $\pm$ of $\TY(G,\bislotslot,\pm)$, therefore it is enough to show that 
  $\nu_{\rho}=\nu_{\alpha^+_\rho}$.
  But if $\bar\rho=\rho$ and $\bar R_\rho=\nu_\rho R_\rho$ is a standard 
  solution, we have that $(R,\bar R)$ gives a standard solution for 
  $\alpha^+_\rho$ as for example in \cite[Lemma 2.2]{Re2000}.
\end{proof}

Let $(G,{\bislotslot})$ with $G$ a finite abelian group
and ${\bislotslot}$ a bicharacter on $G$.
We say $(G,{\bislotslot})$ and $(G',{\bislotslot}')$ are equivalent
if there is a group isomorphism $\phi\colon G \to G'$, such that
${\bi{\phi(g)}{\phi(h)}}'=\bi gh$ for all $g,h\in G$.
The same way, we say two bicharacters $\bislotslot$ and ${\bislotslot}'$ 
if there is a group automorphism $\phi$ of $G$,
such that
${\bi{\phi(g)}{\phi(h)}}'=\bi gh$ for all $g,h\in G$.

We have proven that every generalized metaplectic modular category $\cC$ is braided equivalent 
to $\MP(G,\bislotslot,\nu)$, where $\nu$ is the Frobenius--Schur indicator of
any irreducible generating object and $[(G,\bislotslot)]$ is fixed by 
either $Z(\cC_{\ZZ_2})$ being braided equivalent to 
$\cC\boxtimes \cC(G,\bar{q})$
or equivalently, 
by $\cC^0_{\ZZ_2}$ being braided equivalent to $\cC(G,q)$, 
where $\bar q(g)=\bi gg$. Therefore we have proven:
\begin{thm}
  \label{thm:MPclass1}
  Let $G$ be an abelian group of odd order. 
  Generalized metaplectic modular categories based on $G$ up to 
  braided equivalence are in one-to-one correspondence
  with pairs $([\bislotslot],\nu)$, where
  $[\bislotslot]$ is the equivalence class of a non-degenerate symmetric 
  bicharacter on $G$ and $\nu\in\{\pm\}$. 

  The correspondence is given by associating $\MP(G,\bislotslot,\pm)$ 
  with $([\bislotslot],\pm)$, respectively.
\end{thm}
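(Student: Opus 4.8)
The plan is to assemble the pieces already in place: Subsection~\ref{ssec:ModularDataMP}, Lemmas~\ref{lem:TrivialTwist} and~\ref{lem:MPdeequivariantizationisTY}, and Propositions~\ref{prop:TYCentral} and~\ref{prop:FSalpha}. First I would check that the assignment $([\bislotslot],\pm)\mapsto\MP(G,\bislotslot,\pm)$ really lands among generalized metaplectic modular categories based on $G$. This is exactly Subsection~\ref{ssec:ModularDataMP}: the objects and the modular data computed there show that the fusion rules of $\MP(G,\bislotslot,\pm)$ are~\eqref{eq:genMPFusionRules}, and $\MP(G,\bislotslot,\pm)$ is a unitary modular tensor category, being the M\"uger centralizer of a subcategory inside the modular category $Z(\TY(G,\bislotslot,\pm))$.

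Next I would show surjectivity. Let $\cC$ be a generalized metaplectic modular category based on $G$. By Lemma~\ref{lem:TrivialTwist} the subcategory $\langle\alpha\rangle$ is the unique Tannakian $\Rep(\ZZ_2)\subset\cC$; by Lemma~\ref{lem:MPdeequivariantizationisTY} the de-equivariantization $\cC_{\ZZ_2}$ is tensor equivalent to a Tambara--Yamagami category $\TY(G,\bislotslot,\nu)$ for a non-degenerate symmetric bicharacter $\bislotslot$ and a sign $\nu$, with $\cC^0_{\ZZ_2}\cong\cC(G,q)$, $q(g)=\bi gg^{-1}$. Passing back by equivariantization, $\cC$ is recovered as the M\"uger centralizer of $\rev{\cC^0_{\ZZ_2}}\cong\cC(G,\bar q)$ ($\bar q(g)=\bi gg$) inside $Z(\cC_{\ZZ_2})$. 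Since $|G|$ is odd, $\TY(G,\bislotslot,\nu)$ has a unique pointed fusion subcategory of rank $|G|$, namely its full pointed part $\langle\rho_g^0\rangle\cong\cC(G,\bar q)$, so this central embedding is the one of Proposition~\ref{prop:TYCentral} and its centralizer is precisely $\MP(G,\bislotslot,\nu)$. Hence every $\cC$ as in the theorem is braided equivalent to some $\MP(G,\bislotslot,\nu)$, and it only remains to see that the pair $([\bislotslot],\nu)$ is determined by the braided equivalence class.

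For this I would note that both entries are braided invariants that can be read back off. The sign $\nu$ is the Frobenius--Schur indicator of either generating object by Proposition~\ref{prop:FSalpha}. The class $[\bislotslot]$ is recovered from the canonically attached unitary modular tensor category $\cC^0_{\ZZ_2}\cong\cC(G,q)$: as recalled in the preliminaries, $\cC(G,q)$ and $\cC(G,q')$ are braided equivalent if and only if the metric groups $(G,q)$ and $(G,q')$ are isomorphic, and since $|G|$ is odd the correspondence $\bislotslot\leftrightarrow q$ with $q(g)=\bi gg^{-1}$ is an $\Aut(G)$-equivariant bijection between non-degenerate symmetric bicharacters and non-degenerate quadratic forms on $G$; thus $[q]$ and $[\bislotslot]$ determine one another. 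Together with the previous step this gives the asserted one-to-one correspondence $\MP(G,\bislotslot,\pm)\leftrightarrow([\bislotslot],\pm)$.

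The step I expect to need the most care is the identification, inside the surjectivity argument, of the central functor $\rev{\cC^0_{\ZZ_2}}\to Z(\cC_{\ZZ_2})$ coming from the equivariantization with the functor $\tilde\iota$ of Proposition~\ref{prop:TYCentral}: one must see that, after the tensor identification $\cC_{\ZZ_2}\cong\TY(G,\bislotslot,\nu)$, these two central functors have the same (repletion of) image and hence yield the same M\"uger centralizer. The odd order of $G$ is exactly what makes this routine, since it forces the image to be the unique pointed subcategory of rank $|G|$.
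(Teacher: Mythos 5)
Your proposal is correct and follows essentially the same route as the paper: existence via the construction of $\MP(G,\bislotslot,\pm)$ as a M\"uger centralizer in $Z(\TY(G,\bislotslot,\pm))$, surjectivity via Lemmas \ref{lem:TrivialTwist} and \ref{lem:MPdeequivariantizationisTY} together with the uniqueness of the pointed rank-$|G|$ subcategory from Proposition \ref{prop:TYCentral}, and injectivity by recovering $\nu$ as the Frobenius--Schur indicator (Proposition \ref{prop:FSalpha}) and $[\bislotslot]$ from the braided invariant $\cC_{\ZZ_2}^0\cong\cC(G,q)$. The point you flag as delicate (matching the equivariantization's central functor with $\tilde\iota$ via oddness of $|G|$) is exactly the content of the last part of Proposition \ref{prop:TYCentral}, so nothing is missing.
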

\begin{rmk}
  Let $\cC$ be a generalized metaplectic modular category based on an 
  abelian group $G$ of odd order. 
  Then $[\bi\slot\slot]$ can be recovered from  $q(g)=\bi gg^{-1}$
  for $q$ a quadratic form on $G$, such that $\cC_{\ZZ_2}^0$ 
  is braided equivalent to $\cC(G,q)$. 
  The sign is given by the Frobenius--Schur indicator of the generating object 
  as in Proposition \ref{prop:FSalpha}.
\end{rmk}
Therefore the proof of Theorem \ref{thm:AllOddTYs} shows:
\begin{thm} 
  \label{thm:MPRealization}
  All generalized metaplectic modular categories (see Definition 
    \ref{defi:GeneralizedMetaplecticModularCategory})
  are realized by $\ZZ_2$-orbifolds of conformal nets associated with lattices.
\end{thm}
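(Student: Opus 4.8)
The plan is to read this statement off from the classification Theorem~\ref{thm:MPclass1} together with the lattice construction already performed in the proof of Theorem~\ref{thm:AllOddTYs}. First I would invoke Theorem~\ref{thm:MPclass1}: an arbitrary generalized metaplectic modular category is braided equivalent to some $\MP(G,\bislotslot,\nu)$, with $G$ a finite abelian group of odd order, $\bislotslot$ a non-degenerate symmetric bicharacter on $G$, and $\nu\in\{+,-\}$, so it suffices to realize each $\MP(G,\bislotslot,\nu)$ by a $\ZZ_2$-orbifold of a lattice net. Put $q(g)=\bi gg^{-1}$. By Theorem~\ref{thm:AllPointed} there is an even positive lattice $L_+$ with $\Rep(\A_{L_+})$ braided equivalent to $\cC(G,q)$, and $L_-:=L_+\lattimes E_8$ also realizes $\cC(G,q)$ since $\A_{E_8}$ is holomorphic. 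By Proposition~\ref{prop:Reflection2} the reflection of $L_+$ gives a proper $\ZZ_2$-action on $\A_{L_+}$ with $\tRep{\ZZ_2}(\A_{L_+})$ equivalent to $\TY(G,\bislotslot,\nu_0)$ for one sign $\nu_0$, and by Proposition~\ref{prop:PointedZ2Orbifold} there is a proper $\ZZ_2$-action on $\A_{L_-}=\A_{L_+}\otimes\A_{E_8}$ with $\tRep{\ZZ_2}(\A_{L_-})$ equivalent to $\TY(G,\bislotslot,-\nu_0)$. Thus, choosing $L\in\{L_+,L_-\}$ appropriately, we obtain a lattice net with a proper $\ZZ_2$-action such that $\tRep{\ZZ_2}(\A_L)\cong\TY(G,\bislotslot,\nu)$ while $\Rep(\A_L)\cong\cC(G,q)$.

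It then remains to identify $\Rep(\A_L^{\ZZ_2})$ with $\MP(G,\bislotslot,\nu)$. Since $\A_L$ is completely rational and the $\ZZ_2$-action is proper, $\A_L^{\ZZ_2}$ is completely rational and $\Rep(\A_L^{\ZZ_2})$ is a unitary modular tensor category; by the orbifold characterization it is braided equivalent to the M\"uger centralizer of $\rev{\Rep(\A_L)}\cong\cC(G,\bar q)$ in $Z(\tRep{\ZZ_2}(\A_L))=Z(\TY(G,\bislotslot,\nu))$, which by Proposition~\ref{prop:TYCentral} and the computation of Subsection~\ref{ssec:ModularDataMP} has exactly the fusion rules of $\MP(G,\bislotslot,\nu)$. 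In particular $\Rep(\A_L^{\ZZ_2})$ is a generalized metaplectic modular category based on $G$, so by Theorem~\ref{thm:MPclass1} it is braided equivalent to $\MP(G,\bislotslot',\nu')$ for a unique class $([\bislotslot'],\nu')$, and I would pin down the invariants as follows. The sign $\nu'$ is, by Proposition~\ref{prop:FSalpha}, the Frobenius--Schur indicator of a generating object of $\Rep(\A_L^{\ZZ_2})$; since $\alpha^+$-induction preserves Frobenius--Schur indicators this equals the indicator of the generating object of $\tRep{\ZZ_2}(\A_L)=\TY(G,\bislotslot,\nu)$, i.e.\ $\nu'=\nu$. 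The class $[\bislotslot']$ is recovered (Theorem~\ref{thm:MPclass1} and the remark following it) from the quadratic form $q'$ with $\Rep(\A_L^{\ZZ_2})_{\ZZ_2}^0$ braided equivalent to $\cC(G,q')$; but $\Rep(\A_L^{\ZZ_2})_{\ZZ_2}^0=\Rep(\A_L)\cong\cC(G,q)$ by the orbifold identity $\Rep(\A)=\Rep(\A^G)_G^0$, so $q'=q$ and $[\bislotslot']=[\bislotslot]$. Hence $\Rep(\A_L^{\ZZ_2})$ is braided equivalent to $\MP(G,\bislotslot,\nu)$, and letting $([\bislotslot],\nu)$ run over all pairs gives, via Theorem~\ref{thm:MPclass1}, every generalized metaplectic modular category.

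The step I expect to need the most care is the identification in the second paragraph, namely making sure the abstract classification recognizes $\Rep(\A_L^{\ZZ_2})$ as $\MP(G,\bislotslot,\nu)$ itself and not merely as some Witt-equivalent category. Concretely, one must check that the central structure on $\rev{\Rep(\A_L)}\hookrightarrow\tRep{\ZZ_2}(\A_L)$ coming from the orbifold theory agrees, up to the relevant equivalences, with the central functor $\iota$ of Proposition~\ref{prop:TYCentral} used there to define $\MP(G,\bislotslot,\nu)$ as a M\"uger centralizer (both functors are injective onto the unique maximal pointed subcategory of the Tambara--Yamagami category, so this reduces to comparing lifts to the Drinfel'd center), and that all the de-equivariantizations above are taken with respect to the unique Tannakian subcategory $\Rep(\ZZ_2)$ furnished by Lemma~\ref{lem:TrivialTwist}. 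Once this bookkeeping is settled the theorem follows, and the extra features of the Theorem~\ref{thm:AllOddTYs} construction (lattices of rank divisible by $8$, diagonal self-dual gluings) carry over unchanged.
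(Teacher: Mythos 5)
Your proposal is correct and follows essentially the same route as the paper, which derives Theorem \ref{thm:MPRealization} directly from Theorem \ref{thm:MPclass1} together with the lattice construction in the proof of Theorem \ref{thm:AllOddTYs} (via Theorem \ref{thm:AllPointed}, Proposition \ref{prop:Reflection2}, and Proposition \ref{prop:PointedZ2Orbifold}). The identification of $\Rep(\A_L^{\ZZ_2})$ with $\MP(G,\bislotslot,\nu)$ as the M\"uger centralizer of $\rev{\Rep(\A_L)}$ in $Z(\tRep{\ZZ_2}(\A_L))$, and the bookkeeping of the invariants $([\bislotslot],\nu)$, is exactly what the paper leaves implicit, and your concern about matching central structures is already disposed of by the uniqueness statement in Proposition \ref{prop:TYCentral}.
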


We have a decomposition into Sylow groups 
\begin{align}
  (G,\bislotslot) =\bigoplus_p (A_p,{\bislotslot}_p) 
   \quad\Longleftrightarrow \quad
  (G,q) =\bigoplus_p (A_p,q_p)
\end{align}
over primes $p>2$.
Each $(A_p,q_p)$ 
can be written as a direct sum of $(\ZZ_{p^k},q_{p^k,\pm})$ following 
\cite[Theorem 2.1]{BaJo2015}, \cf \cite{Wa1963,Ni1979}
where the two quadratic forms are given by $q_{p^k,\pm}(x) = \exp(2\pi \ima a x^2/p^k)$ for some $a$ with Jacobi symbol 
$( \frac ap )=\pm 1$, respectively.
Since with $k\geq 1$ the only relation \cite[Proposition 1.8.1 and 1.8.2]{Ni1979} is  
\begin{align}
  (\ZZ_{p^k},q_{p^k,\theta})\oplus
  (\ZZ_{p^k},q_{p^k,\theta})
  \cong (\ZZ_{p^k},q_{p^k,\theta'})\oplus
  (\ZZ_{p^k},q_{p^k,\theta'})
\end{align}
it follows that on  $\ZZ_{p^k}^n$ for $n\geq 1$ there are two isomorphism classes of metric groups:
$(\ZZ_{p^k}^n,q_{p^k,+})^{\oplus n}$ and $(\ZZ_{p^k}^n,q_{p^k,+})^{\oplus (n-1)}\oplus(\ZZ_{p^k}^n,q_{p^k,-})$.
Therefore, for fixed $G$ with 
\begin{align}
  G\cong \bigoplus_{i=1}^k \ZZ^{n_i}_{q_i} 
  \label{eq:oddAbelianGroup}
\end{align}
where $q_1,\ldots,q_k$ are distinct prime powers and 
$(n_1,\ldots,n_k)\in \NN^k$, 
there are exactly $2^k$ isomorphism classes of metric groups or equivalently 
bicharacters %
based on $G$.
Together, we have:
\begin{thm}
  \label{thm:MPclass2}
  Let $G$ be a finite abelian group of odd order with the decomposition into 
  $k$ summands as in (\ref{eq:oddAbelianGroup}), then there are $2^{k+1}$ 
  braided equivalence classes of generalized metaplectic modular categories 
  based on $G$.
\end{thm}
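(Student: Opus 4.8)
The plan is to reduce the count to a classification of non-degenerate symmetric bicharacters on $G$ and then invoke the classical structure theory of finite quadratic forms on odd $p$-groups. By Theorem~\ref{thm:MPclass1}, the braided equivalence classes of generalized metaplectic modular categories based on $G$ are in bijection with pairs $([\bislotslot],\nu)$, $\nu\in\{\pm\}$, where $[\bislotslot]$ ranges over equivalence classes of non-degenerate symmetric bicharacters on $G$. So everything reduces to showing that there are exactly $2^k$ such classes; multiplying by the two values of $\nu$ then yields $2^{k+1}$, as claimed.

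First I would translate the bicharacter count into a count of metric groups. Since $|G|$ is odd, $q(g)=\bi gg^{-1}$ (with inverse $\bi gh=\partial q(g,h)^{\frac12(\Exp(G)+1)}$, as recalled above) is an $\Aut(G)$-equivariant bijection between non-degenerate symmetric bicharacters and non-degenerate quadratic forms on $G$, so equivalence classes of bicharacters on $G$ correspond to isomorphism classes of metric groups $(G,q)$. Next I would split $(G,q)$ over the odd primes: any bicharacter on $\ZZ_{p^a}\times\ZZ_{p'^b}$ with $p\neq p'$ is trivial, so $\partial q(g,h)=1$ whenever $g,h$ lie in distinct Sylow subgroups; consequently $q(g_1+g_2)=q(g_1)q(g_2)$ across Sylow subgroups, the Sylow decomposition $G=\bigoplus_p A_p$ is $\partial q$-orthogonal with $\partial q$ non-degenerate on each $A_p$, and $(G,q)=\bigoplus_p (A_p,q_p)$ as metric groups. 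Within a fixed odd prime $p$, Nikulin's theory further forces $(A_p,q_p)$ to split orthogonally into its homogeneous components $(\ZZ_{p^a}^{m},\,\cdot\,)$; these components, taken over all $p$, are precisely the blocks $\ZZ_{q_i}^{n_i}$ of \eqref{eq:oddAbelianGroup}.

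It then remains to count non-degenerate quadratic forms on a single homogeneous group $\ZZ_{p^a}^{m}$ with $p$ odd and $m\geq 1$. By the Wall--Nikulin classification (\cite[Theorem 2.1]{BaJo2015}, \cite[Proposition 1.8.1 and 1.8.2]{Ni1979}, cf.\ \cite{Wa1963}), every such form is an orthogonal sum of the two rank-one forms $q_{p^a,\pm}$, the only relation among them is $q_{p^a,\theta}\oplus q_{p^a,\theta}\cong q_{p^a,\theta'}\oplus q_{p^a,\theta'}$, and the two surviving classes $q_{p^a,+}^{\oplus m}$ and $q_{p^a,+}^{\oplus(m-1)}\oplus q_{p^a,-}$ are genuinely distinct, separated e.g.\ by the Gauß sum \eqref{eq:GaussSum}. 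So each of the $k$ blocks contributes a factor $2$, giving $\prod_{i=1}^k 2=2^k$ isomorphism classes of metric groups on $G$, hence $2^k$ equivalence classes of bicharacters, and finally $2^{k+1}$ braided equivalence classes of generalized metaplectic modular categories. The main obstacle is exactly this homogeneous-block step: one must verify that a non-degenerate form on $G$ is forced to split orthogonally along the distinct prime powers $q_i$ — so that the blocks vary independently — and that each block carries precisely two classes, neither fewer nor more. Both are classical facts for odd $p$-groups, but they are the only non-formal input beyond Theorem~\ref{thm:MPclass1}, and genuine care is needed because the analogous assertions fail at the prime $2$.
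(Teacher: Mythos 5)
Your proposal is correct and follows essentially the same route as the paper: reduce via Theorem~\ref{thm:MPclass1} to counting equivalence classes of non-degenerate bicharacters, identify these with metric groups, split orthogonally over the odd Sylow subgroups and then into homogeneous blocks, and apply the Wall--Nikulin relation to get exactly two classes per block, hence $2^k$ metric groups and $2^{k+1}$ categories. Your added justifications (the $\Aut(G)$-equivariance of $q\leftrightarrow\bislotslot$, the orthogonality of the Sylow splitting, and the Gau{\ss} sum as a separating invariant for the two homogeneous classes) are correct elaborations of steps the paper leaves implicit.
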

We remember that the modular data $(S,T)$ up to equivalence is an invariant
of a unitary modular tensor categery $\cC$.
It is has been believed that the modular data is a complete invariant %
but while writing this paper a counterexample has appeared in a preprint 
\cite{MiSc2017pp}.
Nevertheless, for generalized metaplectic modular categories we have the 
following:
\begin{thm}
  \label{thm:MPST}
  The modular data up to equivalence is a complete invariant for 
  generalized metaplectic modular categories, \ie 
  the braided isomorphism class of an odd generalized metaplectic modular 
  category is determined by its modular data $(S,T)$.
\end{thm}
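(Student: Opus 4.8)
The plan is to reduce the statement to the classification in Theorem \ref{thm:MPclass1}. A generalized metaplectic modular category $\cC$ is braided equivalent to $\MP(G,\bislotslot,\nu)$ for a uniquely determined isomorphism class of the finite abelian group $G$, a uniquely determined equivalence class $[\bislotslot]$ of non-degenerate symmetric bicharacter on $G$, and a sign $\nu\in\{\pm\}$; so it suffices to reconstruct the data $(G,[\bislotslot],\nu)$ from the equivalence class of the modular data $(S,T)$, after which Theorem \ref{thm:MPclass1} forces any two generalized metaplectic modular categories with equivalent modular data to be braided equivalent.

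First I would recover the combinatorial skeleton. The rank $(|G|+7)/2$ gives $|G|$, and the fusion rules are recovered from $S$ by the Verlinde formula \eqref{eq:Verlinde}. Since $|G|$ is odd, the quantum dimensions $1,\sqrt{|G|},2$ occurring among the simple objects are pairwise distinct when $|G|>1$ (the case $|G|=1$ is a pointed modular category of rank $4$, covered by the classification of pointed modular categories), so the invertibles $[1],[\alpha]$, the generating objects $[\rho],[\alpha\rho]$ of dimension $\sqrt{|G|}$, and the objects $\{[\sigma_g]:g\in G_+\}$ of dimension $2$ are all distinguished intrinsically, with $[\alpha]$ the unique nontrivial invertible. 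In particular the fusion subring $\langle[1],[\alpha],[\sigma_g]:g\in G_+\rangle$ is isolated; by Subsection \ref{ssec:ModularDataMP} it is the fusion ring of $\Rep(\Dih(G))$, with $[\sigma_g][\sigma_h]=[\sigma_{|g+h|}]+[\sigma_{|g-h|}]$ and $[\sigma_g]^2=[1]+[\alpha]+[\sigma_{|2g|}]$ (reading $[\sigma_0]:=[1]+[\alpha]$).

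Next I would extract the sign and the quadratic form. By Proposition \ref{prop:FSalpha} the sign $\nu$ equals the Frobenius--Schur indicator $\nu_\rho$ of either generating object, and Bantay's formula \eqref{eq:Bantay} computes $\nu_\rho$ from $S$, the fusion coefficients, and the ratios $T_{\sigma,\sigma}^2/T_{\tau,\tau}^2$; all of these are unchanged under an equivalence of modular data, which rescales $T$ by a global root of unity and fixes the unit object, so $\nu$ is an invariant of the modular data class. For the bicharacter, the normalized twists $\omega_\lambda=T_{\lambda,\lambda}/T_{\tu,\tu}$ are likewise invariants of the modular data class, and on the $\sigma$-objects they read $\omega_{\sigma_g}=\overline{\bi gg}=q(g)$. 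Using the Chebyshev-type recursion $\sigma^{(0)}=[1]+[\alpha]$, $\sigma^{(1)}=\sigma$, $\sigma^{(k+1)}=\sigma\cdot\sigma^{(k)}-\sigma^{(k-1)}$ inside the dihedral fusion subring one finds $\sigma_g^{(m)}=[1]+[\alpha]$ exactly when $mg=0$, so $|G[m]|=1+2\,\#\{g\in G_+:\sigma_g^{(m)}=[1]+[\alpha]\}$ is determined for every $m$, hence the isomorphism class of $G$. Combining the fusion operation $([\sigma_g],[\sigma_h])\mapsto\{[\sigma_{|g+h|}],[\sigma_{|g-h|}]\}$ on the $\sigma$-objects with the twist values $q(g)$ then pins down the metric group $(G,q)$ up to isomorphism, and with it the bicharacter via $\bi gh=\partial q(g,h)^{\frac12(\Exp(G)+1)}$; equivalently, $\cC_{\ZZ_2}^0$ is braided equivalent to the pointed modular category $\cC(G,q)$ by Lemma \ref{lem:MPdeequivariantizationisTY}, and the modular data of $\cC_{\ZZ_2}^0$ can be computed from that of $\cC$ together with the distinguished condensing boson $[\alpha]$, after which $(G,q)$ follows from the classification of pointed modular categories. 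Assembling these reconstructions recovers $(G,[\bislotslot],\nu)$, which finishes the argument.

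The main obstacle is the reconstruction of $[\bislotslot]$, that is, showing that the fusion ring of $\Rep(\Dih(G))$ equipped with the twist function $q$ determines the metric group $(G,q)$ up to isomorphism and not merely $|G|$ or the isomorphism type of $G$ --- equivalently, that every twist-preserving based-ring automorphism of this structure is induced by a group automorphism of $G$. This amounts to lifting the $\pm$-quotient $(G/{\pm},\,(x,y)\mapsto\{x+y,x-y\})$ back to $G$, where the genuine content lies; the alternative route through anyon condensation instead requires the (standard but delicate) fact that the modular data of the condensed category $\cC_{\ZZ_2}^0$ depends only on the modular data of $\cC$ and the choice of condensing boson.
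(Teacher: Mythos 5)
Your proposal is correct and follows essentially the same route as the paper's proof: recover the fusion ring from $S$ via the Verlinde formula, sort the simple objects by quantum dimension, read off $q$ (hence $[\bislotslot]$) from the normalized twists of the dimension-two objects, extract $\nu$ via Bantay's formula, and invoke Theorem \ref{thm:MPclass1}. You are in fact more explicit than the paper about the one delicate step --- that the $\pm$-quotient fusion structure together with the twist function determines the metric group $(G,q)$ up to isomorphism --- which the paper asserts without comment when it says the dimension-two objects are ``naturally indexed by $G_+$''.
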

\begin{proof} 
  Let $\cC$ be an odd generalized metaplectic modular category with modular 
  data $(S,T)$. The fusion rules of $\cC$ are determined by 
  $S$ via the Verlinde formula  \eqref{eq:Verlinde}.
  Let $G$ be the finite abelian group given by the fusion rules
  of objects of dimension as in the proof of 
  Lemma \ref{lem:MPdeequivariantizationisTY}.
  Then objects of dimension two are naturally indexed by $G_+$ which specifies 
  a unique non-degenerate bicharacter $\bislotslot$ such that 
  $\bi {\pm g}{\pm g} =T_{g,g}/T_{0,0}$ for all $g\in G_+$.
  Let $k$ be an object with dimension $\sqrt{|G|}$. 
  
  Then Bantay's formula \eqref{eq:Bantay} gives the Frobenius--Schur indicator 
  $\nu$ for this object and from the 
  classification it follows that $\cC$ is braided equivalent 
  to $\MP(G,\bislotslot,\nu)$.
\end{proof}

\subsection{(Generalized) metaplectic modular categories from condensation}
We remember that if $\Theta$ is a commutative Q-system in a 
unitary modular tensor category we get a new unitary modular tensor category 
$\cC_\Theta^0$. 
This process is also called \myemph{condensation}, since it corresponds to 
condensation in topological phases of matter. 
It also correspond to local extension by the Q-system $\Theta$ if 
$\cC$ is realized by a local conformal net.

We show that certain metaplectic modular categories can be obtained
from the basic examples $\umtc{\Spin(2n+1)_2}$ its reverse and semion 
categories using condensation by finite abelian groups, which are also
called \myemph{simple current extensions}.
These give simple relations in the Witt group of 
unitary modular tensor categories.
\begin{prop}
  Let $(G_i)_{i=1,\ldots,n}$ be a finite family of abelian groups of odd order
  and $\cC_i=\MP(G_i,\bislotslot_i,\nu_i)$.
  Then in 
  \begin{align}
    \cC&=%
    \mathop{\mathlarger{\mathlarger{\mathlarger{\boxtimes}}}}
    \limits_{i=1}^n \cC_i
  \end{align}
  we have a $\ZZ_2^n$ commutative Q-system. 
  Let $K\subset \ZZ_2^n$ be the subgroup of even codes.
  Then $\cC_K^0$ is braided equivalent to $\MP(G,\bislotslot,\nu)$, where
  \begin{align}
    G&=\bigoplus_{i=1}^n G_i\,,&\bislotslot&=  \bigoplus_{i=1}^n
    \bislotslot_i\,,
   &\nu&=\prod_{i=1}^n\nu_i\,.
  \end{align}
\end{prop}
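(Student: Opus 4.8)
The plan is to build $\cC$ together with a $\ZZ_2^n$-symmetry out of the $\ZZ_2$-symmetries of the factors, condense by the even-codes subgroup, and identify the outcome through the Tambara--Yamagami double machinery of Proposition~\ref{prop:TYCentral}. For the Q-system: by Lemma~\ref{lem:TrivialTwist} each $\cC_i$ contains $\langle\alpha_i\rangle\cong\Rep(\ZZ_2)$ with trivial twist $\omega_{\alpha_i}=1$, so inside $\cC=\boxtimes_{i=1}^n\cC_i$ the invertibles $\underline\alpha_i:=\tu\boxtimes\cdots\boxtimes\alpha_i\boxtimes\cdots\boxtimes\tu$ are involutions of trivial twist that braid trivially with one another (braidings across distinct Deligne factors are trivial) and with themselves (as $\langle\alpha_i\rangle$ is Tannakian, not super-Tannakian). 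Hence $\langle\underline\alpha_1,\dots,\underline\alpha_n\rangle$ is a pointed symmetric subcategory with group $\ZZ_2^n$ and all twists trivial, i.e.\ a copy of $\Rep(\ZZ_2^n)$; its regular Q-system is the asserted commutative $\ZZ_2^n$-Q-system $\Theta$, and for every subgroup $H\le\ZZ_2^n$ the corresponding sub-Q-system $\Theta_H$ is commutative, with $\cC_H^0:=\cC_{\Theta_H}^0$ a UMTC.

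Now take $K\le\ZZ_2^n$ to be the even-codes subgroup and set $\cM:=\cC_K^0$; I would show it is $\MP(G,\bislotslot,\nu)$ with the stated data. First, $\cM$ is modular with $\Dim\cM=\Dim\cC/|K|^2=4|G|$, and it contains the residual Tannakian subcategory $\Rep(\ZZ_2)=\langle[\alpha]\rangle$, where $[\alpha]$ is the common image in $\cM$ of the $\underline\alpha_i$ (they agree there since $e_i+e_j\in K$) and has trivial twist. De-equivariantizing, $\cM_{\ZZ_2}$ (the de-equivariantization of $\cM$ by $\langle[\alpha]\rangle\cong\Rep(\ZZ_2)$) is a $\ZZ_2$-crossed braided extension of the UMTC $\cM_{\ZZ_2}^0$; performing the de-equivariantization in stages (so that $\cM_{\ZZ_2}^0\cong\cC_\Theta^0$) and using that condensation commutes with Deligne products, $\cM_{\ZZ_2}^0\cong\boxtimes_i(\cC_i)_{\ZZ_2}^0\cong\boxtimes_i\cC(G_i,q_i)=\cC(G,q)$ with $q=\bigoplus_iq_i$ by Lemma~\ref{lem:MPdeequivariantizationisTY}; equivalently the bicharacter is $\bislotslot=\bigoplus_i\bislotslot_i$, since an odd metric group recovers its bicharacter from its quadratic form. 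Counting, $\Dim\cM_{\ZZ_2}=2|G|$ while its trivial component $\cC(G,q)$ has dimension $|G|$, so the nontrivial degree has dimension $|G|$; the free module on $\rho_1\boxtimes\cdots\boxtimes\rho_n$ descends to a \emph{simple} object of $\cM_{\ZZ_2}$ of dimension $\sqrt{|G|}$ (the relevant endomorphism space is one-dimensional), lying in the nontrivial degree because its monodromy with $[\alpha]$ carries the value $-1$ of the double braiding of $\rho_i$ with $\alpha_i$ in $\cC_i$, and hence exhausts that degree. Therefore $\cM_{\ZZ_2}$ has Tambara--Yamagami fusion rules over $G$, and reading off the bicharacter from $\cM_{\ZZ_2}^0$ and the sign from the Frobenius--Schur indicator $\nu$ of its generator gives $\cM_{\ZZ_2}\cong\TY(G,\bislotslot,\nu)$ as a fusion category with $\bislotslot=\bigoplus_i\bislotslot_i$. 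Here $\nu=\prod_i\nu_i$: the Frobenius--Schur indicator is multiplicative under $\boxtimes$, so $\rho_1\boxtimes\cdots\boxtimes\rho_n$ has indicator $\prod_i\nu_{\rho_i}=\prod_i\nu_i$ by Proposition~\ref{prop:FSalpha}, and the free-module functors used preserve standard solutions of the conjugate equation, as in \cite[Lemma~2.2]{Re2000} (already invoked in Proposition~\ref{prop:FSalpha}).

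Finally, since $\cM_{\ZZ_2}$ is the de-equivariantization of the UMTC $\cM$ by $\Rep(\ZZ_2)$, the central-functor correspondence of Subsection~\ref{ssec:QSystems} recovers $\cM$ as the Müger centralizer $\cC(G,\bar q)'\cap Z(\cM_{\ZZ_2})$ in $Z(\cM_{\ZZ_2})=Z(\TY(G,\bislotslot,\nu))$, which by Proposition~\ref{prop:TYCentral} is exactly $\MP(G,\bislotslot,\nu)=\MP\bigl(\textstyle\bigoplus_iG_i,\ \bigoplus_i\bislotslot_i,\ \prod_i\nu_i\bigr)$. Alternatively one could finish by checking directly that $\cM$ has the generalized metaplectic fusion rules of Definition~\ref{defi:GeneralizedMetaplecticModularCategory} -- the simple $2$-dimensional objects of $\cM$ are in bijection with $G_+$ by a short count of which free modules over $\Theta_K$ stay simple -- and then applying Theorem~\ref{thm:MPclass1}. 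I expect the main obstacle to be the middle paragraph: pinning down that $\cM_{\ZZ_2}$ has precisely Tambara--Yamagami fusion rules with the correct bicharacter and sign, i.e.\ that the generating free module remains simple and sits in the nontrivial grading, and that de-equivariantization in stages together with its compatibility with Deligne products forces the quadratic form to be $\bigoplus_iq_i$ and the sign to be $\prod_i\nu_i$; once these are in hand the identification is formal, given Proposition~\ref{prop:TYCentral}, Theorem~\ref{thm:MPclass1}, and the computations of Subsection~\ref{ssec:ModularDataMP}.
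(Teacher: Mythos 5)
Your proof is correct, but it takes the categorical route where the paper's actual proof goes through conformal nets. The paper realizes $\cC\cong\Rep(\A_L^{\ZZ_2^n})$ with $\A_L^{\ZZ_2^n}\cong\A_{L_1}^{\ZZ_2}\otimes\cdots\otimes\A_{L_n}^{\ZZ_2}$ via Theorem \ref{thm:MPRealization}, observes that the intermediate net $\A_L^{\Delta(\ZZ_2)}$ for the diagonal $\ZZ_2\leq\ZZ_2^n$ is precisely the simple current extension of $\A_L^{\ZZ_2^n}$ by $K$ (so its representation category is $\cC_K^0$), and then identifies $\tRep{\Delta(\ZZ_2)}(\A_L)\cong\TY(G,\bislotslot,\nu)$ using Propositions \ref{prop:Reflection2} and \ref{prop:PointedZ2Orbifold} to conclude $\cC_K^0\cong\MP(G,\bislotslot,\nu)$. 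The paper only sketches the purely categorical alternative in two sentences (``one can check that $\cC_K^0\cong\TY(G,\bislotslot,\nu)^{\ZZ_2}\cong\MP(G,\bislotslot,\nu)$''), and what you have written is essentially that check carried out in full: the $\Rep(\ZZ_2^n)$ Tannakian subcategory from Lemma \ref{lem:TrivialTwist}, condensation in stages and compatibility with $\boxtimes$, the dimension count and the simplicity, locality over $\Theta_K$, and grading of the free module on $\rho_1\boxtimes\cdots\boxtimes\rho_n$, and the centralizer identification via Proposition \ref{prop:TYCentral}. The net argument buys brevity by outsourcing the existence of the $\sqrt{|G|}$-dimensional twisted object to lattice-net results; your argument is self-contained at the categorical level and, notably, makes the provenance of the sign $\nu=\prod_i\nu_i$ (multiplicativity of Frobenius--Schur indicators under $\boxtimes$ plus their preservation under $\alpha$-induction, as in Proposition \ref{prop:FSalpha}) more explicit than the paper does.
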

\begin{proof}
  Taking a conformal net realization 
  $\A_L^{\ZZ_2^n}\cong\A^{\ZZ_2}_{L_1}\otimes\cdots\otimes \A^{\ZZ_2}_{L_n}$
  of $\cC$ from \ref{thm:MPRealization} let us consider the intermediate net 
  $\A_L^{\ZZ_2^n}\subset \A_L^{\Delta(\ZZ_2)}\subset\A_L\cong
  \A_{L_1}\otimes\cdots\otimes \A_{L_n}$ with 
  $\Delta(\ZZ_2)=\{(0,\ldots,0),(1,\ldots,1)\}\cong \ZZ_2$.
  On the one hand,  $\A_L^{\Delta(\ZZ_2)}$ is a simple current extension of 
  $\A_L$ by $K$, thus $\Rep(\A_L^{\Delta(\ZZ_2)})$ 
  is braided equivalent to $\cC_K^0$. 
  On the other hand,
  $\tRep{\Delta(\ZZ_2)}(\A_L)\cong \TY(G,\bislotslot,\nu)$ and therefore 
  $\Rep(\A_L^{\Delta(\ZZ_2)})$ is braided equivalent to
  $\MP(G,\bislotslot,\nu)$. 

  In purely categorical terms, we have that $\cC_{\ZZ_2^n}$ is equivalent to 
  \begin{align}
    \cF&=%
    \mathop{\mathlarger{\mathlarger{\mathlarger{\boxtimes}}}}
    \limits_{i=1}^n \TY(G_i,\bislotslot_i,\nu_i)
  \end{align}
  and there is an obvious injective functor
  $\TY(G,\bislotslot,\nu)\to \cF$. 
  Then similarly, one can check that there are braided equivalences
  $\cC_K^0 \cong  
    \TY(G,\bislotslot,\nu)^{\ZZ_2}\cong
    \MP(G,\bislotslot,\nu)$.
\end{proof}
\begin{example} 
  Let $p\neq 2$ be a prime number and $n\in\ZZ$.
  Then $\ZZ_{p^n}$ has up to equivalence two bicharacters.
  In, other words there are  up to equivalence two metric groups 
  $(\ZZ_{p^n},q_\pm)$.
  One is $q_+:=q_{A_{p^n-1}}$ from Table \ref{tab:lattices}.
  We have to distinguish two cases:
  \begin{enumerate}
    \item If $p\equiv 3 \pmod 4$, then we denote $q_-= \overline q_+$ which is 
      inequivalent to $q_+$. 
    \item If $p\equiv 1\pmod 4$, then $\overline q_+$ is equivalent to  $q_+$ 
      and therefore there exists an inequivalent quadratic form which we 
      denote by $q_-$.
  \end{enumerate}
  Namely, the quadratic form $q_+$ of $\ZZ_{p^n}$ can be represented as 
  $q_+(x)=\exp(2\pi \ima x^2 m /p^{n})$, where  $m$ is given by $p^n=2m+1$.
  Therefore $q_+\sim \overline q_+$ if and only if $m x^2 \equiv -m \pmod {p^n}$ 
  has a solution $x\not\equiv 0\pmod p$.
  But $x^2\equiv -1 \pmod{p^n}$ has a solution if and only if 
  $x^2\equiv -1 \pmod p$ has a solution due to Gauß. 
  By the \emph{Law of Quadratic Reciprocity}, $x^2\equiv -1 \pmod p$ has a 
  solution if and only if $p\equiv 1 \pmod 4$.
  So we conclude that $q_+\sim \overline q_+$ if and only if $p\equiv 1\pmod 4$.
\end{example}

This shows that many (generalized) metaplectic modular categories arise from 
condensations of $\umtc{\Spin(p^n)_2}^{\pm}$ and semion categories.
\begin{prop}
  We have:
  \begin{enumerate}
    \item 
      Let $N=\prod_{i=1}^r p_i^{n_i}$  with $p_i \equiv  3 \pmod 4$ for all 
      $i=1,\ldots, r$.
      Then all $2^{r+1}$ metaplectic modular categories for $\ZZ_N$ can be 
      obtained from condensing products of $\umtc{\Spin(p_i^{n_i})_2}^\pm$ and 
      $\cS^\pm$.
      Otherwise, at least $2^{r-k+1}$ of the $2^{r+1}$ 
      metaplection modular categories 
      arsies this way, where $k  =| \{i : p_i\equiv 1\pmod 4\}|$.
    \item Let $G\cong \bigoplus_{i=1}^r \ZZ^{n_i}_{q_i}$ be an odd abelian group with 
      $q_1=p_1^{n_1},\ldots,q_r=p_r^{n_r}$ distinct prime powers and $(n_1,\ldots,n_r)\in\NN^r$
      and let $k  =| \{i : p_i\equiv 1\pmod 4\}|$.
      Then at least $2^{r-k+1}$ of the $2^{r+1}$ generalized metaplection modular categories 
      arise from condensation of $\umtc{\Spin(p_i^{n_i})_2}^\pm$ and 
      $\cS^\pm$.
    \item
      All odd generalized metaplectic modular categories can be obtained from condensing 
      products of the following list of unitary modular tensor categories:
      \begin{itemize}
        \item $\umtc{\Spin(p^n)_2}^\pm$ with odd $p$ prime and $n\in \NN$,
        \item metaplectic modular categories $\MP(\ZZ_{p^n},\tilde q,+)$ with 
          $p$ odd prime and $n\in\NN$, 
          such that $p= 1 \pmod 4$ 
          (here $\tilde q$ is a non-degenerate quadratic 
           form with $\tilde q\not\sim q_+=q_{A_{p^n-1}}$), and
        \item the two semion categories $\cS^\pm$.
      \end{itemize}
  \end{enumerate}
\end{prop}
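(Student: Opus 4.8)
The plan is to deduce all three parts from the preceding Proposition, which identifies the condensation of $\boxtimes_{i=1}^{n}\MP(G_i,\bislotslot_i,\nu_i)$ by the even-code subgroup $K\subset\ZZ_2^{n}$ with $\MP(\bigoplus_i G_i,\bigoplus_i\bislotslot_i,\prod_i\nu_i)$; so along a condensation the metric-group data add up while the Frobenius--Schur signs multiply. Three further inputs are used. (a) By Example~\ref{ex:Spin}, $\umtc{\Spin(p^n)_2}\cong\MP(\ZZ_{p^n},\bislotslot_{A_{p^n-1}},\nu)$ for a fixed sign $\nu$, and since reversing the braiding conjugates all twists --- hence conjugates the bicharacter --- but fixes Frobenius--Schur indicators, $\rev{\umtc{\Spin(p^n)_2}}\cong\MP(\ZZ_{p^n},\overline{\bislotslot_{A_{p^n-1}}},\nu)$; together with the preceding Example this shows that the two metric forms on $\ZZ_{p^n}$ are complex conjugate, so that $\umtc{\Spin(p^n)_2}$ and its reverse jointly realize both of them, exactly when $p\equiv3\pmod4$. (b) The category $\cS\boxtimes\rev\cS$ is the doubled semion category, which in the present notation is $\MP(\ZZ_1,\slot,-)$; hence adjoining a $\cS\boxtimes\rev\cS$ block and condensing --- equivalently, applying the passage $\cC\mapsto\hat\cC$ of Proposition~\ref{prop:ChangingFrobeniusSchur}, whose hypotheses hold because $\MP(G,\bislotslot,\nu)$ is $\ZZ_2$-graded with $\langle\alpha\rangle\cong\Rep(\ZZ_2)$ by Lemma~\ref{lem:TrivialTwist} --- sends $\MP(G,\bislotslot,\nu)$ to $\MP(G,\bislotslot,-\nu)$. (c) The structure theory of odd metric groups recalled above (following \cite{Wa1963,Ni1979,BaJo2015}) splits any $(G,\bislotslot)$ orthogonally into cyclic prime-power blocks $(\ZZ_{p^k},q_{p^k,\pm})$, with exactly two isomorphism classes on each $\ZZ_{q_i}^{n_i}$.

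Granting these, part (3) goes as follows: an odd generalized metaplectic modular category is $\MP(G,\bislotslot,\nu)$ by Theorem~\ref{thm:MPclass1}, and we decompose $(G,\bislotslot)=\bigoplus_j(\ZZ_{p_j^{k_j}},q_j)$ into cyclic prime-power blocks. For a block with $q_j=q_+$ use $\umtc{\Spin(p_j^{k_j})_2}$; for a block with $q_j=q_-$ and $p_j\equiv3\pmod4$ use $\rev{\umtc{\Spin(p_j^{k_j})_2}}$, whose bicharacter is then $q_-$ by (a); for a block with $q_j=q_-$ and $p_j\equiv1\pmod4$ use the extra building block $\MP(\ZZ_{p_j^{k_j}},\tilde q,+)$ with $\tilde q\not\sim q_{A_{p_j^{k_j}-1}}$. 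Box these blocks together, adjoin a single $\cS\boxtimes\rev\cS$ if the product of the resulting signs equals $-\nu$ rather than $\nu$, and condense by the even-code subgroup: by the preceding Proposition the result is $\MP(\bigoplus_j\ZZ_{p_j^{k_j}},\bigoplus_j q_j,\nu)=\MP(G,\bislotslot,\nu)$.

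Parts (1) and (2) are this same construction with the extra building block deleted, followed by a count. Put $k=|\{i:p_i\equiv1\pmod4\}|$. Each cyclic prime-power block (respectively each $\ZZ_{q_i}^{n_i}$, via the two-class description of (c)) is always realizable with its ``$+$'' metric structure from copies of $\umtc{\Spin(p_i^{n_i})_2}$; its ``$-$'' structure is reached by reversing one copy when $p_i\equiv3\pmod4$, but when $p_i\equiv1\pmod4$ a reversal leaves the metric structure unchanged and the semionic categories $\cS^{\pm}$ carry only the trivial group, so that block's ``$-$'' structure cannot be produced from the allowed list. Hence exactly $2^{r-k}$ of the $2^{r}$ metric groups on $G$ arise, each --- by the sign flip of (b) --- with both Frobenius--Schur signs, giving at least $2^{r-k+1}$ of the $2^{r+1}$ (generalized) metaplectic modular categories based on $G$; when every $p_i\equiv3\pmod4$ this accounts for all $2^{r+1}$.

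I expect the substance to be entirely bookkeeping: keeping straight that condensation adds metric groups but multiplies Frobenius--Schur signs, that reversal toggles the metric form of a cyclic prime-power block precisely for primes $\equiv3\pmod4$ (the quadratic-reciprocity input of the preceding Example), and that the semionic blocks move only the sign --- this last point being exactly what creates the $2^{-k}$ deficit in parts (1)--(2) and forces the additional building blocks in part (3). Upgrading the lower bounds of (1)--(2) to equalities would require the further remark that the discriminant form of the condensate is the orthogonal sum of those of the blocks, whence a block at a prime $\equiv1\pmod4$ can contribute only its ``$+$'' form; since only ``at least'' is claimed this is not needed.
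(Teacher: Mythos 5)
Your proposal is correct and follows exactly the route the paper intends: the proposition is stated as a direct consequence of the preceding condensation result (metric data add, Frobenius--Schur signs multiply), the preceding example on the two quadratic forms on $\ZZ_{p^n}$ (with $q_-\sim\overline{q_+}$ precisely for $p\equiv 3\pmod 4$, so that $\rev{\umtc{\Spin(p^n)_2}}$ supplies the second form only at those primes), and the sign flip of Proposition \ref{prop:ChangeFS} via a $\cS^+\boxtimes\cS^-$ factor. The bookkeeping in your count, including why the $q_-$ blocks at primes $\equiv 1\pmod 4$ force the extra building blocks in part (3), is accurate.
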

\begin{example}
  The 8 braided equivalence classes of (generalized) 
  metaplectic modular categories with $|G|=15$ (rank 11) are given by
  $\umtc{\Spin(15)_2}^\pm$, 
  the condensation 
    $(\umtc{\Spin(3)_2}^\pm\boxtimes\umtc{\Spin(5)_2})_{\ZZ_2}^0$
  and the four twists 
  $\hat\cC=(\cC\boxtimes\cS^+\boxtimes\cS^-)_{\ZZ_2}^0$ of these.
\end{example}
Everything in this subsection can be proved using only tensor categories.
We could have therefore proved the reconstruction results Theorem \ref{thm:MPRealization} and similarly Theorem \ref{thm:AllOddTYs} 
by proving the reconstruction only for cyclic groups of prime power orders.

\section{Several relations to generalized dihedral groups}
\label{sec:Dihedral}
\subsection{Generalized Tambara--Yamagami categories and generalized dihedral 
groups}
\label{ssec:GenTY}
Let $G=(G,\cdot)$ be a abelian group of odd order seen as a multiplicative
group.
Let us consider the \myemph{generalized dihedral group} 
$\Dih(G)=G\rtimes_{\alpha}\ZZ_2$, where 
$\Aut(G)\ni\alpha\colon g\mapsto g^{-1}$, 
\ie $\Dih(G)=G\sqcup G\tau$, with $\tau^2=e$ and $\tau g \tau = g^{-1}$.
Consider the following fusion rules of $\Dih(G)\cup \{\rho_+,\rho_-\}$:
\begin{gather}
  \label{eq:genTY}
    \begin{aligned}
      [\rho_\pm]^2&=\sum_{g\in G}[g]\,,&
      [\rho_\pm][\rho_\mp]&=\sum_{g\in G}[g\tau]\,,\\
      [g][\rho_\pm]&=[\rho_\pm][g] =[\rho_\pm]\,,&
     [\tau][\rho_\pm]&=[\rho_\pm][\tau]=[\rho_\mp]\,.
  \end{aligned}
\end{gather}
These fusion rules can be seen as a generalization of Tambara--Yamagami fusion
rules.
\begin{prop} 
  Let $\A$ be a completely rational conformal net with 
  $\Rep(\A)$ a generalized metaplectic modular category based on a finite
  abelian group $G$ of odd order and $\cB=\A\rtimes\ZZ_2$ the $\ZZ_2$-simple
  current extension (\eg for example $\A=\A_L^{\ZZ_2}\subset\cB= \A_L$ from 
  Theorem \ref{thm:MPRealization}).
  Then the category %
  of $\cB$--$\cB$ sectors coming from $\A\subset \cB$
  has the fusion rules \eqref{eq:genTY}.
\end{prop}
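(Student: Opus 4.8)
The plan is to identify the category of $\cB$--$\cB$ sectors with a crossed product of a Tambara--Yamagami category and then read off the fusion rules of that crossed product. Write $\cC:=\Rep(\A)$ and let $\alpha$ be the unique nontrivial invertible object (so $[\alpha]^2=[1]$ by \eqref{eq:genMPFusionRules}). By Lemma \ref{lem:TrivialTwist} the twist $\omega_\alpha$ is trivial and $\langle\alpha\rangle$ is the unique Tannakian subcategory $\Rep(\ZZ_2)\subseteq\cC$, so the $\ZZ_2$-simple current extension $\cB=\A\rtimes\ZZ_2$ is precisely the local extension given by the Q-system $\Theta=(\id\oplus\alpha,w,x)$, which is the regular algebra $\Theta_{\ZZ_2}$ of that Tannakian subcategory; in particular $\cC\cong(\cC_{\ZZ_2})^{\ZZ_2}$. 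Since $\cC$ is a UMTC, \cite[Theorem 5.10]{BcEvKa1999} identifies the category of $\cB$--$\cB$ sectors coming from $\A\subset\cB$ with $\bim\Theta\cC\Theta=\langle\bim[+]\Theta\cC\Theta,\bim[-]\Theta\cC\Theta\rangle$, i.e.\ with the dual of $\cC$ with respect to the module category $\cC_\Theta=\cC_{\ZZ_2}$. By the standard description of the dual of an equivariantization (see \cite{EtNiOs2010}), this is the crossed product
\begin{align}
  \bim\Theta\cC\Theta\cong\cC_{\ZZ_2}\rtimes\ZZ_2
\end{align}
for the residual $\ZZ_2$-action on the de-equivariantization $\cC_{\ZZ_2}$.

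Next I would determine $\cC_{\ZZ_2}$ together with this action. By Lemma \ref{lem:MPdeequivariantizationisTY} we have $\cC_{\ZZ_2}\cong\TY(G,\bislotslot,\nu)$ for the appropriate bicharacter $\bislotslot$ and sign $\nu$. The residual $\ZZ_2$-action is then forced by the requirement that its equivariantization $\TY(G,\bislotslot,\nu)^{\ZZ_2}\cong\cC$ carry the fusion rules \eqref{eq:genMPFusionRules}: the objects $\sigma_g$ of dimension $2$ ($g\in G_+$, so $(|G|-1)/2$ of them) can only arise from two-element orbits among the $|G|$ invertibles of $\TY(G,\bislotslot,\nu)$, which therefore permutes all $|G|-1$ nontrivial invertibles freely and fixes only $\tu$; the generating object $\rho$ must be fixed too, since $\cC$ has no object of dimension $2\sqrt{|G|}$ (it instead has the two fixed-point splittings $\rho,\alpha\rho$). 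Thus the action is the identity on $\rho$ and a fixed-point-free involutive automorphism of the pointed part $G$; as an abelian group of odd order admits a unique such automorphism, namely $g\mapsto g^{-1}$, the residual action is inversion on $G$ and trivial on $\rho$.

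Finally I would compute $\TY(G,\bislotslot,\nu)\rtimes_{g\mapsto g^{-1}}\ZZ_2$ at the level of fusion rules. Its simple objects are the pairs $(X,s)$ with $X\in\Irr(\TY(G,\bislotslot,\nu))$ and $s\in\ZZ_2=\{1,\tau\}$, with tensor product $(X,s)\otimes(Y,t)=(X\otimes s(Y),st)$. Writing $[\rho_a]$ ($a\in G$) for the invertibles and $\rho$ for the generator of $\TY(G,\bislotslot,\nu)$, the $2|G|$ invertibles $([\rho_a],1)\leftrightarrow a$ and $([\rho_a],\tau)\leftrightarrow a\tau$ multiply precisely like $\Dih(G)=G\sqcup G\tau$, the objects $\rho_+:=(\rho,1)$ and $\rho_-:=(\rho,\tau)$ are self-dual of dimension $\sqrt{|G|}$, and the remaining products $([\rho_a],s)\otimes(\rho,t)$, $(\rho,t)\otimes([\rho_a],s)$ and $(\rho,s)\otimes(\rho,t)$ reproduce $[g][\rho_\pm]=[\rho_\pm][g]=[\rho_\pm]$, $[\tau][\rho_\pm]=[\rho_\pm][\tau]=[\rho_\mp]$, $[\rho_\pm]^2=\sum_{g\in G}[g]$ and $[\rho_+][\rho_-]=\sum_{g\in G}[g\tau]$ --- which are exactly the fusion rules \eqref{eq:genTY}.

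The main obstacle is the second step: one must check carefully that the dual of the equivariantization is the crossed product with the \emph{inversion} action. The possible $3$-cocycle twist in the associator is irrelevant here, since only fusion rules are claimed, but the underlying permutation action has to be extracted from Lemma \ref{lem:MPdeequivariantizationisTY} and \eqref{eq:genMPFusionRules}, and this is where oddness of $|G|$ is used. A more hands-on alternative that avoids the abstract crossed-product description is to enumerate the $\Theta$--$\Theta$-bimodules in $\cC$ directly from \eqref{eq:genMPFusionRules}: each $\sigma_g$ splits into a pair of invertible bimodules (as $d_{\sigma_g}/d_\Theta=1$) whose products are governed by $[\sigma_g][\sigma_h]=[\sigma_{|g+h|}]+[\sigma_{|g-h|}]$ and hence dihedral, $\id\oplus\alpha$ supplies the remaining invertibles, and $\rho\oplus\alpha\rho$ yields $\rho_+$ and $\rho_-$.
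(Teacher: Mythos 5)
Your proof is correct, and it reaches the conclusion by a genuinely different route than the paper. You pass through the Morita duality between equivariantization and crossed product: writing $\cC=(\cC_{\ZZ_2})^{\ZZ_2}$, you identify the $\cB$--$\cB$ sector category $\bim\Theta\cC\Theta$ with $\cC_{\ZZ_2}\rtimes\ZZ_2\cong\TY(G,\bislotslot,\nu)\rtimes\ZZ_2$, determine the residual action to be inversion, and multiply out the crossed product. The paper instead works entirely inside $\bim\Theta\cC\Theta$ with $\alpha$-induction machinery: it computes the modular invariant $Z$ of $\A\subset\cB$ to count $\tr(ZZ^{\mathrm t})=2n+2$ irreducible sectors, uses the form $[\gamma]=[\id]\oplus[\tau]$ of the canonical endomorphism of the dual $\ZZ_2$-fixed-point inclusion to locate the extra invertible $\tau$ outside $\bim[\pm]\Theta\cC\Theta$, and then exploits the $\ZZ_2\times\ZZ_2$-grading to force $[\tau][\rho_\pm]=[\rho_\mp]$ and hence $[\rho_+][\rho_-]=\sum_g[g\tau]$. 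Both arguments lean on Lemma \ref{lem:MPdeequivariantizationisTY} to see Tambara--Yamagami fusion rules in the graded pieces. What your version buys is a complete and purely fusion-ring-theoretic identification of the dihedral relation $[\tau][g][\tau]=[g^{-1}]$: your observation that a fixed-point-free involutive automorphism of an odd abelian group must be inversion (using that $2$ is invertible) supplies exactly the detail that the paper compresses into the one-line remark that this ``follows from how $\ZZ_2$ acts on $\cD_0$''. What the paper's version buys is independence from the crossed-product duality $(\cD^G)^\ast_{\cD}\simeq\cD\rtimes G$, staying within the operator-algebraic framework it has already set up; your second-paragraph caveat about extracting the underlying permutation action is the right place to be careful, and your argument there is complete.
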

\begin{proof}
  Let $n=|G|$. 
  The modular invariant \cite{BcEvKa1999,BcEvKa2000}
  for the inclusion $A=\A(I)\subset B=\cB(I)$
  for the unique $\ZZ_2$-simple current extension 
  $\A \subset \cB=\A\rtimes \ZZ_2$
  can calculated to be:
  \begin{align}
    Z=  |\chi_{\id}+\chi_\alpha|^2 + \sum_{h\in G\setminus\{0\}} |\chi_{\sigma_{h,-h}}|^2 =
    |\chi_0+\chi_r|^2 + \sum_{i=1}^{\frac{n-1}2} 2|\chi_i|^2\,.
  \end{align}
  We get  
  $\tr(Z)=n+1$ and $\tr(ZZ^\mathrm{t})=2n+2$, which gives
  \cite[Corollary 6.10]{BcEvKa1999} that $|\Irr(\bim B\cC B)|=2n+2$.
  Further we have $\Dim \cC_{\ZZ_2}^0 =\Dim (\bim[0] B \cC B)=n$,  
  $\Dim \cC_{\ZZ_2}=\Dim (\bim[\pm] B \cC B)=2n$ and
  $\Dim(\bim{\ZZ_2}\cC{\ZZ_2})=\Dim (\bim B \cC B)=4n$.

  We have that $\cD_\pm:=\bim[\pm] B \cC B$ are Tambara--Yamagami categories, 
  see Lemma \ref{lem:MPdeequivariantizationisTY}. 
  Let us denote $\Irr(\bim[\pm]B\cC B)=G\cup \{\rho_\pm\}$. 
  Since $A\subset B$ is a fixed point under an outer $\ZZ_2$-action the 
  canonical endomorphism $\gamma\in \cD:=\bim {\ZZ_2}\cC{\ZZ_2}$ is of the form
  $[\gamma]=[\id]\bigoplus [\tau]$ for some $\tau\in \cD$ with $[\tau]^2=[\id]$.
  By a simple counting argument we know that 
  $\Irr(\cD) = G\cup {\rho_\pm}\cup \{\alpha_k:k=1,\ldots, n\}$,
  with $\alpha_k$ automorphisms.
  Since $\tau\not\in \cD_\pm$ see \eg \cite[Lemma 5.8]{Bi2016}, we get that 
  $[\tau] \in \{[\alpha_k] :k=1,\ldots, h\}$.
  
  The $\ZZ_2$-grading on $\cD_\pm$ gives a $\ZZ_2\times\ZZ_2$-grading on $\cD$. 
  Since $\cD =\langle\cD_+,\cD_-\rangle = \langle \rho_+,\rho_-\rangle$ we have 
  $[\tau]\prec [\rho_+][\rho_-]$, which is equivalent to 
  $[\tau][\rho_\pm]\prec [\rho_\mp]$. 
  Therefore we get $[\tau][\rho_\pm]= [\rho_\mp]$.
  But this implies $[\rho_+][\rho_-]=\sum_{g\in G}[g\tau ]$.
  Finally, $[\tau][g][\tau]=[g^{-1}]$ follows from how $\ZZ_2$ acts on $\cD_0$.
\end{proof}
The purely categorical formulation of this proposition is:
\begin{prop}
  \label{prop:GenTY}
  Let $\cC$ be a generalized metaplectic modular category based on $G$ with 
  $|G|$ odd,
  then there is a unique Tannakian subcategory $\Rep(\ZZ_2)$
  and $\cD=\bim{\ZZ_2}\cC{\ZZ_2}$ has the fusion rules 
  \eqref{eq:genTY}.
\end{prop}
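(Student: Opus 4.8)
The plan is to prove this in two ways and to present the direct one in detail. Uniqueness of the Tannakian subcategory $\Rep(\ZZ_2)\subset\cC$ is already Lemma \ref{lem:TrivialTwist}, so only the fusion rules of $\cD=\bim{\ZZ_2}\cC{\ZZ_2}$ are at issue. The quickest route would be to invoke Theorem \ref{thm:MPRealization}: choose an even lattice $L$ with $\cC\cong\Rep(\A_L^{\ZZ_2})$, set $\A=\A_L^{\ZZ_2}$ and $\cB=\A_L=\A\rtimes\ZZ_2$ (the $\ZZ_2$ simple current extension of Theorem \ref{thm:MPRealization}); then $\bim{\ZZ_2}\cC{\ZZ_2}$ is by definition the category of $\cB$--$\cB$ sectors coming from $\A\subset\cB$, and the fusion rules \eqref{eq:genTY} are exactly the content of the preceding Proposition.

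For a self-contained categorical argument I would mimic that proof with $\alpha$-induction. Let $\Theta$ be the canonical Q-system of $\Rep(\ZZ_2)\subset\cC$, so $\theta=[\id]\oplus[\alpha]$, and recall $\cD=\bim\Theta\cC\Theta=\langle\bim[+]\Theta\cC\Theta,\bim[-]\Theta\cC\Theta\rangle$ by \cite[Theorem 5.10]{BcEvKa1999}. With the conventions of the excerpt, $\bim[+]\Theta\cC\Theta$ is tensor equivalent to $\cC_\Theta=\cC_{\ZZ_2}$, which is a Tambara--Yamagami category $\TY(G,\bislotslot,\pm)$ based on $G$ by Lemma \ref{lem:MPdeequivariantizationisTY}; likewise $\bim[-]\Theta\cC\Theta$ is a Tambara--Yamagami category based on $G$ (it is tensor equivalent to the reverse $\rev{\cC_\Theta}$), while $\bim[0]\Theta\cC\Theta=\bim[+]\Theta\cC\Theta\cap\bim[-]\Theta\cC\Theta\cong\cC_\Theta^0\cong\cC(G,q)$ is pointed. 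The two $\ZZ_2$-gradings induce a $\ZZ_2\times\ZZ_2$-grading on $\cD$, faithful because $\bim[0]\Theta\cC\Theta$ sits in the trivial component whereas $[\rho_+][\rho_-]\ne 0$ lies in the component $(1,1)$; hence all four homogeneous components have global dimension $n:=|G|$ and $\cD_{0,0}\cong\cC(G,q)$. Next I would compute the modular invariant $Z$ of $\Theta$ by $\alpha$-induction exactly as in the proof of the preceding Proposition, obtaining $Z=|\chi_{\id}+\chi_\alpha|^2+\sum_h|\chi_{\sigma_{h,-h}}|^2$, so $\tr(Z)=n+1$ and $\tr(ZZ^{\mathrm t})=2n+2$, whence $|\Irr(\cD)|=2n+2$ by \cite[Corollary 6.10]{BcEvKa1999}. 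Since $d_{\rho_\pm}=\sqrt n$, faithfulness of the grading and $\Dim\cD_{1,0}=\Dim\cD_{0,1}=n$ force $\cD_{1,0}=\{[\rho_+]\}$ and $\cD_{0,1}=\{[\rho_-]\}$; then $\cD_{1,1}$ contains exactly $n$ simple objects of total dimension $n$, so they are all invertible, and together with $\Irr(\cD_{0,0})=G$ they form a group $\tilde G=G\sqcup G\tau$ of order $2n$ with $G$ of index two.

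The remaining work is to read off \eqref{eq:genTY} and, in particular, to identify $\tilde G$ with $\Dih(G)$; the latter is the step I expect to be the main obstacle. The relations $[\rho_\pm]^2=\sum_{g\in G}[g]$ and $[g][\rho_\pm]=[\rho_\pm][g]=[\rho_\pm]$ are just the Tambara--Yamagami relations inside $\bim[\pm]\Theta\cC\Theta$; since $[\rho_+][\rho_-]$ has dimension $n$ and lies in $\cD_{1,1}$ it is the sum of all invertibles there, i.e.\ $[\rho_+][\rho_-]=\sum_{g\in G}[g\tau]$, and a summand $[\tau]\prec[\rho_+][\rho_-]$ satisfies $[\tau][\rho_\pm]\prec[\rho_\mp]$, hence $[\tau][\rho_\pm]=[\rho_\mp]$ by comparing dimensions. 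The one identity left is $[\tau][g][\tau]=[g^{-1}]$. Conjugation by $[\tau]$ is an order-$\le 2$ monoidal autoequivalence of $\cD_{0,0}=\cC(G,q)$, hence an automorphism of $G$, and the hard part is to show it is inversion. I would argue this as in the non-categorical proof --- the orbifold inclusion $\A\subset\cB$ is a fixed point under an outer $\ZZ_2$-action, so its canonical endomorphism is $[\id]\oplus[\tau]$ with $[\tau]$ acting on the bosonic sectors $\cC(G,q)$ by the inverting automorphism --- or else intrinsically, by tracking the $\pm$-symmetry built into the labels $\sigma_{h,-h}$ of $\cC$ through the de-equivariantization: each $[\sigma_{h,-h}]$ splits in $\cC_\Theta$ into a pair $[\beta^{\pm}_h]$ with $[\beta^{+}_{-h}]=[\beta^{-}_h]$, so the residual Galois $\ZZ_2$ --- which is conjugation by $[\tau]$ --- acts on $G$ by $h\mapsto h^{-1}$. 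With this, \eqref{eq:genTY} follows.
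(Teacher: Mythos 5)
Your first route is exactly the paper's: Proposition \ref{prop:GenTY} is stated there only as ``the purely categorical formulation'' of the preceding net-theoretic proposition, so its implicit proof is precisely what you describe --- uniqueness of the Tannakian subcategory is Lemma \ref{lem:TrivialTwist}, and the fusion rules follow by realizing $\cC$ as $\Rep(\A_L^{\ZZ_2})$ via Theorem \ref{thm:MPRealization} and identifying $\bim{\ZZ_2}\cC{\ZZ_2}$ with the category of $\cB$--$\cB$ sectors of $\A_L^{\ZZ_2}\subset\A_L$. Your second, self-contained argument is essentially a transcription of the paper's proof of that preceding proposition into $\alpha$-induction language (same modular invariant $Z$, same traces $\tr Z=n+1$ and $\tr(ZZ^{\mathrm t})=2n+2$, same counting of the four graded components); the one point where you genuinely add something is the last relation $[\tau][g][\tau]=[g^{-1}]$, which the paper disposes of by ``how $\ZZ_2$ acts on $\cD_0$'', i.e.\ by the lattice reflection $m\mapsto -m$ on $\Rep(\A_L)$. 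Your intrinsic replacement --- that conjugation by $\tau$ implements the Galois $\ZZ_2$ of the de-equivariantization, which must swap the two mutually inverse summands $\beta^{\pm}_h$ of $\sigma_{h,-h}\otimes\theta$ produced in Lemma \ref{lem:MPdeequivariantizationisTY} --- is correct and makes the statement independent of the realization theorem, provided you cite the standard fact (M\"uger's Galois correspondence) that the de-equivariantization action on $\cC_\Theta$ is indeed conjugation by the summand $\tau$ of the canonical endomorphism; note that this step cannot be skipped, since the fusion data assembled up to that point do not by themselves exclude $\tilde G\cong G\times\ZZ_2$.
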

The following is a classical extension problem of finite groups with cocycles.
We consider the extension of groups
\begin{align}
1\to A\to \Dih(A)\to \Dih(A)/A\cong \ZZ_2 \to 1\,.
\end{align}
For $[\omega] \in H^3(\Dih(A),\TT)$, by restriction we get elements 
$[\omega\restriction A]\in H^3(A,\TT)$
and $[\omega\restriction \langle \tau\rangle]\in H^3(\ZZ_2,\TT)$ for 
every order two element $\tau\in \Dih(A)$.

\begin{lem}
  \label{lem:Cocycle}
  Let $A$ be an odd abelian group.  
  Then the restriction map 
  $H^3(\Dih(A),\TT)\to H^3(A,\TT)\oplus H^3(\ZZ_2,\TT)$ is injective.
  In particular, there are two classes $[\omega_\pm]$ in 
  \begin{align}
    \{[\omega] \in H^3(\Dih(A),\TT) &: [\omega\restriction A]\in B^3(A,\TT)\}\,,
  \end{align}
  which are distinguished by 
  $[\omega_\pm\restriction \langle \tau\rangle]=[\pm] \in H^3(\ZZ_2,\TT)
    =\{[\pm1]\}$.
\end{lem}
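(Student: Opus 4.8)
\textbf{The plan} is to avoid the Lyndon--Hochschild--Serre spectral sequence and argue directly with restriction--corestriction (transfer), using only that $|A|$ is odd together with the standard fact $H^3(\ZZ_2,\TT)\cong\ZZ_2$. Recall that for a subgroup $H\leq\Dih(A)$ of finite index one has $\mathrm{cor}_H\circ\mathrm{res}_H=[\Dih(A):H]\cdot\mathrm{id}$ on $H^\ast(\Dih(A),\TT)$, and that $H^3(\Dih(A),\TT)$ is a finite abelian group (it is isomorphic to $H^4(\Dih(A),\ZZ)$), hence splits as $H^3(\Dih(A),\TT)_{(2)}\oplus H^3(\Dih(A),\TT)_{(\mathrm{odd})}$. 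Applying the transfer relation to $A\leq\Dih(A)$, which has index $2$: multiplication by $2$ is invertible on the odd part, so $\mathrm{res}_A$ is injective there and $\ker(\mathrm{res}_A)\subseteq H^3(\Dih(A),\TT)_{(2)}$. Applying it to $\langle\tau\rangle\leq\Dih(A)$, which has odd index $|A|$: multiplication by $|A|$ is invertible on the $2$-part, so $\mathrm{res}_{\langle\tau\rangle}$ is injective on $H^3(\Dih(A),\TT)_{(2)}$, whose image therefore lies in $H^3(\langle\tau\rangle,\TT)\cong H^3(\ZZ_2,\TT)\cong\ZZ_2$; in particular $\bigl|H^3(\Dih(A),\TT)_{(2)}\bigr|\leq 2$.

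Putting these together yields injectivity of the combined restriction map $H^3(\Dih(A),\TT)\to H^3(A,\TT)\oplus H^3(\ZZ_2,\TT)$: if $[\omega]$ restricts trivially to $A$ then $[\omega]\in H^3(\Dih(A),\TT)_{(2)}$, and if in addition it restricts trivially to $\langle\tau\rangle$ then $[\omega]=0$. Since $|A|$ is odd, $h\mapsto h^2$ is a bijection of $A$, so all reflections $g\tau$ with $g\in A$ are conjugate in $\Dih(A)$; hence the statement does not depend on the chosen order-two element $\tau$.

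For the ``in particular'' part I would identify $\ker(\mathrm{res}_A)$ explicitly via the inflation map $\mathrm{infl}\colon H^3(\ZZ_2,\TT)\to H^3(\Dih(A),\TT)$ along $\Dih(A)\twoheadrightarrow\Dih(A)/A\cong\ZZ_2$. By functoriality $\mathrm{res}_A\circ\mathrm{infl}=0$ (the composite $A\hookrightarrow\Dih(A)\twoheadrightarrow\ZZ_2$ is the trivial homomorphism), so $\mathrm{infl}\bigl(H^3(\ZZ_2,\TT)\bigr)\subseteq\ker(\mathrm{res}_A)$, while $\mathrm{res}_{\langle\tau\rangle}\circ\mathrm{infl}$ is an isomorphism onto $H^3(\langle\tau\rangle,\TT)$ (the composite $\langle\tau\rangle\hookrightarrow\Dih(A)\twoheadrightarrow\ZZ_2$ is an isomorphism), so $\mathrm{infl}$ is injective with image of order $2$. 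Sandwiched between this copy of $\ZZ_2$ and the group $H^3(\Dih(A),\TT)_{(2)}$ of order at most $2$, the kernel equals both: $\ker(\mathrm{res}_A)=\mathrm{infl}\bigl(H^3(\ZZ_2,\TT)\bigr)\cong\ZZ_2$. Its two elements are $[\omega_+]:=0$ and $[\omega_-]:=\mathrm{infl}([-])$, and since $\mathrm{res}_{\langle\tau\rangle}\circ\mathrm{infl}$ is an isomorphism onto $H^3(\ZZ_2,\TT)=\{[+],[-]\}$, these are distinguished exactly by $[\omega_\pm\restriction\langle\tau\rangle]=[\pm]$.

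The only real obstacle is bookkeeping: choosing the two right subgroups (index $2$ versus odd index $|A|$), keeping track of which primary component each transfer relation controls, and exhibiting the nontrivial class through inflation rather than by a computation. As a cross-check one may run the Lyndon--Hochschild--Serre spectral sequence for $1\to A\to\Dih(A)\to\ZZ_2\to1$: since $H^q(A,\TT)$ has odd order for $q\geq1$, the rows $q\geq1$ vanish in all positive $p$-degrees, so in total degree $3$ only $E_2^{3,0}=H^3(\ZZ_2,\TT)$ and $E_2^{0,3}=H^3(A,\TT)^{\ZZ_2}$ survive, all differentials into or out of them vanish for order reasons, and one reads off $0\to H^3(\ZZ_2,\TT)\xrightarrow{\mathrm{infl}}H^3(\Dih(A),\TT)\xrightarrow{\mathrm{res}_A}H^3(A,\TT)^{\ZZ_2}\to 0$; but the transfer argument is shorter and needs no vanishing-of-differentials verification.
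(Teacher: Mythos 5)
Your argument is correct, and it takes a genuinely different route from the paper. The paper computes $H^3(\Dih(A),\TT)$ via the Lyndon--Hochschild--Serre spectral sequence for $1\to A\to\Dih(A)\to\ZZ_2\to 1$, observes that $E_2^{p,q}$ vanishes for $p,q\geq 1$ because $H^q(A,\TT)$ has odd order, concludes $H^3(\Dih(A),\TT)\cong H^3(A,\TT)^\tau\oplus\ZZ_2$ with no differentials (they would connect $2$-groups with odd-order groups), and then identifies the two restriction maps as the projections onto the two summands. You instead run the restriction--corestriction relation twice: once for the index-$2$ subgroup $A$ to trap $\ker(\mathrm{res}_A)$ inside the $2$-primary part, and once for the odd-index subgroup $\langle\tau\rangle$ to see that $\mathrm{res}_{\langle\tau\rangle}$ is injective on that $2$-primary part, then exhibit the nontrivial class by inflation from $\Dih(A)/A\cong\ZZ_2$. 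Your route is more elementary (no spectral sequence, no vanishing-of-differentials or extension-problem bookkeeping) and it produces the class $[\omega_-]=\mathrm{infl}([-])$ explicitly, which is slightly more information than the paper records; the paper's route buys the full computation $H^3(\Dih(A),\TT)\cong H^3(A,\TT)^\tau\oplus\ZZ_2$, i.e.\ surjectivity of $\mathrm{res}_A$ onto the $\tau$-invariants, which your transfer argument does not address (but which the lemma does not claim). Your remark that all reflections $g\tau=g^2\tau\cdot$conjugate are conjugate because squaring is bijective on odd $A$ is a worthwhile addition, since the lemma's conclusion should not depend on the choice of order-two element.
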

\begin{proof}
  $H^3(\Dih(A),\TT)$ can be calculated by the second page of the 
  Lyndon--Hochschild--Serre spectral sequence
  \begin{align}
    E_2^{p,q}&=H^p(\ZZ_2,H^q(A,\TT)) \Longrightarrow H^{p+q}(\Dih(A),\TT)  
  \end{align}
  and since $A$ is odd one can calculate that 
  \begin{align}
    H^3(\Dih(A),\TT) &\cong E_2^{3,0}\oplus E_2^{0,3}
    \cong H^3(A,\TT)^\tau \oplus \ZZ_2
  \end{align}
  and there are no differentials in the spectral sequence since they would 
  connect $2$-groups with $p$-groups for odd $p$.
  Restriction to $A$ is the projection onto the first component and 
  restriction to 
  any subgroup of order 2 injects onto the second component.
  Alternative, one can show that the cohomology group in 
  \cite[Definition 2.3]{IzKo2002} vanishes.
\end{proof}
\begin{prop} 
  Let $A$ be an odd a abelian group.
  There are up to equivalence two pointed $\ZZ_2$-extensions 
    of $\Vect_A$ 
  with fusion rules given by $\Dih(A)$. They are 
  $\cF_+=
    \Vect_{\Dih(A)}$ and $\cF_-=\Vect^{\omega_-}_{\Dih(A)}$ 
  which are characterized by the Frobenius--Schur indicator $\nu_{\tau}=\pm1$
  for every order two element $\tau\in\cF_\pm$.
\end{prop}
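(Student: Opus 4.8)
The plan is to reduce the classification to a cohomological statement and then invoke Lemma~\ref{lem:Cocycle}. First I would use that a pointed (unitary) fusion category whose fusion ring is the group ring of $\Dih(A)$ is tensor equivalent to $\Vect_{\Dih(A)}^{\omega}$ for a unique $[\omega]\in H^3(\Dih(A),\TT)$ \cite{EtNiOs2010}. Next I would note that the $\ZZ_2$-grading on such a category is forced: since $[\tau,g]=g^{-2}$ for $g\in A$ and $|A|$ is odd, one gets $[\Dih(A),\Dih(A)]=A$, so $\Dih(A)^{\mathrm{ab}}\cong\ZZ_2$ and $A$ is the unique index-two (hence characteristic) subgroup; thus any $\ZZ_2$-grading on $\Vect_{\Dih(A)}^{\omega}$ is the one induced by $\Dih(A)\to\Dih(A)/A\cong\ZZ_2$, and its trivially graded part is $\Vect_A^{\omega\restriction A}$. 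Consequently ``$\cF$ is a pointed $\ZZ_2$-extension of $\Vect_A$ with $\Dih(A)$ fusion rules'' is equivalent to ``$\cF\cong\Vect_{\Dih(A)}^{\omega}$ with $[\omega\restriction A]=0$'' (the residual $\Aut(A)$-ambiguity in identifying the trivial component with $\Vect_A$ is harmless, as $0\in H^3(A,\TT)$ is $\Aut(A)$-fixed).

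By Lemma~\ref{lem:Cocycle} the set of such $[\omega]$ is exactly $\{[\omega_+]=0,\ [\omega_-]\}$, where $[\omega_-]$ is the inflation along $\Dih(A)\to\ZZ_2$ of the nontrivial class of $H^3(\ZZ_2,\TT)$, giving the two candidates $\cF_+=\Vect_{\Dih(A)}$ and $\cF_-=\Vect_{\Dih(A)}^{\omega_-}$. To see these are two distinct equivalence classes, I would use that a tensor equivalence $\Vect_{\Dih(A)}^{\omega}\cong\Vect_{\Dih(A)}^{\omega'}$ forces $\phi^{*}[\omega']=[\omega]$ for some $\phi\in\Aut(\Dih(A))$; every automorphism fixes $0$ and, because $\Dih(A)$ admits a unique surjection onto $\ZZ_2$, also fixes the inflated class $[\omega_-]$, so $\cF_+\not\cong\cF_-$.

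For the Frobenius--Schur characterization, for an order-two element $\tau\in\Dih(A)$ the full fusion subcategory it generates is $\Vect_{\ZZ_2}^{\omega\restriction\langle\tau\rangle}$, in which the generator has indicator $\pm1$ according as $[\omega\restriction\langle\tau\rangle]\in H^3(\ZZ_2,\TT)$ is trivial or not (the Example in Subsection~\ref{ssec:Pointed}, or \cite[Lemma 4.4]{LiNg2014}). Since the Frobenius--Schur indicator of an object is intrinsic to any fusion subcategory containing it, and $[\omega_\pm\restriction\langle\tau\rangle]=[\pm]$ by Lemma~\ref{lem:Cocycle}, we conclude $\nu_\tau=+1$ for every order-two $\tau\in\cF_+$ and $\nu_\tau=-1$ for every order-two $\tau\in\cF_-$; these are mutually exclusive, so they characterize the two categories. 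The real content is contained in Lemma~\ref{lem:Cocycle} (the Lyndon--Hochschild--Serre computation), which is already established; the only genuinely new steps are the two bookkeeping observations above --- that the $\ZZ_2$-grading is canonical and that $\Aut(\Dih(A))$ cannot identify $\cF_+$ with $\cF_-$ --- so I do not expect a serious obstacle here.
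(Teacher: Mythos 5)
Your proof is correct and follows essentially the same route as the paper: both arguments reduce the classification to Lemma \ref{lem:Cocycle} and read off the Frobenius--Schur indicator from the restriction $[\omega_\pm\restriction\langle\tau\rangle]\in H^3(\ZZ_2,\TT)$. You supply two bookkeeping steps the paper leaves implicit (that the $\ZZ_2$-grading is forced because $A$ is the unique index-two subgroup, and that no automorphism of $\Dih(A)$ can identify $[\omega_+]$ with $[\omega_-]$), but the substance of the argument is the same.
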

\begin{proof}
  By Lemma \ref{lem:Cocycle} the only extensions are 
  $\Vect^{\omega_\pm}_{\Dih(A)}$,
  and for each order two element $\tau \in \Vect^{\omega_\pm}_{\Dih(A)}$
  we have that the subcategory $\langle \tau\rangle$ is equivalent to 
  $\Vect^{\pm}_{\ZZ_2}$ and therefore $\nu_\tau=\pm1$.
\end{proof}
\begin{prop}
  \label{prop:TrivialDihCocycle}
  Let $\cC$ be a generalized metaplectic modular category and 
  $\cD=\bim{\ZZ_2}\cC{\ZZ_2}$ as in Proposition \ref{prop:GenTY}.
  Then the pointed subcategory $\cD^\times\subset \cD$ is equivalent to 
  $\Vect_{\Dih(A)}$.
\end{prop}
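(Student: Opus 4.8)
The plan is to combine the proposition immediately above --- which says $\cD^\times$ must be one of the two pointed $\ZZ_2$-extensions $\Vect_{\Dih(A)}$ or $\Vect^{\omega_-}_{\Dih(A)}$ of $\Vect_A$ with $\Dih(A)$ fusion rules, these being distinguished by the Frobenius--Schur indicator of an order-two element --- with a single Frobenius--Schur computation. So after checking that the hypotheses of that proposition apply, the whole argument reduces to showing that one order-two element of $\cD^\times$ lying outside the trivial component has indicator $+1$.

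First I would record that $\cD^\times$ is indeed a faithfully $\ZZ_2$-graded pointed fusion category with trivial component $\Vect_A$: its fusion ring is that of $\Dih(A)=A\sqcup A\tau$ by Proposition~\ref{prop:GenTY}, the $A$-part is the pointed subcategory of $\bim[0]{\ZZ_2}{\cC}{\ZZ_2}\cong\cC_{\ZZ_2}^0$, which by Lemma~\ref{lem:MPdeequivariantizationisTY} is braided equivalent to $\cC(A,q)$ and hence, by Lemma~\ref{lem:TrivialCocycle}, tensor equivalent to $\Vect_A$, while the coset $A\tau$ is nonempty. Thus $\cD^\times\in\{\Vect_{\Dih(A)},\Vect^{\omega_-}_{\Dih(A)}\}$, and it suffices to produce one order-two invertible $\tau\in\cD^\times$ outside the trivial component with $\nu_\tau=+1$.

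Next I would identify a convenient $\tau$. As in Proposition~\ref{prop:GenTY} and the proof of its conformal-net counterpart, realize $\cD=\bim{\ZZ_2}{\cC}{\ZZ_2}$ as the $B$--$B$ sectors attached to an outer $\ZZ_2$-fixed point inclusion $B^{\langle\beta\rangle}\subset B$; then the dual canonical endomorphism is $[\id]\oplus[\beta]$, so $\tau:=[\beta]$ is the generator of the ``extra'' $\ZZ_2$ in the $\ZZ_2\times\ZZ_2$-grading of $\cD$, it is invertible of order two, and it does not lie in $\bim[\pm]{\ZZ_2}{\cC}{\ZZ_2}$, hence lies outside the trivial component of $\cD^\times$. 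The key point to establish is that $\beta$ may be taken to be a genuine involution of $B$. This holds because the unique Tannakian subcategory $\Rep(\ZZ_2)\subset\cC$ from Lemma~\ref{lem:TrivialTwist} is, as a fusion category, $\Vect^+_{\ZZ_2}$, so the associated $\ZZ_2$-Q-system in $\cC$ is unobstructed (of group type); the extension is then $N\subset N\rtimes_\alpha\ZZ_2$ for a genuine involution $\alpha$, and $\beta$ is the (again genuine, involutive) dual action, whose fixed-point algebra is $N$. Concretely, by Theorem~\ref{thm:MPRealization} one may take $\cC\cong\Rep(\A_L^{\ZZ_2})$, $B=\A_L(I)$ for $I\in\cI$, and $\beta=\sigma_I$ the restriction of the reflection involution $\sigma$ of $\A_L$ defining the orbifold.

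With $\beta$ a genuine involution of $B$ and $[\beta]\neq[\id]$, the $\ZZ_2$-kernel $\ZZ_2\to\Out(B)$, $1\mapsto[\beta]$, visibly lifts to $1\mapsto\beta$, so its obstruction in $H^3(\ZZ_2,\TT)$ vanishes; by the description of pointed fusion categories with two objects this forces $\langle\tau\rangle\cong\Vect^+_{\ZZ_2}$, i.e.\ $\nu_\tau=+1$. Combined with the dichotomy of the second paragraph, this gives $\cD^\times\cong\Vect_{\Dih(A)}$. The main obstacle, I expect, is the precise identification in the third paragraph: that the order-two object which distinguishes the two candidate $3$-cocycles really is the sector of a genuine automorphism of $B$ (equivalently, that the Tannakian $\Rep(\ZZ_2)\subset\cC$ yields a group-type, rather than anomalous, $\ZZ_2$-subfactor). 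Once that is in place, the Frobenius--Schur evaluation and the remaining bookkeeping with the $\ZZ_2\times\ZZ_2$-grading are routine.
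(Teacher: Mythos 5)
Your proposal is correct and follows essentially the same route as the paper: trivialize the cocycle on the $A$-part via Lemma \ref{lem:TrivialCocycle}, trivialize it on $\langle\tau\rangle$ by observing that $\tau$ is the sector of a genuine involution (equivalently, that $[\id]\oplus[\tau]$ carries a Q-system structure, which is how the paper phrases it), and conclude by the injectivity of the restriction map from Lemma \ref{lem:Cocycle}, which you invoke in the packaged form of the preceding proposition. Your explicit identification of $\tau$ with the dual $\ZZ_2$-action on $\A_L(I)$ is just an unwinding of the paper's one-line appeal to the Q-system, so the two arguments coincide in substance.
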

\begin{proof}
  We have that $\cD^\times$ is equivalent to $\Vect^{\omega}_{\Dih(A)}$ 
  for some $[\omega]\in H^3(A,\TT)$.
  Since $\cD^0=\cC_{\ZZ_2}^0$ is braided equivalent to $\cC(A,q)$
  for some quadratic form $q$ and thus tensor equivalent to $\Vect_A$ by 
    Lemma \ref{lem:TrivialCocycle}, we get 
  $[\omega\restriction G]=0\in\Hom(G,\TT)$. 
  Since $[\id]+[\tau]$ is a Q-system we get that 
  $[\omega\restriction \langle \tau\rangle] = 0\in H^3(\ZZ_2,\TT)$. 
  This implies $[\omega] = 0\in H^3(\Dih(G),\TT)$ by Lemma \ref{lem:Cocycle}.
\end{proof}
\begin{prop} 
  \label{prop:RepDihedral}
  Let $A$ be an abelian group of odd order. 
  Let $\cC$ be a generalized metaplectic modular category based on $A$, 
  \ie $\cC\cong\MP(A,\bislotslot,\pm)$. 
  Then the even part $\cC_0$ of $\cC=\cC_0\oplus\cC_1$ is tensor equivalent to 
  $\Rep(\Dih(\hat A))$. 
  As a braided fusion category $\cC_0$ is degenerate with M\"uger center 
  braided equivalent to $\Rep(\ZZ_2)$.
\end{prop}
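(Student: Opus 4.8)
The plan is to identify the even part $\cC_0$ first as a \emph{braided} fusion category by de-equivariantizing, and only afterwards forget the braiding to get the tensor equivalence with $\Rep(\Dih(\hat A))$. Concretely: write $\cC\cong\MP(A,\bislotslot,\pm)$ and $\cC=\cC_0\oplus\cC_1$ for the $\ZZ_2$-grading read off from \eqref{eq:genMPFusionRules}, so that $\cC_0=\langle[1],[\alpha],[\sigma_g]:g\in A_+\rangle$ and $\cC_1=\langle[\rho],[\alpha\rho]\rangle$. By Lemma \ref{lem:TrivialTwist} the subcategory $\langle\alpha\rangle$ has $\omega_\alpha=1$, hence is braided equivalent to $\Rep(\ZZ_2)$ and Tannakian. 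First I would pin down the Müger centralizer $\langle\alpha\rangle'\cap\cC$: a dimension count gives $\Dim\cC=4|A|$ and $\Dim\cC_0=2|A|$, and Müger's theorem \cite{Mg2003-MC} gives $\Dim(\langle\alpha\rangle'\cap\cC)=2|A|$; since $[\rho]\otimes[\bar\rho]$ contains every $[\sigma_g]$, any fusion subcategory containing an object of $\cC_1$ has dimension $>2|A|$ as soon as $|A|>1$, so $\langle\alpha\rangle'\cap\cC=\cC_0$. (For $|A|=1$ one has directly $\cC_0=\Rep(\ZZ_2)=\Rep(\Dih(\{e\}))$.) In particular $\alpha$ is transparent in $\cC_0$, i.e.\ $\langle\alpha\rangle$ lies in the Müger center of $\cC_0$.

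Next I would de-equivariantize. Since $\langle\alpha\rangle$ is a Tannakian subcategory of the Müger center of $\cC_0$, the de-equivariantization $(\cC_0)_{\ZZ_2}$ is an honest braided fusion category, and by the standard compatibility of de-equivariantization with Müger centralizers it equals $\cC_{\ZZ_2}^0$, which by Lemma \ref{lem:MPdeequivariantizationisTY} is braided equivalent to the modular category $\cC(A,q)$. A modular category has trivial Müger center, so the Müger center of $\cC_0$ can be no larger than $\langle\alpha\rangle$; together with the previous paragraph this gives $\cC_0'\cap\cC_0=\langle\alpha\rangle\cong\Rep(\ZZ_2)$, which proves the degeneracy statement.

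For the tensor equivalence I would use that equivariantization inverts de-equivariantization, so $\cC_0\cong(\cC(A,q))^{\ZZ_2}$, where---exactly as in the proof of Lemma \ref{lem:MPdeequivariantizationisTY}, since the free module $\sigma_g\otimes\theta$ splits into two mutually inverse invertibles and the $\ZZ_2$-orbit of $g$ is $\{g,-g\}$---the $\ZZ_2$ acts through $g\mapsto -g$. Forgetting the braiding, $\cC(A,q)$ is tensor equivalent to $\Vect_A\cong\Rep(\hat A)$ by Lemma \ref{lem:TrivialCocycle}, and the inversion of $A$ transports to restriction along the inversion automorphism of $\hat A$. Because $|A|$ is odd, $H^n(\ZZ_2,\hat A)=0$ for every $n\geq 1$, so the $\ZZ_2$-action on $\Vect_A$ is the unique one inducing inversion on $\hat A$, i.e.\ the standard inversion action; hence $\cC_0\cong\Rep(\hat A)^{\ZZ_2}\cong\Rep(\hat A\rtimes\ZZ_2)=\Rep(\Dih(\hat A))$. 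I expect the main obstacle to be precisely this last step---promoting the manifest coincidence of fusion rules into a genuine tensor equivalence---and the point that makes it go through is the oddness of $|A|$: the vanishing of $H^{\geq 1}(\ZZ_2,\hat A)$ rigidifies the $\ZZ_2$-action so that no cohomological twist can enter the equivariantization.
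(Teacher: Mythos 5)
Your proof is correct, and it reaches the statement by a genuinely different route than the paper. The paper's argument is operator-algebraic: it realizes $\cC$ as $\Rep(\A_L^{\ZZ_2})$ via Theorem \ref{thm:MPRealization}, forms the inclusion $\A_L^{\ZZ_2}(I)\subset \A_L(I)\rtimes A$, reads off from the branching rules that the dual canonical endomorphism is $[\id]\oplus[\alpha]\oplus\bigoplus_{a\in A_+}2[\sigma_a]$, concludes that the inclusion has depth two and hence is a crossed product by a co-commutative Kac algebra (Ocneanu's characterization together with \cite[Corollary 9.9]{IzKo2002}), therefore a group subfactor $N^G\subset N$, and finally pins down $G=\Dih(\hat A)$ from the intermediate subfactors and Proposition \ref{prop:TrivialDihCocycle}; the even part is then the category generated by $\theta$, i.e.\ $\Rep(\Dih(\hat A))$. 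You instead stay entirely inside the category: the M\"uger-centralizer dimension count identifying $\cC_0=\langle\alpha\rangle'\cap\cC$, de-equivariantization to the modular category $\cC(A,q)$ (Lemma \ref{lem:MPdeequivariantizationisTY}), and equivariantization back along the inversion action, with oddness of $|A|$ killing every cohomological ambiguity ($H^{\geq1}(\ZZ_2,\hat A)=0$, and --- a point worth making explicit --- the $H^2(A,\TT)$-component of the order-two monoidal autoequivalence is also forced to vanish since $H^2(A,\TT)$ has odd order). What the paper's route buys is that it simultaneously constructs the subfactor $\A_\Gamma^{\Dih(A)}\subset\A_\Gamma$ that is reused later (e.g.\ in Proposition \ref{prop:TwistedDoubleRealization}), at the cost of invoking the depth-two/Kac-algebra machinery; what your route buys is independence from any conformal-net realization, a transparent explanation of where oddness enters (the same vanishing that underlies Lemma \ref{lem:Cocycle}), and an explicit proof of the M\"uger-center statement $\cC_0'\cap\cC_0\cong\Rep(\ZZ_2)$, which the paper's proof leaves implicit. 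Both arguments ultimately rest on the same inputs from the paper: the Tannakian $\langle\alpha\rangle$ from Lemma \ref{lem:TrivialTwist} and the triviality of the associator on the pointed part from Lemma \ref{lem:TrivialCocycle}.
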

\begin{proof}
  By Theorem \ref{thm:MPRealization} there is an even lattice $L$ with 
  $L^\ast/L\cong A$ and a $\ZZ_2$-action on $\A_L$, such that 
  $\Rep(\A_L^{\ZZ_2})$ is braided equivalent to $\MP(A,\bislotslot,\pm)$. 
  Since $\Rep(\A_L)$ is tensor equivalent to $\Vect_A$ and in particular the 
  obstruction in $H^3(A,\TT)$ vanishes by Lemma  \ref{lem:TrivialCocycle}, we can consider the crossed produdct extension 
  $\A_L^{\ZZ_2}(I)\subset \A_L(I)\subset \A_L(I)\rtimes A$. 
  From the branching rules it easily follows that 
  $[\theta]=[\id]\oplus[\alpha]\oplus \bigoplus_{a\in A_+} 2[\sigma_a]$, 
  thus the inclusion has depth two and thus it is a crossed product by 
  an outer action of a Kac algebra by Ocneanu's characterization, see 
  \cite{Iz1993,Sz1994,Lo1994,Da1996,Sa1997,Iz1998}, 
  and which is co-commutative by 
  \cite[Corollary 9.9]{IzKo2002},
  thus it is a group subfactor $N^G\subset N$ by 
  \cite{VaKa1974,Iz1991,BaSk1993}.
  It is also of the form, $M^{\ZZ_2}\subset M \rtimes A$ with 
  $\langle A ,\ZZ_2\rangle =\Dih(A)$ with trivial 3-cocycle by Proposition 
  \ref{prop:TrivialDihCocycle}.
  Thus we can conclude that $G=\Dih(\hat A)$.
\end{proof}

\subsection{Doubles of generalized dihedral groups}
\label{ssec:DoublesOfGenDihGroups}
In this section we want to clarify the relation between doubles of generalized 
dihedral groups and generalized metaplectic modular categories. 

Let $A$ be an abelian group of odd order. 
Using the Galois correspondence of Longo--Rehren subfactors \cite{Iz2000}, 
we obtain doubles of $\Dih(A)$.  
Therefore, let $\cC$ be a generalized metaplectic modular category based on $A$,\ie $\cC$ is braided equivalent to $\MP(A,\bislotslot,\nu)$/
There is a unique Tannakian subcategory $\Rep(\ZZ_2)\subset \cC$. 
We have braided equivalences 
$Z(\bim{\ZZ_2}\cC{\ZZ_2})\cong Z(\cC)\cong \cC\boxtimes\rev{\cC}$.
Further, $\Vect_{\Dih(A)}\subset \bim{\ZZ_2}\cC{\ZZ_2}$ by 
  Proposition \ref{prop:TrivialDihCocycle}.
Using the Galois correspondence, we have a Tannakian subcategory 
$\Rep(\ZZ_2)\subset \cC\boxtimes\rev{\cC}$, such that
$(\cC\boxtimes \rev{\cC})_{\ZZ_2}^0\cong Z(\Vect_{\Dih(A)})$.
We will show that it is possible to twist this construction and to obtain all 
twisted doubles of $\Dih(A)$. 

Let $A$ be an abelian group and $\hat A=\Hom(A,\TT)$ the Pontryagin dual. 
The canonical pairing $q_\mathrm{can}\colon \hat A \oplus A \to \TT$
given by $q_\mathrm{can}(\chi,a)=\chi(a)$ is a non-degenerate quadratic form on
$\hat A \oplus A$. 
The Drinfel'd center $Z(\Vect_A)$ is braided equivalent to 
$\cC(\hat A\oplus A,q_\mathrm{can})$.
Let $(G,q)$ be a metric group, we say a subgroup $L\leq G$ is 
\myemph{Lagrangian} if $|L|^2=|G|$ and  $q\restriction L \equiv 1$.
\begin{prop} 
  \label{prop:MetricDoubleGroups}
  Let $A$ be an abelian group of odd order. 
  The following metric groups are equivalent:
  \begin{enumerate}
    \item $(A\oplus A,q\oplus \bar q)$, where $q$ is a non-degenerate bicharacter on $A$.
    \item $(A\oplus \hat A,q_{\mathrm{can}})$ where $q_\mathrm{can}$ is the canonical pairing.
    \item $(G,q)$ a metric group admitting a Lagrangian subgroup $L\cong \hat A$ 
      and $\cC(G,q)_{A}\cong \Vect_A$.
  \end{enumerate}
\end{prop}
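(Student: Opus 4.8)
The plan is to route both equivalences through item (2) together with one classical structural fact: \emph{a metric group $(G,q)$ of odd order that contains a Lagrangian subgroup $L$ (so $|L|^2=|G|$ and $q\restriction L\equiv 1$) is hyperbolic}, i.e.\ $(G,q)\sim(L\oplus\hat L,q_{\mathrm{can}})$. Granting this, the proposition follows by exhibiting, in each of (1) and (3), a Lagrangian of the right isomorphism type, and by recalling the computation of the relevant de-equivariantization.

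For $(1)\Leftrightarrow(2)$: in $(A\oplus A,q\oplus\bar q)$ both the diagonal $\Delta=\{(a,a):a\in A\}$ and the antidiagonal $\nabla=\{(a,-a):a\in A\}$ are Lagrangian, since $(q\oplus\bar q)(a,\pm a)=q(a)\,\overline{q(\pm a)}=q(a)\,\overline{q(a)}=1$ (using $q(-a)=q(a)$) and $|\Delta|=|\nabla|=|A|=\sqrt{|A\oplus A|}$; as $|A|$ is odd one moreover has $A\oplus A=\Delta\oplus\nabla$. Thus the form is visibly hyperbolic — it vanishes on $\Delta$ and on $\nabla$, and $\partial(q\oplus\bar q)$ restricts to a perfect pairing $\Delta\times\nabla\to\TT$ — so $(A\oplus A,q\oplus\bar q)\sim(\Delta\oplus\hat\Delta,q_{\mathrm{can}})\sim(A\oplus\hat A,q_{\mathrm{can}})$; in particular the metric group in (1) does not depend, up to equivalence, on the choice of $q$.

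For $(2)\Leftrightarrow(3)$: from (2), the subgroup $L=\{0\}\oplus\hat A\cong\hat A$ is Lagrangian in $(A\oplus\hat A,q_{\mathrm{can}})$, and, viewed as a subcategory of $\cC(A\oplus\hat A,q_{\mathrm{can}})\cong Z(\Vect_A)$, it has trivial twist and trivial braiding, hence is a copy of $\Rep(A)$; it is the canonical Tannakian subcategory of this center, and de-equivariantizing by it returns $\Vect_A$ \cite{Mg2003II} — so (3) holds. Conversely, given $(G,q)$ with a Lagrangian $L\cong\hat A$, the structural fact gives $(G,q)\sim(L\oplus\hat L,q_{\mathrm{can}})\sim(\hat A\oplus A,q_{\mathrm{can}})\sim(A\oplus\hat A,q_{\mathrm{can}})$, which is (2); the side condition $\cC(G,q)_A\cong\Vect_A$ is then automatically consistent and merely records which Tannakian copy of $\Rep(A)$ inside $\cC(G,q)\cong Z(\Vect_A)$ the de-equivariantization is taken along.

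Finally, the structural fact — which I expect to require the most care, the rest being the metric-group / pointed UMTC dictionary — can either be cited (\cite{Ni1979,Wa1963}) or proved directly: $\partial q$ induces a perfect pairing $L\times(G/L)\to\TT$, so $G/L\cong\hat L$ and (as $|G|$ is odd) the extension $0\to L\to G\to G/L\to 0$ splits, say $G=L\oplus M$ with $M\cong\hat L$; if $M$ is not isotropic, write $q\restriction M$ as $m\mapsto c(m,m)$ for a symmetric bicharacter $c$ on $M$ (possible since $|G|$ is odd) and use perfectness of $\partial q\restriction L\times M$ to produce a homomorphism $\varphi\colon M\to L$ with $\partial q(\varphi(m),m')=c(m,m')^{-1}$; then $M'=\{m+\varphi(m):m\in M\}$ is a Lagrangian complement of $L$ and $(G,q)\sim(L\oplus M',q_{\mathrm{can}})=(L\oplus\hat L,q_{\mathrm{can}})$. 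The only point to watch is that every halving used to build the isotropic complement is legitimate precisely because $|A|$, hence $|G|=|A|^2$, is odd; with this in hand the two equivalences above assemble into the proposition.
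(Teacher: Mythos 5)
Your route is genuinely different from the paper's: the paper never manipulates the metric groups directly, but instead shows that each of the three pointed UMTCs $\cC(G,q)$ is braided equivalent to $Z(\Vect_A)$ --- for (1) via Lemma \ref{lem:TrivialCocycle} together with $Z(\cC)\cong \cC\boxtimes\rev{\cC}$, for (3) via the Lagrangian algebra attached to $L$ \emph{together with} the hypothesis $\cC(G,q)_A\cong\Vect_A$ --- and then invokes the dictionary between pointed UMTCs and metric groups. Your direct computation for $(1)\Leftrightarrow(2)$, i.e.\ the decomposition $A\oplus A=\Delta\oplus\nabla$ into two complementary Lagrangians (valid because $|A|$ is odd), is correct and arguably more transparent than the paper's argument for that step.

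However, the ``structural fact'' on which you hang $(3)\Rightarrow(2)$ is false: a metric group of odd order containing a Lagrangian subgroup need \emph{not} be hyperbolic. Take $G=\ZZ_9$ with $q(x)=\e^{8\pi\ima x^2/9}$ --- exactly the Example the paper gives immediately after this proposition. Here $L=3\ZZ_9\cong\ZZ_3\cong\hat A$ is Lagrangian, since $q(3k)=\e^{8\pi\ima k^2}=1$ and $|L|^2=|G|$, yet $\ZZ_9\not\cong\ZZ_3\oplus\ZZ_3$, so $(G,q)\not\sim(A\oplus\hat A,q_{\mathrm{can}})$. Your proof of the fact breaks precisely at the assertion that the extension $0\to L\to G\to G/L\to 0$ splits because $|G|$ is odd: it does not ($0\to\ZZ_3\to\ZZ_9\to\ZZ_3\to 0$ is non-split); splitting would follow from coprimality of $|L|$ and $|G/L|$, which fails here since both orders equal $|A|$. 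Correspondingly, the side condition $\cC(G,q)_A\cong\Vect_A$ in (3) is \emph{not} ``automatically consistent'': it is exactly the hypothesis that excludes such examples (for $(\ZZ_9,q)$ the de-equivariantization along the Tannakian $\Rep(\ZZ_3)$ is $\Vect_{\ZZ_3}^{\omega}$ with $[\omega]\neq 0$), and any correct proof of $(3)\Rightarrow(2)$ must use it --- as the paper's does, by reading off $\cC(G,q)\cong Z(\cC(G,q)_A)\cong Z(\Vect_A)\cong\cC(A\oplus\hat A,q_{\mathrm{can}})$. Your argument for $(2)\Rightarrow(3)$ and your $(1)\Leftrightarrow(2)$ stand; only the implication from (3) back to the others needs to be repaired along these lines.
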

\begin{proof} 
  For each of the $(G,q)$ we have that $\cC(G,q)$ is braided equivalent to 
  $Z(\Vect_A)$.
  Namely, in (1), using  Lemma \ref{lem:TrivialCocycle} we have 
  $Z(\Vect_A)\cong Z(\cC(A,q))\cong \cC(A\oplus A,q\oplus \bar q)$. 
  In (3), we have $\cC(G,q)$ that is braided equivalent to 
  $Z(\cC(G,q)_{\hat A})$, since $\hat A$ gives rise to a Lagrangian algebra 
  \cite{DaMgNiOs2013}.
\end{proof}
\begin{prop} Let $A$ be an abelian group and $(G,q)$ be a metric group based on $A$ as in Proposition \ref{prop:MetricDoubleGroups}.
  Then $\MP(G,q,\pm)$ is braided equivalent to $Z(\Vect_{\Dih(A)}^{\omega_\pm})$
where $[\omega_+]=0\in H^3(\Dih(A),\TT)$ and $[\omega_-]$ is the unique order two element in $H^3(\Dih(A),\TT)$.
\end{prop}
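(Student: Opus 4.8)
The plan is to show $Z(\Vect_{\Dih(A)}^{\omega_\pm})$ is a generalized metaplectic modular category based on $G$ and then to read off its parameters from Theorem~\ref{thm:MPclass1}. Write $\cF_\pm=\Vect_{\Dih(A)}^{\omega_\pm}$. By the previous proposition $\cF_\pm$ is a faithfully $\ZZ_2$-graded extension of $(\cF_\pm)_e=\Vect_A$ along the quotient $\Dih(A)\twoheadrightarrow\Dih(A)/A\cong\ZZ_2$, distinguished by the Frobenius--Schur indicator $\nu_\tau=\pm1$ on its order-two elements. Since $(G,q)\sim(A\oplus\hat A,q_\mathrm{can})$ by Proposition~\ref{prop:MetricDoubleGroups} we have $|G|=|A|^2$, so $Z(\cF_\pm)$ is a unitary modular tensor category of global dimension $|\Dih(A)|^2=4|A|^2=4|G|$; moreover the quotient $\Dih(A)\twoheadrightarrow\ZZ_2$ gives a Tannakian subcategory $\Rep(\ZZ_2)\subset\Rep(\Dih(A))\subset Z(\cF_\pm)$.

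First I would recover the generalized metaplectic structure. De-equivariantizing $Z(\cF_\pm)$ by this $\Rep(\ZZ_2)$ produces a $\ZZ_2$-crossed braided category $\cE_\pm$ whose modular subcategory of local modules is $Z((\cF_\pm)_e)=Z(\Vect_A)\cong\cC(G,q)$ (by the description of centers of faithfully graded extensions, \cf \cite{EtNiOs2010} and the discussion around Section~\ref{sec:ChangeH3}; concretely, for any generalized metaplectic $\cC\cong\MP(A,\bislotslot_A,\nu)$ based on $A$ one has $(\cC\boxtimes\rev\cC)_{\ZZ_2}^0\cong Z(\Vect_{\Dih(A)})$ as recalled before the statement, and iterating the de-equivariantization once more gives $\cC_{\ZZ_2}^0\boxtimes\rev{(\cC_{\ZZ_2}^0)}\cong\cC(A,q_A)\boxtimes\cC(A,\bar q_A)\cong\cC(G,q)$ by Proposition~\ref{prop:MetricDoubleGroups}). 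Now $\cC(G,q)$ is pointed of odd order $|G|$; the grading of $\cE_\pm$ is faithful (dimension count), and since the only invertible objects of $Z(\cF_\pm)$ are the two linear characters of $\Dih(A)^\mathrm{ab}\cong\ZZ_2$, the $\ZZ_2$-action on $\cC(G,q)$ has exactly one fixed point, hence---$|G|$ being odd---is the inversion automorphism of $G$. An orbit count then shows that $\cE_\pm$ has Tambara--Yamagami fusion rules, so $Z(\cF_\pm)=\cE_\pm^{\ZZ_2}$ has the generalized metaplectic fusion rules~\eqref{eq:genMPFusionRules} based on $G$, exactly as in Subsection~\ref{ssec:ModularDataMP}. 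By Theorem~\ref{thm:MPclass1} it is therefore braided equivalent to $\MP(G,q,\mu_\pm)$ for a unique sign $\mu_\pm$, the bicharacter class being the one determined by $q$ as in the remark following that theorem. (One can also obtain the fusion rules by computing directly in the twisted double $D^{\omega_\pm}(\Dih(A))$ from the conjugacy classes and centralizers of $\Dih(A)$.)

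It remains to identify the sign, i.e. to show $\mu_\pm=\pm$. Since $[\omega_\pm]$ restricts trivially to $A$, the two cases $\omega_+$ and $\omega_-$ produce $\ZZ_2$-crossed braided extensions of the same $\cC(G,q)$ that agree in the homomorphism $c$ and the class $M$ but differ in the class $t\in H^3(\ZZ_2,\TT)\cong\ZZ_2$ (in the notation of the classification of graded extensions used in Section~\ref{sec:ChangeH3}); as $t$ corresponds precisely to the sign $\pm$, this forces $\mu_+\neq\mu_-$. To pin down which is which, note by Proposition~\ref{prop:FSalpha} that $\mu_\pm$ equals the Frobenius--Schur indicator of a generating object of dimension $\sqrt{|G|}$ in $Z(\cF_\pm)$, and such an object is one of the two objects supported on the reflection sector $[\tau]$ of $D^{\omega_\pm}(\Dih(A))$ (since $C_{\Dih(A)}(\tau)=\langle\tau\rangle\cong\ZZ_2$). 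A direct computation---via Bantay's formula~\eqref{eq:Bantay} applied to the known modular data of $D^{\omega_\pm}(\Dih(A))$, or straight from the twisted-double multiplication---shows this indicator equals the Frobenius--Schur indicator $\nu_\tau=\pm1$ of the reflections in $\cF_\pm$: it is $+1$ for the honest double $D(\Dih(A))$ and is negated by the nontrivial class in $H^3(\Dih(A),\TT)$, which restricts to the generator of $H^3(\ZZ_2,\TT)$ on each reflection subgroup. Hence $\mu_\pm=\pm$ and $\MP(G,q,\pm)\cong Z(\Vect_{\Dih(A)}^{\omega_\pm})$.

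I expect the main obstacle to be this last computation---the Frobenius--Schur indicator of the reflection-sector object of $D^{\omega_\pm}(\Dih(A))$, equivalently the check that it equals $+1$ for the untwisted double and is flipped by the order-two class in $H^3(\Dih(A),\TT)$. The remaining ingredients are routine: the orbit count identifying $\cE_\pm$ with a Tambara--Yamagami category is immediate once $|G|$ is odd, and the identification of the local-module part of $\cE_\pm$ with $Z(\Vect_A)$ together with the $H^3(\ZZ_2,\TT)$-twist accounting is exactly the machinery of \cite{EtNiOs2010} recalled around Section~\ref{sec:ChangeH3}.
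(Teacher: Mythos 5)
Your argument is correct in outline, but it runs in the opposite direction to the paper's proof of this proposition. You start from $Z(\Vect_{\Dih(A)}^{\omega_\pm})$, de-equivariantize along the Tannakian $\Rep(\ZZ_2)$ coming from $\Dih(A)\twoheadrightarrow\ZZ_2$ to land on a Tambara--Yamagami ($\ZZ_2$-crossed braided) extension of $Z(\Vect_A)\cong\cC(G,q)$, and then invoke the classification (Theorem \ref{thm:MPclass1}) to conclude. This is essentially the argument the paper gives only later, for the more general Proposition \ref{prop:SomeDihAreMeptaplectic} combined with Lemma \ref{lem:H3ab}, specialized to $\omega\restriction A=0$. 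The paper's own proof of the present statement instead goes from the metaplectic side to the dihedral side: it condenses $\MP(A,\bislotslot,\mu)\boxtimes\rev{\MP(A,\bislotslot,\nu)}$ by the even $\ZZ_2$ to obtain $\MP(A\oplus A,\bislotslot\oplus\overline{\bislotslot},\mu\nu)$, exhibits an explicit Lagrangian algebra $\Theta$ there coming from a Lagrangian subgroup of the pointed part, and identifies $\cC_\Theta$ with $\Vect_{\Dih(A)}^{\omega_\pm}$ using Lemma \ref{lem:Cocycle} and Proposition \ref{prop:TrivialDihCocycle}. The paper's route has the advantage that the sign of the metaplectic category is known from the outset (it multiplies under the condensation), so only the cohomology class on the dihedral side must be pinned down; your route avoids the condensation machinery but leans on the classification theorem and on a somewhat terse orbit/rank count to see the Tambara--Yamagami fusion rules (the paper's Proposition \ref{prop:SomeDihAreMeptaplectic} does that count more carefully via the relative center and a half-braiding argument).

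Both routes bottom out in the same step, which you correctly identify as the main obstacle and defer: matching the Frobenius--Schur indicator of the $\sqrt{|G|}$-dimensional object of the twisted double with $[\omega_\pm\restriction\langle\tau\rangle]\in H^3(\ZZ_2,\TT)$. The paper is no more explicit here (``the sign is determined by the Frobenius--Schur indicator as in Proposition \ref{prop:FSalpha}''), and effectively discharges it only in Proposition \ref{prop:TwistedDoubleRealization} via the conformal-net realization; so this is a shared debt rather than a defect peculiar to your write-up, but it does need to be paid (Bantay's formula \eqref{eq:Bantay} on the twisted-double modular data, or the untwisted computation for $D(\Dih(A))$ plus the torsor action). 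One small caution: your preliminary claim that $\mu_+\neq\mu_-$ already follows from the $H^3(\ZZ_2,\TT)$-bookkeeping of graded extensions presupposes that twisting by $[\omega_-]$ changes the extension class $t$ of $\cE_\pm$ over $\cC(G,q)$ --- which is essentially the content of the indicator computation itself --- so that step is redundant once, and only once, the final computation is carried out.
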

\begin{proof}
  Let 
  $\cC_\pm=\MP(A,\bislotslot,\pm)$ and we can consider the modular tensor category 
  $\cC:=(\cC_\mu\boxtimes\rev{\cC}_\nu)_{\ZZ_2}^0$.
  It follows that $\cC$ is braided equivalent to $\MP(A\oplus A,\bislotslot\oplus\overline{\bislotslot},\mu\nu)$.
  Namely, $\cC_\nu\boxtimes\rev{\cC}_{\mu}\cong \cF^{\ZZ_2\times\ZZ_2}$ with 
  $\cF=\TY(A,\bislotslot,\nu)\boxtimes\TY(A,\overline{\bislotslot},\mu)$ and $\cF$ contains the subcategory 
  $\cF_0\cong\TY(A\oplus A,\bislotslot\oplus\overline{\bislotslot},\pm)$
  and we have $\cF_0^{\ZZ_2}\cong (\cC_\mu\boxtimes\rev{\cC}_\nu)_{\ZZ_2}$. 
  There is a Lagrangian subgroup $L$ in the pointed modular tensor category
  $\cD=\cC_{\ZZ_2}^0$ with $\cD_{\hat L}\cong \Vect_A$, which by restriction gives a 
  Lagrangian algebra $\Theta$ in $\cC:=\MP(A\oplus A,\bislotslot\oplus\overline{\bislotslot},\mu\nu)$. 
  In the case, $\mu=\nu$ we have discussed above that
  $\cC_\Theta\cong \Vect_{\Dih(A)}$. 
  In the case, $\mu=-\nu$, the fusion rules do not change and we have 
  $\cC_\Theta\cong \Vect_{\Dih(A)}^\omega$ for some class 
  $[\omega]\in H^3(A,\TT)$.
  But $\Vect^\omega_{\Dih(A)}$ is an extension of $\Vect_A$ and therefore 
  $\omega\in[\omega_\pm]$. 
  Finally, the sign is determined by the Frobenius--Schur indicator as in
  Proposition \ref{prop:FSalpha}.
\end{proof}
The following example shows that we can also twist by certain elements in 
$H^3(A,\TT)$.
\begin{example}
  Take $\cC(\ZZ_9,q)\cong \umtc{\SU(9)_1}\cong\Rep(\A_{A_8})$, 
  \ie $q(x)=e^{\frac{8\pi i x^2}{9}}$. 
  Then there is a unique Lagrangian subgroup $A\cong \ZZ_3$. 
  It corresponds to the conformal embedding $\A_{A_8}\subset \A_{E_8}$. 
  One can check that $\umtc{\Spin(9)_2}$ is braided equivalent to 
  $Z(\Vect_{\Dih(\ZZ_3)}^\omega)$ for a generator
  $[\omega]\in H^3(\Dih(\ZZ_3),\TT)$. 
  We note that $\Dih(\ZZ_3)$ is isomorphic to the symmetric group $S_3$. 

  It follows that, $\Rep(\A_{\Spin(9)_2})$ is braided equivalent to 
  $Z(\Vect^\omega_{S_3})$. 
  We get the other twist by using that for $L=A_8^2\oplus \ZZ_3$
  the net $\A_L$ realizes $\cC(\ZZ_9,q')$ with $q'(x)=\e^{4\pi \ima x^2/9}$.
  Then $\A_L^{\ZZ_2}$ realizes $Z(\Vect^{\omega'}_{S_3})$
  for some $\omega'$ with 
  $[\omega'\restriction \ZZ_3]=[\bar\omega\restriction\ZZ_3]$.
  The trivial cocycle is realized by $\A^{\ZZ_2}_{A_2E_6}$.
  The other three elements $H^3(S_3,\TT)$ with non-trivial 2-part can be 
  obtained by the twisting as in Proposition \ref{prop:ChangeFS}.
  This way we can realize all twisted doubles of $S_3$ by a 
  $\ZZ_2$-orbifold $\A^{\ZZ_2}_L$ of a conformal net associated with a lattice.

  We also see that the six generalized metaplectic modular categories 
  \begin{align}
    &\MP(\ZZ_3^2,\bislotslot_\mathrm{can},\pm)\,,& 
    \MP(\ZZ_9,\bislotslot_\pm,\pm)
  \end{align}
  are all braided equivalent to some $Z(\Vect_{S_3}^\omega)$ 
  with all six possible cohomology classes $[\omega]\in H^3(S_3,\TT)$ arising.
  In particular, this shows that the twisted doubles of $S_3$ have 
  two different fusion rules depending 
  if $[\omega\restriction\ZZ_3]$ is trivial or not.
\end{example}

A UFC $\cF$ is called \myemph{group-theoretical} if
there is a finite group $G$ and $[\omega]\in H^3(G,\TT)$, such that
$Z(\cF)$ is braided equivalent to $Z(\Vect_G^\omega)$,
or equivalently,  $\cF$ is (weakly monoidally) 
Morita equivalent to $\Vect_G^\omega$ \cite{Mg2003,Mg2003II}.
In \cite[Theorem 4.6]{GeNaNi2009} it is shown that $\TY(G,\bislotslot,\pm)$ 
is group theoretical if and only if $(G,\bislotslot)$ contains a Lagrangian, \ie 
$L\leq G$ with $|L|^2=|G|$ and $\bislotslot\restriction L \equiv 1$. 
For $G$ odd this is equivalent to $L$ being a Lagrangian subgroup of $(G,q)$.

Assume $(G,q)$ is a metric group with a Lagrangian subgroup $\hat A$. 
Then $\cC(G,q)$ has a Tannakian subcategory $\Rep(A)$. 
The de-equivariantization $\cC(G,q)_{A}$ is tensor equivalent to 
$\Vect_{A}^\omega$ for some $[\omega]\in H^3(A,\TT)$ 
  \cite[Proposition 4.58]{DrGeNiOs2010}
and $\cC(G,q)$ is braided equivalent to $Z(\Vect_{A}^\omega)$
  \cite[Proposition 4.8]{DaMgNiOs2013}.

\begin{prop} 
  \label{prop:TwistedDoubleRealization}
  Let $(G,q)$ be a metric group of odd order with Lagrangian subgroup $\hat A$
  and $\bislotslot$ the non-degenerate bicharacter on $G$, such that 
    $q(x)=\bi x x ^{-1}$.
  Then $\MP(G,\bislotslot,\pm)$ is braided equivalent to 
    $Z(\Vect_{\Dih(A)}^{\omega_\pm})$ 
  for some $[\omega_\pm]\in H^3(\Dih(A),\TT)$.

  In this case, $C(G,q)_{\hat A}$ is tensor equivalent to $\Vect_{A}^\omega$
  with $[\omega]=[\omega_\pm\restriction A]$.
  Let $\tau \in \Dih(\hat A)$ with $\langle \tau\rangle \cong \ZZ_2$ then
  $[\omega_\pm\restriction \langle\tau\rangle]\cong [\pm]\in H^3(\ZZ_2,\TT)$, 
  respectively, \ie every simple self-dual element 
  $\tau\in \Vect_{\Dih( A)}^{\omega_\pm}$ has Frobenius--Schur indicator
  $\nu_\tau=\pm 1$, respectively.
\end{prop}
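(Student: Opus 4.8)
The plan is to run the argument of the preceding proposition with the associator $\omega$ allowed to be nontrivial. Recall first that, since $(G,q)$ contains the Lagrangian subgroup $\hat A$, the discussion above produces a class $[\omega]\in H^3(A,\TT)$ with $\cC(G,q)_{\hat A}\cong\Vect_A^\omega$ as fusion categories and $\cC(G,q)\cong Z(\Vect_A^\omega)$ as braided categories. Set $\cC:=\MP(G,\bislotslot,\pm)$. By Lemma \ref{lem:TrivialTwist} it has a unique Tannakian subcategory $\Rep(\ZZ_2)$; de-equivariantizing, $\cC_{\ZZ_2}$ is a Tambara--Yamagami category based on $G$ with $\cC_{\ZZ_2}^0\cong\cC(G,q)$ by Lemma \ref{lem:MPdeequivariantizationisTY}, and since the even part recovers the bicharacter via $\bi gg=q(g)^{-1}$ (Remark after Theorem \ref{thm:MPclass1}) and its sign is the Frobenius--Schur indicator of the generator (Proposition \ref{prop:FSalpha}), in fact $\cC_{\ZZ_2}\cong\TY(G,\bislotslot,\pm)$.

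Next I condense the Lagrangian Q-system $B$ of $\hat A$, which lives in the braided category $\cC_{\ZZ_2}^0\cong\cC(G,q)$ and has $\dim B=|\hat A|=\sqrt{|G|}$. Composing $B$ with the canonical Q-system $\Theta_{\ZZ_2}$ of $\Rep(\ZZ_2)\subset\cC$ (iterated condensation of commutative Q-systems, \cf \cite{DaMgNiOs2013}) produces a commutative Q-system $\Theta$ in $\cC$ with $\dim\Theta=2\sqrt{|G|}=\sqrt{\dim\cC}$, hence Lagrangian, and with $\cC_\Theta\cong\TY(G,\bislotslot,\pm)_B$. Since $\hat A$ is also a Lagrangian subgroup of the bicharacter $(G,\bislotslot)$ (for $|G|$ odd, $\bislotslot\restriction\hat A\equiv1$ whenever $q\restriction\hat A\equiv1$), \cite[Theorem 4.6]{GeNaNi2009} identifies $\cC_\Theta\cong\Vect_{\Dih(A)}^{\omega'}$ for some $[\omega']\in H^3(\Dih(A),\TT)$. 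As $\Theta$ is Lagrangian in the modular category $\cC$, this yields $\cC\cong Z(\cC_\Theta)\cong Z(\Vect_{\Dih(A)}^{\omega'})$ \cite{DaMgNiOs2013}; it remains only to pin down $[\omega']$.

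By Lemma \ref{lem:Cocycle} the restriction map $H^3(\Dih(A),\TT)\to H^3(A,\TT)\oplus H^3(\ZZ_2,\TT)$ is injective, so $[\omega']$ is determined by its two restrictions. The identity component of $\Vect_{\Dih(A)}^{\omega'}$ is $\Vect_A^{\omega'\restriction A}$, but it also equals the degree-zero part of $\TY(G,\bislotslot,\pm)_B$, namely $\cC(G,q)_{\hat A}\cong\Vect_A^\omega$, so $[\omega'\restriction A]=[\omega]$; this is precisely the assertion $\cC(G,q)_{\hat A}\cong\Vect_A^\omega$ with $[\omega]=[\omega'\restriction A]$. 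For a simple self-dual object $\tau\in\Vect_{\Dih(A)}^{\omega'}$, necessarily a reflection since $|A|$ is odd, the Frobenius--Schur indicator $\nu_\tau$ is exactly the class $[\omega'\restriction\langle\tau\rangle]\in H^3(\ZZ_2,\TT)=\{[\pm1]\}$, and I claim it equals the sign $\pm$ of $\TY(G,\bislotslot,\pm)$: the odd generator $\rho_\pm$ of $\TY(G,\bislotslot,\pm)$ has Frobenius--Schur indicator equal to that sign (and, by Proposition \ref{prop:FSalpha}, equal to that of the generating object of $\MP(G,\bislotslot,\pm)$), and under the condensation $\rho_\pm$ maps to $\rho_\pm\otimes B$, a direct sum of reflections, so a standard solution of the conjugate equations for $\rho_\pm$ with $\bar R=\pm R$ restricts to standard solutions for the module summands, as in \cite[Lemma 2.2]{Re2000}. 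Hence $[\omega'\restriction\langle\tau\rangle]=[\pm1]$; setting $[\omega_\pm]:=[\omega']$ completes the argument.

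I expect the delicate point to be this last matching of the Frobenius--Schur indicator of a reflection in $\Vect_{\Dih(A)}^{\omega'}$ with the Tambara--Yamagami sign. Should the descent of standard solutions need more care, an alternative is to observe that the two candidate classes $[\omega']$ with $[\omega'\restriction A]=[\omega]$ give Drinfel'd centers that are interchanged by the Frobenius--Schur--flipping operation of Proposition \ref{prop:ChangingFrobeniusSchur} (respectively Proposition \ref{prop:ChangeFS} on the conformal net side) applied to the odd generator $\rho\in\cC_1$, whence $\{Z(\Vect_{\Dih(A)}^{\omega_+}),Z(\Vect_{\Dih(A)}^{\omega_-})\}=\{\MP(G,\bislotslot,+),\MP(G,\bislotslot,-)\}$ by Theorem \ref{thm:MPclass1}, and the labeling is then fixed by the trivial-associator case of the preceding proposition, where $\Vect_{\Dih(A)}$ manifestly has $\nu_\tau=+1$.
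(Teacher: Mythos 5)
Your proof is correct in substance but takes a genuinely different, purely categorical route. The paper never leaves the conformal-net picture: it starts from the lattice realization $\A_L^{\ZZ_2}$ of $\MP(G,\bislotslot,\pm)$ provided by Theorem \ref{thm:MPRealization}, passes to the self-dual lattice $\Gamma=L\oplus\hat A$, reads off from the branching rules that $\A_L^{\ZZ_2}\subset\A_\Gamma$ has depth two, and uses \cite[Corollary 1.2]{Bi2016} together with the argument of Proposition \ref{prop:RepDihedral} to recognize $\A_L^{\ZZ_2}=\A_\Gamma^{\Dih(A)}$ as a group orbifold of a holomorphic net; the equivalence $\Rep(\A_\Gamma^{\Dih(A)})\cong Z(\Vect_{\Dih(A)}^{\tilde\omega})$ and the restriction $\tilde\omega\restriction A=\omega$ then come from the twisted-representation formalism. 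You instead perform the two-step condensation ($\Rep(\ZZ_2)$, then the Lagrangian algebra $B$ of $\hat A$) inside $\MP(G,\bislotslot,\pm)$ itself, verify by a dimension count that the composite Q-system is Lagrangian, and invoke \cite{DaMgNiOs2013} for $\cC\cong Z(\cC_\Theta)$; this is more self-contained, needing neither the lattice realization nor the subfactor depth-two machinery. Two caveats. First, \cite[Theorem 4.6]{GeNaNi2009} as cited in the paper only gives group-theoreticity of $\TY(G,\bislotslot,\pm)$; identifying $\cC_\Theta$ as pointed with underlying group $\Dih(A)$ still needs the short counting argument (the $|A|$ classes of free modules $g\otimes B$ together with the $|A|$ invertible summands of $\rho_\pm\otimes B$ exhaust the global dimension $2|A|$, and the resulting extension of $\ZZ_2$ by $A$ with inversion action splits as $|A|$ is odd) --- the same step the paper defers to Proposition \ref{prop:RepDihedral}. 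Second, your fallback for the sign is anchored in the trivial-associator case, which is only available when $[\omega]=0$; for $[\omega]\neq 0$ neither $\Vect_{\Dih(A)}^{\omega_+}$ nor $\Vect_{\Dih(A)}^{\omega_-}$ has trivial associator, so the fallback identifies the unordered pair of centers with $\{\MP(G,\bislotslot,+),\MP(G,\bislotslot,-)\}$ without fixing the labelling. Your primary argument for the sign --- descent of standard solutions to the module summands as in Proposition \ref{prop:FSalpha} and \cite[Lemma 2.2]{Re2000} --- is the one to keep, and it is exactly the level of detail the paper itself supplies at this step.
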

\begin{proof}
  Let $\A_L^{\ZZ_2}$ be a realization of $\MP(G,\bislotslot,\pm)$ from Theorem 
  \ref{thm:MPRealization}. 
  We can consider the simple current extension $\A_L\rtimes \hat A$ of $\A_L$ by
  $\hat A$, in other words the Lagrangian subgroup $\hat A\leq G=L^\ast / L$ 
  gives a self-dual lattice $\Gamma=L\oplus \hat A$ and $\A_L\rtimes \hat A$ can  be identified with $\A_\Gamma$. 
  The category of $A$ twisted representations of $\A_\Gamma$ is equivalent to 
  $\Vect_{A}^\omega$ for some $[\omega]\in H^3(A,\TT)$ by the above discussion
  or \cite[3.6 Corollary]{Mg2010}, see \cite[Theorem 1.7]{Bi2016}.
  By considering the inclusion $\A_L^{\ZZ_2}\subset\A_L\subset \A_\Gamma$ it 
  follows from the the branching rules that $\A_L^{\ZZ_2}\subset \A_\Gamma$
  \begin{align}
    [\theta]&=[\id]\oplus[\alpha]\oplus\bigoplus_{a\in \hat A_+}2[\sigma_a]
  \end{align}
  and the inclusion has depth 2, thus by \cite[Corollary 1.2]{Bi2016} we have 
  that $\A_L^{\ZZ_2}$ is an orbifold $\A_{\Gamma}^{H}$ of $\A_\Gamma$ for some 
  group $H\triangleright A$ of order $2|A|$. 
  A similar argument as in the proof of Proposition \ref{prop:RepDihedral} 
  shows that $H=\Dih(A)$.
  The category of $H$-twisted representations of $\A_\Gamma$ is a
  $\ZZ_2$-extension $\Vect_{H}^{\tilde\omega}$ of $\Vect_{ A}^\omega$ with 
  $\tilde \omega\restriction A = \omega$.
  As in Proposition \ref{prop:FSalpha} follows that the $\nu_\tau=\pm1$.
\end{proof}

\begin{prop} 
  \label{prop:SomeDihAreMeptaplectic}
  Let $A$ be an abelian group of odd order, 
  $[\tilde\omega]\in H^3(\Dih(A),\TT)$,
  and $[\omega]\in H^3(A,\TT)$ given by $\omega=\tilde \omega\restriction A$.
  Further, assume that $Z(\Vect^{\omega}_A)$ is pointed, \ie
  braided equivalent to  $\cC(G_{A,\omega},q_{A,\omega})$
  for some metric group $(G_{A,\omega},q_{A,\omega})$.
  
  Then $Z(\Vect_{\Dih(A)}^{\tilde\omega})$ is braided eqivalent to 
  $\MP(G_{A,\omega},\bislotslot_{A,\omega},\nu_\tau)$, where 
  $\nu_\tau$ is the Frobenius-Schur indicator of any order two element 
  $\tau\in\Vect_{\Dih(A)}^\omega$.
\end{prop}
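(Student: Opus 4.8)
The plan is to deduce the statement from Proposition \ref{prop:TwistedDoubleRealization} and the cohomological computation behind Lemma \ref{lem:Cocycle}; essentially all the work is in matching up cohomology classes. First I would translate the hypothesis into the language of metric groups. The center $Z(\Vect_A^\omega)$ carries a canonical Lagrangian algebra, the one coming from the forgetful functor $Z(\Vect_A^\omega)\to\Vect_A^\omega$, whose category of modules is $\Vect_A^\omega$ itself. Transporting it across the braided equivalence $Z(\Vect_A^\omega)\cong\cC(G_{A,\omega},q_{A,\omega})$ turns it into a Lagrangian algebra $\bigoplus_{\ell\in\hat A}\ell$ attached to a Lagrangian subgroup $\hat A\leq G_{A,\omega}$ (with $\widehat{\hat A}\cong A$ and $|\hat A|^2=|G_{A,\omega}|=|A|^2$) such that $\cC(G_{A,\omega},q_{A,\omega})_{\hat A}\cong\Vect_A^\omega$ with our fixed $\omega$, using \cite{DrGeNiOs2010,DaMgNiOs2013}. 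Hence $(G_{A,\omega},q_{A,\omega})$ is a metric group of odd order ($|G_{A,\omega}|=|A|^2$) with Lagrangian subgroup $\hat A$ and $q_{A,\omega}(x)=\bi xx^{-1}$ for $\bislotslot=\bislotslot_{A,\omega}$, so it satisfies the hypotheses of Proposition \ref{prop:TwistedDoubleRealization}.

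Next I would apply that proposition to $(G,q)=(G_{A,\omega},q_{A,\omega})$ with this Lagrangian. It produces classes $[\omega_+],[\omega_-]\in H^3(\Dih(A),\TT)$ with $\MP(G_{A,\omega},\bislotslot_{A,\omega},\pm)\cong Z(\Vect_{\Dih(A)}^{\omega_\pm})$, characterized by $\omega_\pm\restriction A=\omega$ — our fixed $\omega$, by the previous paragraph — and by the property that every simple self-dual object of $\Vect_{\Dih(A)}^{\omega_\pm}$ has Frobenius--Schur indicator $\pm1$. These two classes are distinct: $\MP(G_{A,\omega},\bislotslot_{A,\omega},+)\not\cong\MP(G_{A,\omega},\bislotslot_{A,\omega},-)$ by the classification Theorem \ref{thm:MPclass1}, so $Z(\Vect_{\Dih(A)}^{\omega_+})\not\cong Z(\Vect_{\Dih(A)}^{\omega_-})$ and $[\omega_+]\neq[\omega_-]$.

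To finish, I would invoke Lemma \ref{lem:Cocycle}: the restriction map $H^3(\Dih(A),\TT)\to H^3(A,\TT)\oplus H^3(\ZZ_2,\TT)$ is injective, so the set of classes restricting to $[\omega]$ on $A$ injects into $H^3(\ZZ_2,\TT)=\{[\pm1]\}$ and hence has at most two elements, distinguished by their restriction to an order-two subgroup $\langle\tau\rangle\leq\Dih(A)$. Because $|A|$ is odd, all order-two elements of $\Dih(A)$ are conjugate, so this restriction is the same for every such $\tau$ and equals the common Frobenius--Schur indicator $\nu_\tau$ of the order-two objects of the corresponding $\ZZ_2$-extension of $\Vect_A^\omega$. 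Since $[\omega_+]$ and $[\omega_-]$ are two distinct such classes, realizing $\nu_\tau=+1$ and $\nu_\tau=-1$ respectively, they exhaust this set; the given $[\tilde\omega]$, which lies in it by hypothesis, therefore equals $[\omega_{\nu_\tau}]$, and consequently $Z(\Vect_{\Dih(A)}^{\tilde\omega})\cong\MP(G_{A,\omega},\bislotslot_{A,\omega},\nu_\tau)$.

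The step I expect to be the main obstacle is the first one: ensuring that the cocycle appearing in Proposition \ref{prop:TwistedDoubleRealization} (namely $\omega_\pm\restriction A$) is \emph{literally} the fixed representative $\tilde\omega\restriction A=\omega$, and not merely a cocycle whose twisted group algebra is Morita equivalent to it. This forces one to work with the canonical Lagrangian algebra of $Z(\Vect_A^\omega)$, whose category of modules is $\Vect_A^\omega$ on the nose; after that, the remainder is bookkeeping with Lemma \ref{lem:Cocycle}.
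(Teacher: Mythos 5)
Your proof is essentially correct, but it takes a genuinely different route from the paper's. The paper argues directly and purely categorically: starting from $\cC=Z(\Vect_{\Dih(A)}^{\tilde\omega})$ it uses the Galois correspondence for the canonical Tannakian subcategory $\Rep(\Dih(A))$ to produce $\Rep(\ZZ_2)\subset\cC$ with $\cC_{\ZZ_2}^0\cong Z(\Vect_A^\omega)$, identifies $\cC_{\ZZ_2}$ with the relative center $Z_{\Vect_A^\omega}(\Vect_{\Dih(A)}^{\tilde\omega})$ via \cite[Theorem 3.5]{GeNaNi2009}, and then shows by a half-braiding and dimension-counting argument that the nontrivially graded part consists of a single simple object of dimension $|A|$, so that $\cC_{\ZZ_2}$ is a Tambara--Yamagami category and $\cC$ is the corresponding $\MP$. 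You instead leverage the already-proved converse, Proposition \ref{prop:TwistedDoubleRealization}, which exhibits two distinct classes $[\omega_\pm]\in H^3(\Dih(A),\TT)$ restricting to $[\omega]$ on $A$ and to $[\pm]$ on $\langle\tau\rangle$ with $Z(\Vect_{\Dih(A)}^{\omega_\pm})\cong\MP(G_{A,\omega},\bislotslot_{A,\omega},\pm)$, and then use the injectivity of $H^3(\Dih(A),\TT)\to H^3(A,\TT)\oplus H^3(\ZZ_2,\TT)$ (Lemma \ref{lem:Cocycle}) to conclude that these two classes exhaust the fiber over $[\omega]$, so $[\tilde\omega]$ must be one of them. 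This is shorter and cleaner as a deduction, but note two costs: (i) Proposition \ref{prop:TwistedDoubleRealization} is itself proved via the conformal-net realization of Theorem \ref{thm:MPRealization}, so your argument is not intrinsic to the categories, whereas the paper's is; (ii) the identification ``$\omega_\pm\restriction A=\omega$'' you rightly flag is only canonical up to the action of $\Aut(A)$ on $H^3(A,\TT)$ (tensor equivalence of $\Vect_A^\omega$ with $\Vect_A^{\omega'}$ only forces $[\omega']=\phi^\ast[\omega]$ for some $\phi\in\Aut(A)$); this is harmless because any such $\phi$ extends to $\Dih(A)$ and induces a tensor equivalence of the corresponding extensions preserving $\nu_\tau$, but it deserves a sentence. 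With that bookkeeping made explicit, your proof stands.
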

\begin{proof} 
  We have a Tannakian subcategory $\Rep(\Dih(A))$ of
  $\cC:=Z(\Vect^{\tilde\omega}_{\Dih(A)})$, such that 
  $\cC_{\Dih(A)}\cong \Vect^{\tilde\omega}_{\Dih(A)}$.
  From the Galois correspondence we get a Tannakian subcategory category 
  $\Rep(\ZZ_2)$ of $\cC$, such that $\cC_{\ZZ_2}^0\cong Z(\Vect_A^\omega)$ 
  which by assumption is pointed.
  Thus $\cC_{\ZZ_2}$ is $\ZZ_2$-crossed braided extension $\cF$ of 
  $Z(\Vect_A^\omega)$ which is the equivalent to the relative center 
  $Z_{\Vect_A^\omega}({\Vect_{\Dih(A)}^{\tilde\omega}})$ by 
  \cite[Theorem 3.5]{GeNaNi2009}. 
  Let $(\rho,\varepsilon_\rho)
    \in Z_{\Vect_A^\omega}({\Vect_{\Dih(A)}^{\tilde\omega}})$ 
  irreducible, where $\varepsilon_\rho
    =\{\varepsilon_\rho(\sigma)\colon 
    \rho\sigma\to \sigma \rho\}_{\sigma\in \Vect_A^\omega}$ 
  and assume $[\tau]\prec [\rho]$. 
  Then because of the half-braiding, we have $[\rho][g]=[g][\rho]$ 
  for all irreducible sectors $[g]$ in $\Vect_A$.
  Thus $[g][\tau ][g^{-1}] =[g^2][\tau]\prec [\rho]$ and because $|A|$ is odd, 
  we have $\bigoplus_{g\in A}[g][\tau]\prec [\rho]$ and $d\rho \geq |A|$.
  Via a counting argument involving the global dimensions we get
  $\Irr(\cF)=G\cup\{(\rho,\varepsilon_\rho)\}$ and $d\rho=|A|$.
  Thus can conclude that $\cF$ is tensor equivalent to 
  $\TY(G,\bislotslot,\pm)$ with $q_{A,\omega}(g)=\bi g g$.
\end{proof}
\begin{rmk}
  \label{rmk:H3ab}
  We note that $Z(\Vect^{\omega}_A)$ is pointed if and only if 
  $[\omega] \in H^3(A,\TT)_{\mathrm{ab}}=\ker(\psi^\ast)$ 
  \cite[Corollary 3.6]{NgMa2001}, see also \cite[Proposition 4.1]{Ng2003}.
  Here $\psi^\ast\colon H^3(G,\TT)\to \Hom(\Lambda^3
  G,\TT)$ is  given by
  \begin{align}
    \left[\psi^\ast([\omega])\right](x,y,z)&=
      \prod_{\omega\in\mathbb{S}_3}\omega(\sigma(x),\sigma(y),\sigma(z))
        ^{\sign(\sigma)}
  \end{align}
  For example, let $G=(\ZZ_n)^3$ we have the cocycle 
  $\omega(x,y,z)=\zeta_n^{x_1y_2z_3}$ and 
  $Z(\Vect_{(\ZZ_n)}^\omega)$ is not pointed \cite[Example 4.5]{Ng2003}.
\end{rmk}
\begin{lem} 
  \label{lem:H3ab}
  Let $A$ be an abelian group of odd order, $\tilde\omega\in H^3(\Dih(A),\TT)$, 
  and $\omega =\tilde\omega\restriction A$. 
  Then $[\omega]\in H^3(A,\TT)_\mathrm{ab}$.
\end{lem}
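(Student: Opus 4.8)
The plan is to invoke the description of $H^3(A,\TT)_{\mathrm{ab}}$ from Remark~\ref{rmk:H3ab} as $\ker(\psi^\ast)$, where $\psi^\ast\colon H^3(A,\TT)\to\Hom(\Lambda^3 A,\TT)$ is the alternation map, and to prove that $\psi^\ast([\omega])$ is the trivial homomorphism. The only structural input needed is that $\omega$ is not an arbitrary $3$-cocycle on $A$, but the restriction of one on $\Dih(A)$.

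First I would record that, since $A$ is normal in $\Dih(A)$ with quotient $\ZZ_2$, the restriction $[\omega]=[\tilde\omega\restriction A]$ is invariant under the induced action of $\ZZ_2=\Dih(A)/A$ on $H^3(A,\TT)$; this is exactly the statement that $\mathrm{res}\colon H^3(\Dih(A),\TT)\to H^3(A,\TT)$ lands in the bottom edge term $E_2^{0,3}=H^3(A,\TT)^{\ZZ_2}$ of the Lyndon--Hochschild--Serre spectral sequence, which was already used in the proof of Lemma~\ref{lem:Cocycle}. Concretely, the generator of $\ZZ_2$ acts on $A$ by conjugation by an order-two element $\tau$, and $\tau a\tau^{-1}=a^{-1}$, so this action is the inversion automorphism $\iota\colon a\mapsto a^{-1}$ of $A$; hence $\iota^\ast[\omega]=[\omega]$ in $H^3(A,\TT)$.

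Next I would combine this with naturality of the alternation map. For the automorphism $\iota$, naturality of $\psi^\ast$ gives $\psi^\ast(\iota^\ast[\omega])=(\Lambda^3\iota)^\ast\,\psi^\ast([\omega])$; and $\Lambda^3\iota$ acts on $\Lambda^3 A$ as multiplication by $(-1)^3=-1$, since negating each of the three entries of an alternating trilinear expression contributes a factor $-1$ three times. Thus $(\Lambda^3\iota)^\ast$ is inversion on the abelian group $\Hom(\Lambda^3 A,\TT)$. Using $\iota^\ast[\omega]=[\omega]$ we conclude $\psi^\ast([\omega])=\psi^\ast([\omega])^{-1}$, i.e.\ $\psi^\ast([\omega])$ has order dividing $2$. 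Since $|A|$ is odd, $\Lambda^3 A$ and hence $\Hom(\Lambda^3 A,\TT)$ have odd order, so the only element of order dividing $2$ is the trivial one; therefore $\psi^\ast([\omega])=1$ and $[\omega]\in H^3(A,\TT)_{\mathrm{ab}}$.

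The only genuinely delicate point is the bookkeeping of conventions and signs: one must check that the $\ZZ_2$-action appearing in Lemma~\ref{lem:Cocycle} is indeed $\iota^\ast$ (which is unambiguous here since $\iota^2=\mathrm{id}$), and that the explicit formula for $\psi^\ast$ in Remark~\ref{rmk:H3ab} is natural with respect to automorphisms of $A$ and picks up the factor $(-1)^3$ under $\iota$ — this is immediate because permuting the three slots and applying $\iota$ in each slot commute. Everything else is formal.
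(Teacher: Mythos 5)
Your proof is correct and follows essentially the same route as the paper's: both arguments reduce to showing that the alternation $\varphi=\psi^\ast([\omega])$ satisfies $\varphi=\varphi^{-1}$, because $[\omega]$ is invariant under the inversion automorphism $\iota$ while $\iota$ acts by $(-1)^3=-1$ on $\Lambda^3 A$, and then conclude $\varphi\equiv 1$ from $|A|$ being odd. The only (presentational) difference is in how the invariance $\iota^\ast[\omega]=[\omega]$ is justified: you cite the standard fact that restriction along the normal subgroup $A\trianglelefteq\Dih(A)$ lands in the $\ZZ_2$-invariants of $H^3(A,\TT)$, whereas the paper exhibits an explicit coboundary $b_\xi$ (with $\xi$ built from the associators involving the order-two element $\tau$) relating $\omega$ and $\omega^\tau$ at the cochain level.
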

\begin{proof}
  Let $\varphi=\psi^\ast\omega\in \Hom(\Lambda^3(A),\TT)$. 
  For any $c\colon A\times A \times A \to \TT$ let us denote by $c^\tau$ the map
  $c^\tau\colon A\times A \times A \to \TT$ with 
  $c^\tau(g,h,k)=c(g^{-1},h^{-1},k^{-1})$.

  Let $\tau\in\Dih(A)$ with $\tau^2=1$ and let 
  \begin{align}
    \xi(g,h)&=\frac{\tilde\omega(\tau,g,h)\tilde\omega(g^{-1},h^{-1},\tau)}
    {\tilde\omega(g^{-1},\tau,h)}
  \end{align}
  which correspond to the associator $(g^{-1}h^{-1})\tau\to \tau(gh)$ in 
  $\Vect_{\Dih(A)}^{\tilde\omega}$.
  Considering the associator $\omega(g,h,k)$ of 
  $\tau((gh)k)\to \tau( g(hk))$ and $\omega^\tau(g,h,k)$ of
  $((g^{-1}h^{-1})k^{-1})\tau\to(g^{-1}(h^{-1}k^{-1}))\tau$ 
  in $\Vect_{\Dih(A)}^{\tilde \omega}$.
  We get 
  (similarly to \cite[Lemma 2.5]{Iz2016}) that $\omega =\omega^\tau  b_\xi$ with
  $b_\xi(g,h,k) =\frac{\xi(h,k)\xi(g,hk)}{\xi(gh,k)\xi(g,h)}$ for the above 
  $\xi\colon A\times A \to \TT$. 
  From the symmetry of $b_\xi$ follows $\varphi =\varphi^\tau$ and since 
  $\varphi \in \Hom(\Lambda^3(A),\TT)$, we have $\varphi^\tau=\varphi^{-1}$. 
  Thus $\varphi^2\equiv 1$ and since $A$ is odd we have $\varphi\equiv 1$ which 
  is equivalent to $\omega \in H^3(A,\TT)_\mathrm{ab}$.
\end{proof}
In other words, the restriction gives a split exact sequence 
\begin{align}
  \{0\} \longrightarrow H^3(\ZZ_2,\TT) %
  \stackrel{\displaystyle\mathop{\longrightarrow}^{\phantom{\mathrm{res}}}}{\displaystyle\mathop{\longleftarrow}_{\mathrm{res}}}
  H^3(\Dih(A),\TT) 
  \stackrel{\displaystyle\mathop{\longrightarrow}^{{\mathrm{res}}}}{\displaystyle\mathop{\longleftarrow}_{\phantom{\mathrm{res}}}}
   H^3(A,\TT)_\mathrm{ab} \longrightarrow
  \{0\}
\end{align}
where $H^3(A,\TT)_\mathrm{ab}\oplus H^3(\ZZ_2,\TT)\to H^3(\Dih(A),\TT)$ is given by Proposition \ref{prop:SomeDihAreMeptaplectic}.
Further, the proof of Lemma \ref{lem:H3ab} shows that 
$H^3(A,\TT)_\mathrm{ab}=H^3(A,\TT)^\tau\cong H^0(\ZZ_2,H^3(A,\TT))$ and 
$H^3(\ZZ_2,H^0(A,\TT))=H^3(\ZZ_2,\TT)$. 

For $\omega\in H^3(A,\TT)_\mathrm{ab}$ let us denote by $(G_{A,\omega},q_{A,\omega})$ a metric group, such that 
$Z(\Vect_A^\omega)$ is braided equivalent to $\cC(G_{A,\omega},q_{A,\omega})$. If $|A|$ is odd let 
$\bislotslot_{A,\omega}$ be the non-degenerate bicharactor on $A$ with $\bi gg_{A,\omega}^{-1}=q_{A,\omega}(g)$. 
Lemma \ref{lem:H3ab} implies that the assumption in Proposition 
\ref{prop:SomeDihAreMeptaplectic} that $Z(\Vect^\omega_A)$ is pointed is 
automatic. 
Thus we get the following result:
\begin{cor} 
  All twisted doubles of generalized dihedral groups $\Dih(A)$ with $A$ of odd 
  order are generalized metaplectic modular categories.
  
  Namely, $Z(\Vect_{\Dih(A)}^{\tilde\omega})$ is braided equivalent to 
  $\MP(G_{A,\omega},\bislotslot_{A,\omega},\nu)$, where 
  $\omega=\tilde\omega\restriction A$ and $\nu$ is the Frobenius--Schur 
  indicator
  $[\tilde\omega\restriction \ZZ_2]\in H^3(\ZZ_2,\TT)$ and 
  $(G_{A,\omega},\bislotslot_{A,\omega})$ as above.
\end{cor}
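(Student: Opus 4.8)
The plan is to read the corollary off from Proposition~\ref{prop:SomeDihAreMeptaplectic}, using Lemma~\ref{lem:H3ab} to remove the ``$Z(\Vect_A^\omega)$ pointed'' hypothesis that appears there. First I would fix a class $[\tilde\omega]\in H^3(\Dih(A),\TT)$ and set $\omega=\tilde\omega\restriction A$. By Lemma~\ref{lem:H3ab} we have $[\omega]\in H^3(A,\TT)_{\mathrm{ab}}$, so by Remark~\ref{rmk:H3ab} the Drinfel'd center $Z(\Vect_A^{\omega})$ is pointed, i.e.\ braided equivalent to $\cC(G_{A,\omega},q_{A,\omega})$ for a metric group $(G_{A,\omega},q_{A,\omega})$. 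Since $|A|$ is odd, $|G_{A,\omega}|=\Dim Z(\Vect_A^\omega)=|A|^2$ is odd, so there is a unique non-degenerate symmetric bicharacter $\bislotslot_{A,\omega}$ on $G_{A,\omega}$ with $\bi gg_{A,\omega}^{-1}=q_{A,\omega}(g)$, exactly as in the subsection on bicharacters and odd rank pointed UMTCs. Thus the hypothesis of Proposition~\ref{prop:SomeDihAreMeptaplectic} is met, and it yields a braided equivalence $Z(\Vect_{\Dih(A)}^{\tilde\omega})\cong\MP(G_{A,\omega},\bislotslot_{A,\omega},\nu_\tau)$, where $\nu_\tau$ is the Frobenius--Schur indicator of any order-two element $\tau\in\Vect_{\Dih(A)}^{\tilde\omega}$.

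It then remains only to identify the sign $\nu_\tau$ with the class $[\tilde\omega\restriction\ZZ_2]$. For this I would argue that for an involution $\tau\in\Dih(A)$ the subcategory $\langle\tau\rangle\subset\Vect_{\Dih(A)}^{\tilde\omega}$ is equivalent to $\Vect_{\ZZ_2}^{\,\tilde\omega\restriction\langle\tau\rangle}$, whose generator has Frobenius--Schur indicator $+1$ or $-1$ according to whether $[\tilde\omega\restriction\langle\tau\rangle]$ is trivial or not (the $\ZZ_2$ example in the pointed-category subsection, together with Lemma~\ref{lem:Cocycle}). Hence $\nu_\tau$ equals the image of $[\tilde\omega\restriction\langle\tau\rangle]$ under $H^3(\ZZ_2,\TT)\cong\{\pm1\}$. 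Finally, since $|A|$ is odd all involutions of $\Dih(A)$ are conjugate --- each is of the form $a\tau$ and conjugation by $g\in A$ sends $a\tau\mapsto g^2a\,\tau$, which is transitive because squaring is a bijection of $A$ --- so the restriction class does not depend on the chosen $\tau$ and deserves the notation $[\tilde\omega\restriction\ZZ_2]$. Combining, $Z(\Vect_{\Dih(A)}^{\tilde\omega})$ is braided equivalent to $\MP(G_{A,\omega},\bislotslot_{A,\omega},\nu)$ with $\nu=[\tilde\omega\restriction\ZZ_2]$, which is the assertion.

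I do not expect a substantial obstacle: the statement is essentially the conjunction of Proposition~\ref{prop:SomeDihAreMeptaplectic} and Lemma~\ref{lem:H3ab}, both already available. The only point that needs a little care is the last paragraph's bookkeeping --- checking that the Frobenius--Schur sign produced by Proposition~\ref{prop:SomeDihAreMeptaplectic} is genuinely independent of the chosen involution and corresponds precisely to $[\tilde\omega\restriction\ZZ_2]$ under $H^3(\ZZ_2,\TT)\cong\ZZ_2$; this is where the conjugacy of involutions in $\Dih(A)$ for odd $|A|$ and Lemma~\ref{lem:Cocycle} enter.
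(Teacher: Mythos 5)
Your proposal is correct and follows exactly the route the paper takes: the corollary is obtained by combining Proposition \ref{prop:SomeDihAreMeptaplectic} with Lemma \ref{lem:H3ab} (via Remark \ref{rmk:H3ab}) to see that the pointedness hypothesis is automatic, just as in the paragraph preceding the corollary. Your extra bookkeeping identifying $\nu_\tau$ with $[\tilde\omega\restriction\ZZ_2]$ independently of the chosen involution is a harmless elaboration of what the paper already encodes in Lemma \ref{lem:Cocycle}.
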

\begin{cor}
  A  Tambara--Yamagami category $\cF$ based on an abelian group  of odd order
  is group theoretical 
  if and only if $\cF\cong \TY(G_{A,\omega},\bislotslot_{A,\omega},\pm)$
  for some abelian group $A$ of odd order and some
  $\omega\in H^3(A,\TT)_\mathrm{ab}$.

  In this case, $\cF$ is Morita equivalent to 
  $\Vect^{\omega_\pm\times \bar\omega}_{\Dih(A)\times A}$ or equivalently
  $\Vect^{\omega_\pm}_{\Dih(A)}\boxtimes\Vect^{\bar\omega}_A$,
  where $[\omega_\pm]\in H^3(\Dih(A),\TT)$ is characterized by
  $[\omega_\pm\restriction A]=[\omega]$ and 
  $[\omega_\pm\restriction \ZZ_2]=[\pm]\in H^3(\ZZ_2,\TT)$, respectively.
\end{cor}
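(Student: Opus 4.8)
The plan is to prove both implications by assembling the results established above together with the group-theoretical criterion for Tambara--Yamagami categories from \cite[Theorem 4.6]{GeNaNi2009}, which says that $\TY(G,\bislotslot,\pm)$ is group theoretical exactly when $(G,\bislotslot)$ has a Lagrangian subgroup.

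For the ``if'' direction I would start from $\cF\cong\TY(G_{A,\omega},\bislotslot_{A,\omega},\pm)$ with $A$ odd abelian and $\omega\in H^3(A,\TT)_{\mathrm{ab}}$, and compute its Drinfel'd center. Since $|G_{A,\omega}|=|A|^{2}$ is odd, Proposition \ref{prop:TYCentral} gives $Z(\cF)\cong\MP(G_{A,\omega},\bislotslot_{A,\omega},\pm)\boxtimes\cC(G_{A,\omega},\bar q_{A,\omega})$. By Lemma \ref{lem:Cocycle} (together with Lemma \ref{lem:H3ab}) there is a unique $[\omega_\pm]\in H^3(\Dih(A),\TT)$ with $[\omega_\pm\restriction A]=[\omega]$ and $[\omega_\pm\restriction\ZZ_2]=[\pm]$, and Proposition \ref{prop:SomeDihAreMeptaplectic} applied to $\tilde\omega=\omega_\pm$ identifies the first tensor factor with $Z(\Vect_{\Dih(A)}^{\omega_\pm})$. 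For the second factor I would use $\bar q_{A,\omega}=q_{A,\omega}^{-1}$ and the defining property $\cC(G_{A,\omega},q_{A,\omega})\cong Z(\Vect_A^\omega)$ of $G_{A,\omega}$ to rewrite it as $\rev{Z(\Vect_A^\omega)}\cong Z(\Vect_A^{\bar\omega})$, with $\bar\omega=[\omega]^{-1}$. Then multiplicativity of the Drinfel'd center and $\Vect_{G_1}^{\eta_1}\boxtimes\Vect_{G_2}^{\eta_2}\cong\Vect_{G_1\times G_2}^{\eta_1\times\eta_2}$ give $Z(\cF)\cong Z(\Vect_{\Dih(A)\times A}^{\omega_\pm\times\bar\omega})$, which by definition means $\cF$ is group theoretical, and by \cite{Mg2003,Mg2003II} Morita equivalent to $\Vect_{\Dih(A)\times A}^{\omega_\pm\times\bar\omega}\cong\Vect_{\Dih(A)}^{\omega_\pm}\boxtimes\Vect_A^{\bar\omega}$; this is the ``in this case'' assertion.

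For the ``only if'' direction I would write $\cF\cong\TY(H,\bislotslot,\epsilon)$ with $|H|$ odd and set $q_H(h)=\bi hh^{-1}$. If $\cF$ is group theoretical, then by \cite[Theorem 4.6]{GeNaNi2009} and the observation (recalled just before the corollary) that for $|H|$ odd a Lagrangian of $(H,\bislotslot)$ is the same as a Lagrangian of $(H,q_H)$, there is a Lagrangian subgroup $L\le H$. As recorded in the paragraph preceding the corollary, $L$ spans a Tannakian subcategory $\Rep(A)$ of $\cC(H,q_H)$ with $A\cong\hat L$ of odd order and $|A|^{2}=|H|$, and by \cite[Proposition 4.58]{DrGeNiOs2010} and \cite[Proposition 4.8]{DaMgNiOs2013} one obtains $\cC(H,q_H)\cong Z(\Vect_A^\omega)$ for some $[\omega]\in H^3(A,\TT)$. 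Because $Z(\Vect_A^\omega)\cong\cC(H,q_H)$ is pointed, Remark \ref{rmk:H3ab} forces $[\omega]\in H^3(A,\TT)_{\mathrm{ab}}$, so $(G_{A,\omega},q_{A,\omega})$ is defined and satisfies $\cC(G_{A,\omega},q_{A,\omega})\cong Z(\Vect_A^\omega)\cong\cC(H,q_H)$; by the classification of pointed unitary modular tensor categories this yields an equivalence of metric groups $(G_{A,\omega},q_{A,\omega})\sim(H,q_H)$, and since $|H|$ is odd the bicharacters are recovered from the quadratic forms, so $(G_{A,\omega},\bislotslot_{A,\omega})\cong(H,\bislotslot)$ and therefore $\cF\cong\TY(G_{A,\omega},\bislotslot_{A,\omega},\epsilon)$.

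The argument uses no new categorical input beyond the cited results and the standard identity $\rev{Z(\Vect_A^\omega)}\cong Z(\Vect_A^{\bar\omega})$, so I expect the main difficulty to be bookkeeping rather than mathematical: reconciling the several conventions for quadratic forms and bicharacters ($q$ versus $\bar q$, and the normalization of $\bislotslot_{A,\omega}$ by $\bi gg^{-1}=q_{A,\omega}$), checking that the $3$-cocycle $\omega$ produced from the Lagrangian subgroup actually lies in $H^3(A,\TT)_{\mathrm{ab}}$ so that $G_{A,\omega}$ is defined, and correctly tracking the braiding reversal when passing from $\cC(G_{A,\omega},q_{A,\omega})$ to $\cC(G_{A,\omega},\bar q_{A,\omega})$.
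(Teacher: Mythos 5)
Your proposal is correct and follows essentially the same route as the paper: both directions rest on the Lagrangian criterion of \cite[Theorem 4.6]{GeNaNi2009}, the identification $\MP(G_{A,\omega},\bislotslot_{A,\omega},\pm)\cong Z(\Vect^{\omega_\pm}_{\Dih(A)})$ from Proposition \ref{prop:SomeDihAreMeptaplectic}, and the factorization $Z(\TY(G_{A,\omega},\bislotslot_{A,\omega},\pm))\cong \MP(G_{A,\omega},\bislotslot_{A,\omega},\pm)\boxtimes\cC(G_{A,\omega},\bar q_{A,\omega})\cong Z(\Vect^{\omega_\pm}_{\Dih(A)})\boxtimes Z(\Vect^{\bar\omega}_A)$. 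Your ``only if'' direction merely spells out in more detail the passage from the Lagrangian to the equivalence of metric groups $(H,\bislotslot)\cong(G_{A,\omega},\bislotslot_{A,\omega})$, which the paper leaves implicit.
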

\begin{proof} 
  If $\TY(G,\bislotslot,\pm)$ is group theoretical then it admits an Lagrangian
  $\hat A$ and $\MP(G,\bislotslot,\pm)$ is braided equivalent to 
  $Z(\Vect^{\omega_\pm}_{\Dih(A)})$ as in 
    Proposition \ref{prop:SomeDihAreMeptaplectic}. 
  We only need to observe that we have braided equivalences: 
  \begin{align}
    Z(\TY(G_{A,\omega},\bislotslot,\pm)) 
      &\cong
    \MP(G_{A,\omega},\bislotslot,\pm)\boxtimes 
      \cC(G_{A,\omega},\bar q_{A,\omega})\\&\cong
    Z(\Vect^{\omega_\pm}_{\Dih(A)})\boxtimes Z(\Vect^{\overline\omega}_A)
  \end{align}
  which shows the if part and the second statement.
\end{proof}
\begin{cor}
  All twisted doubles of generalized dihedral groups of odd abelian groups 
  can be realized by a $\ZZ_2$-orbifold net $\A_L^{\ZZ_2}$ of a conformal net
  associated with a lattice.
\end{cor}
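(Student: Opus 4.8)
The plan is to deduce this immediately by combining two results already established in this subsection. The first ingredient is the corollary above stating that every twisted double of a generalized dihedral group of an odd abelian group is a generalized metaplectic modular category: for an odd abelian group $A$ and a class $[\tilde\omega]\in H^3(\Dih(A),\TT)$, the twisted double $Z(\Vect_{\Dih(A)}^{\tilde\omega})$ is braided equivalent to $\MP(G_{A,\omega},\bislotslot_{A,\omega},\nu)$, where $\omega=\tilde\omega\restriction A$ lies in $H^3(A,\TT)_{\mathrm{ab}}$ by Lemma~\ref{lem:H3ab} (so that the metric group $(G_{A,\omega},q_{A,\omega})$ with $Z(\Vect_A^\omega)\cong\cC(G_{A,\omega},q_{A,\omega})$ and the associated bicharacter $\bislotslot_{A,\omega}$ are well defined), and $\nu=[\tilde\omega\restriction\ZZ_2]\in\{\pm\}$.

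The second ingredient is Theorem~\ref{thm:MPRealization}: every generalized metaplectic modular category --- in particular $\MP(G_{A,\omega},\bislotslot_{A,\omega},\nu)$ --- is braided equivalent to $\Rep(\A_L^{\ZZ_2})$ for some even positive lattice $L$ carrying a proper $\ZZ_2$-action. Composing the two braided equivalences gives $Z(\Vect_{\Dih(A)}^{\tilde\omega})\cong\Rep(\A_L^{\ZZ_2})$, which is exactly the assertion. If one wants $L$ explicitly, I would unwind the proof of Theorem~\ref{thm:MPRealization} through Theorem~\ref{thm:AllOddTYs}: use Theorem~\ref{thm:AllPointed} to pick an even lattice $L_+$ whose discriminant form realizes $\cC(G_{A,\omega},q)$ for the quadratic form $q$ with $q(g)=\bi gg^{-1}$ determined by $\bislotslot_{A,\omega}$; by Proposition~\ref{prop:Reflection2} the $\ZZ_2$-orbifold of $\A_{L_+}$ under the lattice reflection realizes $\MP(G_{A,\omega},\bislotslot_{A,\omega},+)$, and by Proposition~\ref{prop:PointedZ2Orbifold} the analogous orbifold of $\A_{L_+\lattimes E_8}$ realizes $\MP(G_{A,\omega},\bislotslot_{A,\omega},-)$; so one takes $L=L_+$ or $L=L_+\lattimes E_8$ according to the sign $\nu$.

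There is no substantial analytic obstacle here: the hard work has already been done in Theorem~\ref{thm:MPRealization} and in the preceding corollary. The only point requiring care is the bookkeeping --- that the metric group $(G_{A,\omega},q_{A,\omega})$ extracted from $Z(\Vect_A^\omega)$, the bicharacter $\bislotslot_{A,\omega}$ fed into the lattice construction, and the Frobenius--Schur sign $\nu$ read off from $[\tilde\omega\restriction\ZZ_2]$ refer to the same data throughout, so that the lattice produced by Theorem~\ref{thm:MPRealization} really realizes the specific twisted double one started from. Since the earlier corollary already packages exactly this matching, the present corollary is a one-line composition of the two results.
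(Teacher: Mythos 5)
Your proposal is correct and matches the paper's (implicit) argument exactly: the paper states this corollary without proof precisely because it is the immediate composition of the preceding corollary, which identifies $Z(\Vect_{\Dih(A)}^{\tilde\omega})$ with $\MP(G_{A,\omega},\bislotslot_{A,\omega},\nu)$, and Theorem \ref{thm:MPRealization}, which realizes every generalized metaplectic modular category as some $\Rep(\A_L^{\ZZ_2})$. Your optional unwinding of the explicit lattice through Theorem \ref{thm:AllPointed} and Propositions \ref{prop:Reflection2} and \ref{prop:PointedZ2Orbifold} is also consistent with how the paper's realization theorem is proved.
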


\subsection{Quantum double subfactors of Tambara--Yamagami categories and 
  Bisch--Haagerup subfactors}
\label{ssec:QuantumDoubleAndBischHaagerup}
\begin{prop}
  \label{prop:BischHaagerup}
  Let $A$ be an abelian group of odd order.
  Consider $G=\Dih(A)\times A$, and let $H=\langle \tau\rangle\cong \ZZ_2$ 
  and $\Delta(A)$ the diagonal embedding of $A$ in 
  $A\times A \subset \Dih(A)\times A$. 
  Then $G=\langle \Delta(A), H \rangle$.
  The Longo--Rehren subfactor $S\subset T$ associated with 
  $\TY(A,\bislotslot,\pm)$ is a
  Bisch--Haagerup subfactor $M^H\subset M\rtimes \Delta(A)$ for a $G$-action
  on $M=S\rtimes \ZZ_2$.
\end{prop}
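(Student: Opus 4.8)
The plan is to realize everything concretely through the lattice conformal net $\A_L$ with $\Rep(\A_L)\cong\cC(A,q)$, where $q(g)=\bi gg^{-1}$, exactly as in Theorem~\ref{thm:AllOddTYs} and Proposition~\ref{prop:Reflection2}, and then recognize the Longo--Rehren subfactor as a fixed-point/crossed-product pair. First I would settle the group statement. Writing $\tau=(\tau,e)$ for the generator of $H$ and $\Delta(a)=(a,a)$ with the first $a\in A\le\Dih(A)$, one has $\tau\,\Delta(a)\,\tau^{-1}=(a^{-1},a)$, hence $\Delta(a)\cdot(a^{-1},a)=(e,a^{2})\in\langle\Delta(A),H\rangle$; since $|A|$ is odd, $a\mapsto a^{2}$ is a bijection, so $\{e\}\times A\subseteq\langle\Delta(A),H\rangle$, then $A\times\{e\}\subseteq\langle\Delta(A),H\rangle$ (using $\Delta(a)\cdot(e,a^{-1})=(a,e)$), and finally $\tau$, so $\langle\Delta(A),H\rangle=\Dih(A)\times A$; the reverse inclusion is trivial.

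Next I would fix the net model. By Theorem~\ref{thm:AllPointed} choose an even lattice $L$ with $\Rep(\A_L)\cong\cC(A,q)$ and a mirror $\bar L$ with $\Rep(\A_{\bar L})\cong\cC(A,\bar q)$; let $\Gamma=(L\bar L)\oplus\Delta^\phi(A)$ be the self-dual even lattice obtained by diagonal gluing along the isotropic $\Delta^\phi(A)\le G_{L\bar L}=A\times A$, so that $\A_\Gamma=\A_{L\bar L}\rtimes\Delta(A)$ is holomorphic. Let $\sigma$ be the lattice reflection of $L$, acting on $\A_{L\bar L}=\A_L\otimes\A_{\bar L}$ on the first tensor factor only; by part~(2) of Theorem~\ref{thm:AllOddTYs} (cf.\ Proposition~\ref{prop:Reflection2}), $\A_{L\bar L}^{\ZZ_2}=\A_L^{\ZZ_2}\otimes\A_{\bar L}$ and $\Rep(\A_{L\bar L}^{\ZZ_2})\cong Z(\TY(A,\bislotslot,\pm))$. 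Fix $I\in\cI$ and put $S:=\A_{L\bar L}^{\ZZ_2}(I)$, $T:=\A_\Gamma(I)$, $M:=\A_{L\bar L}(I)=\A_L(I)\otimes\A_{\bar L}(I)$. To see that $S\subset T$ is the Longo--Rehren subfactor of $\TY(A,\bislotslot,\pm)$ I would use the correspondence of \cite{Bi2016}: a proper action of the Tambara--Yamagami hypergroup $K_\cF$ on the holomorphic net $\A_\Gamma$ gives $\A_\Gamma^{K_\cF}$ with $\Rep(\A_\Gamma^{K_\cF})\cong Z(\cF)$, and comparing the index ($[\A_\Gamma(I):\A_{L\bar L}^{\ZZ_2}(I)]=2|A|=\sum_x d_x^2=\dim\cF$) and the representation category, and invoking the uniqueness part of that correspondence, one gets $\A_\Gamma^{K_\cF}=\A_{L\bar L}^{\ZZ_2}$; thus $S\subset T$ is the quantum-double subfactor of $\cF=\TY(A,\bislotslot,\pm)$, i.e.\ its Longo--Rehren subfactor. (One may cross-check against Izumi's explicit description in \cite{Iz2001II}.)

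It remains to repackage $S\subset T$ as a Bisch--Haagerup subfactor. On $\A_L(I)$ the reflection satisfies $\sigma\alpha_a\sigma^{-1}=\alpha_{-a}$, since $\sigma$ sends the sector $\rho_{m+L}$ to $\rho_{-m+L}$ (following \cite{DoXu2006}), so the outer automorphisms $\langle\alpha_a,\sigma\rangle$ of $\A_L(I)$ realize $\Dih(A)$; letting this $\Dih(A)$ act trivially on $\A_{\bar L}(I)$, and letting the second copy of $A$ act on $\A_{\bar L}(I)$ by the outer mirror automorphisms $\beta_b$ and trivially on $\A_L(I)$, we get an outer action of $\Dih(A)\times A$ on $M$. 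Under it $H=\langle\tau\rangle$ acts as $\sigma\otimes\id$, whence $M^H=\A_L^{\ZZ_2}(I)\otimes\A_{\bar L}(I)=S$; and $\Delta(A)$ acts as $a\mapsto\alpha_a\otimes\beta_{\phi(a)}$, whence $M\rtimes\Delta(A)=\A_{L\bar L}(I)\rtimes\Delta(A)=\A_\Gamma(I)=T$ (the standard crossed-product description of the overlattice extension). Since $[M:S]=2$, there is an outer $\ZZ_2$-action on $S$ (the one dual to $H$) with $M\cong S\rtimes\ZZ_2$. By the first paragraph, the subgroup of $\Aut(M)$ generated by $H$ and $\Delta(A)$ is all of $\Dih(A)\times A=G$. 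Hence $S\subset T$ equals the Bisch--Haagerup subfactor $M^H\subset M\rtimes\Delta(A)$ for the $G$-action on $M=S\rtimes\ZZ_2$.

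The main obstacle is the identification step: verifying rigorously that the conformal-net inclusion $\A_{L\bar L}^{\ZZ_2}(I)\subset\A_\Gamma(I)$ is genuinely \emph{the} Longo--Rehren subfactor of $\TY(A,\bislotslot,\pm)$ (and not merely a subfactor with the correct bimodule category), and keeping the $G$-kernel $\langle\Delta(A),H\rangle\to\Out(M)$ straight so that the two displayed actions are compatible and outer. Once that is in hand, the Bisch--Haagerup repackaging and the group computation are routine.
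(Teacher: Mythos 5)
Your overall route is the same as the paper's: realize the Longo--Rehren subfactor as the conformal-net inclusion $\A_L^{\ZZ_2}(I)\otimes\A_{\bar L}(I)\subset \A_{L\bar L}(I)\rtimes_{(g,g^{-1})}A$, compute that $\langle \Delta(A),H\rangle=\Dih(A)\times A$ using that $|A|$ is odd (your commutator computation is essentially identical to the paper's), and then read off the Bisch--Haagerup form. The group-generation step and the identification of $S\subset T$ are fine.

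The genuine gap is in the sentence ``we get an outer action of $\Dih(A)\times A$ on $M$.'' The automorphisms you list --- $\sigma\otimes\id$, the DHR automorphisms $\alpha_a\otimes\id$ and $\id\otimes\beta_b$ --- satisfy the relations $\alpha_a\alpha_b\cong\alpha_{a+b}$ and $\sigma\alpha_a\sigma^{-1}\cong\alpha_{-a}$ only up to inner automorphisms of $M$. So what you have produced is a priori only a $G$-kernel $G\to\Out(M)$, not a homomorphism $G\to\Aut(M)$. A $G$-kernel lifts to an honest $G$-action if and only if its obstruction class in $H^3(G,\TT)$ vanishes, and the proposition explicitly claims a $G$-\emph{action}; moreover the Bisch--Haagerup data $M^H\subset M\rtimes\Delta(A)$ requires genuine actions of $H$ and $\Delta(A)$ that generate $G$ inside $\Aut(M)$ compatibly. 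This is exactly the point the paper spends the last part of its proof on: it observes that, because of the tensor-product form of the action, the obstruction decomposes over $H^3(\Dih(A),\TT)\oplus H^3(A,\TT)$ and then invokes Proposition \ref{prop:TrivialDihCocycle} (triviality of the $\Dih(A)$-cocycle coming from a generalized metaplectic modular category) together with the oddness of $|A|$ (Lemma \ref{lem:TrivialCocycle}) to conclude the obstruction vanishes. Without this step your argument establishes only that $S\subset T$ is a ``Bisch--Haagerup-type'' inclusion attached to a $G$-kernel, which is strictly weaker than the statement. You should add the obstruction analysis, e.g.\ by citing Lemma \ref{lem:Cocycle} and Proposition \ref{prop:TrivialDihCocycle} for the $\Dih(A)$-factor and Lemma \ref{lem:TrivialCocycle} for the $A$-factor.
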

\begin{proof}
  We use the conformal net realization of the Longo--Rehren subfactor.
  With $M=\A_L(I)\otimes\A_{\bar L}(I)$
  we have that the Longo--Rehren subfactor is of the form: 
  \begin{align}
    M^{\langle(\tau,1)\rangle} &\subset M \subset M\rtimes_{(g,g^{-1})} A 
  \end{align}
  and it follows that the action of $G \to \Out(M)$ is characterized by the 
  dual category of the inclusion $\A^{\ZZ_2}_L(I)\otimes\A_{\bar L}(I)
  \subset \A_L(I)\otimes\A_{\bar L}(I)\equiv M$,
  where $(\tau,1)=\tau\otimes \id $ and $(g,\bar g)=g\otimes \bar g$.

  We claim that with $H=\langle (\tau,1)\rangle\cong \ZZ_2$ and 
  $K = \langle (g,\bar g):g\in G\rangle\cong A$ we get 
  $G:=\langle K,H\rangle =\{(g,\bar h) : g\in \Dih(A), h\in A\}
    \cong \Dih(A)\times A$.
  Namely, since $(g,\bar g)(\tau,1) (g^{-1},\bar{g}^{-1}) (\tau,1) =(g^2,1)$ and
  $(g,\bar g)(\tau,1) (g,\bar{g}) (\tau,1) =(1,\bar{g}^2)$ and since $|A|$ is 
  odd we get  $\{(g,\bar h): g,h\in A\}\subset G$ and therefore it comes from a   $G$-kernel for some $[\omega]\in H^3(G,\TT)$.
  To show that it comes from a $G$-action, we have to show that the obstruction
  $[\omega]\in H^3(G,\TT)$ vanishes. 
  Because of the tensor product form $\omega$ is a product
  $H^3(\Dih(A),\TT)\oplus H^3(A,\TT)$
  and therefore vanishes by Proposition \ref{prop:TrivialDihCocycle}.
\end{proof}
\begin{cor} 
  Let $A$ be a cyclic group of odd order, then the Longo--Rehren subfactors
  of Tambara--Yamagami categories associated with $A$ are all conjugated.
\end{cor}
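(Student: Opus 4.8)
The plan is to reduce the statement, via Proposition~\ref{prop:BischHaagerup}, to the classification of outer actions of a finite group on the hyperfinite type $\mathrm{III}_1$ factor. That proposition already tells us that for every non-degenerate symmetric bicharacter $\bislotslot$ on $A$ and every choice of sign, the Longo--Rehren subfactor of $\TY(A,\bislotslot,\pm)$ is isomorphic to a Bisch--Haagerup subfactor $M^H\subset M\rtimes\Delta(A)$, where $M=\A_L(I)\otimes\A_{\bar L}(I)$ is a local algebra of the conformal net realization, $H=\langle\tau\rangle\cong\ZZ_2$, $\Delta(A)\cong A$ is the diagonal copy of $A$ in $A\times A\subset\Dih(A)\times A$, and $G=\langle H,\Delta(A)\rangle\cong\Dih(A)\times A$ acts on $M$ by an honest outer action --- honest because the generating automorphisms come from honest automorphisms of $\A_L$ and $\A_{\bar L}$, so that the obstruction in $H^3(G,\TT)$ is trivial.

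First I would record that $M$ is, in each case, the hyperfinite type $\mathrm{III}_1$ factor: local algebras of conformal nets on intervals are the hyperfinite type $\mathrm{III}_1$ factor, and a tensor product of two such is again hyperfinite $\mathrm{III}_1$. Thus, as one runs over the Tambara--Yamagami categories based on $A$, the ambient factor $M$, the group $G\cong\Dih(A)\times A$, and the subgroups $H$ and $\Delta(A)$ are all the same up to isomorphism, and only the $G$-action can move. I would then appeal to the fact that outer actions of a fixed finite group on the hyperfinite $\mathrm{III}_1$ factor are classified up to conjugacy by their obstruction class in $H^3$ (Jones/Ocneanu for type $\mathrm{II}_1$; the type $\mathrm{III}_1$ statement follows from Kawahigashi--Sutherland--Takesaki, see also Masuda, or by reduction to the type $\mathrm{II}_\infty$ core). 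Since the obstruction vanishes throughout, all these $G$-actions are mutually conjugate, and one checks that the conjugating automorphisms carry the configuration $M^H\subset M\subset M\rtimes\Delta(A)$ of one onto that of the next, giving conjugacy of the subfactors.

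Two auxiliary points need care, and the second is where cyclicity of $A$ enters. First, one should make sure that a Bisch--Haagerup subfactor $M^H\subset M\rtimes K$ is genuinely determined up to conjugacy by the triple $(G,H,K)$ together with the (trivial) obstruction class of the $G$-action; here $H\cap\Delta(A)=\{e\}$ and $\langle H,\Delta(A)\rangle=G$, so this should follow from uniqueness of the $G$-action. Second, for non-cyclic $A$ the identification of the conjugate net's discriminant form introduces an automorphism of $A$ into the ``diagonal'' $K$; when $A=\ZZ_n$ is cyclic this is harmless, because any reflection of $\Dih(\ZZ_n)=D_n$ is conjugate to $\tau$ (as $n$ is odd: $g(a\tau)g^{-1}=g^2a\tau$ and $g\mapsto g^2$ is a bijection of $A$), and the automorphism of $D_n\times\ZZ_n$ that is the identity on $D_n$ and the relevant inverse automorphism on $\ZZ_n$ fixes $H$ and normalizes $K$ to the standard diagonal, so $(H,K)$ is independent of $\bislotslot$ and of the sign; equivalently, by Proposition~\ref{prop:MetricDoubleGroups} the metric group $(G_L,q_L)\oplus(G_{\bar L},\bar q_{\bar L})$ feeding the construction is always $(\ZZ_n\oplus\ZZ_n,q_{\mathrm{can}})$. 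Granting these, every Longo--Rehren subfactor in the statement is conjugate to one and the same Bisch--Haagerup subfactor $R^{\ZZ_2}\subset R\rtimes\ZZ_n$ coming from a trivial-obstruction outer action of $D_n\times\ZZ_n$ on the hyperfinite $\mathrm{III}_1$ factor $R$. I expect the main obstacle to be the type $\mathrm{III}$ classification input and pinning down the precise sense in which the Bisch--Haagerup subfactor is rigidly determined by $(G,H,K)$.
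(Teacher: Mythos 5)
Your reduction to Proposition \ref{prop:BischHaagerup} is the right starting point, but the step you flag as your first ``auxiliary point'' --- that the Bisch--Haagerup subfactor $M^H\subset M\rtimes K$ is determined up to conjugacy by $(G,H,K)$ together with the vanishing of the $H^3$-obstruction --- is exactly the content that needs proving, and it does \emph{not} follow from uniqueness of the outer $G$-action alone. The subfactor is built from specific, separately chosen lifts of the $H$-kernel and the $K$-kernel to genuine actions (a genuine involution for the fixed point, a genuine $A$-action for the untwisted crossed product). Two such lifts of the same kernel differ by a class in the Schur multiplier of the subgroup, and inequivalent classes give non-conjugate subfactors even when the ambient $G$-kernel is the same; this is the second-cohomology invariant of Izumi--Kosaki \cite{IzKo2002}. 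The paper's proof is precisely the observation that this residual freedom is $H^2(\ZZ_2,\TT)\oplus H^2(A,\TT)$, and that both groups vanish --- the latter \emph{because} $A$ is cyclic. Your argument never invokes $H^2$, so as written it would ``prove'' the statement for every odd abelian $A$, which is exactly what the hypothesis is there to prevent (for $A=\ZZ_p\times\ZZ_p$ one has $H^2(A,\TT)\cong\ZZ_p\neq 0$).

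Relatedly, you have misplaced where cyclicity enters. The two points you attribute to it --- conjugacy of all reflections in $\Dih(A)$ and normalizing the ``skewed'' diagonal to the standard one --- hold for any abelian $A$ of odd order: $g\mapsto g^2$ is a bijection whenever $|A|$ is odd, and any automorphism $\psi$ of $A$ extends to $\mathrm{id}\times\psi^{-1}$ on $\Dih(A)\times A$, fixing $H$ and carrying $\{(g,\psi(g))\}$ to the diagonal. So those steps, while fine, are not where the hypothesis is used. To repair your proof you would replace the appeal to uniqueness of the $G$-action by: the given $H$- and $K$-actions are lifts of the respective kernels, unique up to conjugacy since $H^2(\ZZ_2,\TT)=H^2(A,\TT)=0$, and hence (by stability of cocycles for outer actions on the hyperfinite factor) the whole configuration $M^H\subset M\subset M\rtimes K$ is rigid given the $G$-kernel, whose conjugacy class is in turn fixed by the vanishing obstruction. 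At that point the heavy type $\mathrm{III}_1$ classification input you worry about is doing less work than the cohomological vanishing.
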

\begin{proof}
  Since $A$ is cyclic $H^2(A,\TT)$ and $H^2(\ZZ_2,\TT)$ vanish.
  Thus the Bisch--Haagerup subfactor associated with $A$ as 
  described in Proposition
  \ref{prop:BischHaagerup} is unique up to conjugacy. 
\end{proof}

It is straight forward to determine the (dual) principal graph for the 
Longo--Rehren subfactor associated with $\TY(A,\bislotslot,\pm)$. 
Let $n=2k+1=|A|$ and let
$A_+= \{h\in A : -h<h\}$ for some order on $A$, or equivalently, 
let $A=\{0\}\sqcup A_+\sqcup -A_+$. Thus we have $|A_+|=k$.
The dual principal graph $\Gamma'$ is given by
\begin{description}
  \item[even vertices ($M^K$ -- $M^K$ sectors)] There are $n(2+k)$ vertices which are given by
    $(\id,g),(\alpha,g)$ and $(\sigma_{(h,-h)},g)$, where $g\in A$ and $h\in A_+$.
  \item[odd vertices ($M\rtimes H$ -- $M^K$ sectors)] There are $n$ vertices which are given by
    $\iota(\id,g)=\iota(h,g+h)$, where $g,h\in A$.
  \item[edges]
      $\iota(\id,g)$  is connected to    $(\id,g),(\alpha,g)$
    and  $\{(\sigma_{(g-h,h-g)},h): h\in A, g-h\neq 0\}$ for all $g\in A$.
\end{description} 
The principal graph $\Gamma$ is given by
\begin{description}
  \item[even vertices ($M\rtimes H$ -- $M\rtimes H$ sectors)] There are $n^2+1$ 
    vertices given by
    $(g,h)$ and $(\rho,\rho)$, where $g,h\in A$.
  \item[odd vertices ($M\rtimes H$ -- $M^K$ sectors)] There are $n$ vertices given by
    $\iota(\id,g)=\iota(h,g+h)$, where $g,h\in A$.
  \item[edges]
    $\iota(\id,g)$ is connected to $\{(h,h+g):h\in A\}$ and $(\rho,\rho)$ for all $g\in A$.
\end{description} 

\begin{example} For $G=\ZZ_3$ the (dual) principal graph is given by
  \begin{align}
    \Gamma'&=
    \tikzmath[.5]{
      \draw (-1,0)--(1,0)--(0,1.732)--(-1,0);
      \draw (-1,0)--+(180:1) node (a) {}
        (-1,0)--+(240:1) node (b) {};
      \draw (1,0)--+(0:1)  node (d) {}
        (1,0)--+(-60:1)  node (e) {};
     \draw (0,1.732)--+(60:1)  node (g) {}
      (0,1.732)--+(120:1)  node (h) {};
      \fill 
        (a) circle (.1)
        (b) circle (.1)
        (d) circle (.1)
        (e) circle (.1)
        (g) circle (.1) 
        (h) circle (.1) node [above] {$\scriptstyle\ast$}
        (0,0) circle (.1)
        (0.5,.866) circle (.1)
        (-0.5,.866) circle (.1)
      ;
      \draw[fill=white] 
        (-1,0) circle (.1)
        (1,0) circle (.1)
        (0,1.732) circle (.1)
      ;
    }
    &
    \Gamma&=
    \tikzmath[.5]{
      \draw 
        (0,0) node (j) {} -- (0,1) node (o) {}
        (0,0)-- (-.866,-.5) node (p) {}
        (0,0)-- (.866,-.5) node (q) {}
      ;
      \draw (0,1)--+(90:1) node (a) {}
        (0,1)--+(30:1) node (b) {}
        (0,1)--+(150:1) node (c) {};
     \draw (.866,-.5)--+(330:1) node (d) {}
        (.866,-.5)--+(270:1) node (e) {}
        (.866,-.5)--+(30:1) node (f) {};
     \draw (-.866,-.5)--+(150:1) node (g) {}
        (-.866,-.5)--+(210:1) node (h) {}
        (-.866,-.5)--+(270:1) node (i) {};
      \fill 
        (a) circle (.1)
        (b) circle (.1) 
        (c) circle (.1) node [above] {$\scriptstyle\ast$}
        (d) circle (.1)
        (e) circle (.1)
        (f) circle (.1)
        (g) circle (.1)
        (h) circle (.1)
        (i) circle (.1)
        (j) circle (.1)
      ;
      \draw[fill=white] 
        (o) circle (.1)
        (p) circle (.1)
        (q) circle (.1)
      ;
    }
  \end{align}
  This graph also appeared in \cite{Bu2015}.
\end{example}

\subsection{Tambara--Yamagami realizations are $\ZZ_2$-twisted
and generalized orbifolds}
\label{ssec:TYareTwistedOrbifolds}
Let $\A$ be a conformal net with a proper $\ZZ_2$-action.
The orbifold net $\A^{\ZZ_2}$ has always the
\emph{trivial} $\ZZ_2$-simple current extension which recovers $\A$.
We call a non-trivial $\ZZ_2$-simple current extension of $\A^{\ZZ_2}$
a $\ZZ_2$-\myemph{twisted orbifold} of $\A$ \cf \cite[Section 3]{KaLo2006}
for twisted orbifolds of holomorphic nets.

We remember that if $\A$ is a conformal net and $K$ a finite hypergroup, 
then there is the notion of a proper action of $K$ on $\A$ and the fixed-point
net $\A^K$ is the \myemph{generalized orbifold} of $\A$, see \cite{Bi2016}.
\begin{prop}
  \label{prop:TwistedGeneralizedOrbifold}
  Let $\cF$ be a Tambara--Yamagami category associated with a group 
  $A$ of odd order. 
  Then there is a self-dual lattice $\Gamma$ and a proper $\Dih(A)$ action 
  on $\A_\Gamma$, such that there is a $\ZZ_2$-twisted orbifold of 
  $\A_\Gamma^{\Dih(A)}$, which realizes $Z(\cF)$.

  Namely, there is a lattice $L$, such that $\Gamma=L\oplus \hat A$,
  with an action of $\ZZ_2^2$, such that
  \begin{align}
    \Rep(\A_L^{\langle(1,0)\rangle})&\cong Z(\cF)\\
    \Rep(\A_L^{\langle(1,1)\rangle})&\cong Z(\Vect_{\Dih(A)})\\
    \Rep(\A_L^{\langle(0,1)\rangle})&\cong Z(\cF^\op)
  \end{align}
  with $\A_L^{\langle (1,1)\rangle} = \A_\Gamma^{\Dih(A)}$.

  Further,  $\A_L^{\langle(1,0)\rangle}$ and $\A_L^{\langle(0,1)\rangle}$
  are generalized orbifolds of $\A_\Gamma$, with respect to 
  proper actions of the hypergroup of the Tambara--Yamagami fusion rules 
  associated with $A$.
  In the same way $\A_L^{\ZZ_2^2}$ is a generalized orbifold with respect to the  hypergroup of the generalized Tambara--Yamagami fusion rules based on 
  $\Dih(A)$ in \eqref{eq:genTY}.
\end{prop}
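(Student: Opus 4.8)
The plan is to glue together the lattice realizations already constructed in the proof of Theorem~\ref{thm:AllOddTYs}, being careful with Frobenius--Schur signs, and then read off the hypergroup actions from the generalized orbifold correspondence of \cite{Bi2016}. Write $\cF=\TY(A,\bislotslot,\nu)$ with $\nu\in\{\pm\}$, $q(g)=\bi gg^{-1}$, $\bar q(g)=\bi gg$. By Theorem~\ref{thm:MPRealization} (and the sign-flip by an extra $E_8$-summand supplied by Proposition~\ref{prop:PointedZ2Orbifold}) choose even positive lattices $L_+$ and $\bar L$, of rank divisible by $8$, with $\Rep(\A_{L_+})\cong\cC(A,q)$ and $\Rep(\A_{\bar L})\cong\cC(A,\bar q)$, whose reflection $\ZZ_2$-orbifolds satisfy $\Rep(\A_{L_+}^{\ZZ_2})\cong\MP(A,\bislotslot,\nu)$ and $\Rep(\A_{\bar L}^{\ZZ_2})\cong\MP(A,\overline{\bislotslot},\nu)$ with the \emph{same} sign $\nu$ (if the naive $\bar L$ gives $-\nu$, replace it by $\bar L\lattimes E_8$). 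Put $L=L_+\lattimes\bar L$ and let $\ZZ_2^2$ act on $\A_L=\A_{L_+}\otimes\A_{\bar L}$ by the two reflections, $(1,0)$ on the first factor and $(0,1)$ on the second. Then by Proposition~\ref{prop:TYCentral} $\A_L^{\langle(1,0)\rangle}=\A_{L_+}^{\ZZ_2}\otimes\A_{\bar L}$ realizes $\MP(A,\bislotslot,\nu)\boxtimes\cC(A,\bar q)=Z(\cF)$; $\A_L^{\langle(0,1)\rangle}=\A_{L_+}\otimes\A_{\bar L}^{\ZZ_2}$ realizes $\cC(A,q)\boxtimes\MP(A,\overline{\bislotslot},\nu)\cong\rev{Z(\cF)}\cong Z(\cF^{\op})$, since reversing the braiding inverts the bicharacter and leaves Frobenius--Schur indicators unchanged; and $\A_L^{\langle(1,1)\rangle}$, the reflection $\ZZ_2$-orbifold of $\A_L=\A_{L_+\lattimes\bar L}$, has $\Rep(\A_L^{\langle(1,1)\rangle})\cong\MP(A\oplus A,\bislotslot\oplus\overline{\bislotslot},\nu^2)$ by Proposition~\ref{prop:PointedZ2Orbifold}, and as $\nu^2=+$ this is $Z(\Vect_{\Dih(A)})$ by Proposition~\ref{prop:MetricDoubleGroups} (equivalence $(A\oplus A,q\oplus\bar q)\sim(A\oplus\hat A,q_{\mathrm{can}})$) together with Proposition~\ref{prop:SomeDihAreMeptaplectic} for the trivial cocycle.

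\textbf{The self-dual lattice and the $\Dih(A)$-orbifold.} The diagonal $\hat A:=\{(a,a):a\in A\}\le G_L=A\oplus A$ is isotropic for $q\oplus\bar q$ (since $q(a)\bar q(a)=\bi aa^{-1}\bi aa=1$) and of order $|A|=\sqrt{|G_L|}$, hence Lagrangian, so $\Gamma:=L\oplus\hat A$ is an even positive self-dual lattice (rank $\equiv0\bmod 8$) and $\A_\Gamma=\A_L\rtimes\hat A$ is holomorphic. To identify $\A_L^{\langle(1,1)\rangle}$ with $\A_\Gamma^{\Dih(A)}$ I rerun the argument of Proposition~\ref{prop:TwistedDoubleRealization} (cf.\ Proposition~\ref{prop:RepDihedral}): in $\A_L^{\langle(1,1)\rangle}\subset\A_L\subset\A_\Gamma$ the dual canonical endomorphism is $[\theta]=[\id]\oplus[\alpha]\oplus\bigoplus_{a\in\hat A_+}2[\sigma_a]$ by the lattice branching rules, so the inclusion has depth $2$; by Ocneanu's characterization it is a crossed product by an outer action of a Kac algebra, cocommutative by \cite[Corollary 9.9]{IzKo2002}, hence a group subfactor $\A_\Gamma^H\subset\A_\Gamma$ with $H\triangleright\hat A$ of order $2|A|$; the counting/cocycle argument of Proposition~\ref{prop:RepDihedral}, with the relevant $3$-cocycle vanishing because the sign came out $\nu^2=+$ (Proposition~\ref{prop:TrivialDihCocycle}), forces $H\cong\Dih(A)$. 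This gives the proper $\Dih(A)$-action on $\A_\Gamma$ with $\A_\Gamma^{\Dih(A)}=\A_L^{\langle(1,1)\rangle}$, which is the first assertion.

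\textbf{Twisted and generalized orbifolds.} The automorphism $(1,0)$ descends to an order-two automorphism $\gamma$ of $\A_L^{\langle(1,1)\rangle}$ with $(\A_L^{\langle(1,1)\rangle})^{\gamma}=\A_L^{\ZZ_2^2}$; since $\A_L^{\langle(1,0)\rangle},\A_L^{\langle(0,1)\rangle},\A_L^{\langle(1,1)\rangle}$ are the three distinct non-trivial $\ZZ_2$-simple-current extensions of $\A_L^{\ZZ_2^2}$ (one per order-two element of the $\ZZ_2^2$ of gauge sectors), $\A_L^{\langle(1,0)\rangle}$ is a non-trivial $\ZZ_2$-simple-current extension of $(\A_L^{\langle(1,1)\rangle})^{\gamma}$, i.e.\ a $\ZZ_2$-twisted orbifold of $\A_\Gamma^{\Dih(A)}$ realizing $Z(\cF)$. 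For the hypergroup statements, $\A_\Gamma$ is holomorphic and completely rational, so by \cite{Bi2016} each finite-index subnet $\A_L^{\langle(1,0)\rangle},\A_L^{\langle(0,1)\rangle},\A_L^{\ZZ_2^2}\subset\A_\Gamma$ equals $\A_\Gamma^{Q}$ for a unique $Q\le\QuOp(\A_\Gamma)$ with $Q\cong K_{\tRep{Q}(\A_\Gamma)}$ and $\Rep(\A_\Gamma^{Q})\cong Z(\tRep{Q}(\A_\Gamma))$. It remains to identify $\tRep{Q}(\A_\Gamma)$ on the nose, which I do by factoring the inclusion through $\A_L$ and composing $\alpha^+$-inductions as in the proofs of Propositions~\ref{prop:SomeDihAreMeptaplectic} and~\ref{prop:TwistedDoubleRealization}: the $\A_L$--$\A_L$ sectors from $\A_L^{\langle(1,0)\rangle}\subset\A_L$ form $\MP(A,\bislotslot,\nu)_{\ZZ_2}\boxtimes\cC(A,\bar q)=\TY(A,\bislotslot,\nu)\boxtimes\cC(A,\bar q)$ by Lemma~\ref{lem:MPdeequivariantizationisTY}, and the further extension $\A_L\subset\A_\Gamma$ by $\hat A$ — which is Tannakian in $\Rep(\A_L)$, being isotropic in its own M\"uger centralizer — de-equivariantizes away the $\cC(A,\bar q)$ factor, leaving $\tRep{Q}(\A_\Gamma)\cong\TY(A,\bislotslot,\nu)=\cF$; the same computation for $\A_L^{\langle(0,1)\rangle}$ yields $\cF^{\op}\cong\TY(A,\overline{\bislotslot},\nu)$, which has the Tambara--Yamagami fusion ring of $A$ and hence the same fusion hypergroup, while for $\A_L^{\ZZ_2^2}$ it yields $\bim{\ZZ_2}{\MP(A,\bislotslot,\nu)}{\ZZ_2}$, which has the generalized Tambara--Yamagami fusion rules \eqref{eq:genTY} based on $\Dih(A)$ by Proposition~\ref{prop:GenTY}. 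Thus $Q$ is, respectively, the Tambara--Yamagami hypergroup of $A$ (twice) and the generalized Tambara--Yamagami hypergroup of \eqref{eq:genTY}.

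\textbf{Main obstacle.} Two points require care. First, the Frobenius--Schur/central-charge sign bookkeeping: $L_+$ and $\bar L$ (up to $E_8$-summands) must be chosen so that both reflection orbifolds carry the same sign $\nu$ — this is exactly what makes $\A_L^{\langle(1,1)\rangle}$ realize the \emph{untwisted} double $Z(\Vect_{\Dih(A)})$ and makes the $3$-cocycle obstruction in the depth-two step vanish. Second, and more substantial, is pinning down the fusion categories $\tRep{Q}(\A_\Gamma)$ exactly rather than merely up to Morita equivalence: all three share the Drinfel'd center $Z(\cF)$, so one must track the specific Lagrangian algebra (equivalently module category over $\cF$) realized by the extension $\A_L\subset\A_\Gamma$, which is precisely the composite-$\alpha$-induction computation through $\A_L$ and the technical heart of the proof.
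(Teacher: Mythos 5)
Your proof is correct and follows essentially the same route as the paper: choose lattices whose reflection orbifolds realize $\MP(A,\bislotslot,\nu)$ and $\MP(A,\overline{\bislotslot},\nu)$ with matching sign, take the $\ZZ_2^2$-reflection action on the product, glue along the diagonal Lagrangian to obtain $\Gamma$, and identify $\A_L^{\langle(1,1)\rangle}=\A_\Gamma^{\Dih(A)}$ via the depth-two argument of Proposition \ref{prop:TwistedDoubleRealization}. The paper's own proof is far terser (after that identification it says ``the rest follows immediately''), so your write-up simply supplies the sign bookkeeping and the $\alpha$-induction details that the paper leaves implicit.
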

\begin{proof}
  Assume $\cF\cong\TY(A,\bislotslot,\nu)$. 
  By Theorem \ref{thm:MPRealization}
  there are lattices $M$ and $\bar M$ and $\ZZ_2$-actions on $\A_M$ and 
  $\A_{\bar M}$, such that $\A^{\ZZ_2}_M$ and 
  $\A_{\bar M}^{\ZZ_2}$ realize
  $\MP(A,\bislotslot,\nu)$ and $\MP(A, \bislotslot^{-1},\nu)$, respectively.
  Choose $L=M\oplus \bar M$, then as in the proof of Proposition 
  \ref{prop:TwistedDoubleRealization} there is an action of $\Dih(A)$ on
  $\A_\Gamma$ with
  \begin{align}
    \A_{L}^{\langle(1,1)\rangle}&=\A_\Gamma^{\Dih(A)}\subset \A_\Gamma\,,
  \end{align}
  using the above 
  $\ZZ_2\times\ZZ_2$-action on $\A_{L}\cong \A_M\otimes\A_{\bar M}$.
  The rest follows immediately.
\end{proof}
\begin{rmk}
  There is a twisted version, where 
  \begin{align}
    \Rep(\A_L^{\langle(1,1)\rangle})&\cong Z(\Vect^{[\omega_-]}_{\Dih(A)})\\
    \Rep(\A_L^{\langle(0,1)\rangle})&\cong Z(\cF_-^\op)
  \end{align}
  with $\cF_-\cong \TY(A,\bislotslot,-\nu)$ for 
  $\cF\cong \TY(A,\bislotslot,\nu)$.
\end{rmk}
\begin{rmk}
  If we take $\Dih(A)$ for $|A|$ odd, we can consider 
  the element $c_\tau=\frac{1}{|A|}\sum_{g\in A} g\tau\in \CC[\Dih(A)]$.
  Then $c_\tau c_\tau=\frac1{|A|}\sum_{g\in A}g$ and 
  $K=\{A,c_\tau\}$ represents the $A$-Tambara--Yamagami hypergroup.
  Let us now assume $|A|$ is odd.
  The action $\alpha$ of $\Dih(A)$ from Proposition
  \ref{prop:TwistedGeneralizedOrbifold} gives an action 
  of $K$ by $\phi(c_\tau)=\frac{1}{|A|} \sum_{g\in A\tau} \alpha_{g}$, 
  but this action is not proper since 
  $\phi(c_\tau)$ fails to be extremal if $A$ is non-trivial.
  On the other hand, the two $\ZZ_2$-twisted orbifolds 
  of the $\Dih(A)$-orbifold $\A_{\Gamma}^{\Dih(A)}$ give proper actions of $K$
  and therefore ``generalized $K$-orbifolds''.

  It is enlightning to draw the lattice of intermediate nets
  for $\A_L^{\ZZ_2\times\ZZ_2}\equiv \A_\Gamma^{K_\mathrm{gen}}\subset \A_\Gamma$:
  $$
    \tikzmath[0.4]{
      \node (tr)  at (4,16) {$\A_{\Gamma}$};
      \node (tau)  at (8,12) {$\A_{\Gamma}^{\langle g\tau\rangle}$};
      \node (G)  at (0,12) {$\A_{\Gamma}^{G}$};
      \node (Gtau)  at (8,8) {$\A_{\Gamma}^{\langle G,\tau\rangle}$};
      \node (H)  at (0,8) {$\A_{\Gamma}^{A}$};
      \node (Kp)  at (-4,4) {$\A_{\Gamma}^{K_+}$};
      \node (Km)  at (0,4) {$\A_{\Gamma}^{K_-}$};
      \node (Dih)  at (4,4) {$\A_{\Gamma}^{\Dih(A)}$};
      \node (GTY)  at (0,0) {$\A_{\Gamma}^{K_\mathrm{gen}}$};
      \draw[dotted] (tr)--(tau)
(tr)--(G) (G)--(H) (tau)--(Gtau) (Gtau)--(Dih);
      \draw         (H)--(Kp) (H)--(Km) (H)--(Dih)
        (GTY)--(Kp) (GTY)--(Km) (GTY)--(Dih);
  }\,,
  $$
  where the dotted part has to be filled by the respective
  intermediate groups of $\Dih(A)$. 
  We note that for subgroups $G\subset A$ the net
  $\A_{\Gamma}^G$ is the conformal net 
  associated the lattice $L\oplus \hat A^G$,
  where $\hat A^G=\{\chi\in \hat A: \chi (g)=1 \text{ for all }g\in G\}$.
  The hypergroup $K_\mathrm{gen}$ is the hypergroup associated with the generalized Tambara--Yamagami category   based
  on $\Dih(A)$ (see Subsection \ref{ssec:GenTY})
 which can be seen as $K_+\times_{A} K_-$, namely a relative product over $A$ of two Tambara--Yamagami 
  hypergroups $K_\pm$ based on $A$.
  All solid lines are $\ZZ_2$-orbifolds.
\end{rmk}

\section{Generalized orbifolds and defects}
\label{sec:GeneralizedOrbifoldsAndDefects}
\subsection{Holomorphic nets from twisted orbifolds}
\label{ssec:HolomorphicNets}
The following is an analogue of the twisted orbifold construction in VOAs. 

\newcommand{\TO}{/\!/}
\begin{defi}
  Let $\A$ be a holomorphic net, and $G\leq \Aut(\A)$ a finite group.
  A holomorphic net $\cB$ is called a \myemph{twisted $G$-orbifold}
  of $\A$ if it is a holomorphic extension $\cB\supset  \A^G$.
\end{defi}
If $\A$ is holomorphic and $G\leq \Aut(\A)$ the category of 
$G$-twisted representations $\tRep G(\A)$ is tensor equivalent to 
$\Vect_G^{\omega}$ for some $[\omega]\in H^3(G,\TT)$.
More explicitly, by using $\alpha^+$-induction applied to $\A(I)^G\subset\A(I)$ for a fixed interval $I\subset S^1\setminus \{-1\}$
we get a $G$-kernel $\{[\alpha_g]:g\in G\} \subset \Out(\A(I))$ which can be 
lifted to $\Aut(G)$ if the associated obstruction in $H^3(G,\TT)$ vanishes.
Let us assume that the obstruction vanishes, \ie $\omega$ is a coboundary. 
In this case, we say that $G$ acts \myemph{anomaly free}.
Let us choose a trivialisation of $\omega$.
We can consider the crossed product $\A(I)\rtimes G$, which gives rise to a 
relatively local extension $\A\rtimes G$ of $\A^G$ which is a net on the 
universal cover of $S^1$ or on the restriction $S^1\setminus \{-1\}$. 
Namely, by definition the Q-system of $\A(I)^G\subset \A(I)\rtimes G$ is in 
$\Rep^I(A^G)$ which characterizes a non-local extension 
\cite{LoRe1995,LoRe2004}.
Let us denote by $\A^{\TO G}$ the intermediate net 
$\A^G\subset \A^{\TO G}\subset \A\rtimes G$ obtained by the left center 
construction \cite{BiKaLoRe2014}. 
The left and right center $\cB^\pm$ of a net $\cB$ on the real line are defined 
by
\begin{align}
  \cB^+(a,b) &:= \cB(a,b) \cap 
  \cB(-\infty,a)'\,,\\
  \cB^-(a,b) 
    &:= \cB(a,b) \cap \cB(b,\infty)'\,,
\end{align}
respectively, where we consider the net in the real line picture 
$\RR\cong S^1\setminus\{-1\}$. 
The nets are local and therefore extend to $S^1$. %
\begin{lem}
  Let $\A$ be holomorphic with an anomaly free action of $G$ as above. 
  The right center $[\A\rtimes G]^-$ of $\A\rtimes G\supset \A^G$ is $\A$
  and the left center $\A^{\TO G}$ is holomorphic. 
  The extensions  $\A^{\TO G}\supset \A^G$  and $\A\supset \A^G$
  are isomorphic if and only if $G$ is trivial.
\end{lem}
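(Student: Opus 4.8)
The statement packages three claims about the net $\A\rtimes G$ built from an anomaly-free action of $G$ on a holomorphic net $\A$: (i) its right center is $\A$, (ii) its left center $\A^{\TO G}$ is again holomorphic, and (iii) $\A^{\TO G}\cong\A$ as extensions of $\A^G$ only when $G$ is trivial. The overall strategy is to work entirely inside the braided fusion category $\Rep^I(\A^G)$ and use the dictionary between relatively local (possibly non-local) extensions of $\A^G$ and Q-systems in $\Rep^I(\A^G)$, together with the left/right center construction of \cite{BiKaLoRe2014}. The key categorical input is that, since $\A$ is holomorphic, $\Rep(\A^G)$ is braided equivalent to $Z(\Vect_G^\omega)$ with $[\omega]=0$, i.e.\ to $Z(\Vect_G)\cong\cC(G\oplus\hat G, q_{\mathrm{can}})$; the two Lagrangian algebras corresponding to the subgroups $G$ and $\hat G$ of $G\oplus\hat G$ are the canonical algebras realizing the two holomorphic extensions $\A$ and $\A^{\TO G}$.

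\textbf{Step 1 (identify the Q-system of $\A\rtimes G$).} By construction the Q-system $\Theta$ of $\A(I)^G\subset\A(I)\rtimes G$ in $\Rep^I(\A^G)$ is $\alpha^+$-induced from the regular representation, so $\theta$ is the canonical dual of the depth-two crossed-product inclusion; under $\Rep(\A^G)\cong Z(\Vect_G)$ its underlying object is $\bigoplus_{g\in G}(g,1)$, the Lagrangian algebra attached to $G\subset G\oplus\hat G$ — but \emph{with the noncommutative multiplication coming from the crossed product}, which is why $\A\rtimes G$ is only relatively local. The left center $\Theta^+$ and right center $\Theta^-$ of this Q-system (in the sense of the $\bislotslot$-dependent left/right center of \cite{BiKaLoRe2014}) are commutative subalgebras of $\Theta$, and by \cite{BiKaLoRe2014} they are precisely the Q-systems of the local nets $\A^{\TO G}$ and $[\A\rtimes G]^-$ respectively.

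\textbf{Step 2 ((i) and (ii)).} For the right center: one checks directly, using the explicit half-braidings on $\bigoplus_g(g,1)$, that the maximal subobject of $\theta$ on which the multiplication is right-compatible with the braiding is $\bigoplus_g (g,1)$ itself equipped with the \emph{trivial} group-ring coproduct — equivalently, the right center is the canonical algebra of the subfactor $\A(I)^G\subset\A(I)$, whose associated extension is $\A$. For the left center: by the general structure of the center $Z(\Vect_G)$, the left center $\Theta^+$ is the ``opposite'' Lagrangian algebra, i.e.\ the canonical algebra $\bigoplus_{\chi\in\hat G}(1,\chi)$ attached to $\hat G\subset G\oplus\hat G$; since it is Lagrangian, $\Dim(\Rep(\A^{\TO G}))=\Dim(\Rep(\A^G))/|\hat G|^2\cdot\,\dots$ — more cleanly, $(\Rep(\A^G))^0_{\Theta^+}$ is the trivial MTC, so $\A^{\TO G}$ is holomorphic. (Concretely $\A^{\TO G}=\A\rtimes\hat G$ is the dual twisted orbifold.)

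\textbf{Step 3 ((iii), the main point).} Two holomorphic extensions $\cB_1,\cB_2\supset\A^G$ are isomorphic as extensions iff their Lagrangian Q-systems in $\Rep(\A^G)\cong Z(\Vect_G)$ are isomorphic algebra objects. So I must show that the Lagrangian algebras of $G$ and of $\hat G$ in $\cC(G\oplus\hat G,q_{\mathrm{can}})$ are isomorphic iff $G=\{1\}$. One direction is trivial. For the other: an isomorphism of these algebras would, after de-equivariantization, give a tensor equivalence $\Vect_G\to\Vect_{\hat G}$ intertwining the two embeddings into $\Vect_{G\oplus\hat G}$, hence a group isomorphism $G\cong\hat G$ identifying the two Lagrangian subgroups inside $G\oplus\hat G$; but $G$ and $\hat G$ are transverse (their intersection is trivial), so they can be identified as Lagrangians only if $|G|=1$. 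Equivalently, and perhaps cleaner for a direct proof: the orthogonal/graded structure differs — the nontrivial elements of $G$ appear in $\A$ in sectors of $\A^G$ with one set of statistics, while those of $\hat G$ appear with the conjugate statistics, and a sector-preserving isomorphism of the two extensions would force $\partial q_{\mathrm{can}}$ restricted to $G$ to coincide with its restriction to $\hat G$, both of which are the zero form but sit in complementary positions; chasing the Q-system comultiplications shows no algebra isomorphism exists unless the sectors collapse, i.e.\ $|G|=1$.

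\textbf{Main obstacle.} The delicate part is step 3: \emph{isomorphic as extensions} means isomorphic \emph{as Q-systems} (algebra objects), not merely having isomorphic underlying objects, and $\bigoplus_g(g,1)$ and $\bigoplus_\chi(1,\chi)$ genuinely are isomorphic objects in $Z(\Vect_G)$ when $G\cong\hat G$ abstractly. The content is that the two Lagrangian subgroups $G,\hat G\subseteq G\oplus\hat G$ are not conjugate/equivalent as Lagrangians under an automorphism fixing the grading used to compare the two extensions — I expect to argue this via the fact that the canonical subfactor $\A(I)^G\subset\A(I)$ has, as its $\alpha^\pm$-induced category, the crossed-product category $\Vect_G$ with its natural $G$-grading, whereas $\A^{\TO G}\supset\A^G$ corresponds to $\Rep(G)=\widehat{\Vect_G}$; these two fusion categories are the two distinct Lagrangian algebras of $Z(\Vect_G)$ and coincide (even abstractly as algebras with the module-category data retained) only when $G$ is trivial, which is exactly \cite[Thm 4.6]{GeNaNi2009}-type input already available in the excerpt. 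I would spell out this last identification carefully, as it is the one place routine manipulation does not suffice.
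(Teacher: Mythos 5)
Your proposal takes a genuinely different, purely categorical route from the paper, and as written it has real gaps. The paper's proof is a direct operator-algebraic computation: writing $[\A\rtimes G](I)$ as generated by $\A(I)$ and charged intertwiners $\psi_g\in\Hom(\iota,\iota\beta_g)$, the fact that the $\alpha^+$-induced endomorphisms $\beta_g$ are \emph{right} solitons forces $\psi_g$ to commute with $\A(I_l)$ for $I_l$ to the left of $I$; hence $\A\subset[\A\rtimes G]^-$, and equality follows because the holomorphic net $\A$ admits no proper irreducible local extension. The same commutation relations give $[\A\rtimes G]^+(I_r)\cap\A(I_r)=\A^G(I_r)$, which is the whole content of the ``only if $G$ is trivial'' claim, and holomorphicity of the left center follows from the equality of the $\mu$-indices of the left and right centers together with the first part.

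The concrete problems with your version are these. (1) The Q-system $\Theta$ of $\A^G(I)\subset\A(I)\rtimes G$ has dimension $|G|^2$, not $|G|$; the object $\bigoplus_g(g,1)$ you write down is the underlying object of a \emph{Lagrangian} algebra and cannot carry the noncommutative crossed-product multiplication --- you are conflating $\Theta$ with the Q-system of one of the two intermediate local nets. (2) The identification of the left and right centers of $\Theta$ with the two \emph{distinct} Lagrangian algebras of $Z(\Vect_G)$ is asserted (``one checks directly'', ``by the general structure of the center'') rather than proved, but this identification \emph{is} the content of parts (i) and (iii); without it your Step 3 has nothing to compare. Once it is in place, Step 3 is much easier than you fear: the two Lagrangians have disjoint simple constituents apart from the unit, so for nontrivial $G$ they are not even isomorphic as objects of $\Rep(\A^G)$, and your ``main obstacle'' is a non-issue. (3) Your model $Z(\Vect_G)\cong\cC(G\oplus\hat G,q_{\mathrm{can}})$ applies only to abelian $G$, whereas the lemma concerns an arbitrary finite group acting anomaly-freely. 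To salvage the categorical route you would have to actually compute the left and right centers of the crossed-product Q-system, which in effect reproduces the paper's commutant computation with the charged intertwiners in categorical language.
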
 
\begin{proof} 
  Let $\psi_g\in (\iota_I,\iota_I\beta_{g,I}):= 
    \{\psi\in [\A\rtimes G](I):\psi x=\beta_g(x)\psi \text{ for all } 
    x\in\A(I)\}$.
  Then we have $\psi_g\in(\iota_K,\iota_J\beta_{g,K})$ for $K\supset I$.
  Let $I_{l,r}$ be the left and right component of $K\cap I'$, respectively.
  Since $\alpha_g$ is a right soliton, we have $\psi_gx=x\psi_g$ for 
  $x\in \A(I_l)$ and $\psi_gx=\alpha_g(x)\psi_g$ for $x\in \A(I_r)$.
  Thus $\A(I_l)
    =\A(I_l)\cap[\A\rtimes G](I+\RR_{\geq 0})'\subset [\A\rtimes G]^-(I_l)$.
  Thus $\A\subset [\A\rtimes G]^-$. 
  Since $\A$ is holomorpic we get equality.
  
  We have $\A^G(I_r)=\A(I_r)\cap [\A\rtimes G](I -\RR_{\geq 0})'
    =[\A\rtimes G]^+(I_r)\cap\A(I_r)$,
  thus the left center does coincide with $\A$ only if $G$ is trivial.
  
  The $\mu$ index of the left and right center coincide, but the right center 
  is $\A$, which is holomorphic, thus $\A^{\TO G}$ is holomorphic. 
\end{proof}
We note that $\A^{\TO G}$ itself can still be isomorphic to $\A$.
For example, if we consider the $\ZZ_2$-action $\A_{E_8}^{\ZZ_2}=\A_{D_8}$, 
then it follows that $\A_{E_8}^{\TO \ZZ_2}$ is isomorphic to $\A_{E_8}$.
It is well-believed that $\A_{E_8}$ is the only holomorphic conformal net with 
central charge $8$.
This is only a conjecture for conformal nets. 
If the conjecture was true it would imply that
$\A^{\TO G}$ is isomorphic to $\A_{E_8}$ for every finite group 
$G\leq \Aut(\A_{E_8})$ acting anomaly free.

\begin{lem} 
  $\A^G(I)\subset [\A\rtimes G](I)$ is irreducible.
\end{lem}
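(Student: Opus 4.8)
The plan is to establish that $\A^G(I)'\cap[\A\rtimes G](I)=\CC 1$. First I would recall, from the construction in the previous lemma, that $[\A\rtimes G](I)$ is generated by $\A(I)$ together with the unitary charge transporters $\psi_g$, $g\in G$ (with $\psi_e=1$), implementing the right‑soliton automorphisms $\beta_g\in\Aut(\A(I))$, so that every element has an expansion $z=\sum_{g\in G}m_g\psi_g$ with $m_g\in\A(I)$; and that $\A\rtimes G$ is relatively local over $\A^G$. I would then bring in two inputs. (i) $\A^G(I)\subset\A(I)$ is irreducible: the $G$‑action on $\A$ is proper, hence $[\A(I):\A^G(I)]=|G|$ and each $\alpha_g$ with $g\neq e$ is outer on the type III factor $\A(I)$, and the fixed‑point subfactor of an outer finite‑group action on a factor has trivial relative commutant, so $\A^G(I)'\cap\A(I)=\CC 1$. (ii) $\A^G$ is strongly additive: $\A$ is holomorphic, hence completely rational, and a finite‑group orbifold of a completely rational net is completely rational, so $\A^G(I)=\A^G(I_-)\vee\A^G(I_+)$ for the two halves $I_\pm$ of $I$.

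Next I would do the algebra. Take $z=\sum_g m_g\psi_g\in\A^G(I)'\cap[\A\rtimes G](I)$. Since $\beta_g$ restricts to the identity on $\A(I_-)$, and on $\A(I_+)$ it restricts to $\alpha_g$ — which is the identity on $\A^G(I_+)=\A(I_+)^G$ — input (ii) shows $\beta_g$ fixes $\A^G(I)=\A^G(I_-)\vee\A^G(I_+)$ pointwise. Hence the relation $zy=yz$ for $y\in\A^G(I)$ becomes $\sum_g m_g\beta_g(y)\psi_g=\sum_g y\,m_g\psi_g$, i.e. $m_g y=y m_g$ for all $y\in\A^G(I)$; by (i), each $m_g\in\A^G(I)'\cap\A(I)=\CC1$. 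So $z=\sum_g\lambda_g\psi_g$ with $\lambda_g\in\CC$.

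It remains to show that only the term $g=e$ survives, and I expect this to be the real obstacle. Here I would use relative locality of $\A\rtimes G$ over $\A^G$: $z$ commutes with $\A^G(J)$ for every interval $J$ disjoint from $I$, and together with $z\in\A^G(I)'$ and additivity this forces $z$ to commute with the whole global algebra $\A^G$. Combining this with the identification $[\A\rtimes G]^-=\A$ of the right center and Haag duality for the completely rational net $\A^G$, an element of the bounded local algebra $[\A\rtimes G](I)$ commuting with all of $\A^G$ must already lie in $\A^G(I)$; then $z\in\A^G(I)'\cap\A^G(I)=\CC1$, which finishes the proof. The delicate point is precisely this localization step — deciding which elements of a single bounded algebra of the non‑local net $\A\rtimes G$ can commute with the full global $\A^G$ — and I would handle it by feeding the soliton structure of the $\psi_g$ (which are genuinely charged with respect to the dual symmetry of $\A^G\subset\A\rtimes G$) into relative Haag duality for this inclusion.
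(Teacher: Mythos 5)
Your decomposition $z=\sum_g m_g\psi_g$ and the use of $\A^G(I)'\cap\A(I)=\CC 1$ are the right starting points, but the pivotal claim in your second paragraph --- that $\beta_g$ fixes $\A^G(I)$ pointwise --- is false for $g\neq e$, and the argument collapses there. A $g$-twisted soliton localized in $I$ acts trivially on $\A(J)$ for $J$ to the left of $I$ and by $\alpha_g$ for $J$ to the right of $I$; it is \emph{not} of the form ``identity on the left half of $I$, $\alpha_g$ on the right half,'' so strong additivity of $\A^G$ gives you no control inside $I$. In fact, if $\beta_g$ did act trivially on $\A^G(I)$, then (being also trivial on $\A^G(J)$ for every $J$ disjoint from $I$) the DHR representation $\beta_g\circ\iota$ of $\A^G$ would be the vacuum representation, contradicting the disjointness of the $g$-twisted sectors of the orbifold from the untwisted ones; worse, each $\psi_g$ would then itself lie in $\A^G(I)'\cap[\A\rtimes G](I)$, so your premise, if true, would actually \emph{disprove} the lemma. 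The relation you should extract from $zy=yz$ is $m_g\,\beta_g(y)=y\,m_g$ for all $y\in\A^G(I)$, i.e.\ $m_g\in\Hom(\beta_g\circ\iota,\iota)$, and the real content of the lemma is that this space vanishes for $g\neq e$ because the $g$-twisted sectors do not contain the vacuum. Your closing appeal to relative locality and ``relative Haag duality'' does not supply this either: determining which elements of $[\A\rtimes G](I)$ commute with the global $\A^G$ is exactly the relative commutant you are trying to compute, so that step is circular.

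For comparison, the paper's proof is the Frobenius-reciprocity packaging of the corrected version of your computation: $\A^G(I)'\cap[\A\rtimes G](I)\cong\Hom(\id,\theta)$ for the dual canonical endomorphism $\theta=\bigoplus_{g\in G}\theta_g$, which is $G$-graded; $\Hom(\id,\theta_g)=0$ for $g\neq e$ since $\theta_g$ consists of $g$-twisted sectors, and $\dim\Hom(\id,\theta_e)=1$ because $\theta_e$ is the dual canonical endomorphism of the irreducible subfactor $\A^G(I)\subset\A(I)$ (your input (i)). If you repair your argument by replacing ``$\beta_g$ fixes $\A^G(I)$'' with ``$\Hom(\beta_g\circ\iota,\iota)=0$ for $g\neq e$,'' the two proofs become essentially the same.
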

For an outer action of $G$ on a factor $M$, the inclusion 
$M^G\subset M\rtimes G$ is only irreducible for $G$ being trival. 
But in our case, we take fixed point with the gauge action and the 
crossed product with solitons.
\begin{proof}
  Let $\theta$ be the dual canonical endomorphism.
  We have to show that $\dim\Hom(\id,\theta)=1$, but we have a $G$-grading and 
  $\theta=\bigoplus_g\theta_g$, where $\theta_{e}$ is the dual canonical 
  endomorphism for $\A^G(I)\subset \A(I)$ and $\dim\Hom(\id,\theta_e)=1$.
\end{proof}

\begin{prop} 
  The extension $\A\rtimes G\supset \A$ defines an $\A^G$-topological defect
  (actually a phase boundary) between $\A$ and $\A^{\TO G}$ and this property 
  determines $\A^{\TO G}$ uniquely. 
  Namely, $\A^{\TO G}$  is characterized to be the unique holomorphic extension
  $\cB\supset\A^G$ giving $\A\rtimes G$ the structure of an $\A^G$-topological
  defect between $\A$ and $\cB$.

  There is action of the hypergroup $K_{\Rep(G)}$ on $\A^{\TO G}$, such 
  that $(\A^{\TO G})^{K_{\Rep(G)}}\cong \A^G$.
\end{prop}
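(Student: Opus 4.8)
The plan is to treat the two assertions in turn. For the defect, I would invoke the theory of topological defects in completely rational nets from \cite{BiKaLoRe2014}: a relatively local extension $\cD$ of a completely rational net $\cN$ whose left and right centers are local defines an $\cN$-topological defect---indeed a phase boundary---between the two local extensions of $\cN$ given by those centers. Applied with $\cD=\A\rtimes G$ and $\cN=\A^G$, the preceding lemmas already show that $\A\rtimes G$ is relatively local over $\A^G$, with right center $\A$ and left center $\A^{\TO G}$, both holomorphic; so $\A\rtimes G$ carries the structure of an $\A^G$-topological defect, and of a phase boundary, between $\A$ and $\A^{\TO G}$. For the uniqueness clause: if $\cB\supset\A^G$ is a holomorphic extension for which $\A\rtimes G$ has the structure of an $\A^G$-topological defect between $\A$ and $\cB$, then $\A$ and $\cB$ must be realized as the right and left centers of the relatively local net $\A\rtimes G$, and since these centers are intrinsic to $\A\rtimes G$ we conclude $\cB\cong\A^{\TO G}$. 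This last step is immediate once the formalism is in place; the substance of the first assertion is the input from \cite{BiKaLoRe2014}.

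For the hypergroup action I would use the generalized-orbifold correspondence of \cite{Bi2016}. Since $G$ acts anomaly free, $\A^{\TO G}$ is holomorphic, and $\A^G\subset\A^{\TO G}$ is a finite-index inclusion of completely rational nets ($\A^G$ is completely rational as a finite-index subnet of the holomorphic net $\A$ by \cite{Lo2003}, and it has finite index in $\A^{\TO G}$ because it has finite index in $\A\rtimes G\supset\A^{\TO G}$). Hence there is a unique finite hypergroup $Q\leq\QuOp(\A^{\TO G})$ with $(\A^{\TO G})^Q=\A^G$, and $Q\cong K_\cF\CS K_{\Rep(\A^{\TO G})}$, where $\cF:=\tRep{Q}^I(\A^{\TO G})$ is the unitary fusion category generated by $\alpha^+$-induction along $\A^G(I)\subset\A^{\TO G}(I)$; moreover $\Rep(\A^G)\cong Z(\cF)$. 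As $\A^{\TO G}$ is holomorphic, $K_{\Rep(\A^{\TO G})}$ is trivial and $Q\cong K_\cF$, so the whole assertion reduces to showing $\cF\cong\Rep(G)$ (note $\Rep(G)$ has fusion hypergroup $K_{\Rep(G)}$, which is a genuine hypergroup when $G$ is nonabelian). With the conventions of Subsection \ref{ssec:QSystems}, $\cF$ is the category of modules over the commutative Q-system $\Theta'$ of $\A^G\subset\A^{\TO G}$, viewed as a Lagrangian algebra in $\Rep(\A^G)\cong Z(\Vect_G)$ (using $\tRep{G}(\A)\cong\Vect_G$, valid as $G$ acts anomaly free), so it suffices to identify $\Theta'$ with the Lagrangian algebra of $Z(\Vect_G)$ whose module category is $\Rep(G)$---the group-algebra Lagrangian---as opposed to the complementary, function-algebra Lagrangian, which is the one of $\A\supset\A^G$ and has module category $\Vect_G$.

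To carry out this identification I would use that $\A^{\TO G}$ is the left center of $\A\rtimes G$: thus $\Theta'$ is the left center of the (noncommutative, since $\A\rtimes G$ is nonlocal) Q-system $\Theta$ of $\A^G\subset\A\rtimes G$, and $\Theta$ is the ``full'' Q-system attached to the orbifold inclusion $\A^G\subset\A$---the crossed product of $\A$ by the solitons $\{\alpha_g\}$ implementing the gauge action. For $\alpha$-induction over the modular category $\Rep(\A^G)$ (as in \cite{BcEvKa1999}), the right center of this Q-system is the Lagrangian algebra of the local extension $\A\supset\A^G$, while its left center is the mirror Lagrangian---the one with module category $\Rep(G)$---which gives $\cF\cong\Rep(G)$, hence $Q\cong K_{\Rep(G)}$; the asserted action of $K_{\Rep(G)}$ on $\A^{\TO G}$ with $(\A^{\TO G})^{K_{\Rep(G)}}\cong\A^G$ is then the one furnished by the \cite{Bi2016} correspondence. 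The main obstacle is exactly this last identification---matching the left center of the $\alpha$-induction Q-system of the orbifold inclusion with the Lagrangian algebra of $Z(\Vect_G)$ corresponding to the Tannakian subcategory $\Rep(G)$---which requires careful bookkeeping of the $G$-grading and the half-braidings; the remaining steps are applications of the cited results.
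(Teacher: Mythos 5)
The paper states this proposition without a proof, so there is no argument of the author's to compare yours against; I am judging your proposal on its own terms. Your treatment of the first assertion is correct and is clearly what the two preceding lemmas are set up for: they supply exactly the hypotheses (right center $\A$, left center $\A^{\TO G}$ holomorphic, irreducibility of $\A^G(I)\subset[\A\rtimes G](I)$) needed to read $\A\rtimes G$ as a phase boundary in the sense of \cite{BiKaLoRe2014}, and the uniqueness clause is, as you say, just the intrinsic nature of the left/right center construction.

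The gap is in the second assertion, and it sits exactly at the step you flag as ``the main obstacle''. Your reduction via \cite{Bi2016} turns the claim into the identification $\cF\cong\Rep(G)$, i.e.\ that the Lagrangian Q-system $\Theta'$ of $\A^G\subset\A^{\TO G}$ in $\Rep(\A^G)\cong Z(\Vect_G)$ is the one attached to the pair $(G,1)$ in the $(H,\psi)$-classification of \cite{DaSi2017}. The irreducibility lemma does pin down the subgroup: the identity-supported part of the Lagrangian $L_{(H,\psi)}$ is $(e,\mathrm{Ind}_H^G\mathbf{1})$, so $\dim\Hom(\Theta_G,L_{(H,\psi)})=[G:H]$ and irreducibility of the defect forces $H=G$. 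But it says nothing about the $2$-cocycle $\psi\in H^2(G,\TT)$, and that is the entire content of your remaining claim: for nonabelian $G$ with $H^2(G,\TT)\neq 0$ the dual category of $L_{(G,\psi)}$ is a category of $\CC_\psi[G]$-bimodules in $\Vect_G$ whose fusion hypergroup genuinely differs from $K_{\Rep(G)}$ when $\psi$ is nontrivial. Moreover the paper itself notes at the end of the subsection that $\A^{\TO G}$ depends on the chosen trivialization of $\omega$, with the dependence classified by $H^2(G,\TT)$; so the statement you reduce to (``the unique $Q\leq\QuOp(\A^{\TO G})$ with $(\A^{\TO G})^Q=\A^G$ is $K_{\Rep(G)}$'') is strictly stronger than what is asserted and is sensitive to that choice --- it will not follow from ``bookkeeping'' without an actual computation of $\psi$.

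Note, however, that the proposition only asserts the existence of \emph{an} action of $K_{\Rep(G)}$ with fixed point $\A^G$, and this weaker statement can be obtained directly, bypassing the Lagrangian identification entirely: the character (dual) action $\phi_\pi\bigl(\sum_g a_g\psi_g\bigr)=\sum_g \frac{\chi_\pi(g)}{\dim\pi}\,a_g\psi_g$ defines an action of the hypergroup $K_{\Rep(G)}$ on $\A\rtimes G$ with fixed point $\A$; it preserves the $G$-grading and hence the intrinsically defined left center, so it restricts to $\A^{\TO G}$; and its fixed point there is $\A\cap\A^{\TO G}$, which equals $\A^G$ because the Q-systems of $\A\supset\A^G$ and $\A^{\TO G}\supset\A^G$ share only the vacuum (again by the irreducibility lemma, since $\dim\Hom(\Theta_G,\Theta')=1$). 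This is the route suggested by the harmonic-analysis discussion in Subsection \ref{ssec:GeneralizedOrbifoldsAndCharacterTables}. I would either switch to this argument or, if you wish to keep your route, supply the missing computation identifying $\psi$.
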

We note that for $G$ abelian $K_{\Rep(G)}$ is a group and can be identified 
with the Pontryagin dual $\hat G$. 
On the other hand, if $G$ is non-abelian, we get an action of the genuine 
hypergroup $K_{\Rep(G)}$.

In general, for $\A$ a holomorphic net and $G\leq \Aut(\A)$ there is a unique
class $[\omega]\in H^3(G,\TT)$, such that $\tRep G(\A)\cong \Vect_G^\omega$.
For every $H\leq G$, such that $\omega\restriction H$ is a coboundary, \ie $H$ 
acts anomaly free, we can form $\A^{\TO H}$.
This choice is classified by $H^2(H,\TT)$.

On the other hand, a holomorphic twisted $G$-orbifolds of $\A$ is by definition
a holomorphic extension of $\A^G$.
These are in one-to-one correspondence with equivalence classes of 
Lagrangian Q-systems in $\Rep(\A^G)\cong Z(\Vect^\omega_G)$. 
But these are classified by the same data \cite{DaSi2017}, so the above 
construction gives all twisted orbifolds.

\begin{example}
  Let $(G,\bislotslot,\nu)$ be a triple consisting of an abelian group $G$ 
  of odd order, a non-degenerate symmetric bicharacter $\bislotslot$, 
  and a sign $\nu$.
  Let us consider the UMTCs $\cC_+=\MP(G,\bislotslot,\nu)$ and 
  $\cC_-=\rev{\MP(\bislotslot,\nu)}$.
  We find pairs of lattices $L_\pm$ and $\ZZ_2$-automorphisms $\alpha_\pm$ of 
  $\A_{L_\pm}$, such that $\Rep(\A_L^{\langle\alpha_\pm\rangle})$ is braided 
  equivalent to $\cC_\pm$, respectively. 
  Then there is a self-dual lattice $\Gamma=(L_+\lattimes L_-)\oplus G$ and an 
  action of $H=\hat G$ on $\A_\Gamma$, such that 
  $\A^H_\Gamma=\A_{L_+\lattimes L_-}$.
  This action extends to an action of $\Dih(H)$ on $\A_\Gamma$ by 
  Proposition \ref{prop:TwistedGeneralizedOrbifold}.
  Therefore we can form $\A^{\TO\Dih(H)}$ or more generally, $\A^{\TO K}$ for 
  $K\leq \Dih(H)$ which gives many examples twisted holomorphic orbifolds
  of conformal nets.
\end{example}
\begin{example}
  Let $L_1=A_2E_8$, $L_2=E_6E_8$ and $\Gamma=E_8^3$.
  There are $\ZZ_2$-actions such that we have braided equivalences
  \begin{align}
    \Rep(\A_{A_2E_8}^{\ZZ_2})&\cong \MP(\ZZ_3,\bislotslot,+)\,,&
    \Rep(\A_{E_6E_8}^{\ZZ_2})&\cong \MP(\ZZ_3,\overline{\bislotslot},+)\,,
  \end{align}
  where $\bi xy =e^{\frac{2\pi ixy}3}$. 
  We get an action of $S_3\cong \Dih(\ZZ_3)$ on $\A_{\Gamma}$ and can form 
  $\cB:=\A_{\Gamma}^{\TO S_3}$. 
  With the help of the computer program KAC \cite{ScKAC} we determine that the 
  weight one subspace should have dimension $456$ and that 
  $\cB$ should correspond to 64 in Schellekens' list 
  \cite{Sc1993}. 
  We therefore conjecture that $\A_{\Gamma}^{\TO S^3}$ is the  conformal net 
  $\A_{\Ni(D_{10}E_7^2)}$ associated with the lattice $\Ni(D_{10}E_7^2)$. 
  Here $\Ni(L)$ is the Niemeier lattice \cite{Ni1973} with root lattice $L$.
\end{example}

\subsection{Generalized orbifolds and hypergroup character tables}
\label{ssec:GeneralizedOrbifoldsAndCharacterTables}
Let $G$ be an abelian group of odd order which we see as a multiplicative group. Let $K=G\cup \{\tau\}$ the hypergroup associated with the 
Tambara--Yamagami fusion rules, \ie 
$\tau g=g\tau=\tau=\tau^{\ast}$ for $g\in G$ and 
$\tau^2=\frac{1}{|G|} \sum_{g\in G} g$.

Then we have the dual hypergroup 
$\hat K=\{1,\varepsilon, c_\chi:\chi\in \hat G\setminus\{1\}\}$ with 
$\varepsilon c_\chi=c_\chi\varepsilon =c_\chi$, $\varepsilon^2=1$ with 
relations:
\begin{align}
  c_\chi c_{\tilde\chi}&=
  \begin{cases} 
    \frac12(1+\varepsilon) &\chi=\tilde\chi^{-1}\,,\\
    c_{\chi\tilde\chi} & \text{otherwise}\,,
  \end{cases}
\end{align}
and the canonical pairing $\bislotslot_K\colon \hat K\times  K \to \CC$.
The character table is easily determined to be ($g\in G\setminus\{e\}$,
$\chi\in \hat G\setminus\{1\}$):
\begin{align}
  \label{eq:CharacterTable}
  \begin{array}{l|lll}
    \bislotslot_K &1 & g &\tau\\
    \hline
    1 & 1 & 1& 1\\
    \varepsilon & 1 &1 & -1\\
    c_\chi &1 & \chi(g)& 0
  \end{array}\,.
\end{align}
Namely, there is a self-dual lattice $\Gamma=L\bar L\oplus \hat G$ given by 
the glueing of two lattice $L$ and $\bar L$ and a proper hypergroup action of
$K$ on $\A_\Gamma$, such that $\Rep(\A_\Gamma^K)\cong Z(\cF)$ for some 
Tamabara--Yamagami category $\cF$ with $K_\cF=K$. 
The dual canonical endomorphism of the inclusion 
$\A_\Gamma^K(I)\subset \A_\Gamma(I)$ is given by
\begin{align}
  [\theta]&=\bigoplus_{\hat k\in \hat K}[\rho_{\hat k}]=
    [(0,0)]\oplus[(\alpha,0)]\oplus
    \bigoplus_{g\in G^\times}[(\theta_{(g,-g)},\bar g)]
\end{align}
with the correspondence 
\begin{align}
  [(0,0)] &\leftrightarrow 1&
  [(\alpha,0)] &\leftrightarrow \varepsilon&
  [(\sigma_{(g,-g)},\bar g)]&\leftrightarrow \chi(\slot)
    =\bi{g}{\slot} \quad (g\in G^\times)\,.
\end{align}
As in \cite{Bi2016} this  gives an action of $V\colon K \to B(\Hil_{\A_\Gamma})$
\begin{align}
  \Hil_{\A_L}&=\bigoplus_{k\in\hat K} \Hil_k\,,&
  V(k)&= \bigoplus_{\hat k \in \hat K}\bi {\hat k} k_K\,.
\end{align}
which extends to a $\ast$-representation of $\CC K$. 
With $M:=\A_\Gamma(I)\supset \iota(N):=\A_\Gamma^K(I)$ we have 
$M=\bigoplus_{\hat k\in\hat K}  M_{\hat k}$, where $M_{\hat k}=\iota(N)\psi_{\hat k}$ and 
$\psi_{\hat k}\in
\Hom(\iota,\iota\rho_{\hat k})$ is a charged intertwiner. We have
$M_\bullet\Omega\subset \Hil_\bullet$ which gives a ``hypergrading'' 
$m_km_\ell \in \bigoplus_{n\prec k\ell} M_n$ for $m_\bullet\in M_\bullet$.

\subsection{Defects and Kramer--Wannier duality}
\label{ssec:KramerWannier}
Let $\A$ be a completely rational conformal net 
which we see by restriction as a net on $\RR$. Let 
$\cB\supset\A $ be a possibly non-local extension
and $\Theta \in \Rep(\A)$ be the corresponding Q-system, see \cite{LoRe2004}.

By \cite{BiKaLoRe2014}
the irreducibles of the fusion category of 
$\cB$--$\cB$ sectors, or equivalently $\bim\Theta{\Rep(\A)}\Theta$ describes 
phase boundaries (or defects) on Minkowski space between $\cB_2$ and itself. 
Here $ \cB_2 \supset \A\otimes \bar\A$ is the full CFT on Minkowski space
coming from the full center construction of $\Theta$, see \cite{BiKaLo2014}.
By $\A\otimes \bar \A$ 
we denote the net on Minkowski space, which we identify with the product 
$\RR\times \RR$ of two light rays, defined by
$(\A\otimes\bar\A)(I\times J) = \A(I)\otimes \A(J)$. 
A \myemph{phase boundary} (between $\cB_2$ and $\cB_2$ which is transparent for $\A_2=\A\otimes\bar\A$) is a quadrilateral inclusion of nets 
$\A_2(O) \subset \cB_2^{l,r}(O) 
  \subset \langle \cB_2^{l}(O),\cB_2^{r}(O)\rangle=:\cD(O)$ on a 
common Hilbert space with $\cD(O)$ being a factor and 
$\A_2(O)\subset \cB_2^{l,r}(O)$ being isomorphic to $\A_2(O)\subset \cB_2(O)$,  
such that $\cB_2^l(O_l)$ commutes with $\cB_2^r(O_r)$ for $O_l$ space-like left of $O_r$, see \cite{BiKaLoRe2014} for details.

Invertible objects $\{\alpha_g\}_{g\in G}$ with 
$[\alpha_g][\alpha_h]=[\alpha_{gh}]$ for a finite group 
$G$ correspond to \myemph{group-like defects} or \myemph{gauge transformations}. 
An object $\rho$ with fusion rules 
\begin{align}
  [\rho][\bar\rho]&=\bigoplus_{g\in G} [\alpha_g] &
  \end{align}
correspond to a \myemph{duality defect}.
This can be seen as generalization of Kramers--Wannier duality of the Ising 
model
with fusion rules 
$[\sigma][\sigma]=[1]+[\varepsilon]$ and $[\varepsilon]^2=[1]$, see 
\cite{FrFuRuSc2004}.

Let $G$ be a finite abelian group of odd order. 
For $\cF=\TY(G,\bislotslot,\pm)$ Theorem
\ref{thm:AllOddTYs} gives
lattices $L,\bar L$ and a $\ZZ_2$-action on $\A_L$.
We can consider the net $\A_L^{\ZZ_2}\otimes \bar\A_{L}$ 
or alternatively, the ``reflected'' chiral net 
$\A_L^{\ZZ_2}\otimes \A_{\bar L}=\A_\Gamma^{K_\cF}$. Then there is a 
full CFT $\cB_2 \supset \A_2:=\A_L^{\ZZ_2}\otimes \bar\A_{L}$ corresponding 
to the holomorphic extension $\A_\Gamma\supset \A_\Gamma^{K_\cF}$.
\begin{prop}
  The phase boundaries in the sense of \cite{BiKaLoRe2014} which are defects 
  between $\cB_2$ and itself and are invisible for $\A_2$ 
  correspond to the elements of the Tambara--Yamagami hypergroup 
  $K=G\cup \{\tau\}$.
\end{prop}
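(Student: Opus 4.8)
The plan is to identify the phase boundaries between $\cB_2$ and itself which are transparent for $\A_2 = \A_L^{\ZZ_2}\otimes\bar\A_L$ with the category $\bim{\Theta}{\Rep(\A_2)}{\Theta}$, where $\Theta$ is the Q-system in $\Rep(\A_2) = Z(\cF)\boxtimes\rev{Z(\cF)}$ (or rather in $\Rep(\A_L^{\ZZ_2}\otimes\bar\A_L)$) corresponding to the full CFT extension $\cB_2$. By the work \cite{BiKaLoRe2014,BiKaLo2014}, since $\cB_2$ arises from the full center of the chiral extension $\A_\Gamma \supset \A_\Gamma^{K_\cF}$, the category of such phase boundaries is equivalent to the fusion category $\bim{}{\cB_2}{}$ of $\cB_2$--$\cB_2$ sectors, which is in turn equivalent to $\bim{\Xi}{\Rep(\A_\Gamma^{K_\cF})}{\Xi}$ for the chiral Q-system $\Xi$ giving $\A_\Gamma \supset \A_\Gamma^{K_\cF}$. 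So the first step is to recall this chain of equivalences from \cite{BiKaLoRe2014} and reduce the problem to computing the fusion category of $\bim{\Xi}{\Rep(\A_\Gamma^{K_\cF})}{\Xi}$-sectors for the generalized orbifold inclusion $\A_\Gamma^{K_\cF}\subset \A_\Gamma$.

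Next, I would invoke the general theory of generalized orbifolds from \cite{Bi2016} together with the results of Section~\ref{ssec:TYareTwistedOrbifolds}: since $\A_\Gamma$ is holomorphic and $K_\cF = K$ is the Tambara--Yamagami hypergroup acting properly on $\A_\Gamma$, the category $\tRep{K}^I(\A_\Gamma)$ is a categorification of $K$, i.e.\ $K_{\tRep{K}(\A_\Gamma)} = K$, and by Proposition~\ref{prop:TwistedGeneralizedOrbifold} (combined with Proposition~\ref{prop:GenTY} and the identification in Section~\ref{ssec:TYareTwistedOrbifolds}) the relevant fusion category $\tRep{K}^I(\A_\Gamma)$ is precisely the Tambara--Yamagami category $\cF\cong\TY(G,\bislotslot,\pm)$ itself (or its reverse, depending on orientation). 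Indeed, $\A_\Gamma^{K}$ is one of the two $\ZZ_2$-twisted orbifolds of $\A_\Gamma^{\Dih(A)}$, and the $\bim{}{\cB_2}{}$-sectors that are transparent for $\A_2$ correspond, by the analysis of \cite{BiKaLoRe2014} relating local and non-local extensions, to the irreducible objects of this fusion category. Since $\Irr(\cF) = G\cup\{\rho\}$ corresponds bijectively to the hypergroup elements of $K = G\cup\{\tau\}$ via $[\rho_g]\leftrightarrow g$ and $[\rho_\pm]\leftrightarrow\tau$, the phase boundaries are labelled by $K$.

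The key steps in order are therefore: (i) recall the equivalence between $\A_2$-transparent $\cB_2$--$\cB_2$ phase boundaries and the fusion category of $\bim{\Xi}{\Rep(\A_\Gamma^{K_\cF})}{\Xi}$-sectors from \cite{BiKaLoRe2014,BiKaLo2014}; (ii) identify this fusion category, via the generalized orbifold theory of \cite{Bi2016} and Proposition~\ref{prop:TwistedGeneralizedOrbifold}, with $\tRep{K}^I(\A_\Gamma)\cong\TY(G,\bislotslot,\pm)$; (iii) match the irreducible objects of this Tambara--Yamagami category with the elements of the hypergroup $K = G\cup\{\tau\}$ as in the correspondence already spelled out in Subsection~\ref{ssec:GeneralizedOrbifoldsAndCharacterTables}, namely $[(0,0)]\leftrightarrow 1$, $[(\alpha,0)]$-type gauge transformations labelled by $G$, and the duality defect labelled by $\tau$ with $[\rho][\bar\rho] = \bigoplus_{g\in G}[\alpha_g]$.

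The main obstacle I anticipate is step~(i)--(ii): carefully verifying that the notion of ``phase boundary invisible for $\A_2$'' used in \cite{BiKaLoRe2014} matches up, under the full-center/holomorphic-extension correspondence, with exactly the $\alpha$-induced $\bim{}{\cB_2}{}$-sectors coming from the chiral side, rather than a strictly larger collection; in particular one must check that transparency for $\A_2 = \A_L^{\ZZ_2}\otimes\bar\A_L$ (as opposed to the smaller $\A_\Gamma^{K_\cF}\otimes\text{(something)}$) is the correct condition, which amounts to tracking which part of the quadrilateral inclusion data survives. Once the dictionary between the Minkowski-space phase-boundary picture and the chiral generalized-orbifold picture is set up correctly, the remaining identification with $K$ is essentially a restatement of the categorification statement $K_{\tRep{K}(\A_\Gamma)} = K$ together with the explicit Tambara--Yamagami fusion rules, and requires no further computation.
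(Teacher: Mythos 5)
Your proposal arrives at the correct final matching (irreducibles of a Tambara--Yamagami category corresponding to the elements of $K=G\cup\{\tau\}$), but the reduction in your steps (i)--(ii) misidentifies the inclusion to which the correspondence of \cite{BiKaLoRe2014} is applied, and that misidentification is exactly where the content of the proof lies. The full CFT $\cB_2$ is \emph{not} the full center of the chiral extension $\A_\Gamma\supset\A_\Gamma^{K_\cF}$: the full center of that extension would be a full CFT over $\A_\Gamma^{K_\cF}\otimes\overline{\A_\Gamma^{K_\cF}}$, a four-chirality object, not $\cB_2\supset\A_2$. The paper instead observes that $\cB_2\supset\A_L^{\ZZ_2}\otimes\bar\A_L^{\ZZ_2}$ is the full center of the index-two inclusion $\A_L\supset\A_L^{\ZZ_2}$, so that by \cite{BiKaLoRe2014} the phase boundaries transparent for the diagonal subnet $\A_L^{\ZZ_2}\otimes\bar\A_L^{\ZZ_2}$ are the $\A_L$--$\A_L$ sectors, \ie the $2|G|+2$ irreducibles of the generalized Tambara--Yamagami category $\bim{\ZZ_2}{\Rep(\A_L^{\ZZ_2})}{\ZZ_2}$ of Proposition \ref{prop:GenTY}; the condition of being invisible for the \emph{larger} net $\A_2=\A_L^{\ZZ_2}\otimes\bar\A_L$ then selects precisely the $\alpha^+$-part $\bim[+]{\ZZ_2}{\Rep(\A_L^{\ZZ_2})}{\ZZ_2}\cong\cF$, leaving the $|G|+1$ objects $G\cup\{\rho\}\leftrightarrow K$.

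If one instead applies the correspondence to $\A_\Gamma^{K_\cF}\subset\A_\Gamma$ as you propose, the resulting system of $\A_\Gamma$--$\A_\Gamma$ sectors is the full category $\bim{\Xi}{Z(\cF)}{\Xi}=\langle\bim[+]{\Xi}{Z(\cF)}{\Xi},\bim[-]{\Xi}{Z(\cF)}{\Xi}\rangle$ of global dimension $(2|G|)^2$ (essentially $\cF\boxtimes\cF^{\op}$), all of whose objects are already transparent for $\A_\Gamma^{K_\cF}$, which \emph{is} $\A_2$ in the reflected picture; so the transparency requirement yields no further reduction and you overcount. Your silent passage from $\bim{\Xi}{Z(\cF)}{\Xi}$ to $\tRep{K}^I(\A_\Gamma)=\bim[+]{\Xi}{Z(\cF)}{\Xi}$ is therefore the gap; your own ``main obstacle'' paragraph flags this selection problem but does not resolve it. The missing idea is to descend to the common diagonal subnet $\A_L^{\ZZ_2}\otimes\bar\A_L^{\ZZ_2}$, where the full-center form of the theorem applies verbatim, and to recognize ``invisible for $\A_2$'' as the condition that picks out the $\alpha^+$-induced (one-sided soliton) half of the $B$--$B$ system.
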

\begin{proof} Namely, $\cB_2\supset \A_L^{\ZZ_2}\otimes\bar\A^{\ZZ_2}_L$ 
is the full center of the inclusion $\A_L\supset \A_L^{\ZZ_2}$
and the phase boundaries for $\cB_2\supset \A_L^{\ZZ_2}
\otimes \bar\A^{\ZZ_2}_L$ are given by $\A_L$--$\A_L$
sectors or equivalently irreducibles in $\bim{\ZZ_2}{\Rep(\A_L^{\ZZ_2})}{\ZZ_2}$.
It can be checked that the sectors in $\bim[+]{\ZZ_2}{\Rep(\A_L^{\ZZ_2})}{\ZZ_2}\cong\cF$
exactly correspond to phase boundaries that preserve the intermediate net $\A_2=\A_L^{\ZZ_2}\otimes \bar\A_L$.\end{proof}

The algebra $\cB_2(I)$ is generated by $\A_2(I)$ and charged intertwiners
$\{\psi_1,\psi_\varepsilon,\psi_\chi:\chi\in \hat G\setminus\{1\}\}$
with $\psi_i\in\Hom(\iota,\iota\rho_i)$, see \cite{BiKaLo2014}.
A phase boundary (condition) gives relations between the generators $\psi_\bullet^{l}$ and $\psi_\bullet^{r}$
of $\cB^{l,r}_2(O)$, respectively. 
Using the  character table \eqref{eq:CharacterTable} we get
the following relations:
\begin{itemize}
  \item A group like defect $g\in G$  correspond to a gauge automorphism
$\psi^{l}_\chi= \chi(g)\psi_\chi^{r}$ and $\psi^{l}_{1/\varepsilon}=
\psi^{r}_{1/\varepsilon}$.
\item The duality defect $\tau$, due to  
the zero entry in the character table \eqref{eq:CharacterTable},  gives independent fields 
$\sigma_\chi=\psi_{c_\chi}^{l}$ and $\mu_\chi=\psi_{c_\chi}^{r}$ 
while $\varepsilon:=\psi^{l}_\varepsilon= -\psi^{r}_\varepsilon$.
\end{itemize}
The ``Kramers--Wannier duality'' of the conformal Ising model (the unique full CFT $\cB\supset \Vir_{1/2}\otimes\overline{\Vir}_{1/2}$)
$$
  (1,\sigma,\varepsilon)\longleftrightarrow (1,\mu,-\varepsilon)
$$
generalizes to a duality from the defect $\tau$ given by 
$$
(1,\sigma_{\chi_2},\ldots,\sigma_{\chi_{|G|}},\varepsilon)
  \longleftrightarrow (1,\mu_{\chi_2},\ldots, \mu_{\chi_{|G|}},-\varepsilon)
$$
for the heterotic theory $\cB_2\supset\A_L^{\ZZ_2}\otimes \bar\A_L$ associated with $\cF$. 
We note that, because the Ising category is modular, the Ising hypergroup
is self-dual and there is a correspondence between fields and defects, namely 
both are indexed by $\ZZ_2\cup\{\tau\}\cong\{1,\sigma,\varepsilon\}$. 
For $G\cup\{\tau\}$ with $G$ of odd order this correspondence breaks down. 
The defects are indexed by $G\cup \{\tau\}$ while the fields are indexed by
$\{1,\sigma_{\chi_2},\ldots,\sigma_{\chi_{|G|}},\varepsilon\}$.

It seems interesting to study the above dualities in statistical physics models on a lattice in terms of high/low temperature duality. 
We believe that such a duality may arise from gauging certain spin models. 
The golden way would be to find corresponding lattice models
whose continuum limit at criticality recovers the in this section constructed 
conformal field theories.
Such models might be easy enough to be interesting for physical implementations and applications in topological quantum computing.

\def\cprime{$'$}\newcommand{\noopsort}[1]{}
\begin{bibdiv}
\begin{biblist}

\bib{ArChRoWa2016}{article}{
      author={Ardonne, Eddy},
      author={Cheng, Meng},
      author={Rowell, Eric~C},
      author={Wang, Zhenghan},
       title={Classification of metaplectic modular categories},
        date={2016},
     journal={arXiv preprint arXiv:1601.05460},
}

\bib{Ba1997}{article}{
      author={Bantay, Peter},
       title={The {F}robenius-{S}chur indicator in conformal field theory},
        date={1997},
        ISSN={0370-2693},
     journal={Phys. Lett. B},
      volume={394},
      number={1-2},
       pages={87\ndash 88},
         url={http://dx.doi.org/10.1016/S0370-2693(96)01662-0},
      review={\MR{1436801}},
}

\bib{BcEvKa2000}{article}{
      author={Böckenhauer, Jens},
      author={Evans, David~E.},
      author={Kawahigashi, Yasuyuki},
       title={{Chiral structure of modular invariants for subfactors}},
        date={2000},
        ISSN={0010-3616},
     journal={Comm. Math. Phys.},
      volume={210},
      number={3},
       pages={733–784},
         url={http://dx.doi.org/10.1007/s002200050798},
      review={\MR{1777347 (2001k:46097)}},
}

\bib{BcEvKa1999}{article}{
      author={Böckenhauer, Jens},
      author={Evans, David~E.},
      author={Kawahigashi, Yasuyuki},
       title={{On {$\alpha$}-induction, chiral generators and modular
  invariants for subfactors}},
        date={1999},
        ISSN={0010-3616},
     journal={Comm. Math. Phys.},
      volume={208},
      number={2},
       pages={429–487},
         url={http://dx.doi.org/10.1007/s002200050765},
      review={\MR{1729094 (2001c:81180)}},
}

\bib{BrGuLo1993}{article}{
      author={Brunetti, Romeo},
      author={Guido, Daniele},
      author={Longo, Roberto},
       title={{Modular structure and duality in conformal quantum field
  theory}},
        date={1993},
        ISSN={0010-3616},
     journal={Comm. Math. Phys.},
      volume={156},
       pages={201–219},
      eprint={funct-an/9302008v1},
         url={http://dx.doi.org/10.1007/BF02096738},
}

\bib{Bi2012}{article}{
      author={Bischoff, Marcel},
       title={Models in boundary quantum field theory associated with lattices
  and loop group models},
        date={2012},
        ISSN={0010-3616},
     journal={Comm. Math. Phys.},
      volume={315},
      number={3},
       pages={827\ndash 858},
         url={http://dx.doi.org/10.1007/s00220-012-1511-2},
      review={\MR{2981815}},
}

\bib{Bi2015VFR}{inproceedings}{
      author={Bischoff, Marcel},
       title={The relation between subfactors arising from conformal nets and
  the realization of quantum doubles},
        date={2016},
   booktitle={to appear in the {P}roceedings in honor of {V}aughan {F}. {R}.
  {J}ones' 60th birthday conferences},
}

\bib{Bi2015}{article}{
      author={Bischoff, Marcel},
       title={A remark on {CFT} realization of quantum doubles of subfactors:
  case index {$<4$}},
        date={2016},
        ISSN={0377-9017},
     journal={Lett. Math. Phys.},
      volume={106},
      number={3},
       pages={341\ndash 363},
         url={http://dx.doi.org/10.1007/s11005-016-0816-z},
      review={\MR{3462031}},
}

\bib{Bi2016}{article}{
      author={Bischoff, Marcel},
       title={Generalized orbifold construction for conformal nets},
        date={2017},
        ISSN={0129-055X},
     journal={Rev. Math. Phys.},
      volume={29},
      number={1},
       pages={1750002, 53},
         url={http://dx.doi.org/10.1142/S0129055X17500027},
      review={\MR{3595480}},
}

\bib{BaJo2015}{article}{
      author={Basak, Tathagata},
      author={Johnson, Ryan},
       title={Indicators of {T}ambara-{Y}amagami categories and {G}auss sums},
        date={2015},
        ISSN={1937-0652},
     journal={Algebra Number Theory},
      volume={9},
      number={8},
       pages={1793\ndash 1823},
         url={http://dx.doi.org/10.2140/ant.2015.9.1793},
      review={\MR{3418743}},
}

\bib{BiKaLo2014}{article}{
      author={Bischoff, Marcel},
      author={Kawahigashi, Yasuyuki},
      author={Longo, Roberto},
       title={Characterization of 2{D} rational local conformal nets and its
  boundary conditions: the maximal case},
        date={2015},
        ISSN={1431-0635},
     journal={Doc. Math.},
      volume={20},
       pages={1137\ndash 1184},
      review={\MR{3424476}},
}

\bib{BiKaLoRe2014-2}{book}{
      author={Bischoff, Marcel},
      author={Kawahigashi, Yasuyuki},
      author={Longo, Roberto},
      author={Rehren, Karl-Henning},
       title={Tensor categories and endomorphisms of von {N}eumann
  algebras---with applications to quantum field theory},
      series={Springer Briefs in Mathematical Physics},
   publisher={Springer, Cham},
        date={2015},
      volume={3},
        ISBN={978-3-319-14300-2; 978-3-319-14301-9},
         url={http://dx.doi.org/10.1007/978-3-319-14301-9},
      review={\MR{3308880}},
}

\bib{BiKaLoRe2014}{article}{
      author={Bischoff, Marcel},
      author={Kawahigashi, Yasuyuki},
      author={Longo, Roberto},
      author={Rehren, Karl-Henning},
       title={Phase {B}oundaries in {A}lgebraic {C}onformal {QFT}},
        date={2016},
        ISSN={0010-3616},
     journal={Comm. Math. Phys.},
      volume={342},
      number={1},
       pages={1\ndash 45},
         url={http://dx.doi.org/10.1007/s00220-015-2560-0},
      review={\MR{3455144}},
}

\bib{BaSk1993}{article}{
      author={Baaj, Saad},
      author={Skandalis, Georges},
       title={Unitaires multiplicatifs et dualit\'e pour les produits crois\'es
  de {$C^\ast$}-alg\`ebres},
        date={1993},
        ISSN={0012-9593},
     journal={Ann. Sci. \'Ecole Norm. Sup. (4)},
      volume={26},
      number={4},
       pages={425\ndash 488},
         url={http://www.numdam.org/item?id=ASENS_1993_4_26_4_425_0},
      review={\MR{1235438}},
}

\bib{Bu2015}{article}{
      author={Burstein, Richard~D.},
       title={Group-type subfactors and {H}adamard matrices},
        date={2015},
        ISSN={0002-9947},
     journal={Trans. Amer. Math. Soc.},
      volume={367},
      number={10},
       pages={6783\ndash 6807},
         url={http://dx.doi.org/10.1090/tran/5314},
      review={\MR{3378814}},
}

\bib{Ca2004}{article}{
      author={Carpi, Sebastiano},
       title={{On the representation theory of {V}irasoro nets}},
        date={2004},
        ISSN={0010-3616},
     journal={Comm. Math. Phys.},
      volume={244},
      number={2},
       pages={261–284},
         url={http://dx.doi.org/10.1007/s00220-003-0988-0},
      review={\MR{2031030 (2005e:81128)}},
}

\bib{CaWe2005}{article}{
      author={Carpi, Sebastiano},
      author={Weiner, Mihály},
       title={{On the uniqueness of diffeomorphism symmetry in conformal field
  theory}},
        date={2005},
        ISSN={0010-3616},
     journal={Comm. Math. Phys.},
      volume={258},
      number={1},
       pages={203–221},
         url={http://dx.doi.org/10.1007/s00220-005-1335-4},
      review={\MR{2166846 (2006k:81231)}},
}

\bib{Da1996}{article}{
      author={David, Marie-Claude},
       title={Paragroupe d'{A}drian {O}cneanu et alg\`ebre de {K}ac},
        date={1996},
        ISSN={0030-8730},
     journal={Pacific J. Math.},
      volume={172},
      number={2},
       pages={331\ndash 363},
         url={http://projecteuclid.org/euclid.pjm/1102366014},
      review={\MR{1386622}},
}

\bib{MO233580}{misc}{
      author={Degtyarev, Alex},
       title={Do all non-degenerate quadratic forms come from positive even
  lattices?},
         how={MathOverflow},
         url={http://mathoverflow.net/q/233580},
        note={URL:http://mathoverflow.net/q/233580 (version: 2016-03-17)},
}

\bib{DrGeNiOs2010}{article}{
      author={Drinfeld, Vladimir},
      author={Gelaki, Shlomo},
      author={Nikshych, Dmitri},
      author={Ostrik, Victor},
       title={On braided fusion categories. {I}},
        date={2010},
        ISSN={1022-1824},
     journal={Selecta Math. (N.S.)},
      volume={16},
      number={1},
       pages={1\ndash 119},
         url={http://dx.doi.org/10.1007/s00029-010-0017-z},
      review={\MR{2609644 (2011e:18015)}},
}

\bib{DoHaRo1971}{article}{
      author={Doplicher, Sergio},
      author={Haag, Rudolf},
      author={Roberts, John~E.},
       title={Local observables and particle statistics. {I}},
        date={1971},
        ISSN={0010-3616},
     journal={Comm. Math. Phys.},
      volume={23},
       pages={199\ndash 230},
      review={\MR{0297259 (45 \#6316)}},
}

\bib{DoLe1994}{incollection}{
      author={Dong, Chongying},
      author={Lepowsky, James},
       title={Abelian intertwining algebras---a generalization of vertex
  operator algebras},
        date={1994},
   booktitle={Algebraic groups and their generalizations: quantum and
  infinite-dimensional methods ({U}niversity {P}ark, {PA}, 1991)},
      series={Proc. Sympos. Pure Math.},
      volume={56},
   publisher={Amer. Math. Soc., Providence, RI},
       pages={261\ndash 293},
      review={\MR{1278736}},
}

\bib{DaMgNiOs2013}{article}{
      author={Davydov, Alexei},
      author={Müger, Michael},
      author={Nikshych, Dmitri},
      author={Ostrik, Victor},
       title={{The {W}itt group of non-degenerate braided fusion categories}},
        date={2013},
        ISSN={0075-4102},
     journal={J. Reine Angew. Math.},
      volume={677},
       pages={135–177},
      review={\MR{3039775}},
}

\bib{DaSi2017}{article}{
      author={Davydov, Alexei},
      author={Simmons, Darren},
       title={On {L}agrangian algebras in group-theoretical braided fusion
  categories},
        date={2017},
        ISSN={0021-8693},
     journal={J. Algebra},
      volume={471},
       pages={149\ndash 175},
         url={http://dx.doi.org/10.1016/j.jalgebra.2016.09.016},
      review={\MR{3569181}},
}

\bib{DoXu2006}{article}{
      author={Dong, Chongying},
      author={Xu, Feng},
       title={{Conformal nets associated with lattices and their orbifolds}},
        date={2006},
        ISSN={0001-8708},
     journal={Adv. Math.},
      volume={206},
      number={1},
       pages={279–306},
      eprint={math/0411499v2},
         url={http://dx.doi.org/10.1016/j.aim.2005.08.009},
}

\bib{EvGa2011}{article}{
      author={Evans, David~E.},
      author={Gannon, Terry},
       title={{The exoticness and realisability of twisted {H}aagerup-{I}zumi
  modular data}},
        date={2011},
        ISSN={0010-3616},
     journal={Comm. Math. Phys.},
      volume={307},
      number={2},
       pages={463–512},
         url={http://dx.doi.org/10.1007/s00220-011-1329-3},
      review={\MR{2837122 (2012m:17040)}},
}

\bib{EvGa2014}{article}{
      author={Evans, David~E.},
      author={Gannon, Terry},
       title={Near-group fusion categories and their doubles},
        date={2014},
        ISSN={0001-8708},
     journal={Adv. Math.},
      volume={255},
       pages={586\ndash 640},
         url={http://dx.doi.org/10.1016/j.aim.2013.12.014},
      review={\MR{3167494}},
}

\bib{EtGeNiOs2015}{book}{
      author={Etingof, Pavel},
      author={Gelaki, Shlomo},
      author={Nikshych, Dmitri},
      author={Ostrik, Victor},
       title={Tensor categories},
      series={Mathematical Surveys and Monographs},
   publisher={American Mathematical Society, Providence, RI},
        date={2015},
      volume={205},
        ISBN={978-1-4704-2024-6},
         url={http://dx.doi.org/10.1090/surv/205},
      review={\MR{3242743}},
}

\bib{EtNiOs2010}{article}{
      author={Etingof, Pavel},
      author={Nikshych, Dmitri},
      author={Ostrik, Victor},
       title={Fusion categories and homotopy theory},
        date={2010},
        ISSN={1663-487X},
     journal={Quantum Topol.},
      volume={1},
      number={3},
       pages={209\ndash 273},
         url={http://dx.doi.org/10.4171/QT/6},
        note={With an appendix by Ehud Meir},
      review={\MR{2677836}},
}

\bib{FrFuRuSc2004}{article}{
      author={Fr\"ohlich, J\"urg},
      author={Fuchs, J\"urgen},
      author={Runkel, Ingo},
      author={Schweigert, Christoph},
       title={Kramers-{W}annier duality from conformal defects},
        date={2004},
        ISSN={0031-9007},
     journal={Phys. Rev. Lett.},
      volume={93},
      number={7},
       pages={070601, 4},
         url={http://dx.doi.org/10.1103/PhysRevLett.93.070601},
      review={\MR{2115752}},
}

\bib{FrKe1993}{book}{
      author={Fr{\"o}hlich, J{\"u}rg},
      author={Kerler, Thomas},
       title={Quantum groups, quantum categories and quantum field theory},
      series={Lecture Notes in Mathematics},
   publisher={Springer-Verlag, Berlin},
        date={1993},
      volume={1542},
        ISBN={3-540-56623-6},
      review={\MR{1239440 (95f:81042)}},
}

\bib{FrReSc1989}{article}{
      author={Fredenhagen, K.},
      author={Rehren, K.-H.},
      author={Schroer, B.},
       title={{Superselection sectors with braid group statistics and exchange
  algebras. {I}.\ {G}eneral theory}},
        date={1989},
        ISSN={0010-3616},
     journal={Comm. Math. Phys.},
      volume={125},
      number={2},
       pages={201–226},
         url={http://projecteuclid.org/getRecord?id=euclid.cmp/1104179464},
      review={\MR{1016869 (91c:81047)}},
}

\bib{GaFr1993}{article}{
      author={Gabbiani, Fabrizio},
      author={Fröhlich, Jürg},
       title={{Operator algebras and conformal field theory}},
        date={1993},
        ISSN={0010-3616},
     journal={Comm. Math. Phys.},
      volume={155},
      number={3},
       pages={569–640},
}

\bib{GuLo1995}{article}{
      author={Guido, Daniele},
      author={Longo, Roberto},
       title={{An algebraic spin and statistics theorem}},
        date={1995},
        ISSN={0010-3616},
     journal={Comm. Math. Phys.},
      volume={172},
       pages={517–533},
         url={http://dx.doi.org/10.1007/BF02101806},
}

\bib{GuLo1996}{article}{
      author={Guido, Daniele},
      author={Longo, Roberto},
       title={The conformal spin and statistics theorem},
        date={1996},
        ISSN={0010-3616},
     journal={Comm. Math. Phys.},
      volume={181},
      number={1},
       pages={11\ndash 35},
         url={http://projecteuclid.org/euclid.cmp/1104287623},
      review={\MR{1410566 (98c:81121)}},
}

\bib{GeNaNi2009}{article}{
      author={Gelaki, Shlomo},
      author={Naidu, Deepak},
      author={Nikshych, Dmitri},
       title={Centers of graded fusion categories},
        date={2009},
        ISSN={1937-0652},
     journal={Algebra Number Theory},
      volume={3},
      number={8},
       pages={959\ndash 990},
         url={http://dx.doi.org/10.2140/ant.2009.3.959},
      review={\MR{2587410}},
}

\bib{GiRe2016}{article}{
      author={Giorgetti, Luca},
      author={Rehren, Karl-Henning},
       title={Bantay's trace in unitary modular tensor categories},
        date={2016},
     journal={arXiv preprint arXiv:1606.04378},
}

\bib{Hh2000}{incollection}{
      author={H\"ohn, Gerald},
       title={Genera of vertex operator algebras and three-dimensional
  topological quantum field theories},
        date={2003},
   booktitle={Vertex operator algebras in mathematics and physics ({T}oronto,
  {ON}, 2000)},
      series={Fields Inst. Commun.},
      volume={39},
   publisher={Amer. Math. Soc., Providence, RI},
       pages={89\ndash 107},
      review={\MR{2029792}},
}

\bib{He2016pp}{incollection}{
      author={Henriques, Andr\'e},
       title={The classification of chiral {WZW} models by
  {$H^4_+(BG,\mathbb{Z})$}},
        date={2017},
   booktitle={Lie algebras, vertex operator algebras, and related topics},
      series={Contemp. Math.},
      volume={695},
   publisher={Amer. Math. Soc., Providence, RI},
       pages={99\ndash 121},
         url={https://doi.org/10.1090/conm/695/13998},
      review={\MR{3709708}},
}

\bib{He2017}{article}{
      author={Henriques, Andr{\'e}~G},
       title={What chern--simons theory assigns to a point},
        date={2017},
     journal={Proceedings of the National Academy of Sciences},
       pages={201711591},
}

\bib{IzKo2002}{article}{
      author={Izumi, Masaki},
      author={Kosaki, Hideki},
       title={{On a subfactor analogue of the second cohomology}},
        date={2002},
        ISSN={0129-055X},
     journal={Rev. Math. Phys.},
      volume={14},
      number={7-8},
       pages={733–757},
         url={http://dx.doi.org/10.1142/S0129055X02001375},
        note={Dedicated to Professor Huzihiro Araki on the occasion of his 70th
  birthday},
      review={\MR{1932664 (2004a:46061)}},
}

\bib{Iz2000}{article}{
      author={Izumi, Masaki},
       title={{The Structure of Sectors Associated with Longo–Rehren
  Inclusions\\I. General Theory}},
        date={2000},
        ISSN={0010-3616},
     journal={Comm. Math. Phys.},
      volume={213},
       pages={127–179},
         url={http://dx.doi.org/10.1007/s002200000234},
}

\bib{Iz2001II}{article}{
      author={Izumi, Masaki},
       title={The structure of sectors associated with {L}ongo-{R}ehren
  inclusions. {II}. {E}xamples},
        date={2001},
        ISSN={0129-055X},
     journal={Rev. Math. Phys.},
      volume={13},
      number={5},
       pages={603\ndash 674},
         url={http://dx.doi.org/10.1142/S0129055X01000818},
      review={\MR{1832764 (2002k:46161)}},
}

\bib{Iz2016}{article}{
      author={Izumi, Masaki},
       title={The classification of $3^n$ subfactors and related fusion
  categories},
        date={2016},
     journal={arXiv preprint arXiv:1609.07604},
}

\bib{Iz1991}{article}{
      author={Izumi, Masaki},
       title={Application of fusion rules to classification of subfactors},
        date={1991},
        ISSN={0034-5318},
     journal={Publ. Res. Inst. Math. Sci.},
      volume={27},
      number={6},
       pages={953\ndash 994},
         url={http://dx.doi.org/10.2977/prims/1195169007},
      review={\MR{1145672}},
}

\bib{Iz1993}{article}{
      author={Izumi, Masaki},
       title={Subalgebras of infinite {$C^*$}-algebras with finite {W}atatani
  indices. {I}. {C}untz algebras},
        date={1993},
        ISSN={0010-3616},
     journal={Comm. Math. Phys.},
      volume={155},
      number={1},
       pages={157\ndash 182},
         url={http://projecteuclid.org/euclid.cmp/1104253206},
      review={\MR{1228532}},
}

\bib{Iz1998}{article}{
      author={Izumi, Masaki},
       title={Subalgebras of infinite {$C^*$}-algebras with finite {W}atatani
  indices. {II}. {C}untz-{K}rieger algebras},
        date={1998},
        ISSN={0012-7094},
     journal={Duke Math. J.},
      volume={91},
      number={3},
       pages={409\ndash 461},
         url={http://dx.doi.org/10.1215/S0012-7094-98-09118-9},
      review={\MR{1604162}},
}

\bib{Jo1980}{article}{
      author={Jones, Vaughan F.~R.},
       title={Actions of finite groups on the hyperfinite type {${\rm
  II}_{1}$}\ factor},
        date={1980},
        ISSN={0065-9266},
     journal={Mem. Amer. Math. Soc.},
      volume={28},
      number={237},
       pages={v+70},
         url={http://dx.doi.org/10.1090/memo/0237},
      review={\MR{587749}},
}

\bib{KaLo2005}{article}{
      author={Kawahigashi, Yasuyuki},
      author={Longo, Roberto},
       title={Noncommutative spectral invariants and black hole entropy},
        date={2005},
        ISSN={0010-3616},
     journal={Comm. Math. Phys.},
      volume={257},
      number={1},
       pages={193\ndash 225},
         url={http://dx.doi.org/10.1007/s00220-005-1322-9},
      review={\MR{2163574}},
}

\bib{KaLo2006}{article}{
      author={Kawahigashi, Y.},
      author={Longo, Roberto},
       title={{Local conformal nets arising from framed vertex operator
  algebras}},
        date={2006},
        ISSN={0001-8708},
     journal={Adv. Math.},
      volume={206},
      number={2},
       pages={729–751},
      eprint={math/0411499v2},
}

\bib{KaLoMg2001}{article}{
      author={Kawahigashi, Y.},
      author={Longo, Roberto},
      author={Müger, Michael},
       title={{Multi-Interval Subfactors and Modularity of Representations in
  Conformal Field Theory}},
        date={2001},
     journal={Comm. Math. Phys.},
      volume={219},
       pages={631–669},
      eprint={arXiv:math/9903104},
}

\bib{KaPe1984}{article}{
      author={Kac, Victor~G.},
      author={Peterson, Dale~H.},
       title={Infinite-dimensional {L}ie algebras, theta functions and modular
  forms},
        date={1984},
        ISSN={0001-8708},
     journal={Adv. in Math.},
      volume={53},
      number={2},
       pages={125\ndash 264},
         url={http://dx.doi.org/10.1016/0001-8708(84)90032-X},
      review={\MR{750341}},
}

\bib{KaWe1993}{incollection}{
      author={Kazhdan, David},
      author={Wenzl, Hans},
       title={Reconstructing monoidal categories},
        date={1993},
   booktitle={I. {M}. {G}el\cprime fand {S}eminar},
      series={Adv. Soviet Math.},
      volume={16},
   publisher={Amer. Math. Soc., Providence, RI},
       pages={111\ndash 136},
      review={\MR{1237835}},
}

\bib{LiNg2014}{incollection}{
      author={Liu, Gongxiang},
      author={Ng, Siu-Hung},
       title={On total {F}robenius-{S}chur indicators},
        date={2014},
   booktitle={Recent advances in representation theory, quantum groups,
  algebraic geometry, and related topics},
      series={Contemp. Math.},
      volume={623},
   publisher={Amer. Math. Soc., Providence, RI},
       pages={193\ndash 213},
         url={http://dx.doi.org/10.1090/conm/623/12462},
      review={\MR{3288628}},
}

\bib{Lo2003}{article}{
      author={Longo, Roberto},
       title={{Conformal Subnets and Intermediate Subfactors}},
        date={2003},
        ISSN={0010-3616},
     journal={Comm. Math. Phys.},
      volume={237},
       pages={7–30},
      eprint={arXiv:math/0102196v2 [math.OA]},
         url={http://dx.doi.org/10.1007/s00220-003-0814-8},
}

\bib{Lo1994}{article}{
      author={Longo, Roberto},
       title={{A duality for {H}opf algebras and for subfactors. {I}}},
        date={1994},
        ISSN={0010-3616},
     journal={Comm. Math. Phys.},
      volume={159},
      number={1},
       pages={133–150},
         url={http://projecteuclid.org/getRecord?id=euclid.cmp/1104254494},
      review={\MR{1257245 (95h:46097)}},
}

\bib{LoRe2004}{article}{
      author={Longo, Roberto},
      author={Rehren, Karl-Henning},
       title={{Local Fields in Boundary Conformal QFT}},
        date={2004},
     journal={Rev. Math. Phys.},
      volume={16},
       pages={909–960},
      eprint={arXiv:math-ph/0405067},
}

\bib{LoRe1995}{article}{
      author={Longo, Roberto},
      author={Rehren, Karl-Henning},
       title={{Nets of Subfactors}},
        date={1995},
     journal={Rev. Math. Phys.},
      volume={7},
       pages={567–597},
      eprint={arXiv:hep-th/9411077},
}

\bib{LoRo1997}{article}{
      author={Longo, R.},
      author={Roberts, J.~E.},
       title={{A theory of dimension}},
        date={1997},
        ISSN={0920-3036},
     journal={K-Theory},
      volume={11},
      number={2},
       pages={103–159},
      eprint={arXiv:funct-an/9604008v1},
         url={http://dx.doi.org/10.1023/A:1007714415067},
      review={\MR{1444286 (98i:46065)}},
}

\bib{LoXu2004}{article}{
      author={Longo, Roberto},
      author={Xu, Feng},
       title={{Topological sectors and a dichotomy in conformal field theory}},
        date={2004},
        ISSN={0010-3616},
     journal={Comm. Math. Phys.},
      volume={251},
      number={2},
       pages={321–364},
         url={http://dx.doi.org/10.1007/s00220-004-1063-1},
      review={\MR{2100058 (2005i:81087)}},
}

\bib{Mg2003}{article}{
      author={Müger, Michael},
       title={{From subfactors to categories and topology. {I}. {F}robenius
  algebras in and {M}orita equivalence of tensor categories}},
        date={2003},
        ISSN={0022-4049},
     journal={J. Pure Appl. Algebra},
      volume={180},
      number={1-2},
       pages={81–157},
         url={http://dx.doi.org/10.1016/S0022-4049(02)00247-5},
      review={\MR{1966524 (2004f:18013)}},
}

\bib{Mg2003II}{article}{
      author={Müger, Michael},
       title={{From subfactors to categories and topology. {II}. {T}he quantum
  double of tensor categories and subfactors}},
        date={2003},
        ISSN={0022-4049},
     journal={J. Pure Appl. Algebra},
      volume={180},
      number={1-2},
       pages={159–219},
         url={http://dx.doi.org/10.1016/S0022-4049(02)00248-7},
      review={\MR{1966525 (2004f:18014)}},
}

\bib{Mg2003-MC}{article}{
      author={Müger, Michael},
       title={On the structure of modular categories},
        date={2003},
        ISSN={0024-6115},
     journal={Proc. London Math. Soc. (3)},
      volume={87},
      number={2},
       pages={291\ndash 308},
         url={http://dx.doi.org/10.1112/S0024611503014187},
      review={\MR{1990929}},
}

\bib{Mg2005}{article}{
      author={Müger, Michael},
       title={{Conformal Orbifold Theories and Braided Crossed G-Categories}},
        date={2005},
        ISSN={0010-3616},
     journal={Comm. Math. Phys.},
      volume={260},
       pages={727–762},
         url={http://dx.doi.org/10.1007/s00220-005-1291-z},
}

\bib{Mg2010}{inproceedings}{
      author={Müger, Michael},
       title={{On superselection theory of quantum fields in low dimensions}},
        date={2010},
   booktitle={{X{VI}th {I}nternational {C}ongress on {M}athematical
  {P}hysics}},
   publisher={World Sci. Publ., Hackensack, NJ},
       pages={496–503},
         url={http://dx.doi.org/10.1142/9789814304634_0041},
      review={\MR{2730815 (2012i:81165)}},
}

\bib{NgMa2001}{article}{
      author={Mason, Geoffrey},
      author={Ng, Siu-Hung},
       title={Group cohomology and gauge equivalence of some twisted quantum
  doubles},
        date={2001},
        ISSN={0002-9947},
     journal={Trans. Amer. Math. Soc.},
      volume={353},
      number={9},
       pages={3465\ndash 3509},
         url={http://dx.doi.org/10.1090/S0002-9947-01-02771-4},
      review={\MR{1837244}},
}

\bib{MiSc2017pp}{article}{
      author={Mignard, Micha\"el},
      author={Schauenburg, Peter},
       title={Modular categories are not determined by their modular data},
        date={2017},
     journal={arXiv preprint arXiv:1708.02796},
}

\bib{MoTaWe2016}{article}{
      author={Morinelli, Vincenzo},
      author={Tanimoto, Yoh},
      author={Weiner, Mih{\'a}ly},
       title={Conformal covariance and the split property},
        date={2016},
     journal={arXiv preprint arXiv:1609.02196},
}

\bib{Ng2003}{incollection}{
      author={Ng, Siu-Hung},
       title={Non-commutative, non-cocommutative semisimple {H}opf algebras
  arise from finite abelian groups},
        date={2003},
   booktitle={Groups, rings, {L}ie and {H}opf algebras ({S}t. {J}ohn's, {NF},
  2001)},
      series={Math. Appl.},
      volume={555},
   publisher={Kluwer Acad. Publ., Dordrecht},
       pages={167\ndash 177},
      review={\MR{1995058}},
}

\bib{Ni1973}{article}{
      author={Niemeier, Hans-Volker},
       title={Definite quadratische {F}ormen der {D}imension {$24$} und
  {D}iskriminante {$1$}},
        date={1973},
        ISSN={0022-314X},
     journal={J. Number Theory},
      volume={5},
       pages={142\ndash 178},
         url={http://dx.doi.org/10.1016/0022-314X(73)90068-1},
      review={\MR{0316384}},
}

\bib{Ni1979}{article}{
      author={Nikulin, V.~V.},
       title={Integer symmetric bilinear forms and some of their geometric
  applications},
        date={1979},
        ISSN={0373-2436},
     journal={Izv. Akad. Nauk SSSR Ser. Mat.},
      volume={43},
      number={1},
       pages={111\ndash 177, 238},
      review={\MR{525944}},
}

\bib{Re2000}{article}{
      author={Rehren, Karl-Henning},
       title={{Canonical tensor product subfactors}},
        date={2000},
        ISSN={0010-3616},
     journal={Comm. Math. Phys.},
      volume={211},
      number={2},
       pages={395–406},
         url={http://dx.doi.org/10.1007/s002200050818},
      review={\MR{1754521 (2001d:46093)}},
}

\bib{Re1989}{incollection}{
      author={Rehren, Karl-Henning},
       title={Braid group statistics and their superselection rules},
        date={1990},
   booktitle={The algebraic theory of superselection sectors ({P}alermo,
  1989)},
   publisher={World Sci. Publ., River Edge, NJ},
       pages={333\ndash 355},
      review={\MR{1147467}},
}

\bib{Sa1997}{article}{
      author={Sato, Nobuya},
       title={Fourier transform for paragroups and its application to the depth
  two case},
        date={1997},
        ISSN={0034-5318},
     journal={Publ. Res. Inst. Math. Sci.},
      volume={33},
      number={2},
       pages={189\ndash 222},
         url={http://dx.doi.org/10.2977/prims/1195145447},
      review={\MR{1442497}},
}

\bib{Sc1993}{article}{
      author={Schellekens, A.~N.},
       title={Meromorphic {$c=24$} conformal field theories},
        date={1993},
        ISSN={0010-3616},
     journal={Comm. Math. Phys.},
      volume={153},
      number={1},
       pages={159\ndash 185},
         url={http://projecteuclid.org/euclid.cmp/1104252600},
      review={\MR{1213740}},
}

\bib{ScKAC}{misc}{
      author={Schellekens, AN},
       title={The computer program {KAC}},
         url={https://www.nikhef.nl/~t58/Site/Kac.html},
}

\bib{St1995}{thesis}{
      author={Staszkiewicz, C.P.},
       title={{Die lokale Struktur abelscher Stromalgebren auf dem Kreis}},
        type={Ph.D. Thesis},
     address={Freie Universität Berlin},
        date={1995},
}

\bib{Su1980}{article}{
      author={Sutherland, Colin~E.},
       title={Cohomology and extensions of von {N}eumann algebras. {I}, {II}},
        date={1980},
        ISSN={0034-5318},
     journal={Publ. Res. Inst. Math. Sci.},
      volume={16},
      number={1},
       pages={105\ndash 133, 135\ndash 174},
         url={https://doi.org/10.2977/prims/1195187501},
      review={\MR{574031}},
}

\bib{Sz1994}{article}{
      author={Szyma\'nski, Wojciech},
       title={Finite index subfactors and {H}opf algebra crossed products},
        date={1994},
        ISSN={0002-9939},
     journal={Proc. Amer. Math. Soc.},
      volume={120},
      number={2},
       pages={519\ndash 528},
         url={http://dx.doi.org/10.2307/2159890},
      review={\MR{1186139}},
}

\bib{Tu2010}{book}{
      author={Turaev, Vladimir},
       title={Homotopy quantum field theory},
      series={EMS Tracts in Mathematics},
   publisher={European Mathematical Society (EMS), Z\"urich},
        date={2010},
      volume={10},
        ISBN={978-3-03719-086-9},
         url={http://dx.doi.org/10.4171/086},
        note={Appendix 5 by Michael M\"uger and Appendices 6 and 7 by Alexis
  Virelizier},
      review={\MR{2674592}},
}

\bib{TaYa1998}{article}{
      author={Tambara, Daisuke},
      author={Yamagami, Shigeru},
       title={Tensor categories with fusion rules of self-duality for finite
  abelian groups},
        date={1998},
        ISSN={0021-8693},
     journal={J. Algebra},
      volume={209},
      number={2},
       pages={692\ndash 707},
         url={http://dx.doi.org/10.1006/jabr.1998.7558},
      review={\MR{1659954}},
}

\bib{VaKa1974}{article}{
      author={Va{\u\i}nerman, L.~I.},
      author={Kac, G.~I.},
       title={Nonunimodular ring groups and {H}opf-von {N}eumann algebras},
        date={1974},
     journal={Mathematics of the USSR-Sbornik},
      volume={23},
      number={2},
       pages={185},
         url={http://stacks.iop.org/0025-5734/23/i=2/a=A03},
}

\bib{Wa1963}{article}{
      author={Wall, C. T.~C.},
       title={Quadratic forms on finite groups, and related topics},
        date={1963},
        ISSN={0040-9383},
     journal={Topology},
      volume={2},
       pages={281\ndash 298},
      review={\MR{0156890 (28 \#133)}},
}

\bib{Wang2010}{book}{
      author={Wang, Zhenghan},
       title={Topological quantum computation},
      series={CBMS Regional Conference Series in Mathematics},
   publisher={Published for the Conference Board of the Mathematical Sciences,
  Washington, DC; by the American Mathematical Society, Providence, RI},
        date={2010},
      volume={112},
        ISBN={978-0-8218-4930-9},
         url={https://doi.org/10.1090/cbms/112},
      review={\MR{2640343}},
}

\bib{Wa}{article}{
      author={Wassermann, Antony},
       title={{Operator algebras and conformal field theory III. Fusion of
  positive energy representations of LSU(N) using bounded operators}},
        date={1998},
     journal={Invent. Math.},
      volume={133},
      number={3},
       pages={467–538},
      eprint={arXiv:math/9806031v1 [math.OA]},
}

\bib{Xu2000-2}{article}{
      author={Xu, Feng},
       title={{Algebraic orbifold conformal field theories}},
        date={2000},
     journal={Proc. Nat. Acad. Sci. U.S.A.},
      volume={97},
      number={26},
       pages={14069},
      eprint={arXiv:math/0004150v1 [math.QA]},
}

\bib{Xu2000}{article}{
      author={Xu, Feng},
       title={{Jones-{W}assermann subfactors for disconnected intervals}},
        date={2000},
        ISSN={0219-1997},
     journal={Commun. Contemp. Math.},
      volume={2},
      number={3},
       pages={307–347},
      eprint={arXiv:q-alg/9704003},
         url={http://dx.doi.org/10.1142/S0219199700000153},
      review={\MR{1776984 (2001f:46094)}},
}

\bib{Xu2001}{article}{
      author={Xu, Feng},
       title={On a conjecture of {K}ac-{W}akimoto},
        date={2001},
        ISSN={0034-5318},
     journal={Publ. Res. Inst. Math. Sci.},
      volume={37},
      number={2},
       pages={165\ndash 190},
         url={http://projecteuclid.org/euclid.prims/1145476849},
      review={\MR{1814564}},
}

\bib{Xu2009}{article}{
      author={Xu, Feng},
       title={An application of mirror extensions},
        date={2009},
        ISSN={0010-3616},
     journal={Comm. Math. Phys.},
      volume={290},
      number={1},
       pages={83\ndash 103},
         url={http://dx.doi.org/10.1007/s00220-008-0700-5},
      review={\MR{2520508 (2010i:81350)}},
}

\end{biblist}
\end{bibdiv}
\address
\end{document}